\theoremstyle{plain}
\newtheorem{theorem}{Theorem}[section]
\newtheorem{lemma}[theorem]{Lemma}
\newtheorem{corollary}[theorem]{Corollary}
\newtheorem{proposition}[theorem]{Proposition}
\newtheorem{remark}[theorem]{Remark}
\newtheorem*{assumptions}{Assumptions}
\renewcommand{\eqref}[1]{\textnormal{(\ref{#1})}}
\numberwithin{equation}{section}
\def\Oh{{\mathcal  O}}
\newcommand{\diam}{\mathrm{diam}}      
\renewcommand{\div}{\mathrm{div}\,} 
\title[From All-dielectrics to Plasmonics]{From all-dieletric nanoresonators\\ to extended quasi-static plasmonic resonators}
\author[Cao, Ghandriche and Sini]{Xinlin Cao $^*$ Ahcene Ghandriche  $^{**}$ and Mourad Sini$^{\ddag}$}
\thanks{$^*$ Department of Applied Mathematics, The Hong Kong Polytechnic University. Email: xinlin.cao@polyu.edu.hk; xlcao.pdeip@gmail.com.}
\thanks{$^{**}$ Nanjing Center for Applied Mathematics, Nanjing, 211135, People’s Republic of China. Email: gh.hsen@njcam.org.cn.}
\thanks{$^{\ddag}$ RICAM, Austrian Academy of Sciences, Altenbergerstrasse 69, A-4040, Linz, Austria. Email: mourad.sini@oeaw.ac.at. This author is partially supported by the Austrian Science Fund (FWF): P 32660}
\date{\today}
\begin{document}

\begin{abstract}
We derive the electromagnetic medium equivalent to a cluster of all-dielectric nanoparticles (i.e. enjoying high refractive indices), distributed periodically in a smooth domain $\Omega$, while excited at nearly resonating dielectric incident frequencies (i.e. subwavelength Mie-resonant frequencies). This effective medium is an alteration of the permeability that keeps the permittivity unchanged. We provide regimes under which the effective permeability can be positive or negative valued. In addition, if the incident frequency is {\it{close}} to any of the subwavelength all-dielectric resonances, then the distributed cluster behaves as an extended  quasi-static plasmonic resonator. Therefore, exciting the cluster of all-dielectric nanoresonators with nearly resonating incident frequencies, we can generate an extended quasi-static plasmonic resonator which creates {\it{giant electromagnetic fields}} in its surrounding.

\medskip 

\noindent{\bf Keywords}: Maxwell system, dielectric nanoparticles, negative media, plasmonic resonances, Mie resonances, effective medium theory.
 
\medskip
\noindent{\bf AMS subject classification}: 35C15; 35C20; 35Q60

\end{abstract}

\maketitle
\hypersetup{linkcolor=black}
\tableofcontents 

\section{Introduction and statement of the results}\label{sec:Intro}
\subsection{Background and motivation}\label{Background and motivation}

The nanoscale optics structures are usually
manufactured using building-blocks made of plasmonics, with
metals such as gold or silver, enjoying negative real part of the permittivity, or dielectrics, as silicon. Each type of material (metal or dielectric) has advantages and disadvantages, see \cite{All-dielectric-1, All-dielectric-2} for related discussions. Such properties are due mainly to their resonant effects. Indeed, both material can resonate at specific incident frequencies. The metals are known to resonate at the so-called plasmonic resonances which are related to the eigenvalues of the Magnetization operator while the dielectrics resonate at the so-called all-dielectric resonances (or subwavelenght Mie-resonances) which are related to the vector-Newtonian operator. A natural way of representing the electromagnetic field generated by such structures is the Lippmann-Schwinger system of equations, via the Maxwell dyadic fundamental tensor. This integral operator is given as a sum of the two operators we just mentioned, namely the Magnetization and vector-Newtonian operators. Such a point of view provides a unified way for studying the resonant effects generated by the types of contrast of the permittivity/permeability of the structure as compared to the surrounding background. This approach was used, with eventually different strategies and applications, to study the subwavelenght plasmonic resonances in \cite{GS} and dielectric resonances in \cite{ALZ, CGS} while  created by a single nanoparticle and in  \cite{Ammari-Li-Li-Zou, AM-1, Cao-Sini, AM} while created by a dense cluster of such resonators. Let us mention a notable difference between the field created by plasmonic resonators and the ones created by dielectric ones when the contrast appears in the electric permittivity (considering non-magnetic materials). In the former case, only the electric permittivity is averaged and changed while the permeability is kept the same. In the latter case, however, the the opposite holds, namely the permittivity is unchanged while the effective permeability is changed. This last phenomenon is quite surprising as it means that we can generate purely magnetic materials using a nanostructure of purely electric (i.e. non-magnetic) building blocks. This last observation is already made in the engineering literature for spherically shaped nanoparticles using formally the classical Mie-computations, \cite{Videen-Bickel}, see also \cite{All-dielectric-2} for a review on the related applied liteature. In addition, it was justified using the homogenization techniques in \cite{BBF} assuming the nanoparticles to be absorbing (i.e. with a non-vanishing imaginary part of the permittivity) and tacking the incident frequencies away from the dielectric resonant frequencies (or the subwavelenght Mie resonant frequencies).  In the current work, we go one step forward by taking incident frequencies close to these resonant frequencies, regardless of the absorption assumption, and characterize the equivalent medium. 
The outcome can be summarized as follows
\begin{enumerate}
\item[] 
\item As we allow incident frequencies to be close to the resonant frequencies then the volumetric concentration of the nano-particles can be low. Such a property might be very useful in practice as we do not need to (or we cannot) inject highly dense clusters of nano-particles to get a desired effective field. This can be seen in Assumptions (\ref{condition-on-k}) and  (\ref{def-cr}) of \textbf{Theorem \ref{thm-main-posi}} which means that the number of nano-particles is of the order $a^{h-3}$ with $h$ close to $1$ while the usual homogenization, with frequencies away from the resonant frequencies, need to take this number of the order $a^{-3}$ where $a$,  $a\ll 1$, models the maximum radius of the nano-particles.    
\item[] 
\item The effective permeability is changed, in contrast to the permittivity, and it can be positive or negative signed depending on the choice of the incident frequency. Therefore, the equivalent and extended medium can behave as a plasmon or as dielectric again depending on the choice of the incident frequency. This is described in \textbf{Theorem \ref{thm-main-posi}} and discussed more in \textbf{Remark \ref{remark-remove-inver}}. 
\item[] 
\item We show that the equivalent extended medium can generate low frequency plasmonic resonances. In other words, this result means that we can generate extended quasi-static plasmonic resonators from structures of all-dielectric nanoresonators. This is described in \textbf{Theorem \ref{coro-plas-resonance}} and discussed more in \textbf{Remark \ref{R-7}} and \textbf{Remark \ref{R-8}}.
\item[] 
\end{enumerate}

An important consequence of these results is that we can use all-dielectric nanoparticles, with well tuned optical properties, to generate high and even giant electromagnetic fields, see the discussions in \textbf{Remark \ref{R-7}} and \textbf{Remark \ref{R-8}} for more details. In addition to its mathematical interest, the 
 generation of high or eventually giant electromagnetic fields by designs coming from nanostructures is highly attractive and desirable, see the discussions in \cite{MZM} and the references therein. 
\bigskip

To justify these results, we proceed in two main steps. In the first step, we derive the point-interaction approximation (also called the Foldy-Lax approximation) of the fields generated by the all-dielectric nanoparticles. Therefore, we reduce the Lippmann-Schwinger equations (L.S.E) stated on the union of the nanoparticles by a linear algebraic system having as a matrix, to invert, defined with the dyadic fundamental solution taken on the locating centers of these nanoparticles. The justification of this step, taking into account the whole cluster of nanoparticles and their high contrasting values of the indices of refraction, is highly non trivial. These approximations, in those regimes, are secured in \cite{CGS}. The second step, which is the object of the current work, aims at deriving the limit of this linear algebraic system to an eventual system of integral equations which will play the role of the L.S.E modeling the wave propagation of the effective electromagnetic field (with the generated effective permittivity/permeability).  Characterizing this limiting problem is also highly non trivial. In addition to the technicalities coming from the Maxwell system, other difficulties come from the resonant regimes we are handling. For instance, a key step in the analysis is to derive the well-posedness and smoothing property for the effective Maxwell system. In regimes where the effective medium is positive, this smoothing property is derived by restating the corresponding L.S.E as $Id+K$ where $Id$ is the identity operator and $K$ is 'smaller' in related Hölder spaces where the unknowns are the electromagnetic fields inside the domain and their normal traces on the boundary, see \cite{Cao-Sini}. In the case where the effective medium is negative, we rewrite the corresponding L.S.E as  $Inv+K$ where $Inv:=Id-M$ with $Id$ is the identity operator and $M$ is related to the Magnetization operator, here $K$ is 'relatively smaller' as compared $Inv$ in some regimes. The operator $Inv$ can be inverted, with the needed control of its lower bound, using the spectral decomposition of the Magnetization operator projected on the three subspaces splitting the energy space via the Helmholtz decomposition: $\mathbb{L}^2=\mathbb{H}_0(\div=0)\overset{\perp}{\otimes}\mathbb{H}_0(Curl=0)\overset{\perp}{\otimes}\nabla \mathcal{H}armonic$, see  (\ref{hel-decomp}).  The control of the lower bounds of $Inv$ is key in justifying the point (3) above, precisely to show how the field becomes enhanced and eventually derive giant electromagnetic fields. This property suggest that the Maxwell operator, defined with the mentioned negative permeability, supports low frequency resonances, in the resolvent sense, which are related to the eigenvalues of the Magnetization operator. A rigorous justification of the last assertion can be achieved in the lines of \cite{MPS-JMPA} where the acoustic model was studied and a low frequency resonance, through the Minnaert frequency, of the related Hamiltonian is shown.  
\bigskip

In this work, we distribute the all-dielectric nanoparticles in 3D-domains. The proposed approach to analyse the elecromagnetic field can also be applied to distribution in more general manifolds. Of a particular interest to us are 2D-type nanostructures where, we distribute the nanoparticles in superpositions of 2D-surfaces (flat or not). The following two situations are of special importance:
\begin{enumerate}
       \item[] 
	\item {\it{Moir\'e metamaterials}}, \cite{W-Z:2018}. Briefly, the $3$ D Moir\'e metamaterials are built as superpositions and 'rotations' of surfaces while Moir\'e metasurfaces are build as superpositions and 'rotations' of lines. As, we can generate $2$ D and 1 $D$ blocks (i.e. metasurfaces and metawires), we do believe that we can provide with rigorous mathematical models for the Moir\'e metamaterials or metasurfaces. 
   \item[] 
	\item {\it{van der Waals heterostructures}}, \cite{van-der-Waals-paper}. The van der Waals heterostructures are built as superposition of 2D-sheets, of subwavelenght nanoparticles, which are globally periodic but locally formed of heterogeneous clusters of nanoparticles. Example of such distributions as trigonal prismatic, octahidral, chalcogenides, Boron nitrids, etc. see \cite{van-der-Waals-paper, van-der-Waals-book}.   
 \item[] 
\end{enumerate}
Event though these phenomena are reported with the modeling at the quantum scale, we are tempted to analyze this challenging issue, first based on the Maxwell models, which is known to be a good model at scales of tens to hundreds of nanometers.

\subsection{The electromagnetic fields generated by a cluster of all-dielectric nano-particles.}\label{subsec1}

We denote by $D:= \underset{m=1}{\overset{\aleph}{\cup}} \, D_m$ a collection of $\aleph$ connected, bounded and $C^1$--smooth particles of $\mathbb{R}^3$. Consider the electromagnetic scattering of time-harmonic electromagnetic plane waves, which can be described by the following Maxwell system
\begin{equation}\label{model-m}
	\begin{cases}
	\mathrm{Curl} \, E^T-i k \mu_rH^T=0\quad\mbox{in}\ \mathbb{R}^3,\\	
	\mathrm{Curl} \, H^T+i k \epsilon_r E^T=0\quad\mbox{in}\ \mathbb{R}^3,\\
	E^T=E^{in}+E^s,\quad H^T=H^{in}+H^s,\\
	\sqrt{\mu_0\epsilon_0^{-1}}H^s\times\frac{x}{|x|}-E^s=\Oh({\frac{1}{|x|^2}}),\quad \mbox{as} \ |x|\rightarrow\infty,
	\end{cases}
\end{equation}
where $\epsilon_0$ and $\mu_0$ are respectively the electric permittivity and the magnetic permeability of the vacuum outside $D$, $\epsilon_r \, := \, \dfrac{\epsilon}{\epsilon_0}$ and $\mu_r \, := \, \dfrac{\mu}{\mu_0}$ are the relative permittivity and permeability fulfilling that $\epsilon_r=1$ outside $D$ while $\mu_r=1$ in the whole space $\mathbb{R}^3$. Taking the incident plane wave 
\begin{equation*}
  E^{Inc}(x) \, = \, \mathrm{p} \, e^{i \, k \, \theta \cdot x}  \quad \text{and} \quad  H^{Inc}(x) \, = \, \left( \theta \times \mathrm{p} \right) \, e^{i \, k \, \theta \cdot x},
\end{equation*}
satisfying
\begin{equation}\label{IncWave}
	\begin{cases}
	\mathrm{Curl}\left(E^{Inc}\right) \, - \, i \, k \, H^{Inc} \, = \, 0, \quad \text{in} \quad \mathbb{R}^{3}, \\
                                           \\
	\mathrm{Curl}\left(H^{Inc}\right) \, + \, i \, k \, E^{Inc}=0, \quad \text{in} \quad \mathbb{R}^{3},
	\end{cases}
\end{equation}
with $\theta, \mathrm{p} \in \mathbb{S}^2$, such that $\mathrm{p} \cdot \theta \, = \, 0$, where $\mathbb{S}^2$ is the unit sphere, as a direction of incidence, then this problem is well-posed in appropriate Sobolev spaces, see \cite{colton2019inverse, Mitrea}, and the scattered wave $(E^s, H^s)$ possesses the coming behaviors

\begin{equation}\label{far-def}
E^s(x)=\frac{e^{i k|x|}}{|x|}\left(E^\infty(\hat{x}, \theta, p)+O(|x|^{-1})\right),\quad \mbox{as}\quad |x|\rightarrow \infty,
\end{equation}
and
\begin{equation}\notag
H^s(x)=\frac{e^{i k|x|}}{|x|}\left(H^\infty(\hat{x}, \theta, p)+O(|x|^{-1})\right),\quad \mbox{as}\quad |x|\rightarrow \infty,
\end{equation}
where $(E^\infty(\hat{x}, , \theta, p), H^\infty(\hat{x}, \theta, p))$ is the corresponding electromagnetic far-field pattern of \eqref{model-m} in the propagation direction $\hat{x} \, := \, \dfrac{x}{|x|}$. Suppose that 
\begin{equation}\label{scaling motion}
	D_m \, = \, a \, {B}_m \, + \, {z}_m, \, m=1, \cdots, \aleph,
\end{equation}
where $D_m$ are the $\aleph$ connected small components of the medium $D$, which are characterized by the parameter $a>0$ and the locations ${z}_m$. Each ${B}_m$ containing the origin is a bounded Lipschitz domain and fulfills that ${B}_m\subset B(0, 1)$, where $B(0,1)$ is the ball centered at the origin with radius 1. We denote
\begin{equation}\label{basic-ad}
a:=\max\limits_{1\leq m \leq \aleph}\mathrm{diam}(D_m)\quad\mbox{and}\quad
d:=\min\limits_{{1\leq m,j\leq \aleph}\atop{m\neq j}}d_{mj}:=\min\limits_{{1\leq m,j\leq \aleph}\atop{m\neq j}} \mathrm{dist}(D_m, D_j).
\end{equation}

\bigskip

On the basis of the Foldy-Lax approximation presented in \cite{CGS}, we investigate the corresponding effective permittivity and permeability generated by the cluster of nano-particles with high relative permittivity contrast under the following assumptions.
\begin{assumptions}
\smallskip
\phantom{}
\begin{enumerate}
    \item[]
    \item[I).] Assumption on the shape of $B_m$. 
Assume that the shapes of $B_m$'s introduced in \eqref{scaling motion} are the same and we denote
\begin{equation}\label{def-B}
	B:=B_m\quad\mbox{for}\quad m=1,\cdots, \aleph.
\end{equation}
The domain $B$ is a bounded and $C^1$-smooth domain that contains the origin. Recall the Helmholtz decomposition of $\mathbb{L}^2:=\mathbb{L}^2(B)$ space, for any bounded domain $B$,  
\begin{equation}\label{hel-decomp}
\mathbb{L}^2=\mathbb{H}_0(\div=0)\overset{\perp}{\otimes}\mathbb{H}_0(Curl=0)\overset{\perp}{\otimes}\nabla \mathcal{H}armonic.
\end{equation} 
Then, the projection of an arbitrary vector field $F$ onto three subspaces $\mathbb{H}_0(\div=0), \; \mathbb{H}_0(Curl=0)$ and $\nabla \mathcal{H}armonic$ can be respectively represented as $\overset{1}{\mathbb{P}}\left( F \right), \overset{2}{\mathbb{P}}\left( F \right)$ and $\overset{3}{\mathbb{P}}\left( F \right)$. We define the vector Newtonian potential operator ${\bf N} \, := \, {\bf N}_B$ for any vector function $F$ as
\begin{equation}\label{def-new-operator}
{\bf N}(F)(x) \, := \, \int_B\Phi_0(x, z) \, F(z)\,dz,\quad\mbox{where}\quad \Phi_0(x, z)=\frac{1}{4\pi}\frac{1}{|x-z|}\quad (x\neq z).
\end{equation}
For the Newtonian operator, under the previous decomposition, we denote $\left(\lambda_{n}^{(2)}(B), e_{n}^{(2)} \right)_{n \in \mathbb{N}}$ as the related eigen-system (of its projection) over the subspace $\mathbb{H}_{0}\left( Curl = 0 \right)$, and we denote $\left(\lambda_{n}^{(1)}(B), e_{n}^{(1)} \right)_{n \in \mathbb{N}}$ as the related eigen-system (of its projection) over the subspace $\mathbb{H}_{0}(\div=0)$. Since 
\begin{equation*}
\mathbb{H}_{0}\left( \div = 0 \right) \equiv Curl \left( \mathbb{H}_{0}\left( Curl \right) \cap \mathbb{H}\left( \div = 0 \right) \right),    
\end{equation*}
see for instance \cite{{amrouche1998vector}}, then for any $n$, there exists $\phi_n\in \mathbb{H}_0(Curl)\cap \mathbb{H}(\div=0)$, such that 
\begin{equation}\label{pre-cond}
e_{n}^{(1)}= Curl (\phi_{n}) \,\, \mbox{ with } \,\, \nu\times \phi_{n}=0 \,\, \text{and} \,\, \div(\phi_{n})=0.
\end{equation}	
We assume that, for a certain $n_0$, 
\begin{equation}\label{Intphin0}
\int_{B} \phi_{n_{0}}(y) \, dy \, \neq \, 0,
\end{equation}
where $\phi_{n_0}$ fulfills \eqref{pre-cond} with $n=n_0$. Then we can define a constant tensor ${\bf P_0}$ by
\begin{equation}\label{defP0}
{\bf{P}}_0 := \sum_{\ell=1}^{\ell_{\lambda_{n_{0}}}} \int_{B}\phi_{n_{0}, \ell}(y)\,dy \otimes \overline{ \int_{B}\phi_{n_{0}, \ell}(y)\,dy}, \quad \ell_{\lambda_{n_{0}}} \in \mathbb{N}^{\star},
\end{equation}
with $\phi_{n_{0}, \ell}$ fulfilling
\begin{equation}\notag
e^{(1)}_{n_0, \ell}=Curl(\phi_{n_{0}, \ell}),\, \div(\phi_{n_{0}, \ell})=0, \, \nu\times \phi_{n_{0}, \ell}=0,\mbox{ and }
{\bf N}(e^{(1)}_{n_0, \ell})=\lambda_{n_0}^{(1)}(B) e^{(1)}_{n_0, \ell}.
\end{equation}  
    \item[]
    \item[II).] Assumptions on the permittivity and permeability of each particle. In order to investigate the electromagnetic scattering by all-dielectric nano-particles with high contrast electric permittivity parameter, we assume that for a positive constant $\eta_0$, independent of $a$, 
\begin{equation}\label{contrast-epsilon}
\eta:=\epsilon_{r} - 1=\eta_0 \; a^{-2},\; \mbox{ with } \;\; \; ~~ a \ll 1,
\end{equation}
and the relative magnetic permeability $\mu_r$ to be moderate, namely $\mu_r=1$. 
    \item[]
    \item[III).] Assumption on the used incident frequency $k$. There exists a positive constant $c_0$, independent of $a$, such that
\begin{equation}\label{condition-on-k}
1 \, - \, k^2 \, \eta \, a^2 \, \lambda_{n_{0}}^{(1)}(B) \, = \, \pm \; c_0\; a^h,\;\; a \ll 1,
\end{equation}
where $\lambda_{n_{0}}^{(1)}(B)$ is the eigenvalue corresponding to $e_{n_0}^{(1)}$ in $(\ref{pre-cond})$ with $n=n_0$.
    \item[]
    \item[IV).] Assumptions on the distribution of the cluster of particles. Let $\Omega$ be a bounded domain of unit volume, containing the particles $D_m$, $m=1, 2, \cdots, \aleph$. We divide $\Omega$ into $[d^{-3}]$ { periodically distributed} subdomains $\Omega_m$, $m=1, 2, \cdots, \aleph$, such that $z_m\in D_m\subset\Omega_m$ with $z_m$ centering at $\Omega_m$.
Then we see that $\aleph=\Oh(d^{-3})$. Let each $\Omega_m$ be a cube of volume $d^3$.
To further describe the relation between $d$ and $a$, we take a positive constant $c_r$ to be the dilution parameter such that 
\begin{equation}\label{def-cr}
	{a^{3-h}}=c_r^{-3}{d^3}.
\end{equation}  

\begin{figure}[htbp]
	\centering
	\includegraphics[width=0.48\linewidth]{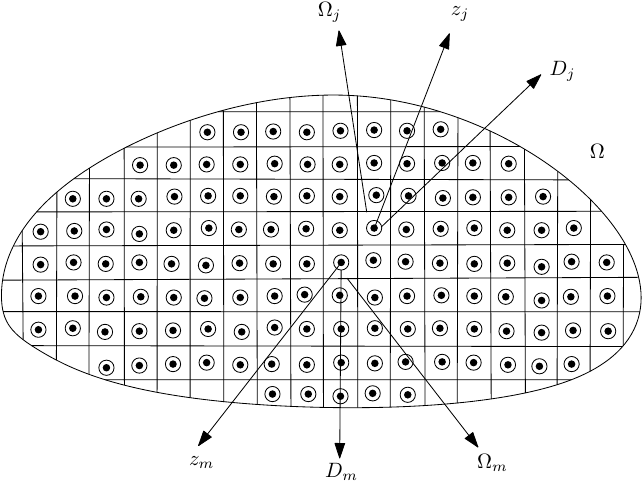}
	\caption{A schematic illustration for a periodic distribution of all-dielectric nano-particles.}
	\label{fig:omega-perodic}
\end{figure}
The above schematization describes a way of subdividing the domain $\Omega$, see Figure \ref{fig:omega-perodic}.
\end{enumerate}
\medskip
\begin{remark}\label{rem-vol-d}
The following remarks are necessary regarding these assumptions.
\begin{enumerate}
    \item[]
    \item An extra condition on $\eta_0$ in \eqref{contrast-epsilon} to be small enough or $c_r$ in \eqref{def-cr} to be large enough has to be proposed if we distribute the maximum numbers of particles, i.e. $\aleph\sim d^{-3}$.
    \item[]
    \item 	Since the shape of $\Omega$ is arbitrary, the intersecting part between the set of the cubes and $\partial \Omega$ is not necessarily to be empty, unless $\Omega$ is a cube. As the volume of $\Omega_m$ satisfies $|\Omega_m|=\Oh(d^3)$, then the maximum radius of $\Omega_m$ is of order $d$. Thus the intersecting surfaces, of the eventual $\Omega_m$'s, with $\partial \Omega$ possess the area of order $d^2$. Since the total area of $\partial \Omega$ is of order one, we deduce that the number of such particles near $\partial \Omega$ will not exceed the order of $d^{-2}$, and therefore the volume of this set is of order $d$ under the regime of \eqref{def-cr}, as $a\rightarrow0$.

    \item []
    \item Among all the available decompositions of the energy space $L^2(\Omega)$, the one given by $(\ref{hel-decomp})$ is natural as we know that for the Newtonian operator, when restricted to $\mathbb{H}_{0}(\div=0)$  and/or $\mathbb{H}_{0}(Curl = 0)$ admits an eigen-system and when restricted to $\nabla \mathcal{H}armonic$ it can be neglected, since its presence in the L.S.E is multiplied by the frequency $k^2$ which we consider as a small parameter. Similarly, for the Magnetization operator, when restricted to $\mathbb{H}_{0}(\div=0)$  is a vanishing one, when restricted to $\mathbb{H}_{0}(Curl=0)$ it will be reduced to the identity operator and, finally, when we restrict it to $\nabla \mathcal{H}armonic$ subspace admits an eigen-system.
\item[] 
\item Equation $(\ref{condition-on-k})$'s sign selection is dependent on how the eigenvalue $\lambda_{n_{0}}^{(1)}(B)$ is approached (i.e. from left or right). Consequently, the obtained results will be given in term of the selected sign. 
\end{enumerate}
\end{remark}
\end{assumptions}

Our starting point is the following proposition.

\begin{proposition}\cite[Theorem 1.3]{CGS}\label{prop-discrete}
	Consider the electromagnetic scattering problem \eqref{model-m} generated by a cluster of nano-particles $D_1, \cdots, D_\aleph$. Under \textbf{Assumptions}, for $\frac{9}{11}<h<1$, the electric far-field of the scattered wave possesses the expansion as 
	\begin{equation}\label{approximation-E}
	E^\infty(\hat{x}, \theta, p) = - \, \frac{i \, k^3 \, \eta}{4\, \pi}  \,  \sum_{m=1}^{\aleph} \, e^{- i \, k \, \hat{x} \cdot z_{m}} \hat{x}\times Q_m+\mathcal{O}\left(a^{\frac{h}{3}}\right),
	\end{equation} 
uniformly in all directions $\hat{x}  \, \in \, \mathbb{S}^2$, where 
	$\left( Q_m \right)_{m=1,\cdots, \aleph}$ is the vector solution to the following algebraic system
	\begin{equation}\label{linear-discrete}
	Q_{m}-\frac{\eta \, k^2}{\pm c_0} \, a^{5-h} \, \sum_{j=1 \atop j \neq m}^\aleph {\bf{P}}_0 \cdot \Upsilon_k(z_{m}, z_j) \cdot {Q}_j
	=\frac{i \, k}{\pm c_0} \, a^{5-h} \; {\bf{P}}_0 \cdot  H^{Inc}(z_{m}),
	\end{equation}
	with $\Upsilon_k(\cdot,\cdot)$ is the dyadic Green's function given by
	\begin{equation}\label{dyadic-Green}
		\Upsilon_k(x,z):=\underset{x}{\nabla} \, \underset{x}{\nabla} \, \Phi_k(x,z) \, + \, k^2 \, \Phi_k(x,z) \, {\bf I}, \quad \text{for} \quad x \neq z,
	\end{equation}
	where 
 \begin{equation}\label{DefFSHE}
  \Phi_k(x, z) \, := \, \dfrac{e^{i \, k \, \left\vert x - z \right\vert}}{4 \, \pi \, \left\vert x - z \right\vert}, \quad \text{for} \quad x \neq z,   
 \end{equation}
  being the fundamental solution to the Helmholtz equation, and ${\bf{P}}_0$ is the polarization tensor introduced in \eqref{defP0}. In particular, \eqref{linear-discrete} is invertible under the condition
	\begin{equation}\label{condi-inver-p1}
	\frac{ k^2 |\eta| a^5}{d^3 \left\vert 1 - k^2 \, \eta \, a^2 \, \lambda_{n_{0}}^{(1)} \right\vert} \left\vert {\bf{P}}_0 \right\vert < 1.
	\end{equation}
\end{proposition}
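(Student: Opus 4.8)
The proposition is quoted from \cite{CGS}; here is how I would establish it directly.

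\textbf{Starting point and rescaling.} The plan is to start from the volume integral (Lippmann--Schwinger) formulation for the total interior electric field,
\begin{equation}\notag
E^T(x) - \eta \int_D \Upsilon_k(x,z)\,E^T(z)\,dz = E^{in}(x), \qquad x\in D,
\end{equation}
and to read off the far field afterwards from the far-field asymptotics of $\Upsilon_k$. Since $D=\cup_m D_m$ with $D_m=aB+z_m$, I would rescale each particle to the reference domain $B$ via $x=a\xi+z_m$. Under this dilation the weakly singular part $k^2\Phi_k\,{\bf I}$ of the kernel produces, to leading order, the operator $k^2\eta a^2{\bf N}=k^2\eta_0{\bf N}$ (using $\eta=\eta_0a^{-2}$), while the strongly singular part $\nabla\nabla\Phi_k$ produces the Magnetization operator, which is scale invariant. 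I would then invoke the mapping properties recalled in \textbf{Remark \ref{rem-vol-d}}: on $\mathbb{H}_0(\div=0)$ the Magnetization part vanishes and ${\bf N}$ is diagonalized by $(\lambda_n^{(1)},e_n^{(1)})$; on $\mathbb{H}_0(Curl=0)$ the Magnetization reduces to the identity; and on $\nabla\mathcal{H}armonic$ the Newtonian contribution is $\Oh(k^2)$ and negligible.

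\textbf{Isolating the resonant mode and deriving the system.} Projecting the rescaled equation onto the splitting \eqref{hel-decomp}, the self-interaction on $\mathbb{H}_0(\div=0)$ acts on the eigenmode $e_{n_0}^{(1)}$ as multiplication by $1-k^2\eta a^2\lambda_{n_0}^{(1)}$, which by \eqref{condition-on-k} equals $\pm c_0a^h$. This small denominator is the mechanism of the resonance: the component of $E^T$ along $e_{n_0}^{(1)}=Curl(\phi_{n_0})$ is amplified by $(\pm c_0a^h)^{-1}$, while the components in the other two subspaces stay $\Oh(1)$ and feed only into the error. I would encode the dominant part of the interior field on $D_m$ through the vector moment $Q_m$ built from $\int_B\phi_{n_0}$, the quantity entering the polarization tensor ${\bf P}_0$ in \eqref{defP0}; integrating $E^{in}$ against $e_{n_0}^{(1)}=Curl(\phi_{n_0})$, integrating by parts with $\nu\times\phi_{n_0}=0$ from \eqref{pre-cond}, and inserting $Curl(E^{in})=ik\,H^{in}$ from \eqref{IncWave}, turns the incident contribution into $ik\,{\bf P}_0\cdot H^{Inc}(z_m)$, which explains the magnetic rather than electric nature of the source. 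For the particle-particle coupling, since $x\in D_m$ and $z\in D_j$ are separated by a distance $\geq d\gg a$, the kernel is smooth across distinct particles and I would freeze it at the centers, $\Upsilon_k(x,z)\approx\Upsilon_k(z_m,z_j)$, replacing $\int_{D_j}E^T$ by the frozen moment $Q_j$. Collecting the amplification factor $(\pm c_0a^h)^{-1}$, the cell volume, and the rescaling powers yields exactly the coefficient $\tfrac{\eta k^2}{\pm c_0}a^{5-h}$ and hence the closed system \eqref{linear-discrete}. Finally, substituting this representation into the far-field asymptotics of the dyadic, for which $\Upsilon_k(x,z)\sim k^2\tfrac{e^{ik|x|}}{|x|}e^{-ik\hat x\cdot z}(\,\cdot\,)$ and whose action on a magnetic-dipole moment produces a single cross product, gives the $\hat x\times Q_m$ structure and the prefactor $-\tfrac{ik^3\eta}{4\pi}$ in \eqref{approximation-E}.

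\textbf{Invertibility.} I would recast \eqref{linear-discrete} as $(\mathrm{Id}-A)\mathbf{Q}=\mathbf{b}$ on $(\mathbb{C}^3)^{\aleph}$, where the off-diagonal blocks are $A_{mj}=\tfrac{\eta k^2}{\pm c_0}a^{5-h}\,{\bf P}_0\cdot\Upsilon_k(z_m,z_j)$ and $A_{mm}=0$, and prove invertibility by a Neumann series once $\|A\|<1$. Using $c_0a^h=|1-k^2\eta a^2\lambda_{n_0}^{(1)}|$ the block coefficient becomes $\tfrac{|\eta|k^2a^5}{|1-k^2\eta a^2\lambda_{n_0}^{(1)}|}$, so by the Schur/row-sum bound
\begin{equation}\notag
\|A\|\;\leq\;\frac{|\eta|k^2a^5}{\big|1-k^2\eta a^2\lambda_{n_0}^{(1)}\big|}\,|{\bf P}_0|\;\max_{m}\sum_{j\neq m}\big|\Upsilon_k(z_m,z_j)\big|,
\end{equation}
and the task reduces to the lattice-sum estimate $\max_m\sum_{j\neq m}|\Upsilon_k(z_m,z_j)|\lesssim d^{-3}$, which then gives precisely \eqref{condi-inver-p1}. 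The sum is governed by the worst term $|\nabla\nabla\Phi_k|\sim|z_m-z_j|^{-3}$; comparing with the integral over the periodic cells of size $d$ yields the $d^{-3}$ scaling, the borderline logarithm being eliminated by the angular cancellation of the traceless static near-field kernel over the symmetric periodic lattice.

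\textbf{Error analysis and main obstacle.} The remainder $\Oh(a^{h/3})$ in \eqref{approximation-E} collects three sources: the non-resonant subspace components, the Taylor error of freezing $\Upsilon_k$ at the centers, and the difference between $\Phi_k$ and $\Phi_0$ in the self term. Each must be estimated \emph{after} the $a^{-h}$ amplification, so the delicate point---and what I expect to be the crux---is to show that these errors, once multiplied by the resonant factor and summed over the $\Oh(a^{h-3})$ particles, remain smaller than the leading term in the operative range $\tfrac{9}{11}<h<1$. Controlling the singular lattice sum uniformly in this regime, and rigorously justifying the point-interaction replacement while the fields are resonantly enhanced, is the main technical obstacle; the restriction $h>\tfrac{9}{11}$ should be exactly what makes the competing powers of $a$ close the estimate.
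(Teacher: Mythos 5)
First, a contextual remark: the paper does not prove this proposition at all --- it is imported verbatim from \cite[Theorem 1.3]{CGS} and used as a black box --- so your sketch can only be compared with the approach of that cited work, which the present paper summarizes (L.S.E.\ on the union of particles, rescaling, the splitting \eqref{hel-decomp}, resonant amplification by $(\pm c_0 a^h)^{-1}$ of the $e_{n_0}^{(1)}$--component, coupling to $H^{Inc}$ through integration by parts against $\phi_{n_0}$, frozen-kernel Foldy--Lax reduction, and far-field identification). Your pipeline reproduces exactly this architecture, and the mechanism you give for the magnetic source term $i\,k\,{\bf P}_0\cdot H^{Inc}(z_m)$ is the correct one.

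The genuine gap is in your invertibility argument. You reduce \eqref{condi-inver-p1} to the claim
\begin{equation*}
\max_{m}\sum_{j\neq m}\bigl\vert \Upsilon_k(z_m,z_j)\bigr\vert \;\lesssim\; d^{-3},
\end{equation*}
via a Schur/row-sum bound, and you propose to remove the borderline logarithm by ``angular cancellation of the traceless kernel over the periodic lattice.'' These two steps are incompatible: once the absolute value is taken entrywise inside the row sum --- which the Schur bound forces --- all sign and angular structure of $\nabla\nabla\Phi_0$ is destroyed, and the sum is genuinely of order $d^{-3}\left\vert \log d\right\vert$; this is exactly the $k=3$ case of the counting \textbf{Lemma \ref{conting1}} in this paper. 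The logarithm is not cosmetic: using \eqref{contrast-epsilon}, \eqref{condition-on-k} and \eqref{def-cr}, the quotient in \eqref{condi-inver-p1} equals $\dfrac{k^2\,\eta_0}{c_0\,c_r^{3}}\,\left\vert {\bf P}_0\right\vert$, a quantity of order one in $a$, so an extra factor $\left\vert \log d \right\vert\to\infty$ would make the smallness condition impossible to satisfy uniformly as $a\to 0$ with fixed $k,\eta_0,c_0,c_r$. Consequently the log-free condition \eqref{condi-inver-p1} cannot be reached by any argument that estimates $\sum_{j\neq m}\vert\Upsilon_k(z_m,z_j)\vert$.

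The cancellation you correctly sense must instead be exploited \emph{before} taking absolute values, i.e.\ in an $\ell^2$ framework: one estimates $\sum_m\bigl\vert \sum_{j\neq m}\Upsilon_k(z_m,z_j)\cdot Q_j \bigr\vert^2$ directly by comparing the discrete quadratic form with the continuous bilinear form of the Magnetization operator applied to the piecewise-constant density $\sum_j Q_j\,\chi_{\Omega_j}\left\vert \Omega_j\right\vert^{-1}$, and then uses that $\nabla{\bf M}$ is a Calder\'on--Zygmund operator, bounded on $\mathbb{L}^2$ with spectrum contained in $]0,1]$, plus counting-lemma control of the discretization error. This is how the invertibility of such systems is established in this line of work (\cite{ACP}, \cite{AM-1}, \cite{CGS}). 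Separately, your error analysis --- the origin of the window $\frac{9}{11}<h<1$ and of the remainder $\mathcal{O}(a^{h/3})$ --- is only announced, not derived; you flag this honestly, but as it stands the proposal neither closes the approximation step nor, more seriously, the invertibility step.
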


\begin{remark}

In \textbf{Proposition $\ref{prop-discrete}$}, the condition $(\ref{condi-inver-p1})$ is just a sufficient condition to ensure the invertibility of the algebraic system \eqref{linear-discrete}. However, if we take $-c_0$ in the denominator of \eqref{linear-discrete}, then the condition \eqref{condi-inver-p1} can be released, see \textbf{Remark $\ref{remark-remove-inver}$}.
\end{remark}

\subsection{The effective medium and the generated fields}

On the basis of \textbf{Assumptions} as well as the Foldy-Lax approximation presented in \textbf{Proposition \ref{prop-discrete}}, we shall investigate the corresponding effective medium generated by the cluster of all-dielectric nano-particles. We show in this current work that even if the dielectric nano-particles are merely generated by the contrasts of their permittivity, the effective medium is a perturbation of the permeability and not the permittivity. We denote $\mathring{\epsilon}_r$ and $\mathring{\mu_{r}}$ as the corresponding effective relative permittivity and permeability, respectively.  We state the main results by distinguishing two cases, one related to the positive definite $\mathring{\mu_{r}}$ and the second to the negative definite $\mathring{\mu_{r}}$. 

\begin{theorem}\label{thm-main-posi}
	Let $\Omega$ be a bounded domain with $C^{1,\alpha}$-regularity, for $\alpha\in (0,1)$. Then under \textbf{Assumptions}, there holds the following expansion for the far-fields
 \begin{enumerate}
     \item[]
     \item For positive definite $\mathring \mu_r$.
     \newline There exists a positive constant $c_{\mathrm{reg}}(\alpha, \Omega)$, depending only on $\alpha$ and $\Omega$, such that if
			\begin{equation}\notag
			\left( k^{3+\alpha}+k^3+k^2+k+1 \right) \; c_{\mathrm{reg}}(\alpha, \Omega) \; c_r^{-3} \; \left\vert {\bf P_0} \right\vert \;  < \; 1,
			\end{equation} 
     then
	\begin{align}\label{err-es}
 \notag
E^\infty_{\mathrm{eff}, +}(\hat{x}, \theta, p)-E^\infty(\hat{x}, \theta, p)&=\Oh\bigg(\frac{k^5 \, \eta_{0}^2 \, c_{r}^{-3} \, \lvert {\bf P_0} \rvert^{2}}{c_{0} \, \left( c_0 \, c_r^3 \, - \, k^2 \, \eta_0 \, \lvert{\bf P_0} \rvert \right)} \, \left\vert \left( {\bf I} \, - \, \frac{\eta_0 \, k^2}{\pm \, 3 \, c_0} \, c_r^{-3} \, {\bf P_0}\right)^{-1} \right\vert \\ &\cdot\left([H^{\mathring{\mu_{r}}}]^{2}_{C^{0, \alpha}(\overline\Omega)} \, c_r^{2 \, \alpha} \, a^{2 \, \alpha \, (1-\frac{h}{3})} \, \left\vert \log(a) \right\vert^{2} + \lVert H^{\mathring{\mu_{r}}}\rVert^{2}_{\mathbb L^{\infty}(\Omega)}  \, c_r^{\frac{12}{7}} \, a^{\frac{12}{7}(1-\frac{h}{3})}\right)^{\frac{1}{2}} \bigg) + \mathcal{O}\left(a^{\frac{h}{3}} \right),
\end{align}
uniformly in all directions $\hat{x}  \, \in \, \mathbb{S}^2$, where
$E^\infty_{\mathrm{eff},+}(\cdot)$ is the far-field pattern in the same sense as \eqref{far-def}, associated with the following electromagnetic scattering problem for the effective medium as $a\rightarrow 0$,
	\begin{equation}\label{model-equi}
	\begin{cases}
	\mathrm{curl}E^{{\mathring\epsilon}_r}-i k {\mathring\mu}_r H^{{\mathring\mu}_r}=0,\quad \mathrm{curl} H^{{\mathring\mu}_r}+i k{\mathring\epsilon_r} E^{{\mathring\epsilon}_r}=0\quad\mbox{in} \ \Omega,\\
	\mathrm{curl} E^{{\mathring\epsilon}_r}-i k H^{{\mathring\mu}_r}=0,\quad\mathrm{curl} H^{{\mathring\mu}_r}+i k E^{{\mathring\epsilon}_r}=0\quad\mbox{in}\ \mathbb{R}^3\backslash \Omega,\\
	E^{{\mathring\epsilon}_r}=E_s^{{\mathring\epsilon}_r}+E^{{\mathring\epsilon}_r}_{in}, \quad 	H^{{\mathring\mu}_r}=H_s^{{\mathring\mu}_r}+H^{{\mathring\mu}_r}_{in},\\
	\nu\times E^{{\mathring\epsilon}_r}|_{+}=\nu\times E^{{\mathring\epsilon}_r}|_{-}, \quad \nu\times H^{{\mathring\mu}_r}|_{+}=\nu\times H^{{\mathring\mu}_r}|_{-}\quad\mbox{on} \ \partial \Omega,\\
	H^{{\mathring\mu}_r}_s\times\frac{x}{|x|}-E^{{\mathring\epsilon}_r}_s=\Oh({\frac{1}{|x|^2}}),\quad \mbox{as} \ |x|\rightarrow\infty
	\end{cases}
	\end{equation}
 and $[H^{\mathring{\mu_{r}}}]_{C^{0, \alpha}(\overline\Omega)}$ denotes the H\"{o}lder norm of $H^{\mathring{\mu_{r}}}$. The effective permittivity and permeability are respectively given by
	\begin{equation}\label{new-coeff}
		\mathring{\epsilon}_r={\bf I} \quad \text{and} \quad
		\mathring \mu_r ={\bf I}+\frac{\eta_0 k^2}{\pm c_0}c_r^{-3}\left({\bf I}-\frac{\eta_0 k^2}{\pm 3 c_0}c_r^{-3}{\bf P_0}\right)^{-1}{\bf P_0}\quad \mbox{in}\quad\Omega.
	\end{equation} 
 \item[]
     \item For negative definite $\mathring \mu_r$. \newline
      Let the domain $B$, introduced in \eqref{def-B}, be the unit ball $B(0,1)$. \footnote{The reason why we assume that $B$ is a ball is that, in this case, the tensor ${\bf P_0}$ becomes isotropic (proportional to the identity matrix). Handling the plasmonic resonances with anisotropic permeability (or permittivity) is quite challenging but very interesting. This general case needs certainly to be studied with a different approach than the one we propose in \textbf{Section \ref{sec4:negative}}.} Under  the condition  $k^{2} > \dfrac{\pi^{3}}{4} \, \dfrac{c_{0} \, c_{r}^{3}}{\eta_{0}}$, 
\begin{eqnarray}\label{es-error-neg}
\nonumber
E^\infty_{\mathrm{eff},-}(\hat{x}, \theta, p)-E^\infty(\hat{x}, \theta, p) &=& \Oh\Bigg( \frac{k^5 \eta_0^2c_{r}^{-3}}{c_0\left( c_0 \, c_r^3 \,  \pi^3 \, -12 \, k^2 \, \eta_0 \right)} \left|\left(1 \, \mp \, \frac{4 \, \eta_0 \, k^2}{\pi^3 \, c_0} \, c_r^{-3} \right)^{-1}\right| \\  \nonumber
&\cdot& \left([H^{\mathring{\mu_r}}]^{2}_{C^{0, \alpha}(\overline\Omega)} \, c_r^{2 \, \alpha} \, a^{2 \, \alpha \, \left(1-\frac{h}{3}\right)} \, \left\vert \log\left( a \right) \right\vert^{2} \, + \, \lVert H^{\mathring{\mu_r}}\rVert^{2}_{\mathbb L^{\infty}(\Omega)} \, c_r^{\frac{12}{7}} \, a^{\frac{12}{7}\left(1-\frac{h}{3}\right)}\right)^{\frac{1}{2}} \Bigg) \\ &+& \mathcal{O}\left( a^{\frac{h}{3}} \right),
\end{eqnarray}
 uniformly in all directions $\hat{x}  \, \in \, \mathbb{S}^2$,
	where $[H^{\mathring{\mu_r}}]_{C^{0, \alpha}(\overline{\Omega})}$ is the H\"{o}lder coefficient. $E^\infty_{\mathrm{eff},-}(\cdot)$ is the far-field associated with the electromagnetic scattering problem \eqref{model-equi} where the effective permittivity and permeability are respectively given by	
 \begin{equation}\label{new-coeff-neg}
	\mathring{\epsilon}_r={\bf I} \quad \text{and} \quad  \mathring \mu_{r} \, = \, \frac{\pi^{3} \, \pm 8 \, \dfrac{\eta_0 k^2}{c_0 \,  c_r^{3}}}{\pi^{3} \, \mp 4 \, \dfrac{\eta_0 k^2}{c_0 \,  c_r^{3}}} \, {\bf I}, \quad \mbox{in}\quad\Omega. 
	\end{equation}  
 \end{enumerate}
\end{theorem}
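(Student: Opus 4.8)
The plan is to read the Foldy--Lax system \eqref{linear-discrete} as a quadrature (Riemann-sum) discretization of the Lippmann--Schwinger equation (L.S.E.) attached to the effective problem \eqref{model-equi}, carrying both sign choices in parallel since the analytic difference between them lies only in how the governing integral operator is inverted. First I would exhibit the continuum equation that \eqref{linear-discrete} approximates. Using $\eta=\eta_0 a^{-2}$ from \eqref{contrast-epsilon} and $a^{3-h}=c_r^{-3}d^3$ from \eqref{def-cr}, the coupling coefficient $\frac{\eta k^2}{\pm c_0}a^{5-h}$ collapses to $\frac{\eta_0 k^2}{\pm c_0}c_r^{-3}d^3$, displaying the cell volume $d^3$ as the Riemann weight; rescaling $Q_m=a^2 d^3\,u_m$ (the natural size dictated by the right-hand side) turns \eqref{linear-discrete} into a discrete equation whose formal limit is $u(x)-\frac{\eta_0 k^2}{\pm c_0}c_r^{-3}\mathbf P_0\int_\Omega \Upsilon_k(x,y)\,u(y)\,dy=\frac{ik}{\pm c_0}c_r^{-3}\mathbf P_0\,H^{Inc}(x)$. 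Writing $\Upsilon_k=\nabla\nabla\Phi_k+k^2\Phi_k\mathbf I$ and invoking the distributional identity $\nabla\nabla\Phi_0=\mathrm{p.v.}(\cdots)\mp\tfrac13\mathbf I\,\delta$ splits this singular integral into the vector Newtonian operator $\mathbf N$, the (principal-value) Magnetization operator, and a local depolarization term proportional to $u(x)$; collecting the latter produces exactly the self-interaction factor $\bigl(\mathbf I-\frac{\eta_0 k^2}{\pm 3c_0}c_r^{-3}\mathbf P_0\bigr)^{-1}$, and matching the resulting relation with the L.S.E.\ form of \eqref{model-equi} both fixes $\mathring\epsilon_r=\mathbf I$ and identifies $u$ as proportional to the effective interior field $H^{\mathring\mu_r}$, yielding the permeability \eqref{new-coeff}.

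Next I would establish well-posedness and $C^{0,\alpha}(\overline\Omega)$-regularity of $H^{\mathring\mu_r}$, which is needed because the seminorm $[H^{\mathring\mu_r}]_{C^{0,\alpha}}$ enters the final bounds. In the positive-definite case I would recast the effective L.S.E.\ as $Id+K$ with $K$ a smoothing operator, small in the relevant H\"older space as in \cite{Cao-Sini}; the hypothesis $(k^{3+\alpha}+k^3+k^2+k+1)\,c_{\mathrm{reg}}(\alpha,\Omega)\,c_r^{-3}|\mathbf P_0|<1$ is precisely what makes $Id+K$ invertible with a bound $c_{\mathrm{reg}}$ depending only on $\alpha,\Omega$. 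In the negative-definite case $Id+K$ is no longer coercive, so I would instead write the equation as $Inv+K$ with $Inv:=Id-M$, $M$ the Magnetization operator, and invert $Inv$ through the spectral decomposition of $M$ over the three summands of the Helmholtz splitting \eqref{hel-decomp}; specializing $B$ to the unit ball makes $\mathbf P_0$ isotropic, and the threshold $k^2>\frac{\pi^3}{4}\frac{c_0 c_r^3}{\eta_0}$ keeps the relevant quantity away from the spectrum so that $Inv$ has a controlled lower bound while $K$ stays relatively smaller.

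The technical heart, and the step I expect to be the main obstacle, is quantifying the error between the lattice sum and the singular integral for the hypersingular kernel $\nabla\nabla\Phi_k\sim|x-y|^{-3}$, which is not absolutely integrable. For each $z_m$ I would split the sum into a near zone $|z_j-z_m|\le R$ and a far zone. On the far zone the error is a classical Riemann-sum error controlled by the H\"older regularity of the density, contributing a term of size $[H^{\mathring\mu_r}]_{C^{0,\alpha}}\,d^{\alpha}$ together with a logarithmic factor $|\log a|$ arising from the borderline accumulation of the $|x-y|^{-3}$ kernel; since $d^\alpha=c_r^{\alpha}a^{\alpha(1-h/3)}$ this reproduces the first term inside \eqref{err-es} and \eqref{es-error-neg}. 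On the near zone I would extract the principal-value and depolarization contributions analytically and bound the remainder by $\|H^{\mathring\mu_r}\|_{\mathbb L^\infty}$ times powers of $R$ and $d$; optimizing the cutoff $R$ against the far-zone error produces the fractional exponent, giving the second term $\|H^{\mathring\mu_r}\|_{\mathbb L^\infty}^2\,c_r^{12/7}a^{\frac{12}{7}(1-\frac h3)}=\|H^{\mathring\mu_r}\|_{\mathbb L^\infty}^2\,d^{12/7}$.

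Finally I would close by stability. The invertibility of \eqref{linear-discrete} (from \eqref{condi-inver-p1}, whose threshold $c_0 c_r^3-k^2\eta_0|\mathbf P_0|>0$ is exactly the denominator appearing in the prefactors, or, in the negative case, the released condition of \textbf{Remark \ref{remark-remove-inver}}) together with the invertibility of the continuum equation from the previous step lets the local quadrature and sampling errors propagate into a global estimate for $Q_m-a^2 d^3\,u(z_m)$, with the prefactor recording the norms of the two inverses, namely $\frac{k^5\eta_0^2 c_r^{-3}|\mathbf P_0|^2}{c_0(c_0 c_r^3-k^2\eta_0|\mathbf P_0|)}\bigl|(\mathbf I-\frac{\eta_0 k^2}{\pm 3c_0}c_r^{-3}\mathbf P_0)^{-1}\bigr|$. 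Substituting this into the far-field formula \eqref{approximation-E} (where $\eta Q_m=\eta_0 d^3 u_m$ makes the sum a Riemann sum of the smooth kernel $e^{-ik\hat x\cdot y}\hat x\times u(y)$) and comparing with the Kirchhoff representation of $E^\infty_{\mathrm{eff},\pm}$ for \eqref{model-equi} yields the stated estimates, the residual $\mathcal O(a^{h/3})$ being inherited from \textbf{Proposition \ref{prop-discrete}}. For the negative-definite case the isotropy of the ball forces $\mathbf P_0=\frac{12}{\pi^3}\mathbf I$; inserting this into \eqref{new-coeff} collapses the tensor formula to the scalar \eqref{new-coeff-neg}, and the negativity $\pi^3\mp 4\frac{\eta_0 k^2}{c_0 c_r^3}<0$ is equivalent to the threshold $k^2>\frac{\pi^3}{4}\frac{c_0 c_r^3}{\eta_0}$, confirming that this is precisely the regime where $\mathring\mu_r$ is negative definite.
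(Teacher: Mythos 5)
Your proposal is correct and follows essentially the same route as the paper: the formal passage from the Foldy--Lax system to the effective L.S.E.\ with the depolarization factor $\bigl(\mathbf I-\frac{\eta_0k^2}{\pm 3c_0}c_r^{-3}\mathbf P_0\bigr)^{-1}$ (the paper extracts it via $\nabla\mathbf M_{\Omega_m}(\mathbf I)=\tfrac13\mathbf I$, which is your distributional identity in disguise), H\"older regularity via $Id+K$ in the positive case and $Inv+K$ with the spectral decomposition over \eqref{hel-decomp} in the negative case, a near/far-zone splitting of the lattice-versus-integral error with an optimized cutoff (the paper's $\beta=\tfrac{11}{14}$) producing precisely the $[H^{\mathring{\mu_r}}]_{C^{0,\alpha}}d^{\alpha}|\log d|$ and $\|H^{\mathring{\mu_r}}\|_{\mathbb L^\infty}d^{6/7}$ contributions, and finally stability of the discrete system propagating these errors into the far-field comparison with the $\mathcal O(a^{h/3})$ residual inherited from \textbf{Proposition \ref{prop-discrete}}. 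The only difference is one of bookkeeping direction: you discretize the continuum equation as a quadrature, whereas the paper averages the continuum L.S.E.\ over the balls $S_m$ to obtain a perturbed copy of the discrete system and then invokes its $\ell_2$-stability, but these are the same comparison.
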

\bigskip
The justification of \textbf{Theorem \ref{thm-main-posi}} highly relies on the $C^{0, \alpha}$-regularity of the solution $(E^{\mathring{\epsilon_{r}}}, H^{\mathring{\mu_{r}}})$ to \eqref{model-equi}, which has been investigated in \cite[Theorem 2.2]{Cao-Sini} for positive definite $\mathring{\mu_{r}}$. For $\mathring{\mu_{r}}$ being negative definite, $C^{0, \alpha}$-regularity of the solution also plays a key role in deriving the corresponding effective medium and we shall later give a separate proposition to guarantee it. The following theorem presents the other main result of the effective medium generated by the cluster of all-dielectric nano-particles with negative definite effective permeability $\mathring{\mu_{r}}$. Before presenting the main theorem with respect to negative definite $\mathring{\mu_r}$, we first introduce some notations. Denote
\begin{equation}\label{def-xi}
	\xi:=\frac{\eta_0 k^2}{c_0 \, c_r^{3}},
\end{equation}
where $\eta_0$ and $c_0$ are given in \eqref{contrast-epsilon} and \eqref{condition-on-k}, respectively. We also define for any vector function $F \in \mathbb{L}^2(\Omega)$, the Magnetization operator $\nabla{\bf M}$ as 
\begin{equation}\label{def-mag-operator}
	\nabla {\bf M}_D(F)(x) := \underset{x}{\nabla}\int_D\underset{z}{\nabla}\Phi_0(x, z) \cdot F(z) \,dz, \quad x \in D,
\end{equation}
where $\Phi_{0}(\cdot,\cdot)$ is given by $(\ref{def-new-operator})$. It is known that the Magnetization operator $\nabla{\bf M}_D: \nabla \mathcal{H}armonic \rightarrow \nabla \mathcal{H}armonic$ is self-adjoint, bounded and its spectrum $\sigma(\nabla{\bf M}_D)$ satisfies $\sigma(\nabla{\bf M}_D) \subset ]0, 1]$ with $\dfrac{1}{2}$ as the only accumulation point of its spectrum. We refer to \cite{Raevskii1994, Dyakin-Rayevskii, friedman1980mathematical, friedman1981mathematical} and \cite{ 10.2307/2008286} for more discussions on other properties of $\nabla{\bf M}$.

\begin{remark}\label{remark-remove-inver}
Three remarks are in order.
\begin{enumerate}
    \item[] 
    \item In the  case where the tensor ${\bf P_0}$ is defined over a ball, and then ${\bf P_0} \; = \; \frac{12}{\pi^3} \; {\bf I}$, see (\ref{exp-p0}), the condition $$ \frac{\eta_0 \, k^2}{c_0 \, c_r^{3}} \, < \, \frac{\pi^{3}}{8}$$ ensures the positive definiteness of the permeability tensor $\mathring \mu_r$, as it can be seen from (\ref{new-coeff}) .
    \item[]
    \item In \textbf{Theorem \ref{thm-main-posi}}, to generate the effective medium with negative definite $\mathring{\mu_{r}}$, the invertibility condition \eqref{condi-inver-p1} can be removed. Indeed, for $\mathring{\mu_{r}}$ being negative definite, it suffices to satisfy
	\begin{equation}\notag
		{\bf I} \, \neq \, \frac{\eta k^2}{\pm 3c_0}c_r^{-3}{\bf P_0} \overset{(\ref{exp-p0})}{=} \, \frac{4\, \eta \, k^2  }{\pm \, c_0 \, \pi^{3} \, c_r^{3}} \,{\bf I},
	\end{equation}
	which is obviously fulfilled, since we have the freedom to choose the parameters $\eta$, $k$, $c_{0}$ and $c_{r}$, which appear on the right hand side (R.H.S) of the previous in-equation.
    \item[]
    \item With \eqref{def-xi} and \eqref{new-coeff-neg}, $\mathring{\mu_{r}}$ can be represented as 
	\begin{equation}\notag
		\mathring{\mu_{r}}=\left(1\pm \frac{12}{\pi^3}\xi\left(1\mp \frac{4}{\pi^3}\xi\right)^{-1}\right){\bf I}.
	\end{equation}
	It is easy to verify that for $\xi \, > \, \dfrac{\pi^3}{4}$, 
	\begin{equation}\notag
		1\pm \frac{12}{\pi^3}\xi\left(1\mp \frac{4}{\pi^3}\xi\right)^{-1}<0,
	\end{equation}
	which indicates that $\mathring{\mu_r}$ is negative definite. \textbf{Theorem \ref{thm-main-posi}} presents the effective medium associated with positive or negative definite $\mathring{\mu_r}$. In both cases, and with the stronger condition that $B$ is a unit ball for the case of negative definite $\mathring{\mu_{r}}$, the $C^{0, \alpha}$ regularity of the solution to the electromagnetic scattering problem \eqref{model-equi} is obtained. Under this H\"{o}lder regularity result, the error term \eqref{es-error-neg} can be regarded as a particular form of the error \eqref{err-es} in \textbf{Theorem \ref{thm-main-posi}} by taking the explicit expression of the constant tensor ${\bf P_0}$ for negative definite $\mathring{\mu_r}$, see \textbf{Section \ref{sec4:negative}} for more detailed discussions.
 \end{enumerate}
\end{remark}  

\subsection{Extended quasi-static plasmonics and the generated giant electromagnetic fields}

 The interesting consequence of this last result is that for the effective permeability $\mathring{\mu_r}$ being negative, we can generate plasmonic resonators under quasi-static regime from the cluster of all-dielectric sub-wavelength resonators. We present this result in the following theorem. 

\begin{theorem}\label{coro-plas-resonance}
We follow the same notations and conditions in \textbf{Theorem \ref{thm-main-posi}}, $(2)$. In addition, we assume $\Omega$ to satisfy the following upper bound for its surface area to volume ratio  
\begin{equation}\label{non-flat}
\frac{\vert \partial \Omega\vert}{8 \;\vert \Omega\vert}<1.
\end{equation}
Then we have the following results.
\begin{enumerate}
    \item[]
    \item Let  $\Omega$ be a $C^{4,\alpha}$-smooth domain \footnote{This extra regularity condition is needed in the proof of \textbf{Proposition \ref{AddedLemma}}.} and $\lambda_{m_0}^{(3)}(\Omega) \, > \,\dfrac{1}{3}$ be any eigenvalue of the Magnetization operator $\nabla {\bf M}$, defined in $\Omega$. We choose the incident frequency $k$ of the form
 \begin{equation}\label{the plasmon-incident-frequency}
k^2=\frac{1}{\eta_0\;  \lambda_{n_0}^{(1)}(B)} \, + \, \beta \, \frac{\left( 3 \, \lambda_{m_0}^{(3)}(\Omega) \, - \,1 \right)}{\eta_0\;\lambda_{n_{0}}^{(1)}(B)},
\end{equation}
where $\beta$ is a small parameter, such that $k^{2} \, \leq \, C^\text{te} \, \left\vert \beta \right\vert$, with $C^\text{te}$ as a constant independent on $k$ and $\beta$. Then, in the regime where $\eta_{0} \gg 1$ and under the assumption of the existence of $m_{0} \in \mathbb{N}$ such that 
\begin{equation*}
 \boldsymbol{\mathcal{X}}_{m_{0}}(\lambda_{m_{0}}^{(3)}(\Omega), \beta) \neq 0,   
\end{equation*}
the far-field $E^{\infty}(\cdot)$  admits the following approximation  
\begin{equation}\label{far-E}
	E^\infty(\hat{x}, \theta, p) = \frac{\pm \, 3 \, k^{2}}{\left\vert \beta \right\vert \; \left( 3 \, \lambda_{m_{0}}^{(3)}\left( \Omega \right) - 1 \right)^{2}}  \, \hat{x} \, \times \, \left( \boldsymbol{\mathcal{X}}_{m_{0}}(\lambda_{m_0}^{(3)}(\Omega), \beta) \cdot \left( \theta \, \times \, \mathrm{p} \right) \right) + \mathcal{O}\left( k^{2} \right) + \mathcal{O}\left(\frac{k^3}{\left\vert \beta \right\vert} \right), 
\end{equation}
uniformly in all directions $\hat{x}  \, \in \, \mathbb{S}^2$, where the tensor $\boldsymbol{\mathcal{X}}_{m_{0}}(\lambda_{m_0}^{(3)}(\Omega), \beta)$ is given by 
\begin{equation*}
    \boldsymbol{\mathcal{X}}_{m_{0}}(\lambda_{m_0}^{(3)}(\Omega), \beta) := \int_{\Omega} e_{m_{0}}^{(3)}(z) \; dz   \otimes \int_{\Omega} \overline{  \begin{LARGE} \textbf{N} \end{LARGE}^{-1}\left(e_{m_0}^{(3)}\right)}(x) \, dx.  
\end{equation*}
with
\begin{equation*}
    \begin{LARGE} \textbf{N} \end{LARGE} := \left({\bf I} \, - \, \dfrac{\pm \, k^{2}}{2 \, \left\vert \beta \right\vert \, \left( 3 \, \lambda_{m_{0}}^{(3)}(\Omega) \, - \, 1 \right)^{2}} \left( N + N^{\prime} \right) \overset{3}{\mathbb{P}} \, \right)
\end{equation*}
where $N(\cdot)$ is the Newtonian operator given by $(\ref{def-new-operator})$, $N^{\prime}(\cdot)$ is the operator defined by 
\begin{equation}\label{DefNprimeTHM}
  N^{\prime}(F)(x) \, := \, \int_{\Omega} \, \Phi_0(x, z) \, \frac{(x - z) \otimes (x - z)}{\lVert x-z \rVert^2} \cdot F(z) \, dz, \quad x \in \Omega.
\end{equation}
    \item[]
    \item 
    \item Let $\Omega$ be a ball. In this case, we know that  $\lambda_{1}^{(3)}(\Omega) = \dfrac{1}{3}$ and $\displaystyle \int_{\Omega} e_{1}^{(3)}(z) \; dz \, \neq \, 0 $, and under the assumption 
    \begin{equation*}
    \langle \left( N \, + \, N^{\prime}\right)(e_{1}^{(3)}); e_{1}^{(3)} \rangle_{\mathbb{L}^{2}(\Omega)} \, \neq \, 0,
    \end{equation*}
    where $N(\cdot)$ is the Newtonian operator given by $(\ref{def-new-operator})$ and  $N^{\prime}(\cdot)$ is the operator defined by $(\ref{DefNprimeTHM})$, we have 
    \begin{eqnarray}\label{Eq126}
    \nonumber
    &  & \\
    E^{\infty}(\hat{x}, \theta, p) \, & = & \, \pm \, i \, 2 \, \pi \, k^{-1} \, \hat{x} \times \left(\theta \times \mathrm{p} \right)  \, + \,  \mathcal{O}\left( 1 \right), \\ \nonumber
    &&
    \end{eqnarray}
    uniformly in all directions $\hat{x} \, \in \, \mathbb{S}^2$, where the frequency $k$ is taken to be of the form
    \begin{equation}\label{Ballkscale}
        k^{4} \, = \, \frac{\pi^{3} \, c_{0} \, c_{r}^{3}}{6 \, \eta_{0} \, \left\langle \left(N + N^{\prime} \right)\left( e_{1}^{(3)} \right) ; e_{1}^{(3)} \right\rangle_{\mathbb{L}^{2}(\Omega)} },
    \end{equation}
    with $\eta_{0} \, \gg \, 1$ (and then $k\ll 1$).
    \item[] 
\end{enumerate}
\end{theorem}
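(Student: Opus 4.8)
The plan is to work entirely with the effective scattering problem (\ref{model-equi}), since by \textbf{Theorem \ref{thm-main-posi}}, $(2)$, the cluster far-field $E^\infty$ agrees with the effective one $E^\infty_{\mathrm{eff},-}$ up to the error in (\ref{es-error-neg}); it then suffices to exhibit the plasmonic blow-up of (\ref{model-equi}) carrying the negative permeability (\ref{new-coeff-neg}). Eliminating $E^{\mathring\epsilon_r}$ from (\ref{model-equi}) (recall $\mathring\epsilon_r={\bf I}$) reduces the interior magnetic field $H^{\mathring\mu_r}$ to the volume Lippmann--Schwinger equation $H^{\mathring\mu_r}-\gamma\,T_k(H^{\mathring\mu_r})=H^{Inc}$ in $\Omega$, where $T_k(F)(x)=\int_\Omega\Upsilon_k(x,z)F(z)\,dz$ is built from the dyadic kernel (\ref{dyadic-Green}) and $\gamma:=\mathring\mu_r-{\bf I}$ is the contrast. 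For the ball one has ${\bf P_0}=\tfrac{12}{\pi^3}{\bf I}$ (see \textbf{Remark \ref{remark-remove-inver}}), so by (\ref{new-coeff-neg}) and (\ref{def-xi}), $\gamma=\tfrac{\pm12\,\xi}{\pi^3\mp4\,\xi}\,{\bf I}$. First I would split $\mathbb{L}^2(\Omega)$ through (\ref{hel-decomp}) and expand $\Upsilon_k$ for small $k$: on $\mathbb{H}_0(\div=0)$ and $\mathbb{H}_0(Curl=0)$ the operator $\mathrm{Id}-\gamma T_k$ stays boundedly invertible in the regime at hand, so no resonance occurs there, whereas on $\nabla\mathcal{H}armonic$ the strongly singular part of $\nabla_x\nabla_x\Phi_0$ is governed by the Magnetization operator $\nabla{\bf M}_\Omega$ of (\ref{def-mag-operator}) (eigenvalues $\lambda_n^{(3)}(\Omega)\in\,]0,1]$) at order one, while the $k^2$-term of the expansion supplies the second-order operator $\tfrac{k^2}{2}(N+N')$ with $N$ the Newtonian operator (\ref{def-new-operator}) and $N'$ from (\ref{DefNprimeTHM}). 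Restricting to the $\lambda_{m_0}^{(3)}(\Omega)$-eigenspace, the leading scalar symbol is $1+\gamma\lambda_{m_0}^{(3)}(\Omega)=\tfrac{\pi^3\pm4\,\xi\,(3\lambda_{m_0}^{(3)}(\Omega)-1)}{\pi^3\mp4\,\xi}$, which is precisely where the combination $3\lambda_{m_0}^{(3)}(\Omega)-1$ and the plasmonic resonance condition $\pi^3\pm4\,\xi\,(3\lambda_{m_0}^{(3)}(\Omega)-1)=0$ arise.

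For part $(1)$, with $\lambda_{m_0}^{(3)}(\Omega)>\tfrac13$ the resonance numerator can be reached. Substituting the frequency (\ref{the plasmon-incident-frequency}) into $\xi$, and using that (\ref{condition-on-k}) forces $k^2\eta_0\lambda_{n_0}^{(1)}(B)\to1$, I would Taylor-expand the numerator and show its $\beta$-independent part is driven to zero while the remainder is $\sim\beta\,(3\lambda_{m_0}^{(3)}(\Omega)-1)^2$; thus the symbol has a near-zero of that order. Inverting $\mathrm{Id}-\gamma T_k$ by isolating the spectral projection onto $e_{m_0}^{(3)}$, the interior field acquires the amplification $\sim[\,\beta\,(3\lambda_{m_0}^{(3)}(\Omega)-1)^2\,]^{-1}$; the non-resonant remainder stays bounded because $\nabla{\bf M}_\Omega$ is self-adjoint and $\lambda_{m_0}^{(3)}(\Omega)$ is isolated from the accumulation point $\tfrac12$. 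The finite part of the resolvent on the eigenspace is not a scalar but the operator $\mathbf{N}={\bf I}-\tfrac{\pm k^2}{2|\beta|(3\lambda_{m_0}^{(3)}(\Omega)-1)^2}(N+N')\overset{3}{\mathbb{P}}$ of the statement, arising from the $\tfrac{k^2}{2}(N+N')$ correction; the constraint $k^2\le C^{\mathrm{te}}|\beta|$ keeps its coefficient bounded while permitting the secondary (self-interaction) singularity encoded by $\mathbf{N}^{-1}$, which is why the non-degeneracy hypothesis $\boldsymbol{\mathcal{X}}_{m_0}\neq0$ is imposed.

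Feeding the resonant interior field into the far-field representation attached to (\ref{model-equi}) (the effective analogue of (\ref{approximation-E})) and pairing the quasi-static source $H^{Inc}(z)\approx\theta\times\mathrm{p}$ against $e_{m_0}^{(3)}$ through $\mathbf{N}^{-1}$ produces the rank-one tensor $\boldsymbol{\mathcal{X}}_{m_0}=\int_\Omega e_{m_0}^{(3)}\,dz\otimes\int_\Omega\overline{\mathbf{N}^{-1}(e_{m_0}^{(3)})}\,dx$; collecting the prefactors yields (\ref{far-E}). For part $(2)$, with $\Omega$ a ball one has $\lambda_1^{(3)}(\Omega)=\tfrac13$, so $3\lambda_1^{(3)}(\Omega)-1=0$ and the leading numerator reduces to $\pi^3\neq0$: the first-order symbol does not vanish and the part-$(1)$ mechanism degenerates. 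Instead the resonance is approached by driving $\xi\to\infty$ (so $\gamma\to-3$ and $\mathring\mu_r\to-2$, the classical dipolar plasmon of a sphere); balancing the first-order residual $\sim\xi^{-1}\sim k^2$ against the second-order term $\tfrac{k^2}{2}\langle(N+N')(e_1^{(3)});e_1^{(3)}\rangle_{\mathbb{L}^2(\Omega)}$ fixes $k$ through (\ref{Ballkscale}), and the resulting $k^{-1}$ residue gives (\ref{Eq126}), with non-degeneracy guaranteed by $\langle(N+N')(e_1^{(3)});e_1^{(3)}\rangle_{\mathbb{L}^2(\Omega)}\neq0$.

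The main obstacle is to make the quasi-static expansion of $\Upsilon_k$ and the ensuing resolvent inversion rigorous with remainders that are \emph{uniform} as the resonance is approached. This rests on the spectral theory of $\nabla{\bf M}_\Omega$ on $\nabla\mathcal{H}armonic$ (self-adjointness, spectrum in $]0,1]$, isolation of $\lambda_{m_0}^{(3)}(\Omega)$ from the accumulation point $\tfrac12$) together with the $C^{0,\alpha}(\overline\Omega)$ regularity of $H^{\mathring\mu_r}$ --- the reason $\Omega$ is taken $C^{4,\alpha}$-smooth and the separate regularity statement (\textbf{Proposition \ref{AddedLemma}}) is invoked. Controlling the two nested near-singularities simultaneously (the primary one in $\beta$, the secondary one inside $\mathbf{N}^{-1}$), and proving that the neglected divergence-free and curl-free components of $H^{\mathring\mu_r}$ do not contaminate the leading $\nabla\mathcal{H}armonic$ residue, is the delicate heart of the argument.
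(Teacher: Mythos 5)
Your proposal follows essentially the same route as the paper's own proof: the same Lippmann--Schwinger reduction for $H^{\mathring{\mu}_r}$ with contrast $\pm\,\xi\,{\bf T}^{\mathring{\mu}_r}$, the same Helmholtz/spectral decomposition isolating $\nabla{\bf M}$ on $\nabla \mathcal{H}armonic$, the same dispersion relation $\pi^{3}\pm 4\,\xi\,(3\,\lambda_{m_0}^{(3)}(\Omega)-1)=0$ perturbed by $\beta$ with the finite part of the resolvent given by $\textbf{N}$ (hence the tensor $\boldsymbol{\mathcal{X}}_{m_0}$), and, for the ball, the same degeneration at $\lambda_{1}^{(3)}=\tfrac{1}{3}$ resolved by taking $\xi \gg 1$ and balancing against the $k^{2}(N+N^{\prime})$ correction to obtain the $k^{4}$-scaling and the $k^{-1}$ blow-up. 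The points you flag as delicate (uniform control of the remainders near resonance via \textbf{Proposition \ref{AddedLemma}}, and showing the divergence-free and curl-free components do not pollute the resonant mode) are exactly the steps the paper carries out in detail.
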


\begin{remark} \label{R-7}   
Regarding \textbf{Theorem \ref{coro-plas-resonance}}, the following remarks are in order. 
\begin{enumerate}
    \item[]
    \item The condition (\ref{non-flat}) is not restrictive as it is satisfied by many shapes (as sphere, cube, polyhedron, etc), see \cite{wiki}. In addition, taking any of these domains and apply a dilation with a magnitude of order larger than one then we get domains satisfying (\ref{non-flat}). Finally, general convex bodies with appropriate affine transformations will also satisfy (\ref{non-flat}), see \cite{Ball}. Let us mention that the surface area to volume ratio appears in many areas of applied sciences as well.
    \item[] 
    \item For $k$ satisfying \eqref{the plasmon-incident-frequency}, with  $\eta_0 \, \gg \, 1$ and $\beta \, \ll \, 1$, we have a giant amplification of the magnetic field $H^{Inc}(\cdot)$. In other words, the incident frequency
\begin{equation}\notag
	k_{n_0} \, := \, \sqrt{\frac{1}{\eta_{0} \; \lambda_{n_0}^{(1)}\left( B \right)}}, 
\end{equation}
is a low-frequency (or quasi-static) plasmonic resonance generated by the cluster of the all-dielectric nano-particles. 
Moreover, the corresponding effective permeability now possesses the form
\begin{equation*}\label{DPmu}
	\mathring{\mu_r} \, = \, \left( \, 1\, - \, \frac{1}{\lambda_{m_0}^{(3)}(\Omega)} \, +  \, \mathrm{T}\left( \beta \right) \right) \; {\bf I}.
\end{equation*}	 
with $\left\vert \mathrm{T}\left( \beta \right) \right\vert \; \ll \; \left\vert \beta \right\vert \ll 1 $. Remember that $\lambda^{(3)}_{n}(\Omega) \, < \, 1$, for every $n$, hence $\mathring{\mu_r}<0$ for $\vert \beta\vert \ll 1$.
 \item[]
   \item If we choose the back-scattering direction, i.e. $\hat{x}=-\theta$, then the formula $(\ref{Eq126})$ will be reduced to 
       \begin{equation*}
    E^{\infty}(\hat{x}, \theta, p) \, = \, \pm \, i \, 2 \, \pi \, k^{-1} \, \mathrm{p}  \, + \,  \mathcal{O}\left( 1 \right),
    \end{equation*}
    where $k$ is given by $(\ref{Ballkscale})$. In particular, the generated electromagnetic field is very large and directed into the polarization direction $p$. In addition, this giant behavior of the electromagnetic field infers that the frequency in $(\ref{Ballkscale})$ behaves as a quasi-static Plasmonic resonance.
   \item[] 
        \item In $(\ref{far-E})$, if  $\displaystyle\int_{\Omega} e_{n}^{(3)}(z) \, dz =0 $,\; $\forall \; n \in \mathbb{N}$, by expanding the function $e^{- \, i \, k \, \hat{x} \cdot z}$ with respect to the frequency $k$, see the far-field expression given by $(\ref{add-coro-E})$, neglecting the zeroth order term, keeping the first order term and repeating the same computations we get the following formula
        \begin{eqnarray*}
            E^{\infty}\left( \hat{x}, \theta, p \right) &=& \frac{k^{2} \, 4 \, \pi}{9 \, \left( 3 \, \lambda_{m_{0}}^{(3)}(\Omega) \, - \, 1  \right)} \; \hat{x} \times \left( \theta \times \mathrm{p} \right)  \\
            &-& \frac{2 \, k^{2}}{\langle \left( N + N^{\prime} \right)\left( e_{m_{0}}^{(3)}\right) ; e_{m_{0}}^{(3)} \rangle_{\mathbb{L}^{2}(\Omega)}} \; \hat{x} \times \left( \boldsymbol{\Lambda}_{m_{0}}  \left( \hat{x}, \theta \right) \, \cdot \, \left( \theta \times \mathrm{p} \right) \right) \; + \; \mathcal{O}\left( k^{3} \right),  
        \end{eqnarray*}
        where $N(\cdot)$ is the Newtonian operator given by $(\ref{def-new-operator})$, $N^{\prime}(\cdot)$ is the operator defined by $(\ref{DefNprimeTHM})$ and $\boldsymbol{\Lambda}_{m_{0}}\left( \hat{x}, \theta \right)$ is the tensor given by
        \begin{equation*}
            \boldsymbol{\Lambda}_{m_{0}}\left( \hat{x}, \theta \right) := \int_{\Omega} \hat{x} \cdot z \; e_{m_{0}}^{(3)}(z) \, dz \otimes \int_{\Omega} \theta \cdot z \; \overline{e_{m_{0}}^{(3)}}(z) \, dz.
        \end{equation*}
        \item[]
        \item To avoid complicated computations and presentation, we have focused on the case $\lambda_{n}^{(3)}(\Omega) \geq \dfrac{1}{3}$. To deal with the other eventual case \footnote{For a general shape $\Omega$, we have $0 \, < \, \lambda^{(3)}_{m}(\Omega) \,< \, 1$ and for $\Omega$ as a ball we have $\frac{1}{3} \, < \, \lambda^{(3)}_{m}(\Omega) \, < \, \frac{2}{3}$. The existence of shapes having such eigenvalues smaller than $\frac{1}{3}$ is shown in \cite[Theorem 3.4]{ahner1994} for instance.}, i.e. $\lambda_{n}^{(3)}(\Omega) < \dfrac{1}{3}$, we need to look for $\xi$ as the solution to the full dispersion equation
        \begin{equation*}
           \left(\pi^{3} \, + \, 4 \, \xi \right) \, - \, 12 \, \xi \, \lambda_{n}^{(3)}(\Omega) \, + \, 12 \, \xi \, \langle K(e_{n}^{(3)}); e_{n}^{(3)} \rangle_{\mathbb{L}^{2}(\Omega)} \; = \; 0,
        \end{equation*}
        which by using $(\ref{ANC})$ becomes 
        \begin{eqnarray*}
           0 &=& \pi^{4} \, + \, \pi \, 4 \, \xi  \, \left( 1 - \, 3  \, \lambda_{n}^{(3)}(\Omega) \right) \, + \pi \, 6 \, \xi k^2 \, \langle \left( N + N^{\prime} \right)(e_{n}^{(3)}); e_{n}^{(3)} \rangle_{\mathbb{L}^{2}(\Omega)}  + 2 \, \xi \, i \, k^{3} \, \left\vert \int_{\Omega} e_{n}^{(3)}(x) \, dx \right\vert^{2} \\ &+&  3 \, \xi  \, \sum_{j \geq 3} \, \frac{(i \, k)^{j+1}}{(j+1)!} \, \langle \int_{\Omega} \nabla \nabla \left\vert \cdot - y \right\vert^{j} \cdot e_{n}^{(3)}(y) \, dy; e_{n}^{(3)} \rangle_{\mathbb{L}^{2}(\Omega)} \\ &+& 3 \, \xi \, k^{2} \, \sum_{j \geq 1} \, \frac{(i \, k)^{j+1}}{(j+1)!} \, \langle \int_{\Omega}  \left\vert \cdot - y \right\vert^{j} \, e_{n}^{(3)}(y) \, dy; e_{n}^{(3)} \rangle_{\mathbb{L}^{2}(\Omega)},         
        \end{eqnarray*}
        which can be seen, recalling that $\xi = \dfrac{k^{2} \, \eta_{0}}{c_{0} \, c_{r}^{3}}$, see $(\ref{def-xi})$, as a polynomial function in terms of the frequency $k$. With similar arguments, we derive the same results as in \textbf{Theorem \ref{coro-plas-resonance}}, for $\lambda^{(3)}_{m}(\Omega) \, < \, \dfrac{1}{3}$.   
    \end{enumerate}
\end{remark}
\medskip

\begin{remark}\label{R-8}
   Thanks to \cite[Theorem 3.4]{ahner1994}, we know that for any $l \, \in \, ]0,1]$, there exists a smooth surface $\partial \Omega^l$, such that $l$ is an eigenvalue of $\nabla {\bf M}$, in $\Omega^l$. In particular, for any $\varsigma \, \ll \, 1$, there existences $\Omega^{\pm \, \varsigma}$ such that $\dfrac{1}{3} \, \pm \, \varsigma \, \in \, \sigma\left( \nabla {\bf M}_{\Omega^{\pm \, \varsigma}} \right)$, and in this case $(\ref{far-E})$ takes the following form
   \begin{equation*}
	E^\infty(\hat{x}, \theta, p) = \frac{- \, 6}{\varsigma^{4}}  \, \hat{x} \, \times \, \left( \boldsymbol{\mathcal{Z}}_{m_{0}} \cdot \left(\mathrm{p} \times \theta \right) \right) + \mathcal{O}\left( \varsigma^{2} \right) + \mathcal{O}\left( k \right), 
\end{equation*}
with
\begin{equation*}
   \boldsymbol{\mathcal{Z}}_{m_{0}} \, := \, \int_{\Omega} e_{m_{0}}^{(3)}(z) \; dz  \, \otimes \, \int_{\Omega} \,   \overline{\left( \left(  N + N^{\prime} \right) \overset{3}{\mathbb{P}} \, \right)^{-1}\left(e_{m_0}^{(3)}\right)} \, dx.
\end{equation*}
Hence, as $\varsigma$ is taken to be small, we observe a huge amplification of the electric field.
\end{remark}

Let us finish this part of the section by stressing that although the results presented in \textbf{ Theorem \ref{thm-main-posi}} and \textbf{Theorem \ref{coro-plas-resonance}} are derived for the farfields $E^{\infty}\left( \hat{x}, \theta, p \right)$, $\hat{x} \in \mathbb{S}^2$, they can be also derived, in a similar way, for the scattered fields $E^s\left( x, \theta, p \right)$ for $x \in \mathbb{R}^3\setminus \overline{\Omega}$.

\subsection{Formal derivation of the effective medium}
Based on \textbf{Proposition \ref{prop-discrete}}, formula $(\ref{linear-discrete})$, we have the following linear algebraic system
\begin{eqnarray}
\nonumber
	U_m \, - \, \frac{\eta\, k^2}{\pm c_0}\, a^{5-h} \,  \sum_{j=1 \atop j \neq m}^{\aleph} \, \Upsilon_{k}(z_m, z_j) \cdot {\bf P_0} \cdot U_{j} \, &=& \, i \, k \, H^{Inc}(z_m) \\ \label{alg-dis-00}
 U_m \, - \, \frac{\eta_{0} \, k^2}{\pm c_0}\, a^{3-h} \,  \sum_{j=1 \atop j \neq m}^{\aleph} \, \Upsilon_{k}(z_m, z_j) \cdot {\bf P_0} \cdot U_{j} \, & \overset{(\ref{contrast-epsilon})}{=} & \, i \, k \, H^{Inc}(z_m), 
\end{eqnarray}
where
\begin{equation*}
	U_m \; := \; \pm \, c_0 \, a^{h-5} \, {\bf P_0^{-1}} \cdot Q_m, \quad \text{for} \quad 1 \leq m \leq \aleph.
\end{equation*} 
Accordingly, the discrete form of the Foldy-Lax approximation given in \textbf{Proposition \ref{prop-discrete}}, formula $(\ref{approximation-E})$, becomes
\begin{eqnarray}
\nonumber
	E^\infty(\hat x) \, &=& \, - \frac{i\, k^3\, \eta}{\pm\, 4\, \pi\, c_0} \, a^{5-h}
\sum_{m=1}^\aleph e^{-ik\hat{x}\cdot z_m}\hat{x}\times {\bf P_0} \cdot U_m \, + \, \Oh(a^{\frac{h}{3}}) \\ \nonumber
& \overset{(\ref{contrast-epsilon})}{=} &  \, - \frac{i\, k^3\, \eta_{0}}{\pm\, 4\, \pi\, c_0} \, a^{3-h}
\sum_{m=1}^\aleph e^{-ik\hat{x}\cdot z_m}\hat{x}\times {\bf P_0} \cdot U_m \, + \, \Oh(a^{\frac{h}{3}}) \\ \label{far-dis-00}
& \overset{(\ref{def-xi})}{=} &  \, - \frac{i\, k \, \xi}{\pm \, 4 \, \pi} \, c_{r}^{3} \, a^{3-h}
\sum_{m=1}^\aleph e^{-ik\hat{x}\cdot z_m}\hat{x}\times {\bf P_0} \cdot U_m \, + \, \Oh(a^{\frac{h}{3}}).
\end{eqnarray}
Now, inspired by \eqref{alg-dis-00}, we have the following integral form for the magnetic field 
\begin{equation}\label{PRP1}
H^{\mathring{\mu}_r}(x) \, - \, \frac{\eta_0 \, k^2}{\pm \, c_0} \, a^{3-h}  \, \int_{\Omega \setminus \Omega_{m}} \Upsilon_k(x,y) \cdot {\bf P_0} \cdot H^{\mathring{\mu}_r}(y) \, dy \, = \, i \, k \, H^{Inc}(x), \quad x \in \Omega_{m}, 
\end{equation}
with $\Upsilon_{k}(\cdot,\cdot)$ is the dyadic Green's function given by $(\ref{dyadic-Green})$. Moreover, under Assumption \uppercase\expandafter{\romannumeral4}, we know that the dilution parameter $c_r$ fulfills
\begin{equation}\label{distribute}
	c_r^{-3} \, = \, \frac{a^{3-h}}{d^3} \quad \text{and} \quad \left\vert \Omega_m \right\vert \,  = \, \Oh\left( d^3 \right) \, = \, \Oh\left(c_r^3 a^{3-h}\right).
\end{equation}
This allows us to rewrite $(\ref{PRP1})$ as 
\begin{equation*}
H^{\mathring{\mu}_r}(x) \, - \, \frac{\eta_0 \, k^2}{\pm \, c_0 \, c_{r}^{3}} \, d^{3}  \, \int_{\Omega \setminus \Omega_{m}} \Upsilon_k(x,y) \cdot {\bf P_0} \cdot H^{\mathring{\mu}_r}(y) \, dy \, = \, i \, k \, H^{Inc}(x), \quad x \in \Omega_{m},
\end{equation*}
or, 
\begin{eqnarray*}
H^{\mathring{\mu}_r}(x) \, &+& \, \frac{\eta_0 \, k^2}{\pm \, c_0 \, c_{r}^{3}} \, d^{3}  \,  \int_{\Omega_{m}}   \Upsilon_k(x,y) \cdot {\bf P_0} \cdot H^{\mathring{\mu}_r}(y) \, dy \\ &-& \, \frac{\eta_0 \, k^2}{\pm \, c_0 \, c_{r}^{3}} \, d^{3}  \,  \int_{\Omega} \,   \Upsilon_k(x,y) \cdot {\bf P_0} \cdot H^{\mathring{\mu}_r}(y) \, dy \, = \, i \, k \, H^{Inc}(x), \quad x \in \Omega_{m}.
\end{eqnarray*}
Then, from the second term on the left hand side (L.H.S), we extract a constant tensor, that we denote by ${\bf T}^{\mathring\mu_{r}}$, related to the periodic distribution of the cluster of nano-particles, we will provide later with further details, and this can be used to derive the coming Lipmann-Schwinger system of Equation (L.S.E) 
\begin{equation}\label{LS-1}
	H^{\mathring{\mu}_r}(x) \, - \, \frac{\eta_0 \, k^2}{\pm \, c_0 \, c_r^{3}}  \, \left[ - \, \nabla {\bf M}^{k}\left( {\bf T}^{\mathring\mu_{r}} \cdot H^{\mathring{\mu}_r}\right)(x) \, + \, k^{2} \, {\bf N^{k}} \left( {\bf T}^{\mathring\mu_{r}} \cdot H^{\mathring{\mu}_r}\right)(x)\right] \, = \, i\, k \, H^{Inc}(x, \theta), \quad  x \in \Omega,
\end{equation}
where ${\bf T}^{\mathring\mu_{r}}$ is a constant tensor defined by
\begin{equation}\label{exp-T}
	{\bf T}^{\mathring\mu_{r}} \, = \, \left({\bf I} \, - \, \frac{\eta_0 \, k^2}{\pm 3c_0}c_r^{-3}{\bf P_0}\right)^{-1}{\bf P_0},
\end{equation}
with ${\bf P_0}$ given by \eqref{defP0}, $\nabla {\bf M}^{k}$ is the Magnetization operator with non vanishing frequency defined by
\begin{equation*}
    \nabla {\bf M}^{k}\left( E \right)(x) \, := \, \nabla \int_{\Omega} \underset{y}{\nabla}\Phi_{k}(x,y) \cdot E(y) \, dy, \quad x \in \Omega, 
\end{equation*}
and ${\bf N}^{k}$ is the Newtonian operator with non vanishing frequency given by 
\begin{equation*}
   {\bf N^{k}}\left( E \right)(x) \, := \, \int_{\Omega} \Phi_{k}(x,y)  E(y) \, dy, \quad x \in \Omega,
\end{equation*}
where $\Phi_{k}(\cdot,\cdot)$ is defined by $(\ref{DefFSHE})$. Furthermore, it is straightforward to verify that $H^{\mathring{\mu}_r}(\cdot)$ in \eqref{LS-1} is the solution to the electromagnetic scattering problem for the effective medium \eqref{model-equi}
with
\begin{equation}\label{effective-permi}
	\mathring{\epsilon}_r={\bf I}\quad\mbox{and}\quad 
	\mathring{\mu}_r={\bf I}+ \frac{\eta_0 k^2}{\pm c_0}c_r^{-3}{\bf T}^{\mathring\mu_{r}}, \quad \text{in} \;\; \Omega.
\end{equation}	
\begin{remark}\label{rem-F}
 We add the following two observations.
 \begin{enumerate}
     \item[] 
     \item By choosing appropriate parameters $c_0, \eta_{0}$ and $c_r$, the invertibility of the matrix
		\begin{equation}\label{P0-introduction}
			{\bf I}-\frac{\eta_0 \, k^2}{\pm 3c_0}c_r^{-3}{\bf P_0}
		\end{equation} 
		in $(\ref{exp-T})$ can be guaranteed. Indeed, it suffices to let $c_r$ to be large enough. If, in particular, ${\bf P_0}$ is constructed for the unit ball $\Omega:=B(0;1)$, then with the explicit form of ${\bf P_0}$ given by $(\ref{exp-p0})$, it is sufficient to fulfill $\dfrac{4\eta_0 k^2}{\pm c_0 \pi^3}c_r^{-3}\neq 1$ to get the invertibility of (\ref{P0-introduction}).
  \item[]
  \item By rearranging terms in \eqref{exp-T}, we  see that ${\bf T}^{\mathring\mu_{r}}$ satisfies the following equation
	\begin{equation*}
			{\bf T}^{\mathring\mu_{r}}\left({\bf I}+\frac{\eta_0 \, k^2}{\pm 3c_0}c_r^{-3}{\bf T}^{\mathring\mu_{r}}\right)^{-1}={\bf P_0},
	\end{equation*}

	where the invertibility of the matrix ${\bf I}+\dfrac{\eta_0 \, k^2}{\pm 3c_0}c_r^{-3}{\bf T}^{\mathring{\mu_r}}$ can be fulfilled for $c_r$ large enough, see the observation (1) above as well.
 \item[] 
 \end{enumerate}
\end{remark}

\bigskip
The rest of the paper is organized as follows. \textbf{Section \ref{sec3:positive}} is devoted to prove \textbf{Theorem \ref{thm-main-posi}} for positive definite $\mathring{\mu_{r}}$ by investigating the effective medium and evaluating the error estimate between the far-fields $E^\infty(\cdot)$ and $E^\infty_{\mathrm{eff},+}(\cdot)$, under some necessary a-priori estimates. In \textbf{Section \ref{sec4:negative}}, we first present a systematical analysis on the behaviours of $\mathring{\mu_{r}}$ by different choices of the parameter $c_0$, in particular, the case for $\mathring{\mu_r}$ being negative definite is studied in detail under certain geometrical setup. Then the $C^{0, \alpha}$-regularity of the solution $(E^{\mathring{\epsilon_{r}}}(\cdot), H^{\mathring{\mu_{r}}}(\cdot))$ for negative definite $\mathring{\mu_{r}}$ is presented and the corresponding proof is given in \textbf{Section \ref{sec5:proof}}. Based on the similar a-priori estimates in \textbf{Section \ref{sec3:positive}}, we can prove \textbf{Theorem \ref{thm-main-posi}} for negative definite $\mathring{\mu_{r}}$, accordingly. Finally, with appropriate choice of the parameter $\xi$, we shall generate the plasmonic resonance under quasi-static regime from the cluster of all-dielectric nanoparticles by proving \textbf{Theorem \ref{coro-plas-resonance}}. \textbf{Section \ref{sec5:proof}} is devoted to provide the detailed proof of the a-priori estimates in \textbf{Section \ref{sec3:positive}} and the  $C^{0, \alpha}(\overline{\Omega})$-regularity of $H^{\mathring{\mu_{r}}}$ in \textbf{Section \ref{sec4:negative}}.
\bigskip

In the rest of the manuscript, the dependency of the electromagnetic fields $(E, H)$ and $(E^{\mathring{\mu}_r}, H^{\mathring{\mu}_r})$, with the incident fields, in terms of the incident direction $\theta$ and polarization direction $\mathrm{p}$ will be omitted to reduce heavy notations.

\section{The effective medium with $\mathring\mu_{r}$-Dielectric}\label{sec3:positive}

In this section, we consider the electromagnetic scattering for the effective medium with positive permeability $\mathring{\mu_r}$, i.e. $\mathring{\mu_r} \, > \, 0$. The analyses will be divided into the following three parts. First, we present the H\"{o}lder regularity of the solution to the L.S.E \eqref{LS-1}, which also indicates the well-posedness for the scattering problem \eqref{model-equi} for $\mathring{\mu_{r}}$ being positive. Then, we give several significant a-priori estimates associated with the L.S.E as well as the discrete linear algebraic system \eqref{linear-discrete}. Finally, we shall prove our main results in \textbf{Theorem \ref{thm-main-posi}}.

\subsection{$\boldsymbol{C^{0, \alpha}}$-regularity for the solution $\boldsymbol{H^{\mathring{\mu_{r}}}}$ with $\boldsymbol{ \mathring{\mu_{r}}}$-Dielectric.}

From \cite{Cao-Sini}, where the effective permittivity and permeability generated by a cluster of moderately contrasting nano-particles are studied, by following a similar argument, we know that for $\mathring{\mu_{r}}$ being positive definite in the scattering problem \eqref{model-equi}, under certain conditions with respect to the frequency $k$, the dilution parameter $c_r$ and the constant tensor ${\bf P_0}$, there also holds the H\"{o}lder regularity,  up to the boundary, of the electromagnetic fields associated with \eqref{model-equi}.

\begin{proposition}\cite[Theorem 2.2]{Cao-Sini}\label{prop-regularity}
		Let $\Omega$ be a bounded domain with $C^{1,\alpha}$-regularity, for $\alpha\in (0,1)$. Regarding the electromagnetic scattering problem  \eqref{model-equi}, with 
 $\mathring{\mu_{r}}$ being positive definite, there exists a positive constant $c_{\mathrm{reg}}(\alpha, \Omega)$, depending only on $\alpha$ and $\Omega$, such that if
			\begin{equation}\notag
			\left( k^{3+\alpha}+k^3+k^2+k+1 \right) \; c_{\mathrm{reg}}(\alpha, \Omega) \; c_r^{-3} \; \left\vert {\bf P_0} \right\vert \;  < \; 1,
			\end{equation} 
		then we have
		\begin{equation}\notag
		(E^{\mathring{\epsilon}_r},H^{\mathring{\mu}_r})\in C^{0,\alpha}({\overline{\Omega}})\times C^{0,\alpha}({\overline{\Omega}}).	
		\end{equation}
	\end{proposition}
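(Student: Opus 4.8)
The plan is to deduce the Hölder regularity directly from the Lippmann--Schwinger reformulation \eqref{LS-1}, by displaying it as a perturbation $Id+K$ of the identity on a suitable Hölder space and then reading off the regularity of the solution from a Neumann series. The smallness hypothesis $\left(k^{3+\alpha}+k^3+k^2+k+1\right)c_{\mathrm{reg}}(\alpha,\Omega)\,c_r^{-3}\,\lvert {\bf P_0}\rvert<1$ is precisely the condition guaranteeing $\|K\|<1$.

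First I would record the mapping properties of the two constituent operators appearing in \eqref{LS-1}. The Newtonian part $k^2\,{\bf N}^k(\cdot)$ integrates against the weakly singular kernel $\Phi_k\sim\lvert x-y\rvert^{-1}$ and therefore gains two derivatives: it sends $C^{0,\alpha}(\overline\Omega)$, indeed even $\mathbb{L}^\infty(\Omega)$, boundedly into $C^{1,\alpha}(\overline\Omega)$, so after carrying the $k^2$ prefactor it is a genuinely smoothing (and small) operator on $C^{0,\alpha}(\overline\Omega)$. The delicate term is the Magnetization part $\nabla{\bf M}^k(\cdot)$ of \eqref{def-mag-operator}, whose kernel $\nabla_x\nabla_y\Phi_k\sim\lvert x-y\rvert^{-3}$ is strongly singular of Calderón--Zygmund type; it is not compact and must be treated as a principal-value operator. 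Its boundedness on $C^{0,\alpha}(\overline\Omega)$ up to the boundary follows from Calderón--Zygmund theory, the relevant constant $c_{\mathrm{reg}}(\alpha,\Omega)$ encoding the $C^{1,\alpha}$ character of $\partial\Omega$.

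Because the principal-value analysis of $\nabla{\bf M}^k$ near $\partial\Omega$ produces, via integration by parts / the single-layer decomposition, a boundary contribution carried by the normal trace $\nu\cdot H^{\mathring\mu_r}$ on $\partial\Omega$, I would close the equation not for $H^{\mathring\mu_r}$ alone but for the pair $\bigl(H^{\mathring\mu_r},\,(\nu\cdot H^{\mathring\mu_r})|_{\partial\Omega}\bigr)$, obtaining a coupled interior--boundary system of the form $Id+K$ on $C^{0,\alpha}(\overline\Omega)\times C^{0,\alpha}(\partial\Omega)$. Collecting the contrast prefactor, which by \eqref{effective-permi}--\eqref{exp-T} is of size $\mathcal{O}(c_r^{-3}\lvert{\bf P_0}\rvert)$ once $c_r$ is large enough to expand $\bigl({\bf I}-\frac{\eta_0k^2}{\pm3c_0}c_r^{-3}{\bf P_0}\bigr)^{-1}$ in a Neumann series, together with the powers $k^{3+\alpha},k^3,k^2,k,1$ extracted from the expansion $\Phi_k=\Phi_0+(\Phi_k-\Phi_0)$ of the kernel and its derivatives, one bounds $\|K\|$ by the left-hand side of the hypothesis. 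Under the stated smallness this gives $\|K\|<1$, so $Id+K$ is boundedly invertible by Neumann series; since the datum $ik\,H^{Inc}$ is real-analytic, the solution pair lies in $C^{0,\alpha}(\overline\Omega)\times C^{0,\alpha}(\partial\Omega)$, and in particular $H^{\mathring\mu_r}\in C^{0,\alpha}(\overline\Omega)$. Finally $E^{\mathring\epsilon_r}$ is recovered from its own volume (dyadic) representation with source $(\mathring\mu_r-{\bf I})\,H^{\mathring\mu_r}\in C^{0,\alpha}(\overline\Omega)$, a smoothing operation that transfers the Hölder regularity and yields $(E^{\mathring\epsilon_r},H^{\mathring\mu_r})\in C^{0,\alpha}(\overline\Omega)\times C^{0,\alpha}(\overline\Omega)$.

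The main obstacle is exactly the Hölder continuity up to the boundary of the strongly singular, non-compact Magnetization operator $\nabla{\bf M}^k$: one cannot invoke a soft Fredholm/compactness argument but must establish sharp Calderón--Zygmund estimates on $C^{0,\alpha}(\overline\Omega)$ with the correct jump relations across $\partial\Omega$. This is precisely why the normal trace is carried as a second unknown and why the $C^{1,\alpha}$-regularity of $\partial\Omega$ is indispensable.
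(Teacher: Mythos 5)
Your proposal follows essentially the same route as the paper, which defers the detailed proof to \cite[Theorem 2.2]{Cao-Sini} but describes the strategy in Section \ref{Background and motivation}: recast the L.S.E.\ \eqref{LS-1} as $Id+K$ on H\"older spaces whose unknowns are the electromagnetic fields inside $\Omega$ together with their normal traces on $\partial\Omega$, and use the stated smallness condition to make $K$ dominated by the identity, so that a Neumann-series inversion yields $C^{0,\alpha}(\overline{\Omega})$ regularity. Your splitting into the weakly singular Newtonian part and the Calder\'on--Zygmund Magnetization part, the coupled interior--boundary system carrying $\nu\cdot H^{\mathring{\mu}_r}$ as a second unknown, and the recovery of $E^{\mathring{\epsilon}_r}$ from $H^{\mathring{\mu}_r}$ all match that approach.
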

	We refer to \cite[Theorem 2.2]{Cao-Sini} for the detailed proof of \textbf{Proposition \ref{prop-regularity}}. The coming corollary is suggested by the previous proposition.
\begin{corollary}\label{CoroHolderH}
For $1 \leq j \leq \aleph$ and $z \in \Omega_{j}$, denote $S_j\subset\Omega_j$ as the largest ball contained in $\Omega_j$, then we have 
\begin{equation}\label{AS0456}
    \left\vert H^{\mathring{\mu}_r}(z) - \frac{1}{\left\vert S_{j} \right\vert} \, \int_{S_{j}} H^{\mathring{\mu}_r}(y) \, dy  \right\vert \; \lesssim \; \left[ H^{\mathring{\mu}_r} \right]_{C^{0,\alpha}\left( \overline{\Omega} \right)} \; d^{\alpha}.  
\end{equation}
\end{corollary}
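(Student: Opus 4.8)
The plan is to combine the up-to-the-boundary H\"{o}lder regularity of $H^{\mathring{\mu}_r}$ supplied by \textbf{Proposition \ref{prop-regularity}} with the fact that each subdomain $\Omega_j$ has diameter of order $d$. Since the regularity hypothesis of \textbf{Proposition \ref{prop-regularity}} is part of the standing setup, I may take for granted that $H^{\mathring{\mu}_r}\in C^{0,\alpha}(\overline{\Omega})$, so that for all $z,y\in\overline{\Omega}$ one has $\vert H^{\mathring{\mu}_r}(z)-H^{\mathring{\mu}_r}(y)\vert\le [H^{\mathring{\mu}_r}]_{C^{0,\alpha}(\overline{\Omega})}\,\vert z-y\vert^{\alpha}$. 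The whole argument then reduces to rewriting the left-hand side of \eqref{AS0456} as the average of a pointwise difference and measuring the diameter of the averaging domain.

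First I would use that $H^{\mathring{\mu}_r}(z)$ is constant in $y$ and that $\vert S_j\vert^{-1}\int_{S_j}dy=1$ to write
\begin{equation*}
H^{\mathring{\mu}_r}(z) - \frac{1}{\vert S_j\vert}\int_{S_j}H^{\mathring{\mu}_r}(y)\,dy = \frac{1}{\vert S_j\vert}\int_{S_j}\bigl(H^{\mathring{\mu}_r}(z)-H^{\mathring{\mu}_r}(y)\bigr)\,dy.
\end{equation*}
Taking absolute values inside the integral and inserting the H\"{o}lder bound on the integrand yields
\begin{equation*}
\left\vert H^{\mathring{\mu}_r}(z) - \frac{1}{\vert S_j\vert}\int_{S_j}H^{\mathring{\mu}_r}(y)\,dy\right\vert \le [H^{\mathring{\mu}_r}]_{C^{0,\alpha}(\overline{\Omega})}\,\frac{1}{\vert S_j\vert}\int_{S_j}\vert z-y\vert^{\alpha}\,dy.
\end{equation*}

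The remaining point is the control of the distances $\vert z-y\vert$ appearing in the last display. Since $z\in\Omega_j$ and $y\in S_j\subset\Omega_j$, both points lie in $\Omega_j$, whence $\vert z-y\vert\le\diam(\Omega_j)$. By Assumption \uppercase\expandafter{\romannumeral4}, each $\Omega_j$ is a cube of volume $d^{3}$, so $\diam(\Omega_j)=\sqrt{3}\,d=\Oh(d)$ and therefore $\vert z-y\vert^{\alpha}\lesssim d^{\alpha}$ uniformly in $y\in S_j$. Consequently the normalized integral $\vert S_j\vert^{-1}\int_{S_j}\vert z-y\vert^{\alpha}\,dy$ is itself $\lesssim d^{\alpha}$, and substituting this bound into the previous inequality produces exactly \eqref{AS0456}.

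The estimate is essentially routine; the only place needing care is the uniform geometric bound $\vert z-y\vert\le\diam(\Omega_j)\lesssim d$, which rests on the periodic cubic partition of $\Omega$ fixed in Assumption \uppercase\expandafter{\romannumeral4} and, crucially, is independent of the index $j$ so that the implied constant in \eqref{AS0456} does not depend on the particle. I would finally remark that neither the precise choice of $S_j$ as the \emph{largest} inscribed ball nor the exact value of $\vert S_j\vert$ enters the final bound: any measurable subset of $\Omega_j$ of positive measure serves equally well, since the argument only uses that the averaging domain is contained in $\Omega_j$ and hence inherits the same diameter bound.
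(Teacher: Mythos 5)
Your proof is correct and is exactly the argument the paper intends: the corollary is stated as an immediate consequence of the H\"{o}lder regularity in \textbf{Proposition \ref{prop-regularity}}, obtained by averaging the pointwise H\"{o}lder bound over $S_{j}$ and using that $z$ and $y$ both lie in $\Omega_{j}$ with $\diam(\Omega_{j}) = \Oh(d)$. Your closing remark that only the inclusion $S_{j}\subset\Omega_{j}$ (and not the specific choice of the largest inscribed ball) matters is accurate and consistent with how the corollary is used later in the paper.
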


\subsection{A-priori estimates with $\boldsymbol{ \mathring{\mu_r}}$-Dielectric.}

In order to prove \textbf{Theorem \ref{thm-main-posi}}, we first give some important a-priori estimates related to the L.S.E \eqref{LS-1} and the discrete linear algebraic system \eqref{linear-discrete} in the following proposition, which is derived by the counting lemmas and the Calder\'{o}n-Zygmund inequality. Denote $S_m\subset\Omega_m$ as the largest ball contained in $\Omega_m$, $m=1, \cdots, \aleph$. Undoubtedly, for every $m$ such that $1 \leq m \leq \aleph$, we have $\left\vert S_{m} \right\vert \sim \left\vert \Omega_{m} \right\vert$. Based on \textbf{Proposition \ref{prop-regularity}}, we have. 

\begin{proposition}\label{prop-es-LS}
Under \textbf{Assumptions}, there holds the coming algebraic system associated with the L.S.E \eqref{LS-1},  
\begin{equation*}
    \digamma_{m} \, - \, \pm \, \xi \, d^{3} \, \sum_{j=1 \atop j \neq m}^{\aleph} \Upsilon_{k}\left( z_{m}, z_{j} \right) \cdot {\bf P}_{0} \cdot \digamma_{j} \, = \, i \, k \, H^{Inc}(z_m) \, + \, \mathrm{Error}_m, 
\end{equation*}
where 
\begin{equation*}
    \digamma_{m} \, := \, \left( {\bf I}+\frac{\eta_0 \, k^2}{\pm 3c_0}c_r^{-3} {\bf T}^{\mathring\mu_{r}}\right) \cdot \frac{1}{|S_m|}\int_{S_m}H^{\mathring{\mu}_r}(x)\,dx, \quad m \, = \, 1, \cdots, \aleph,
\end{equation*}
    with  ${\bf T}^{\mathring\mu_{r}}$ as the tensor introduced in $(\ref{exp-T})$, $\eta_0$ is the constant given in \eqref{contrast-epsilon} and $\mathrm{Error}_m$ is the corresponding error term satisfying the following estimate
	\begin{equation*}
	\sum_{m=1}^{\aleph} \left\vert \mathrm{Error}_{m} \right\vert^{2}
	\lesssim \, \frac{\eta_0^2 k^4}{c_0^2} \, \lvert{\bf T}^{\mathring\mu_{r}}\rvert^{2} \; c_r^{-6} \; \left( [H^{\mathring\mu_{r}}]^{2}_{C^{0, \alpha}(\overline{\Omega})}d^{2 \, \alpha \, - \, 3 } \, \left\vert \log(d) \right\vert^{2} \,
	+ \, \lVert H^{\mathring\mu_{r}}\rVert^{2}_{\mathbb L^{\infty}(\Omega)} \, d^{-\frac{9}{7}} \right).
	\end{equation*}
In addition, we have 
	\begin{equation}\label{es-l2}
         \left( \sum_{m=1}^\aleph \left\vert \digamma_{m} \, - \, U_{m} \right\vert^{2}\right)^{\frac{1}{2}} \\ 
		\lesssim  \frac{\eta_0\, k^2 \; \left\vert {\bf T}^{\mathring\mu_{r}} \right\vert}{\left( c_0 \, c_r^3 \, -k^2 \, \eta_0 \, \lvert{\bf P_0}\rvert \right)} \; \left([H^{\mathring\mu_{r}}]^{2}_{C^{0,\alpha}(\overline{\Omega})} \, d^{2 \, \alpha - 3} \, \left\vert \log(d) \right\vert^{2} \, + \, \left\Vert H^{\mathring\mu_{r}} \right\Vert^{2}_{\mathbb{L}^{\infty}(\Omega)} \, d^{-\frac{9}{7}}\right)^{\frac{1}{2}},
	\end{equation}
where $\{ U_m \}_{1 \leq m \leq \aleph}$ is the solution to the algebraic system $(\ref{alg-dis-00})$. 

\end{proposition}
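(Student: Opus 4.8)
The plan is to realize the cell-averages of the effective field as the solution of a discrete system having \emph{exactly} the same matrix as the Foldy--Lax system \eqref{alg-dis-00}, and then to compare the two. It is cleanest to start not from \eqref{PRP1} but from the Lippmann--Schwinger equation \eqref{LS-1}: using $\nabla_y\Phi_k=-\nabla_x\Phi_k$ one has $-\nabla{\bf M}^k(\cdot)+k^2{\bf N}^k(\cdot)=\int_\Omega\Upsilon_k(x,y)\cdot(\cdot)\,dy$, so that \eqref{LS-1} reads $H^{\mathring{\mu}_r}(x)\mp\xi\int_\Omega\Upsilon_k(x,y)\cdot{\bf T}^{\mathring\mu_{r}}H^{\mathring{\mu}_r}(y)\,dy=ik\,H^{Inc}(x)$ with $\xi=\tfrac{\eta_0k^2}{c_0c_r^3}$. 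First I would fix $m$, restrict this to $x\in\Omega_m$, average over the inscribed ball $S_m$, and split $\int_\Omega=\int_{\Omega_m}+\int_{\Omega\setminus\Omega_m}$. The self-cell integral over $\Omega_m$ contributes, to leading order, the local depolarization term: the principal value of $\nabla\nabla\Phi_0$ over a ball gives the factor $-\tfrac13{\bf I}$, whence $\pm\tfrac{\xi}{3}{\bf T}^{\mathring\mu_{r}}\overline H_m$ on the left, where $\overline H_m:=\tfrac{1}{|S_m|}\int_{S_m}H^{\mathring{\mu}_r}$. Combined with $\overline H_m$ this is precisely $\digamma_m=({\bf I}+\tfrac{\eta_0k^2}{\pm3c_0}c_r^{-3}{\bf T}^{\mathring\mu_{r}})\overline H_m$, which explains why $\digamma_m$ carries that matrix factor.

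The off-cell part I would discretize by the center rule, $\int_{\Omega_j}\Upsilon_k(x,y)\cdot{\bf T}^{\mathring\mu_{r}}H^{\mathring{\mu}_r}(y)\,dy\approx|\Omega_j|\,\Upsilon_k(z_m,z_j)\cdot{\bf T}^{\mathring\mu_{r}}\overline H_j$; since $|\Omega_j|=\mathcal{O}(d^3)$, the prefactor $\mp\xi$ turns into $\mp\xi d^3$, and the algebraic identity ${\bf T}^{\mathring\mu_{r}}={\bf P}_0({\bf I}\pm\tfrac{\xi}{3}{\bf T}^{\mathring\mu_{r}})$ contained in \eqref{exp-T} (see Remark \ref{rem-F}) rewrites $\Upsilon_k\cdot{\bf T}^{\mathring\mu_{r}}\overline H_j$ as $\Upsilon_k\cdot{\bf P}_0\digamma_j$. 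This produces exactly the operator $-(\pm\xi d^3)\sum_{j\neq m}\Upsilon_k(z_m,z_j)\cdot{\bf P}_0\cdot\digamma_j$ of the statement, with $\mathrm{Error}_m$ collecting every remainder.

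The core work is then the $\ell^2$ bound on $\mathrm{Error}_m$, which gathers (i) the center-rule errors from freezing $x$ at $z_m$ and $y$ at $z_j$ in $\Upsilon_k$ and $|\Omega_j|$ at $d^3$; (ii) the error from replacing $H^{\mathring{\mu}_r}$ by $\overline H_j$ on each cell; (iii) the higher-order part of the self-cell integral; and (iv) $\overline{H^{Inc}}_m-H^{Inc}(z_m)=\mathcal{O}(kd)$. For (ii) I would use Corollary \ref{CoroHolderH}, i.e. $|H^{\mathring{\mu}_r}(y)-\overline H_j|\lesssim[H^{\mathring{\mu}_r}]_{C^{0,\alpha}(\overline\Omega)}d^\alpha$, available thanks to the $C^{0,\alpha}$-regularity of Proposition \ref{prop-regularity}; for (i) and (iii) I would Taylor expand the kernel using $|x-z_m|\lesssim d$ on $S_m$ and $|y-z_j|\lesssim d$ on $\Omega_j$. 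The delicate point is that $\Upsilon_k$ carries the Calder\'on--Zygmund singularity $\nabla\nabla\Phi_k\sim|x-y|^{-3}$, so the resulting lattice sums $\sum_{j\neq m}d_{mj}^{-3}$ and $\sum_{j\neq m}d_{mj}^{-3+\alpha}$ sit at the logarithmic borderline and must be summed with the periodic counting lemmas, giving the $[H^{\mathring{\mu}_r}]^2_{C^{0,\alpha}(\overline\Omega)}d^{2\alpha-3}|\log d|^2$ contribution; the genuinely singular near-diagonal terms have to be controlled by the Calder\'on--Zygmund $\mathbb L^2$ inequality together with the packing of the cells, which is the source of the $\|H^{\mathring{\mu}_r}\|^2_{\mathbb L^\infty(\Omega)}d^{-9/7}$ contribution. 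Summing over $m$ and factoring out $\tfrac{\eta_0^2k^4}{c_0^2}|{\bf T}^{\mathring\mu_{r}}|^2c_r^{-6}$ then yields the stated bound on $\sum_m|\mathrm{Error}_m|^2$.

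Finally, to obtain \eqref{es-l2} I would subtract \eqref{alg-dis-00} from the system just derived for $\{\digamma_m\}$: since the two share the same matrix $\pm\xi d^3\,\Upsilon_k(z_m,z_j)\cdot{\bf P}_0$, the difference $\delta_m:=\digamma_m-U_m$ solves $\delta_m-(\pm\xi d^3)\sum_{j\neq m}\Upsilon_k(z_m,z_j)\cdot{\bf P}_0\cdot\delta_j=\mathrm{Error}_m$. The discrete operator $d^3\sum_j\Upsilon_k(z_m,z_j)\cdot$ is a Riemann realization of the $\mathbb L^2$-bounded dyadic (Calder\'on--Zygmund) operator, so its $\ell^2\to\ell^2$ norm is $\lesssim|{\bf P}_0|$ up to a universal constant; with the factor $\xi=\tfrac{k^2\eta_0}{c_0c_r^3}$ the whole matrix is a contraction exactly under the invertibility condition \eqref{condi-inver-p1}, i.e. $\xi|{\bf P}_0|<1$, and a Neumann series bounds the inverse by $(1-\xi|{\bf P}_0|)^{-1}=c_0c_r^3/(c_0c_r^3-k^2\eta_0|{\bf P}_0|)$. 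Applying this to $\mathrm{Error}_m$ converts the error bound into \eqref{es-l2}. The main obstacle throughout is the interaction of steps (i)--(iii) with the singular kernel: because the second-derivative kernel is exactly of order $|x-y|^{-3}$, the relevant lattice sums are not summable by a crude Schur test, so one must exploit the cancellation of the Calder\'on--Zygmund operator to keep both the error bound and the $\ell^2\to\ell^2$ operator norm uniform in the number $\aleph$ of particles; this is also what forces the non-standard exponents $d^{2\alpha-3}|\log d|^2$ and $d^{-9/7}$.
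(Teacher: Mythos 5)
Your overall route is the same as the paper's: average the L.S.E \eqref{LS-1} over the inscribed balls $S_m$, extract the depolarization factor $\tfrac13$ from the self-cell integral (the paper does this via harmonicity, the Mean Value Theorem and $\nabla{\bf M}_{\Omega_m}({\bf I})=\tfrac13{\bf I}$, which is exactly why $\digamma_m$ carries the factor ${\bf I}+\tfrac{\eta_0k^2}{\pm 3c_0}c_r^{-3}{\bf T}^{\mathring\mu_{r}}$), discretize the off-cell integrals by the center rule, use ${\bf T}^{\mathring\mu_{r}}={\bf P_0}\,({\bf I}+\tfrac{\eta_0k^2}{\pm 3c_0}c_r^{-3}{\bf T}^{\mathring\mu_{r}})$ to turn ${\bf T}^{\mathring\mu_{r}}\overline H_j$ into ${\bf P_0}\cdot\digamma_j$, and finally subtract \eqref{alg-dis-00} and invert the common matrix by a Neumann series under \eqref{condi-inver-p1}, which is what the paper does by invoking the proof of \cite[Theorem 1.3]{CGS}. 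Your treatment of the H\"older-oscillation error (Corollary \ref{CoroHolderH} against the $|x-z|^{-3}$ kernel, summed with the borderline $k=3$ case of Lemma \ref{conting1}, producing $d^{2\alpha-3}\left\vert\log d\right\vert^2$) and of the incident-field term coincide with the paper's terms $A_{m,1}$ and $T_m$.

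The genuine gap is at the quantitative core: the origin of the exponent $d^{-9/7}$. You propose to handle the center-rule error by plain Taylor expansion of the kernel and attribute the $\lVert H^{\mathring\mu_{r}}\rVert^2_{\mathbb L^\infty(\Omega)}d^{-9/7}$ term to "the Calder\'on--Zygmund $\mathbb L^2$ inequality together with the packing of the cells"; neither mechanism produces $9/7$, and the plan as stated fails. A direct Taylor expansion gives a per-pair error $\sim d^4d_{mj}^{-4}$, and $\sum_{j\neq m}d_{mj}^{-4}=\mathcal O(d^{-4})$ by Lemma \ref{conting1}, so the error per $m$ is $\mathcal O(1)$ and $\sum_m|\mathrm{Error}_m|^2=\mathcal O(d^{-3})$; this is too weak, since in the proof of Theorem \ref{thm-main-posi} the factor $|\Omega_m|\aleph^{1/2}\sim d^{3/2}$ then multiplies $(d^{-3})^{1/2}$ and the far-field error no longer decays. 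What the paper actually does (term $A_{m,3}$) is: (i) use harmonicity of $\Upsilon_0(\cdot,z)$ and the Mean Value Theorem so that the $x$-average over $S_m$ is exact, and (ii) split the cells into those inside a neighborhood $\Omega_0^m$ of $\Omega_m$ of radius $d^\beta$ (at most $N_{(1)}\lesssim d^{3\beta-3}$ of them, a packing bound) and the remaining far cells, estimate the two groups separately, and optimize $\beta=\tfrac{11}{14}$ to balance $d^{5-8\beta}$ against $d^{6\beta-6}$; it is precisely this two-scale optimization that manufactures $d^{-9/7}$. The Calder\'on--Zygmund inequality enters only in the self-cell term $B_{m,1}$, which contributes at the H\"older order $d^{2\alpha-3}$, not $d^{-9/7}$. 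A second, smaller omission: after splitting $\int_\Omega=\int_{\Omega_m}+\int_{\Omega\setminus\Omega_m}$ you identify $\Omega\setminus\Omega_m$ with $\cup_{j\neq m}\Omega_j$, silently dropping the layer $\Omega\setminus\cup_{j}\Omega_j$ near $\partial\Omega$ of volume $\mathcal O(d)$ (Remark \ref{rem-vol-d}); this is the paper's term $C_m$, which requires the separate boundary counting Lemma \ref{lem-count-boundary} and contributes $\mathcal O(d^{-1})$ to $\sum_m|\mathrm{Error}_m|^2$ — subdominant to $d^{-9/7}$, but a term your decomposition never produces and whose control is not automatic.
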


In order to clear the structure of our paper, we postpone the proof of \textbf{Proposition \ref{prop-es-LS}} to \textbf{Section \ref{sec5:proof}}.
\bigskip
\newline 
With the regularity results for the solution as well as the necessary a-priori estimates above, we are now in a position to prove \textbf{Theorem \ref{thm-main-posi}},  for the effective medium with $\mathring{\mu_{r}}$-Dielectric as follows.

\subsection{Proof of Theorem \ref{thm-main-posi}.}
 
We start by deriving a useful integral representation of $E^{\infty}_{eff, +}(\cdot)$ from $(\ref{LS-1})$. For this, we take the Curl operator in both sides of $(\ref{LS-1})$, to get 
\begin{eqnarray*}
	Curl\left( H^{\mathring{\mu}_r} \right)(x) \, \mp \, \xi \, k^{2} \, Curl \, \int_{\Omega} \Phi_{k}(x,z) \, {\bf T}^{\mathring\mu_{r}} \cdot H^{\mathring{\mu}_r}(z) \, dz  \, = \, i\, k \, Curl\left(H^{Inc}\right)(x) \, &=&  \, k^{2} \, E^{Inc}(x) \\
 	Curl\left( H^{\mathring{\mu}_r} \right)(x) \, \mp \, \xi \, k^{2} \, \int_{\Omega} \underset{x}{\nabla} \Phi_{k}(x,z) \, \times  \, {\bf T}^{\mathring\mu_{r}} \cdot H^{\mathring{\mu}_r}(z) \, dz \, - \, k^{2} \, E^{T}(x) &=& - \, k^{2} \, E^{s}(x).
\end{eqnarray*}
Since $H^{\mathring{\mu}_r}(\cdot)$ is the magnetic field generated by a source $i \, k \, H^{Inc}(\cdot)$, instead of $H^{Inc}(\cdot)$, the equations satisfied by $H^{\mathring{\mu}_r}(\cdot)$ are to understood up to the coefficient $i \, k$, in particular, we have
\begin{equation*}
    Curl\left( H^{\mathring{\mu}_r} \right) \, = \, i \, k \, Curl\left( H^{T} \right) \, =\, - \, i \, k \, i \, k \, E^{T}  \, =  \, k^{2} \, E^{T}, \quad\mbox{for}\quad x\in\mathbb{R}^3\backslash\overline{\Omega}. 
\end{equation*}
Then, we reduce the previous equation to
\begin{equation*}
 \, \pm \, \xi  \, \int_{\Omega} \underset{x}{\nabla} \Phi_{k}(x,z) \, \times  \, {\bf T}^{\mathring\mu_{r}} \cdot H^{\mathring{\mu}_r}(z) \, dz \, = \, E^{s}(x), \quad \left\vert x \right\vert \, \rightarrow \, + \, \infty.
\end{equation*}
In addition, we have, 
\begin{equation*}
    \underset{x}{\nabla} \Phi_{k}(x,z) \, = \, \frac{i \, k}{4 \, \pi} \, \frac{x}{\left\vert x \right\vert^{2}} \, e^{i \, k \, \left\vert x \right\vert} \, e^{- \, i \, k \, \hat{x} \cdot z} \, + \, \mathcal{O}\left( \frac{1}{\left\vert x - z \right\vert^{2}} \right), \quad \left\vert x \right\vert \, \rightarrow \, + \, \infty. 
\end{equation*}
Then, by plugging this formula into the previous one, we end up with
\begin{equation}\label{Eq0224Eq}
\frac{ e^{i \, k \, \left\vert x \right\vert}}{\left\vert x \right\vert} \, \left( \, \mp \, \frac{i \, k}{4 \, \pi} \, \xi  \, \int_{\Omega}  \, e^{- \, i \, k \, \hat{x} \cdot z}  \, \hat{x} \, \times  \, {\bf T}^{\mathring\mu_{r}} \cdot H^{\mathring{\mu}_r}(z) \, dz \, + \,  \mathcal{O}\left( \frac{1}{\left\vert x \right\vert} \right) \right)  \, = \, E^{s}(x), \quad \left\vert x \right\vert \, \rightarrow \, + \, \infty.
\end{equation}
Moreover, from $(\ref{far-def})$, we know that 
\begin{equation*}
    E^{s}(x) \, = \, \frac{e^{i \, k \, \left\vert x \right\vert}}{\left\vert x \right\vert} \, \left( E^{\infty}_{eff, +}(\hat{x}) + \mathcal{O}\left(\frac{1}{\left\vert x \right\vert}\right) \right),
\end{equation*}
which, by identification with $(\ref{Eq0224Eq})$, we get the representation

\begin{equation}\label{ls-eff}
    E^{\infty}_{eff, +}(\hat{x}) \, =  \, \pm \, \frac{i \, k}{4 \, \pi} \, \xi  \, \int_{\Omega}  \, e^{- \, i \, k \, \hat{x} \cdot z}  \, \hat{x} \, \times  \, {\bf T}^{\mathring\mu_{r}} \cdot H^{\mathring{\mu}_r}(z) \, dz.
\end{equation}
Next, we use $(\ref{exp-T})$, to rewrite it like
\begin{equation}\label{LS-Far}
	    E^{\infty}_{eff, +}(\hat{x})  \,  =  \, \pm \, \frac{i\, k}{4\, \pi}\, \xi\, \int_{\Omega}\,  e^{-\, i\, k\, \hat{x}\cdot z}\, \hat{x}\times   \left({\bf I} \, - \, \frac{\eta_0\, k^2}{\pm 3c_0}c_r^{-3} {\bf P_0} \right)^{-1} \cdot {\bf P_0} \cdot H^{\mathring\mu_{r}}(z)\,dz.
\end{equation}
By subtracting \eqref{far-dis-00} from \eqref{LS-Far}, and using the fact that $\Omega =  \left( \underset{m=1}{\overset{\aleph}{\cup}}  \Omega_{m} \right) \cup \left( \Omega \setminus \left( \underset{m=1}{\overset{\aleph}{\cup}}  \Omega_{m} \right) \right)$, we get  
\begin{equation}\label{mid1}
	 	E^\infty_{\mathrm{eff}, +}(\hat{x}) \, - \, E^\infty(\hat{x}) \, = \pm \, \frac{i \, k }{4\, \pi} \, \xi\, \left[ \, F^{\infty}(\hat{x}) \, + \, G^{\infty}(\hat{x}) \, \right] + \, \mathcal{O}\left( a^{\frac{h}{3}} \right),
\end{equation}
where 
\begin{eqnarray*}
		F^\infty(\hat{x}) &:=& \sum_{m=1}^{\aleph}\int_{\Omega_m} e^{- i\, k\,\hat{x}\cdot z}\, \hat{x}\times \left({\bf I} \, - \,\frac{\eta_0 k^2}{\pm 3c_0}c_r^{-3} \, {\bf P_0} \right)^{-1} \cdot {\bf P_0} \cdot \left(H^{\mathring\mu_{r}}(z)-\frac{1}{|S_m|}\int_{S_m} H^{\mathring\mu_{r}}(y)\,dy\right)\,dz  \\
		&+& \sum_{m=1}^{\aleph}\int_{\Omega_m}\left( e^{- i\, k\,\hat{x}\cdot z} \hat{x}\times  \left({\bf I} - \frac{\eta_0 k^2}{\pm 3c_0}c_r^{-3}{\bf P_0} \right)^{-1} \cdot {\bf P_0} \cdot \frac{1}{|S_m|}\int_{S_m}H^{\mathring{\mu_r}}(y)dy-e^{-i\, k \, \hat{x}\cdot z_m}\hat{x}\times {\bf P_0} \cdot U_m\right)dz,
\end{eqnarray*}
and
\begin{equation}\label{DefGinf}
	G^\infty(\hat{x}) := \int_{\Omega\backslash\underset{m=1}{\overset{\aleph}{\cup}}\Omega_m}e^{- i\, k\,\hat{x}\cdot z}\, \hat{x}\times  \left({\bf I} \, - \, \frac{\eta_0 k^2}{\pm 3c_0}c_r^{-3}{\bf P_{0}} \right)^{-1} \cdot {\bf P_0} \cdot H^{\mathring\mu_{r}}(z)\,dz.
 \end{equation} 
Next, we estimate $\left\Vert F^{\infty} \right\Vert_{\mathbb{L}^{\infty}\left( \mathbb{S}^2 \right)}$ and $\left\Vert E^{\infty} \right\Vert_{\mathbb{L}^{\infty}\left( \mathbb{S}^2 \right)}$.
\begin{enumerate}
    \item[]
    \item Estimation of $\left\Vert F^{\infty} \right\Vert_{\mathbb{L}^{\infty}\left( \mathbb{S}^2 \right)}$. To accomplish this, we estimate separately the first and the second term on the R.H.S of $F^{\infty}(\cdot)$.
    \begin{enumerate}
        \item[]
        \item Estimation of $F_{1}^{\infty}$. 
        \begin{eqnarray*}
            F_{1}^{\infty}(\hat{x}) &:=& \sum_{m=1}^{\aleph}\int_{\Omega_m} e^{- i\, k\,\hat{x}\cdot z}\, \hat{x}\times \left({\bf I} \, - \,  \frac{\eta_0 k^2}{\pm 3c_0}c_r^{-3}{\bf P_{0}}\right)^{-1} \cdot {\bf P_0} \cdot \left(H^{\mathring\mu_{r}}(z)-\frac{1}{|S_m|}\int_{S_m} H^{\mathring\mu_{r}}(y)\,dy\right)\,dz \\
            \left\vert F_{1}^{\infty}(\hat{x}) \right\vert & \leq & \sum_{m=1}^{\aleph}\int_{\Omega_m}   \left\vert {\bf P_0} \right\vert \, \left\vert \left({\bf I} \, - \,  \frac{\eta_0 k^2}{\pm 3c_0}c_r^{-3}{\bf P_{0}}\right)^{-1} \right\vert \, \left\vert H^{\mathring\mu_{r}}(z)-\frac{1}{|S_m|}\int_{S_m} H^{\mathring\mu_{r}}(y)\,dy \right\vert \,dz \\
            & \overset{(\ref{AS0456})}{\lesssim} & \sum_{m=1}^{\aleph} \left\vert \Omega_m \right\vert  \, \left\vert {\bf P_0} \right\vert \, \left\vert \left({\bf I} \, - \,  \frac{\eta_0 k^2}{\pm 3c_0}c_r^{-3}{\bf P_{0}}\right)^{-1} \right\vert \, \, [H^{\mathring{\mu_{r}}}]_{C^{0, \alpha}(\overline{\Omega})} \, d^{\alpha}, 
        \end{eqnarray*}
        where we have used the fact that ${\bf P_0}$ is a constant tensor. In addition, since $\underset{m=1}{\overset{\aleph}{\sum}} \left\vert \Omega_m \right\vert \sim 1$, we obtain 
	\begin{equation}\label{F1}
		\left\Vert F_{1}^{\infty} \right\Vert_{\mathbb{L}^{\infty}(\mathbb{S}^2)}  \lesssim
		  \lvert {\bf P_0} \rvert \;  \left\vert \left({\bf I} \, - \,  \frac{\eta_0 k^2}{\pm 3c_0}c_r^{-3}{\bf P_{0}}\right)^{-1} \right\vert \, \, [H^{\mathring{\mu_{r}}}]_{C^{0, \alpha}(\overline{\Omega})} d^\alpha.
	\end{equation}
        \item[]
        \item Estimation of $F_{2}^{\infty}$. 
        \begin{equation*}
          F_{2}^{\infty}(\hat{x}) :=  \sum_{m=1}^{\aleph}\int_{\Omega_m}\left( e^{-i\, k\,\hat{x}\cdot z} \, \hat{x}\times \left({\bf I} \, - \, \frac{\eta_0 k^2}{\pm 3c_0}c_r^{-3}{\bf P_0} \right)^{-1} \cdot {\bf P_0} \cdot \frac{1}{|S_m|}\int_{S_m}H^{\mathring{\mu_r}}(y)\,dy-e^{- i k \hat{x}\cdot z_m}\hat{x}\times {\bf P_0} \cdot U_m\right) \,dz.
        \end{equation*}
        By using Taylor expansion near $z_{m}$, we obtain that
        \begin{equation*}
            e^{- i\, k\,\hat{x}\cdot z} = e^{- i\, k\,\hat{x}\cdot z_{m}} - i \, k \, \int_{0}^{1} (z-z_{m}) \cdot \hat{x} \, e^{- i \, k \, \left(z_{m} + t \, (z-z_{m}) \right)} \, dt, \quad z \in \Omega_{m}. 
        \end{equation*}
        Then, 
        \begin{eqnarray*}
		F_2^\infty\left( \hat{x} \right) &=& \sum_{m=1}^{\aleph}\int_{\Omega_m} e^{- i\, k\,\hat{x}\cdot z_m}\, \hat{x}\times {\bf P_0} \cdot \left[ \left({\bf I} \, - \, \frac{\eta_0 k^2}{\pm 3c_0}c_r^{-3}{\bf P_0} \right)^{-1} \cdot \frac{1}{|S_m|}\int_{S_m} H^{\mathring\mu_{r}}(y)\,dy-U_{m} \right] \, dz \\
		&-& \sum_{m=1}^{\aleph}\int_{\Omega_m}  i \, k \, \int_{0}^{1} (z-z_{m}) \cdot \hat{x} \, e^{- i \, k \, \left(z_{m} + t \, (z-z_{m}) \right)} \, dt \\ && \qquad \qquad \qquad \qquad \hat{x}\times {\bf P_0} \cdot \left({\bf I} \, - \, \frac{\eta_0 k^2}{\pm 3c_0}c_r^{-3}{\bf P_0} \right)^{-1} \cdot \frac{1}{|S_m|}\int_{S_m} H^{\mathring\mu_{r}}(y)\,dy\,dz.
	\end{eqnarray*}
        \item[]
    \end{enumerate}
    We evaluate $F^\infty_2(\hat{x})$ by separately estimating the first and the second term on the R.H.S of the previous equation.  
    \begin{enumerate}
        \item[]
        \item Estimation of $F_{2,1}^{\infty}$.
        \begin{eqnarray*}
            F_{2,1}^{\infty}\left( \hat{x} \right) &:=& \sum_{m=1}^{\aleph}\int_{\Omega_m} e^{- i\, k\,\hat{x}\cdot z_m}\, \hat{x}\times {\bf P_0} \cdot \left[ \left({\bf I} \, - \, \frac{\eta_0 k^2}{\pm 3c_0}c_r^{-3}{\bf P_0} \right)^{-1} \cdot \frac{1}{|S_m|}\int_{S_m} H^{\mathring\mu_{r}}(y)\,dy-U_{m} \right] \, dz\\
            \left\vert F_{2,1}^{\infty}\left( \hat{x} \right) \right\vert & \leq & \left\vert \Omega_{m} \right\vert  \, \left\vert {\bf P_0} \right\vert \,  \sum_{m=1}^{\aleph}  \left\vert \left({\bf I} \, - \,  \frac{\eta_0 k^2}{\pm 3c_0}c_r^{-3}{\bf P_0} \right)^{-1} \cdot \frac{1}{|S_m|}\int_{S_m} H^{\mathring\mu_{r}}(y)\,dy-U_{m} \right\vert \\
            & \leq & \left\vert \Omega_{m} \right\vert \, \left\vert {\bf P_0} \right\vert \, \aleph^{\frac{1}{2}} \left( \sum_{m=1}^{\aleph}  \left\vert \left({\bf I} \, - \, \frac{\eta_0 k^2}{\pm 3c_0}c_r^{-3}{\bf P_0} \right)^{-1} \cdot \frac{1}{|S_m|}\int_{S_m} H^{\mathring\mu_{r}}(y)\,dy-U_{m} \right\vert^{2} \, \right)^{\frac{1}{2}}.
        \end{eqnarray*}
        From $(\ref{exp-T})$, we can derive that
        \begin{equation*}
            \left( {\bf I} \, + \, \frac{\eta_0 k^2}{\pm 3c_0}c_r^{-3}{\bf T}^{\mathring{\mu_{r}}} \right) \; = \;  \left({\bf I} \, - \, \frac{\eta_0 k^2}{\pm 3c_0}c_r^{-3}{\bf P_0} \right)^{-1},
        \end{equation*}
        which will be used together with the estimation $(\ref{es-l2})$, and the fact that $\left\vert \Omega_{m} \right\vert = \mathcal{O}\left( d^{3} \right)$ and $\aleph = \mathcal{O}\left( d^{-3} \right)$, to deduce the estimation
        \begin{equation}\label{EstF21}
            \left\Vert F_{2,1}^{\infty} \right\Vert_{\mathbb{L}^{\infty}(\mathbb{S}^2)} \lesssim 	\frac{\eta_0 \, k^2  \, \lvert {\bf P_0} \rvert^{2} \, \left\vert \left( {\bf I} \, - \, \dfrac{\eta_{0} \, k^{2}}{\pm \, 3 \, c_{0} \, c_r^3} \, {\bf P_0} \right)^{-1} \right\vert}{\left(c_0 \, c_r^3 \, - \, k^2 \, \eta_0 \, \lvert {\bf P_0} \rvert \right)} \; \left([H^{\mathring{\mu_{r}}}]^{2}_{C^{0, \alpha}(\overline{\Omega})} \, d^{2 \, \alpha } \left\vert \log(d) \right\vert^{2} \, + \, \lVert H^{\mathring{\mu_r}}\rVert^{2}_{\mathbb L^\infty\left( \Omega \right)} \, d^{\frac{12}{7}}\right)^{\frac{1}{2}}.
        \end{equation}
        \item[]
        \item Estimation of $F_{2,2}^{\infty}$.
        \begin{eqnarray*}
           F_{2,2}^{\infty}\left( \hat{x} \right) & := & -\sum_{m=1}^{\aleph}\int_{\Omega_m}  i \, k \, \int_{0}^{1} (z-z_{m}) \cdot \hat{x} \, e^{- i \, k \, \left(z_{m} + t \, (z-z_{m}) \right)} \, dt \\ && \qquad \qquad \qquad \qquad \hat{x}\times {\bf P_0} \cdot \left({\bf I} \, - \, \frac{\eta_0 k^2}{\pm 3c_0} c_r^{-3}{\bf P_{0}} \right)^{-1} \cdot \frac{1}{|S_m|}\int_{S_m} H^{\mathring\mu_{r}}(y)\,dy\,dz \\
           \left\vert F_{2,2}^{\infty}\left( \hat{x} \right) \right\vert & \lesssim &  k\, \sum_{m=1}^{\aleph} \int_{ \Omega_{m}} \left\vert z - z_{m} \right\vert \, dz  \, \lvert {\bf P_0} \rvert \, \left\vert \left( {\bf I} \, - \, \frac{\eta_0 \, k^2}{\pm \, 3 \, c_0} \, c_r^{-3} \,  {\bf P_0} \right)^{-1} \right\vert \, \lVert H^{\mathring{\mu_{r}}}\rVert_{\mathbb L^\infty\left( S_{m} \right)}. 
        \end{eqnarray*}
        Then, 
\begin{equation}\label{EstF22}
                \left\Vert F_{2,2}^{\infty} \right\Vert_{\mathbb{L}^{\infty}(\mathbb{S}^2)} = \mathcal{O}\left( k\, d \, \lvert {\bf P_0} \rvert \,  \left\vert \left( {\bf I} \, - \, \frac{\eta_0 \, k^2}{\pm \, 3 \, c_0} \, c_r^{-3} \,  {\bf P_0} \right)^{-1} \right\vert \, \lVert H^{\mathring{\mu_{r}}}\rVert_{\mathbb L^\infty\left( \Omega \right)}  \right).
\end{equation}
        \item[] 
    Consequently, by gathering $(\ref{EstF21})$ and $(\ref{EstF22})$, we obtain that
    \begin{equation}\label{EstF2}
            \left\Vert F_{2}^{\infty} \right\Vert_{\mathbb{L}^{\infty}(\mathbb{S}^2)} \lesssim 	\frac{\eta_0\, k^2 \,  \lvert {\bf P_0}\rvert^{2} \,  \left\vert \left( {\bf I} \, - \, \dfrac{\eta_0 \, k^2}{\pm \, 3 \, c_0} \, c_r^{-3} \,  {\bf P_0} \right)^{-1} \right\vert }{\left(c_0 \, c_r^3 \, - \, k^2 \, \eta_0 \, \lvert {\bf P_0} \rvert \right)} \; \left([H^{\mathring{\mu_{r}}}]^{2}_{C^{0, \alpha}(\overline{\Omega})} \, d^{2 \, \alpha } \left\vert \log(d) \right\vert^{2} \, + \, \lVert H^{\mathring{\mu_r}}\rVert^{2}_{\mathbb L^\infty\left( \Omega \right)} \, d^{\frac{12}{7}}\right)^{\frac{1}{2}}.
    \end{equation}
      \end{enumerate}
      Now, by combining with $(\ref{F1})$ and $(\ref{EstF2})$, we obtain, 
          \begin{equation}\label{EstF}
            \left\Vert F^{\infty} \right\Vert_{\mathbb{L}^{\infty}(\mathbb{S}^2)} \lesssim 	\frac{\eta_0 \, k^2 \,  \lvert {\bf P_0}\rvert^{2} \,  \left\vert \left( {\bf I} \, - \, \dfrac{\eta_0 \, k^2}{\pm \, 3 \, c_0} \, c_r^{-3} \,  {\bf P_0} \right)^{-1} \right\vert }{\left(c_0 \, c_r^3 \, - \, k^2 \, \eta_0 \, \lvert {\bf P_0} \rvert \right)} \; \left([H^{\mathring{\mu_{r}}}]^{2}_{C^{0, \alpha}(\overline{\Omega})} \, d^{2 \, \alpha } \left\vert \log(d) \right\vert^{2} \, + \, \lVert H^{\mathring{\mu_r}}\rVert^{2}_{\mathbb L^\infty\left( \Omega \right)} \, d^{\frac{12}{7}}\right)^{\frac{1}{2}}.
    \end{equation}
    \item[]
    \item Estimation of $\left\Vert G^{\infty} \right\Vert_{\mathbb{L}^{\infty}\left( S \right)}$. By taking the modulus on both sides of the equation $(\ref{DefGinf})$, we get: 
\begin{eqnarray*}
    \left\vert G^\infty(\hat{x}) \right\vert & \leq & \left\vert  \int_{\Omega\backslash\underset{m=1}{\overset{\aleph}{\cup}}\Omega_m}e^{- i\, k\,\hat{x}\cdot z}\, \hat{x}\times \left({\bf I} \, - \, \frac{\eta_0 k^2}{\pm 3c_0}c_r^{-3}{\bf P_0} \right)^{-1} \cdot {\bf P_0} \cdot H^{\mathring\mu_{r}}(z)\,dz \right\vert \\
    & \leq & \left\vert   \Omega\backslash\underset{m=1}{\overset{\aleph}{\cup}}\Omega_m \right\vert \,  \left\vert {\bf P_0} \right\vert \, \left\vert \left( {\bf I} \, - \, \frac{ \eta_0 \, k^2}{\pm \, 3 \, c_0} \, c_r^{-3} \, 
     {\bf P_0}  \right)^{-1}  \right\vert \,\left\Vert H^{\mathring\mu_{r}} \right\Vert_{\mathbb{L}^{\infty}(\Omega)}.
 \end{eqnarray*}

Due to \textbf{Remark \ref{rem-vol-d}}, we know that $\left\vert \Omega \backslash \underset{m=1}{\overset{\aleph}{\cup}}  \Omega_{m} \right\vert = \mathcal{O}\left( d \right)$, which implies,
	\begin{equation}\label{G}
		\left\Vert G^\infty \right\Vert_{\mathbb{L}^{\infty}(\Omega)} \lesssim d \,  \lvert {\bf P_0} \rvert \, \left\vert \left( {\bf I} \, - \, \frac{ \eta_0 \, k^2}{\pm \, 3 \, c_0} \, c_r^{-3} \, 
     {\bf P_0}  \right)^{-1}  \right\vert \, \lVert H^{\mathring{\mu_{r}}}\rVert_{\mathbb L^\infty\left( \Omega \right)}.
	\end{equation}
 \end{enumerate}
Now, combining with the estimation of $F^{\infty}(\cdot)$ given by $(\ref{EstF})$ and the estimation of $G^{\infty}(\cdot)$ presented by $(\ref{G})$, under the relation $(\ref{distribute})$, we derive from $(\ref{mid1})$ the coming estimation
	 	\begin{eqnarray}\label{far-end}
    E^{\infty}_{\mathrm{eff}, +}(\hat{x}) - E^{\infty}(\hat{x})  &=& \nonumber \Oh\bigg( \frac{k^5 \, \eta_{0}^2 \, c_{r}^{-3} \, \lvert {\bf P_0} \rvert^{2} \, \left\vert \left( {\bf I} \, - \, \dfrac{ \eta_0 \, k^2}{\pm \, 3 \, c_0} \, c_r^{-3} \, 
     {\bf P_0}  \right)^{-1} \right\vert}{c_{0} \, \left( c_0 \, c_r^3 \, - \, k^2 \, \eta_0 \, \lvert{\bf P_0} \rvert \right)} \\ \nonumber && \left([H^{\mathring{\mu_{r}}}]^{2}_{C^{0, \alpha}(\overline\Omega)} \, c_{r}^{2 \, \alpha} \,  a^{2 \, \alpha \, (1-\frac{h}{3})} \, \left\vert \log(a) \right\vert^{2} \, + \, \lVert H^{\mathring{\mu_{r}}}\rVert^{2}_{\mathbb L^{\infty}\left( \Omega \right)} \, c_r^{\frac{12}{7}} \, a^{\frac{12}{7}(1-\frac{h}{3})}\right)^{\frac{1}{2}} \bigg) 
	 \\ &+& \mathcal{O}\left( a^{\frac{h}{3}} \right),
	 \end{eqnarray}
	 where $\alpha\in (0,1)$. The remaining part of the proof consists in substituting the explicit form of the constant tensor ${\bf T}^{\mathring{\mu_r}}$, given by $(\ref{exp-T})$, into $(\ref{effective-permi})$, then  derive the expressions of the far-field error given by $(\ref{err-es})$  and the effective coefficient as $(\ref{new-coeff})$.
  
  \medskip

Now we prove the second part of \textbf{Theorem \ref{thm-main-posi}}. 
 Using $(\ref{def-xi})$ and substituting $(\ref{exp-T-neg})$ into $(\ref{ls-eff})$, the far-field becomes
\begin{equation}\notag
	E_{\mathrm{eff}, -}^\infty(\hat{x})= \mp \frac{3 \, i\, \eta_0\, k^3}{c_0\, \pi^4}\, c_r^{-3}\, \left(1 \, \mp \, \frac{4\eta_0 k^2}{\pi^3 \, c_0 \, c_r^{3}}\right)^{-1}\int_{\Omega}e^{- i k \hat{x}\cdot z}\hat{x}\times H^{\mathring{\mu_r}}(z)\,dz.
\end{equation} 
Then, similar to the expression of the error for $E^\infty_{\mathrm{eff}, +}(\hat{x})-E^\infty(\hat{x})$, given by \eqref{far-end}, with the help of the explicit representations of the constant tensors ${\bf P_0}$, in $(\ref{exp-p0})$, and ${\bf T}^{\mathring{\mu_r}}$, in $(\ref{exp-T-neg})$, we deduce by direct computations that
\begin{eqnarray*}
\nonumber
  \left\vert E^\infty_{\mathrm{eff}, -}(\hat{x})-E^\infty(\hat{x})\right\vert & \lesssim & \frac{k^5 \, \eta_0^2\, c_r^{-3}}{c_0\left( c_0 \, c_r^3 \, \pi^3 \, - \, 12 \, k^2 \, \eta_0 \right)}\left|\left(1 \mp  \frac{4 \,\eta_0 \, k^2}{\pi^3 \, c_0} \, c_r^{-3} \, \right)^{-1}\right| \\
	& \cdot &\left([H^{\mathring{\mu_r}}]^{2}_{C^{0, \alpha}(\overline\Omega)} \, d^{2 \, \alpha} \, \left\vert \log(d) \right\vert^{2} \, + \, \lVert H^{\mathring{\mu_r}}\rVert^{2}_{\mathbb L^{\infty}(\Omega)} \,  d^{\frac{12}{7}} \right)^{\frac{1}{2}} \, + \, \mathcal{O}\left( a^{\frac{h}{3}} \right),
\end{eqnarray*} 
where $\alpha\in(0, 1)$. Then \eqref{es-error-neg} can be obtained directly by using \eqref{distribute}. The negative definite effective permeability $\mathring{\mu_r}$ is given by the diagonal matrix \eqref{mu-r0-diag}, while $\mathring{\epsilon_{r}}$ keeps unchanged, which proves \eqref{new-coeff-neg}. This ends the proof of \textbf{Theorem \ref{thm-main-posi}}.

\section{The effective medium with $\mathring{\mu_r}$-Plasmonic.}\label{sec4:negative}

In this section, we investigate the electromagnetic scattering for the effective medium with negative permeability $\mathring{\mu_r}$. Precisely, we first present a systematical discussion on the behaviors of the effective permeability $\mathring{\mu_r}$ appearing in \eqref{model-equi}, associated with different values of the parameter $c_0$ given by \eqref{condition-on-k}, particularly for the case with $\mathring{\mu_r}$ being negative definite. Then, similar to the analyses in \textbf{Section \ref{sec3:positive}} for positive permeability $\mathring{\mu_r}$, we study the H\"{o}lder regularity of the solution $(E^{\mathring{\epsilon_{r}}}, H^{\mathring{\mu_r}})$ to \eqref{model-equi}. Indeed, we show that with $\partial\Omega$ is $C^2-$ regular, we obtain the $C^{0, \alpha}$-regularity to the solution for negative permeability $\mathring{\mu_r}$, see \textbf{ Proposition \ref{prop-regu-neg}}. If in addition $\partial\Omega$ is $C^{4, \alpha}-$ regular, then we derive $C^{0, \alpha}$-norm estimates of these solutions, see \textbf{ Lemma \ref{AddedLemma}}. Finally, we give the proof of \textbf{Theorem \ref{thm-main-posi}}, point (2), by utilizing the a-priori estimates for the corresponding L.S.E in a similar way as we did in \textbf{Proposition \ref{prop-es-LS}}.

\medskip

\subsection{Negative definite effective permeability $\boldsymbol{\mathring{\mu_r}}$.}

From the L.S.E \eqref{LS-1}, we deduce that the effective permeability is of the form
\begin{equation}\label{mu-r0}
	\mathring{\mu_r} = {\bf I}+\frac{\eta_0 k^2}{\pm c_0}c_r^{-3}{\bf T}^{\mathring{\mu_r}}
	\overset{(\ref{exp-T})}{=} {\bf I}+\frac{\eta_0 k^2}{\pm c_0}c_r^{-3}\left({\bf I}-\frac{\eta_0 k^2}{\pm 3c_0}c_r^{-3} {\bf P_0}\right)^{-1}{\bf P_0},
\end{equation}
where ${\bf P_0}$ is given by $(\ref{defP0})$. In the subsequent discussions, regarding negative contrast permeability $\mathring{\mu_r}$, we assume that the domain $B$, in $(\ref{defP0})$, is the unit ball $B(0,1)$. In this case, from \cite[Section 5.2]{ALZ}, we know that  
\begin{equation}\label{Eq0849}
    \int_{B(0,1)} \phi_{1,j}(x) \, dx \, = \, \frac{4}{\pi} \, \nabla \left( \left\vert x \right\vert \, Y_{1}^{j}\left( \hat{x} \right) \right), 
\end{equation}
where $\phi_{1,j}(\cdot)$ is the function defined in $(\ref{Intphin0})$, with $n_{0} = 1$, and $Y_{1}^{j}\left( \hat{x} \right)$, with $j=-1,0,1$, are the spherical harmonics given explicitly by
\begin{eqnarray*}
   Y_{1}^{-1}\left( \hat{x} \right) & \, = \, & \frac{1}{2} \, \sqrt{\frac{3}{2 \, \pi}} \, \left( \hat{x}_{1} \, - \, i \, \hat{x}_{2} \right) \\
   Y_{1}^{0}\left( \hat{x} \right) & \, = \, & \frac{1}{2} \, \sqrt{\frac{3}{\pi}} \,  \hat{x}_{3}  \\
   Y_{1}^{1}\left( \hat{x} \right) & \, = \, & - \, \frac{1}{2} \, \sqrt{\frac{3}{2 \, \pi}} \, \left( \hat{x}_{1} \, + \, i \, \hat{x}_{2} \right).
\end{eqnarray*}
Using $(\ref{Eq0849})$, we obtain\footnote{Consequently, 
\begin{equation}\label{PCB}
    \int_{B(0,1)} \phi_{1}(x) \, dx \, = \, \frac{2}{\pi} \, \sqrt{\frac{3}{\pi}} \, \begin{pmatrix}
0 \\
- i \sqrt{2}  \\
1
\end{pmatrix}.
\end{equation}
}
\begin{eqnarray*}
    \int_{B(0,1)} \phi_{1,-1}(x) \, dx \, &=& \, \frac{1}{\pi} \, \sqrt{\frac{6}{\pi}} \, \begin{pmatrix}
1 \\
-i  \\
0
\end{pmatrix} \\
  \int_{B(0,1)} \phi_{1,0}(x) \, dx \, &=& \, \frac{2}{\pi} \, \sqrt{\frac{3}{\pi}} \, \begin{pmatrix}
0 \\
0  \\
1
\end{pmatrix} \\
  \int_{B(0,1)} \phi_{1,1}(x) \, dx \, &=& \, - \, \frac{1}{\pi} \, \sqrt{\frac{6}{\pi}} \, \begin{pmatrix}
1 \\
i  \\
0
\end{pmatrix}. 
\end{eqnarray*}
Now, by using the expression of ${\bf P_0}$, given in $(\ref{defP0})$, we obtain
\begin{equation}\label{exp-p0}
{\bf P_0} \; = \; \frac{12}{\pi^3} \; {\bf I}.
\end{equation}
Recall the definition of $\xi$ given by $(\ref{def-xi})$. By substituting $(\ref{exp-p0})$ and $(\ref{def-xi})$ into $(\ref{mu-r0})$, we have
\begin{equation}\label{mu-r0-diag}
	\mathring{\mu_r}={\bf I}\pm \xi \left(1\mp \xi \frac{4}{\pi^3} \right)^{-1}{\bf I}\cdot \frac{12}{\pi^3}{\bf I} \; = \; \frac{\left(\pi^{3}  \, \pm \, 8 \, \xi \right)}{\left(\pi^{3} \, \mp \, 4 \, \xi \right)} \; {\bf I}.
\end{equation}
And, regarding the permeability contrast, we get
\begin{equation*}
	\mathring{\mu_r} \, - \, {\bf I} \, = \,  \frac{ \pm \, 12 \, \xi}{\left(\pi^{3} \, \mp \, 4 \, \xi \right)} \; {\bf I}.
\end{equation*}
The above formula, as a function of the chosen sign, suggests two cases 
\begin{enumerate}
    \item[]
    \item By taking the lower sign, we obtain
    \begin{equation*}
	\mathring{\mu_r} \, - \, {\bf I} \, = \,  \frac{ - \, 12 \, \xi}{\left(\pi^{3} \, + \, 4 \, \xi \right)} \; {\bf I} \, < \, 0. 
\end{equation*}
 
    \item[] 
    \item By taking the upper sign, we obtain
            \begin{equation*}
	\mathring{\mu_r} \, - \, {\bf I} = \left(  \frac{12 \, \xi}{\pi^3 - 4 \, \xi} \right) \, {\bf I} \, \begin{cases}
		  < 0, & \text{\;\; if $\xi > \dfrac{\pi^3}{4}$ \,\, }\\
                 & \\
            > 0, & \text{\;\; if $ \xi < \dfrac{\pi^3}{4}$ \,\, }
		        \end{cases}.
           \end{equation*}
\end{enumerate}
The above formula, as a functions of the chosen sign, suggests two cases\footnote{$\mathring{\mu_r}$ is positive (resp. negative) definite if and only if $\langle x^{T}; \mathring{\mu_r} \cdot x \rangle \, > \, 0$ (resp. $\langle x^{T}; \mathring{\mu_r} \cdot x \rangle \, < \, 0$), for every $x \in \mathbb{R}^{3}$.}. 
\begin{enumerate}
    \item[] 
    \item By taking the lower sign, we obtain 
    \begin{equation}\label{ls}
	\mathring{\mu_r} = \left(  \frac{\pi^3 - 8 \, \xi}{\pi^3 + 4 \, \xi} \right) \, {\bf I} \, \begin{cases}
		  < 0, & \text{\;\; if $\xi > \dfrac{\pi^3}{8}$}\\
                 & \\
            > 0, & \text{\;\; if $0 < \xi < \dfrac{\pi^3}{8}$}
		        \end{cases}.
\end{equation}
\item[] 
    \item By taking the upper sign, we obtain:
        \begin{equation}\label{us}
	\mathring{\mu_r} = \left(  \frac{\pi^3 + 8 \, \xi}{\pi^3 - 4 \, \xi} \right) \, {\bf I} \, \begin{cases}
		  < 0, & \text{\;\; if $\xi > \dfrac{\pi^3}{4}$}\\
                 & \\
            > 0, & \text{\;\; if $0 < \xi < \dfrac{\pi^3}{4}$}
		        \end{cases}.
\end{equation}
\end{enumerate}
Therefore, for $\xi:=\dfrac{\eta_0 k^2}{c_0}c_r^{-3}>\dfrac{\pi^3}{4}$, which means
\begin{equation}\notag
	c_{0} < \dfrac{4 \, \eta_{0} \, k^2}{\pi^3 \, c_{r}^3},
\end{equation}
 the tensor $\mathring{\mu_r}$ is always negative definite. Particularly, if $c_0\ll 1$, then in \eqref{mu-r0}, $\dfrac{\eta_0 k^2}{\pm c_0}c_r^{-3}{\bf P_0}$ dominates and there holds
\begin{equation*}
	\left({\bf I}-\frac{\eta_0 k^2}{\pm 3c_0}c_r^{-3}{\bf P_0}\right)^{-1} =\left(-\frac{\eta_0 k^2}{\pm 3c_0}c_r^{-3}{\bf P_0}\right)^{-1}\left({\bf I}+\left(-\frac{\eta_0 k^2}{\pm 3c_0}c_r^{-3}{\bf P_0}\right)^{-1}\right)^{-1}
	= \left(-\frac{\eta_0 k^2}{\pm 3c_0}c_r^{-3}{\bf P_0}\right)^{-1}\left({\bf I}+o(1)\right). 
\end{equation*}
Thus
\begin{equation}\notag
	\mathring{\mu_r}={\bf I}+\frac{\eta_0 k^2}{\pm c_0}c_r^{-3}\left(-\frac{\eta_0 k^2}{\pm 3c_0}c_r^{-3}\right)^{-1}{\bf P_0}^{-1}{\bf P_0}\left({\bf I}+o(1)\right)=-2{\bf I}+o(1),
\end{equation}
which indicates that in this case, there will be twice of the enhancement of the negative definite effective permeability. In addition, by the explicit representation of ${\bf P_0}$, given by $(\ref{exp-p0})$, we can also derive from the definition of ${\bf T}^{\mathring{\mu_r}}$, see for instance $(\ref{exp-T})$, that
	\begin{equation}\label{exp-T-neg}
	{\bf T}^{\mathring{\mu_r}} \; = \; \left(1 \, \mp \, \frac{4}{\pi^3} \, \xi \right)^{-1} \, \frac{12}{\pi^3} \, {\bf I},
	\end{equation}
with $\xi$ as given by $(\ref{def-xi})$.

\subsection{$\boldsymbol{C^{0,\alpha}}$-regularity of the solution $\boldsymbol{H^{\mathring{\mu_r}}}$ with negative definite $\boldsymbol{\mathring{\mu_r}}$.}

In order to generate the effective medium with negative definite permeability in \textbf{Theorem \ref{thm-main-posi}}, second point, the $C^{0, \alpha}$-regularity of the solution $H^{\mathring{\mu_r}}$ is needed. However, the method utilized in \textbf{Proposition \ref{prop-regularity}} does not work for $\mathring{\mu_r}$ being negative definite in the L.S.E \eqref{LS-1}. As explained in \textbf{Section \ref{Background and motivation}}, for $\mathring{\mu_r}$ positive definite, we rewrite the operator appearing in the LSE as $Id +A$ where $Id$ is the identity operator and $A$ is shown to be dominated by $Id$ under the conditions in \textbf{Proposition \ref{prop-regularity}}. In short, in this case we have no plasmonic eigenvalues. However, for $\mathring{\mu_r}$ negative definite, such plasmon eigenvalues occur, therefore, we rewrite the related operator as $Inv+A$ where $Inv$ includes the Magnetization operator $\nabla {\bf M}$ (which creates the plasmon eigenvalues).  Therefore, in this subsection, we prove the $C^{0, \alpha}$-regularity of the solution, in this case, by properly analyzing the operator $Inv$. We state these results in the following proposition.

\begin{proposition}\label{prop-regu-neg}
	Let $\Omega$ be a bounded domain which is $C^{2}$- smooth. Regarding the electromagnetic scattering problem \eqref{model-equi}, with $\mathring{\mu_r}$ being negative definite, we have the following regularity result
	\begin{equation}\notag
		(E^{\mathring{\epsilon_{r}}}, H^{\mathring{\mu_r}})\in C^{0, \alpha}(\overline{\Omega})\times C^{0, \alpha}(\overline{\Omega}),
	\end{equation}
 under the condition 
 \begin{equation*}
     2 \, \delta \, \pi^{3} \, < \, \xi \, < \, \frac{\pi^{4}}{12 \, \left\vert \Omega \right\vert \, c(k)},
 \end{equation*}
where $\xi$ is the coefficient given by $(\ref{def-xi})$,  $c(\cdot)$ is the frequency function defined by
\begin{equation}\label{def-ck}
   c(k) : =  k^2 \,  +  \frac{k^3}{6}  \,   + \frac{k^2}{4  \, \diam(\Omega)} \, \sum_{n \geq 2}\frac{\left( k \, \diam(\Omega) \right)^{n}}{n !} + \frac{k^3}{16} \, \sum_{n \geq 1}\frac{\left( k \, \diam(\Omega) \right)^{n}}{n !},
\end{equation}
 and $\delta$ is the parameter given by
 \begin{equation*}
     \delta := 1 + \left[ \frac{1}{12 \; \underset{n \in \mathbb{N}}{\min} \;  \left\vert \lambda_{n}^{(3)}\left( \Omega \right) - \dfrac{1}{3}\right\vert }  \right],
 \end{equation*}
 with the eigenvalues $\{ \lambda_{n}^{(3)}\left( \Omega \right) \}_{n \in \mathbb{N}}$ are such that $\lambda_{n}^{(3)}\left( \Omega \right) \, > \, \dfrac{1}{3}$. 
\end{proposition}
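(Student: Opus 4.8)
The plan is to study the Lippmann--Schwinger operator behind \eqref{LS-1} by projecting it onto the Helmholtz decomposition \eqref{hel-decomp} and isolating the resonant contribution carried by the \emph{static} Magnetization operator $\nabla{\bf M}$. Since $B=B(0,1)$, the tensor ${\bf P_0}=\tfrac{12}{\pi^{3}}{\bf I}$ by \eqref{exp-p0} and ${\bf T}^{\mathring\mu_r}$ is a scalar multiple of ${\bf I}$ by \eqref{exp-T-neg}; hence the operator in \eqref{LS-1} is the scalar-coefficient operator
\[
\mathcal{L} \, := \, {\bf I} \, + \, \gamma\,\nabla{\bf M}^{k} \, - \, \gamma\,k^{2}\,{\bf N}^{k}, \qquad \gamma \, := \, \mathring\mu_r - 1 \, = \, \frac{\pm\,12\,\xi}{\pi^{3}\mp 4\,\xi},
\]
with $\xi$ as in \eqref{def-xi}. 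First I would split off the static Magnetization operator and write $\mathcal{L}=Inv+K$, where $Inv:={\bf I}+\gamma\,\nabla{\bf M}$ and $K:=\gamma(\nabla{\bf M}^{k}-\nabla{\bf M})-\gamma\,k^{2}{\bf N}^{k}$. The operator $K$ is a smoothing perturbation: after Taylor expanding $e^{ik|x-z|}$ the leading singularity of $\nabla_{x}\nabla_{x}(\Phi_{k}-\Phi_{0})$ cancels, leaving a weakly singular kernel, while ${\bf N}^{k}$ gains two derivatives; a direct kernel estimate bounds $\|K\|$ by $|\gamma|$ times the frequency function $c(k)$ of \eqref{def-ck} times $|\Omega|$, both on $\mathbb{L}^{2}$ and on $C^{0,\alpha}(\overline\Omega)$.

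Next I would invert $Inv$ spectrally. By the action of $\nabla{\bf M}$ on the three subspaces recorded in \textbf{Remark \ref{rem-vol-d}}, $Inv$ acts as the identity on $\mathbb{H}_0(\div=0)$, as $(1+\gamma)\,{\bf I}=\mathring\mu_r\,{\bf I}$ on $\mathbb{H}_0(Curl=0)$, and with eigenvalues $1+\gamma\,\lambda_{n}^{(3)}(\Omega)$ on $\nabla\mathcal{H}armonic$. Thus the $\mathbb{L}^{2}$ lower bound of $Inv$ is $\min\{\,1,\,|\mathring\mu_r|,\,\min_{n}|1+\gamma\,\lambda_{n}^{(3)}|\,\}$, and only the last term is delicate: $1+\gamma\,\lambda_{n}^{(3)}$ vanishes precisely at $\lambda_{n}^{(3)}=\tfrac13+\tfrac{\pi^{3}}{12\,\xi}$. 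The definition of $\delta$ forces $\min_{n}|\lambda_{n}^{(3)}-\tfrac13|>\tfrac{1}{12\,\delta}$, while the hypothesis $\xi>2\,\delta\,\pi^{3}$ gives $\tfrac{\pi^{3}}{12\,\xi}<\tfrac{1}{24\,\delta}$, so the eigenvalues stay uniformly away from this pole and one obtains $\min_{n}|1+\gamma\,\lambda_{n}^{(3)}|\geq \tfrac{\pi^{3}}{\pi^{3}+4\,\xi}>0$. Consequently $Inv$ is boundedly invertible on $\mathbb{L}^{2}$ with $\|Inv^{-1}\|$ under control.

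I would then combine the two estimates: the upper bound $\xi<\tfrac{\pi^{4}}{12\,|\Omega|\,c(k)}$ is exactly the smallness condition making $\|K\|$ strictly below the lower bound of $Inv$, i.e. $\|Inv^{-1}K\|<1$, so that $\mathcal{L}=Inv\,(\,{\bf I}+Inv^{-1}K\,)$ is invertible by a Neumann series; this yields well-posedness of \eqref{LS-1} and an $\mathbb{L}^{2}$ bound for $H^{\mathring\mu_r}$. To upgrade to $C^{0,\alpha}$ I would use that $K$ maps $\mathbb{L}^{2}$ into $C^{0,\alpha}(\overline\Omega)$ and that $H^{Inc}$ is analytic, so that in $H^{\mathring\mu_r}=Inv^{-1}(i\,k\,H^{Inc}-K\,H^{\mathring\mu_r})$ the argument of $Inv^{-1}$ already lies in $C^{0,\alpha}$; the Hölder regularity then follows once $Inv^{-1}$ is shown to be bounded on $C^{0,\alpha}(\overline\Omega)$. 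Finally, $E^{\mathring\epsilon_r}$ is recovered from $H^{\mathring\mu_r}$ through a curl-type singular-integral representation with kernel $\nabla_{x}\Phi_{k}$, which maps $C^{0,\alpha}(\overline\Omega)$ into itself, giving $E^{\mathring\epsilon_r}\in C^{0,\alpha}(\overline\Omega)$ as well.

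The step I expect to be the main obstacle is the last one, namely passing from the $\mathbb{L}^{2}$ spectral lower bound for $Inv$ to a resolvent bound on the Hölder space $C^{0,\alpha}(\overline\Omega)$. The spectral decomposition used above is an $\mathbb{L}^{2}$ tool, whereas the boundedness of $(\,{\bf I}+\gamma\,\nabla{\bf M}\,)^{-1}$ on $C^{0,\alpha}$ requires that the spectrum of the Magnetization operator (equivalently of the adjoint Neumann--Poincar\'e operator) on $C^{0,\alpha}$ coincide with its $\mathbb{L}^{2}$ spectrum, so that $-1/\gamma$ stays in the Hölder resolvent set with a comparable bound. This is exactly where the $C^{2}$-regularity of $\partial\Omega$ enters, guaranteeing the compactness and symmetrizability of $\nabla{\bf M}$ and the coincidence of its spectra across these function spaces.
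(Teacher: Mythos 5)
Your first step --- the splitting $\mathcal{L}=Inv+K$ with $Inv={\bf I}+\gamma\,\nabla{\bf M}$, the spectral lower bound $\min\{1,\lvert\mathring{\mu_r}\rvert,\min_n\lvert 1+\gamma\lambda_n^{(3)}\rvert\}$ obtained by projecting on the Helmholtz decomposition, the bound $\Vert K\Vert\lesssim \lvert\gamma\rvert\, c(k)\,\lvert\Omega\rvert$, and the verification that $2\,\delta\,\pi^{3}<\xi<\pi^{4}/(12\,\lvert\Omega\rvert\,c(k))$ makes $Inv$ dominate $K$ --- is essentially the paper's own argument for the $\mathbb{L}^{2}$ part (the paper organizes the eigenvalue bookkeeping through the tensors $\mathring{\mu_r}$ and $A_{\mathring{\mu_r},n}$ and a case analysis on the sign, but the content and the resulting conditions coincide with yours). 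The problem is your second step. You correctly flag that upgrading from the $\mathbb{L}^{2}$ resolvent bound to a resolvent bound for $Inv^{-1}=({\bf I}+\gamma\,\nabla{\bf M})^{-1}$ on $C^{0,\alpha}(\overline\Omega)$ is the main obstacle, but your proposed resolution is not available: $\nabla{\bf M}$ is \emph{not} compact on $\mathbb{L}^{2}(\Omega)$ (it acts as the identity on all of $\mathbb{H}_0(Curl=0)$, vanishes on $\mathbb{H}_0(\div=0)$, and its eigenvalues on $\nabla\mathcal{H}armonic$ accumulate at $\tfrac12$), and it is a Calder\'on--Zygmund volume singular integral operator, so no $C^{2}$-regularity of $\partial\Omega$ gives compactness, nor is there a standard theorem asserting that its spectrum, or the quantitative lower bound of $Inv$ obtained via the orthogonal decomposition and self-adjointness, transfers to the non-Hilbertian space $C^{0,\alpha}(\overline\Omega)$. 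As written, the invertibility of $Inv$ on $C^{0,\alpha}$ with controlled norm is assumed, not proved, and this is precisely the point the whole proposition turns on.

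The paper avoids this issue entirely by changing tools after the $\mathbb{L}^{2}$ step. Instead of inverting $Inv$ in H\"older spaces, it exploits the PDE: outside $\Omega$ the fields are smooth since $\mathring{\epsilon_r}=\mathring{\mu_r}={\bf I}$ there, so a cutoff and the trace theorem give $\nu\times H^{\mathring{\mu_r}}_{-}\vert_{\partial\Omega}\in \mathbb{W}^{1-\frac1p,p}(\partial\Omega)$ for every $p$ via the transmission condition; inside $\Omega$ the Maxwell system with \emph{constant} coefficients yields $H^{\mathring{\mu_r}}_{-},\,\mathrm{Curl}\,H^{\mathring{\mu_r}}_{-},\,\mathrm{div}\,H^{\mathring{\mu_r}}_{-}\in\mathbb{L}^{2}(\Omega)$, and the div--curl--trace estimate of Amrouche--Seloula (\cite[Corollary 5.3]{AS}) gives $H^{\mathring{\mu_r}}_{-}\in\mathbb{W}^{1,2}(\Omega)$. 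Iterating this with the Sobolev embeddings ($\mathbb{W}^{1,2}\hookrightarrow\mathbb{L}^{6}$, then $\mathbb{W}^{1,6}\hookrightarrow C^{0,\frac12}\subset\mathbb{L}^{p}$ for all $p$, then $\mathbb{W}^{1,p}\hookrightarrow C^{0,\alpha}$ for $p>3$) bootstraps all the way to $C^{0,\alpha}(\overline\Omega)$, with the $C^{2}$-smoothness of $\partial\Omega$ entering only through the regularity of $\nu$ and the applicability of the trace and div--curl estimates. If you want to salvage your route, you would have to prove boundedness of $({\bf I}+\gamma\,\nabla{\bf M})^{-1}$ on $C^{0,\alpha}$ directly --- a nontrivial result in itself --- whereas the elliptic bootstrap requires only the $\mathbb{L}^{2}$ bound you already have.
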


We postpone the detailed proof of \textbf{Proposition \ref{prop-regu-neg}} to \textbf{Section \ref{sec5:proof}}.

\subsection{A-priori estimates with negative definite  $\boldsymbol{\mathring{\mu_r}}$.}

The a-priori estimates corresponding to the L.S.E \eqref{LS-1} for negative definite $\mathring{\mu_r}$ possess the similar expressions to \textbf{Proposition \ref{prop-es-LS}}. Indeed, with the help of the explicit representations of the constant tensors ${\bf P_0}$ and ${\bf T}^{\mathring{\mu_r}}$, which are given by \eqref{exp-p0} and \eqref{exp-T-neg}, respectively, we can get the corresponding a-priori estimates as follows.

\begin{proposition}\label{prop-ls-neg}
	Let ${\bf T}^{\mathring{\mu_r}}$ be the constant tensor defined by $(\ref{exp-T})$. For negative definite permeability $\mathring{\mu_r}$, under \textbf{Assumptions}, we have the following algebraic system associated with the L.S.E $(\ref{LS-1})$,   
	\begin{equation*}
       V_{m} \; \mp \; c_r^3 a^{3-h} \, \frac{12 \, \xi}{\pi^{3}}
		 \, \sum_{j=1 \atop j \neq m}^\aleph\Upsilon_k(z_m, z_j)\cdot V_{j}  = i \, k \, H^{Inc}(z_m)+\Oh(k d)+\widetilde{\mathrm{Error}}_m,
	\end{equation*}
	where $\xi$ is given by $(\ref{def-xi})$ and 
 \begin{equation*}
   V_{m} \; := \;  \frac{\pi^3}{\left( \pi^3 \, \mp \, 4 \, \xi \right)}  \, \frac{1}{|S_m|} \, \int_{S_m}H^{\mathring{\mu_r}}(x)\,dx, \quad \text{for} \quad 1 \leq m \leq \aleph. 
 \end{equation*}
 Moreover, there holds the following $\ell_2$-estimates, 
	\begin{equation*}
 \sum_{m=1}^\aleph\left|\widetilde{\mathrm{Error}}_m\right|^2 \, \lesssim \frac{\xi^{2}}{\left\vert \pi^{3} \, \mp \, 4 \, \xi \right\vert^{2}} \,  \, \left([H^{\mathring{\mu_r}}]^{2}_{C^{0, \alpha}(\overline{\Omega})} \, d^{2 \, \alpha - 3} \, \left\vert \log d \right\vert^{2} +\lVert H^{\mathring{\mu_r}}\rVert^{2}_{\mathbb L^{\infty}(\Omega)} d^{-\frac{9}{7}}\right),
	\end{equation*}
and
	\begin{equation*}
	 \sum_{m=1}^{\aleph} \left| V_{m} \; - \; U_m \right|^{2} 
	\lesssim \frac{\left\vert \xi \right\vert^{2}}{\left\vert \pi^{3} \, - 12 \, \xi \right\vert^{2} \; \left\vert \pi^{3} \, \mp \, 4 \,  \xi \right\vert^{2}} \; \left([H^{\mathring{\mu_r}}]^{2}_{C^{0, \alpha}(\overline{\Omega})}d^{2 \, \alpha \, - \, 3} \, 
 \left\vert \log d \right\vert^{2} \, + \, \lVert H^{\mathring{\mu_r}}\rVert^{2}_{\mathbb L^{\infty}(\Omega)} d^{-\frac{9}{7}}\right).
	\end{equation*}
\end{proposition}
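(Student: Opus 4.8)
The plan is to obtain \textbf{Proposition \ref{prop-ls-neg}} as the scalar specialization of the general a-priori estimates in \textbf{Proposition \ref{prop-es-LS}}, whose derivation (via the counting lemmas and the Calder\'on--Zygmund inequality) is carried out in \textbf{Section \ref{sec5:proof}}. The only structural change is the Hölder-regularity input: in the positive case it is furnished by \textbf{Proposition \ref{prop-regularity}}, whereas here it must be supplied by \textbf{Proposition \ref{prop-regu-neg}}. Since the error terms of \textbf{Proposition \ref{prop-es-LS}} depend on $H^{\mathring{\mu_r}}$ only through $[H^{\mathring{\mu_r}}]_{C^{0,\alpha}(\overline\Omega)}$ and $\lVert H^{\mathring{\mu_r}}\rVert_{\mathbb L^\infty(\Omega)}$, both of which are now controlled for negative definite $\mathring{\mu_r}$, the whole derivation transfers verbatim. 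First I would re-run the cell-by-cell reduction of the L.S.E \eqref{LS-1} exactly as for \textbf{Proposition \ref{prop-es-LS}}: using the decomposition of $\Omega$ into the cells $\Omega_m$ and the remainder, replacing $H^{\mathring{\mu_r}}(x)$ on each $\Omega_m$ by its average over $S_m$ (whose cost is the $d^\alpha$ Hölder term from \textbf{Corollary \ref{CoroHolderH}}), and invoking the near-boundary volume bound $\lvert \Omega\setminus\cup_m\Omega_m\rvert=\mathcal O(d)$ from \textbf{Remark \ref{rem-vol-d}}.

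Next I would perform the algebraic simplifications that collapse the tensorial statement into the scalar one. Using $\xi=\dfrac{\eta_0 k^2}{c_0 c_r^3}$ one has $\dfrac{\eta_0 k^2}{\pm 3 c_0}c_r^{-3}=\pm\dfrac{\xi}{3}$, and substituting ${\bf T}^{\mathring{\mu_r}}=\left(1\mp\frac{4}{\pi^3}\xi\right)^{-1}\frac{12}{\pi^3}{\bf I}$ from \eqref{exp-T-neg} gives the key collapse
\begin{equation}\notag
{\bf I}+\frac{\eta_0 k^2}{\pm 3 c_0}c_r^{-3}{\bf T}^{\mathring{\mu_r}}={\bf I}\pm\frac{4\xi}{\pi^3}\left(1\mp\frac{4\xi}{\pi^3}\right)^{-1}{\bf I}=\left(1\mp\frac{4\xi}{\pi^3}\right)^{-1}{\bf I}=\frac{\pi^3}{\pi^3\mp 4\xi}\,{\bf I},
\end{equation}
so that the vector $\digamma_m$ of \textbf{Proposition \ref{prop-es-LS}} becomes precisely $V_m=\dfrac{\pi^3}{\pi^3\mp4\xi}\,\frac{1}{|S_m|}\int_{S_m}H^{\mathring{\mu_r}}$. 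In the coupling term, ${\bf P_0}=\frac{12}{\pi^3}{\bf I}$ from \eqref{exp-p0} together with $d^3=c_r^3 a^{3-h}$ from \eqref{distribute} turns $\pm\xi d^3{\bf P_0}$ into $\pm\frac{12\xi}{\pi^3}c_r^3 a^{3-h}{\bf I}$, which, after accounting for the leading minus sign of the original system, is the stated coefficient $\mp\, c_r^3 a^{3-h}\frac{12\xi}{\pi^3}$. The extra $\mathcal O(kd)$ displayed on the right-hand side is exactly the cost of replacing the smooth source $ik\,H^{Inc}(x)$ by $ik\,H^{Inc}(z_m)$ on each $\Omega_m$, since $\lvert H^{Inc}(x)-H^{Inc}(z_m)\rvert\lesssim k\lvert x-z_m\rvert\lesssim kd$.

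For the two $\ell^2$ bounds I would substitute the same explicit scalars into the error estimates of \textbf{Proposition \ref{prop-es-LS}}. For $\widetilde{\mathrm{Error}}_m$, note $\dfrac{\eta_0^2 k^4}{c_0^2}c_r^{-6}=\xi^2$ and $\lvert{\bf T}^{\mathring{\mu_r}}\rvert=\dfrac{12}{\lvert\pi^3\mp4\xi\rvert}$, so the prefactor $\frac{\eta_0^2 k^4}{c_0^2}\lvert{\bf T}^{\mathring{\mu_r}}\rvert^2 c_r^{-6}$ becomes $\dfrac{144\,\xi^2}{\lvert\pi^3\mp4\xi\rvert^2}$, i.e. $\dfrac{\xi^2}{\lvert\pi^3\mp4\xi\rvert^2}$ up to the absorbed constant. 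For $\sum_m\lvert V_m-U_m\rvert^2$ I would use \eqref{es-l2} with $\eta_0 k^2=\xi c_0 c_r^3$ and $\lvert{\bf P_0}\rvert=\frac{12}{\pi^3}$, whence the denominator $c_0 c_r^3-k^2\eta_0\lvert{\bf P_0}\rvert=c_0 c_r^3\,\dfrac{\pi^3-12\xi}{\pi^3}$ has modulus $c_0 c_r^3\,\dfrac{\lvert\pi^3-12\xi\rvert}{\pi^3}$, and the coefficient $\dfrac{\eta_0 k^2\lvert{\bf T}^{\mathring{\mu_r}}\rvert}{\lvert c_0 c_r^3-k^2\eta_0\lvert{\bf P_0}\rvert\rvert}$ reduces to $\dfrac{12\pi^3\,\lvert\xi\rvert}{\lvert\pi^3\mp4\xi\rvert\,\lvert\pi^3-12\xi\rvert}$, matching the claimed bound up to constants once squared. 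Throughout, the denominators $\pi^3\mp4\xi$ and $\pi^3-12\xi$ are nonzero by the parameter freedom discussed in \textbf{Remark \ref{remark-remove-inver}} and \textbf{Remark \ref{rem-F}}.

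The genuinely delicate part is not these scalar reductions but the transfer of the counting-lemma and Calder\'on--Zygmund estimates underlying \textbf{Proposition \ref{prop-es-LS}}: one must check that replacing the positive-case regularity by \textbf{Proposition \ref{prop-regu-neg}} preserves the exact powers $d^{2\alpha-3}\lvert\log d\rvert^2$ and $d^{-9/7}$ in the error, and in particular that the near-diagonal singular interactions $\Upsilon_k(z_m,z_j)$ are summed with the same $d$-scaling irrespective of the sign of $\mathring{\mu_r}$. Since these estimates are purely geometric — they quantify the cluster distribution and the singularity of the dyadic kernel — and do not see the sign of the permeability, I expect this transfer to be routine, with the main obstacle being bookkeeping of the $\xi$-dependent constants rather than any new analytic difficulty.
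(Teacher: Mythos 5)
Your proposal is correct and follows exactly the route the paper takes: it invokes the proof of \textbf{Proposition \ref{prop-es-LS}} (whose counting-lemma and Calder\'on--Zygmund machinery is blind to the sign of $\mathring{\mu_r}$), replaces the regularity input by \textbf{Proposition \ref{prop-regu-neg}}, and then substitutes the explicit tensors ${\bf P_0}=\frac{12}{\pi^3}{\bf I}$ and ${\bf T}^{\mathring{\mu_r}}=\left(1\mp\frac{4}{\pi^3}\xi\right)^{-1}\frac{12}{\pi^3}{\bf I}$, which is precisely what the paper does while omitting the details. In fact your scalar reductions (the collapse $\big({\bf I}+\frac{\eta_0 k^2}{\pm 3c_0}c_r^{-3}{\bf T}^{\mathring{\mu_r}}\big)=\frac{\pi^3}{\pi^3\mp4\xi}{\bf I}$ identifying $\digamma_m$ with $V_m$, the identification of the $\mathcal{O}(kd)$ term with the Taylor cost $T_m$, and the computation of the two $\ell^2$ prefactors) carry out correctly the bookkeeping the paper leaves implicit.
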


The proof of \textbf{Proposition \ref{prop-ls-neg}} is very similar to the proof of \textbf{Proposition \ref{prop-es-LS}}. Indeed, from the proof of \textbf{Proposition \ref{prop-es-LS}}  presented in \textbf{Section \ref{sec5:proof}}, we notice that the main methods utilized therein are independent of the precise form of the effective permeability $\mathring{\mu_r}$. Therefore, with the help of \textbf{Proposition \ref{prop-regu-neg}}, which shows the $C^{0, \alpha}\left( \overline{\Omega} \right)$-regularity of the solution $H^{\mathring{\mu_r}}$, we obtain the a-priori estimate results directly by substituting the explicit representations of the constant tensors ${\bf P_0}$, given by \eqref{exp-p0}, and ${\bf T}^{\mathring{\mu_r}}$, given by \eqref{exp-T-neg}. To avoid being redundant, we omit the detailed proof here.

\medskip

\subsection{Proof of Theorem \ref{coro-plas-resonance}.}\label{ProofTHM18}

We divide the proof into the following few parts 
to demonstrate the estimation of the far field collected by exciting a medium having a negative permeability. The initial step is dedicated to demonstrating that ${\bf I} \, - \, \dfrac{12 \, \xi}{\pi^{3}+4 \xi} \, \nabla {\bf M}$ is the dominant operator in the L.S.E given by $(\ref{LS-1})$.
The second step involves computing low-frequency resonances that are related to the L.S.E given by $(\ref{LS-1})$. We will prove that they are small and related to the dominant operator given by ${\bf I} \, - \, \dfrac{12 \, \xi}{\pi^{3}+4 \xi} \, \nabla {\bf M}$. The field generated at the previously computed low-frequency plasmonic resonances is estimated in the third part. In the fourth part, we finish estimating the far field that was generated. We conclude the fourth part by mentioning a valuable remark regarding the effective permeability tensor expression and the far field expression which depends on the dominated operator's size in $(\ref{LS-1})$. In the last part, we discuss a particular case when the domain $\Omega$ is a ball and the excitation of the first eigenvalue related to the Magnetization operator defined on a ball.
\newline \smallskip \newline 
We start by recalling the L.S.E given by $(\ref{LS-1})$, 
\begin{equation}\label{LS-vec}
	\left({\bf I}\pm \xi \nabla{\bf M}{\bf T}^{\mathring{\mu_r}}-{\bf K}\right)H^{\mathring{\mu_r}} \, = \, i \, k \, H^{Inc}(x, \theta),
\end{equation}
where the parameter $\xi$ is given by $(\ref{def-xi})$ and the operator ${\bf K}$ is defined by
\begin{eqnarray}\label{TD09}
\nonumber
	{\bf K}(F) \, &:=&  \, \pm \, \xi \, \left[ - \nabla{\bf M^k} \, + \, \nabla{\bf M} \, + \, k^{2} \, {\bf N^k} \right] \left( {\bf T}^{\mathring{\mu_{r}}} \cdot F \right) \\
 &\overset{(\ref{exp-T-neg})}{=}& \, \pm  \, \frac{12 \, \xi }{\left( \pi^3 \, \mp \, 4 \, \xi \right)} \, \left[ - \nabla{\bf M^k} \, + \, \nabla{\bf M} \, + \, k^{2} \, {\bf N^k} \right] \left( F \right).
\end{eqnarray}
To investigate $(\ref{LS-vec})$, we first show that ${\bf I}\pm \xi \nabla{\bf M}{\bf T}^{\mathring{\mu_r}}$ dominates the operator ${\bf K}$. Thanks to the Helmholtz decomposition of $\mathbb{L}^{2}(\Omega)$, given by $(\ref{hel-decomp})$, we have
\begin{equation*}
\left({\bf I}\pm \xi \nabla{\bf M}{\bf T}^{\mathring{\mu_r}}\right)(H)=\overset{1}{\mathbb{P}}\left(\left({\bf I}\pm \xi \nabla{\bf M}{\bf T}^{\mathring{\mu_r}}\right)(H)\right)+\overset{2}{\mathbb{P}}\left(\left({\bf I}\pm \xi \nabla{\bf M}{\bf T}^{\mathring{\mu_r}}\right)(H)\right)+\overset{3}{\mathbb{P}}\left(\left({\bf I}\pm \xi \nabla{\bf M}{\bf T}^{\mathring{\mu_r}}\right)(H)\right).
\end{equation*}
Next, we derive sufficient conditions such that 
\begin{equation}\label{Scdt}
 \left\Vert \overset{j}{\mathbb{P}}\left( {\bf I} \pm \xi \nabla{\bf M}{\bf T}^{\mathring{\mu_r}} \right) \right\Vert_{\mathcal{L}(\mathbb{L}^{2}(\Omega);\mathbb{L}^{2}(\Omega))} \, \gg \, \left\Vert {\bf K} \right\Vert_{\mathcal{L}(\mathbb{L}^{2}(\Omega);\mathbb{L}^{2}(\Omega))}, \quad \text{for} \quad j=1,2,3.   
\end{equation}

To do this, we first recall the Helmholtz decomposition $\mathbb{L}^2(\Omega)=\mathbb{H}_0(\div=0)\overset{\perp}{\otimes}\mathbb{H}_0(Curl=0)\overset{\perp}{\otimes}\nabla \mathcal{H}armonic$ to which we correspond a basis given by three subfamilies $(e^1_n)_{n\in \mathbb{N}}, (e^2_n)_{n\in \mathbb{N}}$ and $(e^3_n)_{n\in \mathbb{N}}$ where the first two sequences are basis of the first two subspaces generated by the projection, on them, of the vector Newtonian operator while the third sequence is a basis of the third space generated by the Magnetization operator $\nabla {\bf M}$, see \cite{CGS}. We use the  fact that 
\begin{equation}\label{M-prop}
	\nabla {\bf M}(e_n^{(1)})=0, \nabla {\bf M}(e_n^{(2)})=e_n^{(2)} \mbox{ and } \nabla{\bf M}(e_n^{(3)})=\lambda_n^{(3)}(\Omega) \; e_n^{(3)},\; \forall \, n,
\end{equation}
to derive the following estimation
\begin{align}\label{P1-h}
	\left\lVert \overset{1}{\mathbb{P}}\left(\left({\bf I}\pm \xi \nabla{\bf M}{\bf T}^{\mathring{\mu_r}}\right) \cdot H \right)\right\rVert^2_{\mathbb L^2(\Omega)}&=\sum_n\left|\langle \left({\bf I}\pm \xi \nabla{\bf M}{\bf T}^{\mathring{\mu_r}}\right) \cdot H, e_n^{(1)}\rangle_{\mathbb{L}^{2}(\Omega)}\right|^2\notag\\
	&=\sum_n \left|\langle H, e_n^{(1)}\rangle_{\mathbb{L}^{2}(\Omega)} \, \pm \, \langle \xi \nabla{\bf M}{\bf T}^{\mathring{\mu_r}}(H), e_n^{(1)}\rangle\right|^2=\sum_n \left| \langle H, e_n^{(1)}\rangle_{\mathbb{L}^{2}(\Omega)} \right|^2,
\end{align}
invoking the fact that $\nabla {\bf M}$ is self-adjoint. Again by $(\ref{M-prop})$, we have
\begin{equation}\notag
	\langle \xi \nabla {\bf M}{\bf T}^{\mathring{\mu_r}}, e_n^{(2)}\rangle_{\mathbb{L}^{2}(\Omega)} \, = \, \langle \xi {\bf T}^{\mathring{\mu_r}}, \nabla{\bf M}(e_n^{(2)})\rangle_{\mathbb{L}^{2}(\Omega)} \, = \, \langle \xi{\bf T}^{\mathring{\mu_r}}, e_n^{(2)}\rangle_{\mathbb{L}^{2}(\Omega)},
\end{equation}
and then, 
\begin{equation*}
	\left\lVert \overset{2}{\mathbb{P}}\left(\left({\bf I}\pm \xi \nabla{\bf M} {\bf T}^{\mathring{\mu_r}}\right)(H)\right)\right\rVert_{\mathbb L^2(\Omega)}^2=\sum_n\left| \langle \left({\bf I}\pm \xi {\bf T}^{\mathring{\mu_r}}\right)(H), e_n^{(2)}\rangle_{\mathbb{L}^{2}(\Omega)} \right|^2\overset{\eqref{mu-r0}}{=}\sum_n\left| \langle \mathring{\mu_r} \cdot H, e_n^{(2)}\rangle_{\mathbb{L}^{2}(\Omega)} \right|^2,
\end{equation*}
with 
\begin{equation}\label{Tmur}
  \mathring{\mu_r} := {\bf I} \pm \xi {\bf T}^{\mathring{\mu_r}} \, \overset{(\ref{mu-r0-diag})}{=} \, \frac{\left(\pi^{3} \pm 8 \, \xi \right)}{\left(\pi^{3} \mp 4 \, \xi \right)} \, {\bf I},
\end{equation}
which gives us a representation of $\mathring{\mu_r}$ in the form proportional to the identity. Thus we have
\begin{equation}\label{P2-hm}
	\left\lVert \overset{2}{\mathbb{P}}\left(\left({\bf I}\pm \xi \nabla{\bf M} {\bf T}^{\mathring{\mu_r}}\right)(H)\right)\right\rVert_{\mathbb L^2(\Omega)}^2  = \left\vert \mathring{\mu_r} \right\vert^{2} \, \sum_n\left| \langle  H, e_n^{(2)}\rangle_{\mathbb{L}^{2}(\Omega)} \right|^2.
\end{equation}
Similarly, by using the relation that
\begin{equation}\notag
	\left\langle \xi \nabla {\bf M}{\bf T}^{\mathring{\mu_r}}(H), e_n^{(3)}\right\rangle_{\mathbb{L}^{2}(\Omega)} = \left\langle \xi {\bf T}^{\mathring{\mu_r}}(H), \nabla {\bf M}(e_n^{(3)})\right\rangle_{\mathbb{L}^{2}(\Omega)} \; = \; \lambda_n^{(3)}(\Omega) \; \left\langle \xi {\bf T}^{\mathring{\mu_r}}(H), e_n^{(3)}\right\rangle_{\mathbb{L}^{2}(\Omega)},
\end{equation}
we obtain
\begin{equation*}
	\left\lVert \overset{3}{\mathbb{P}}\left(\left({\bf I}\pm \xi \nabla{\bf M} {\bf T}^{\mathring{\mu_r}}\right) \cdot H \right)\right\rVert_{\mathbb L^2(\Omega)}^2 = \sum_n \left| \left\langle \left({\bf I}\pm \lambda_n^{(3)}(\Omega) \,  \xi \, {\bf T}^{\mathring{\mu_r}}\right) \cdot H, e_n^{(3)}\right\rangle_{\mathbb{L}^{2}(\Omega)} \right|^2 = \sum_n\left| \left\langle A_{\mathring{\mu_r}, n} \cdot H, e_n^{(3)}\right\rangle_{\mathbb{L}^{2}(\Omega)} \right|^2,
\end{equation*}
with 
\begin{equation}\label{A-mu}
	A_{\mathring{\mu_r}, n} \, := \, {\bf I} \, \pm \lambda_n^{(3)}(\Omega) \, \xi \, {\bf T}^{\mathring{\mu_r}} \, \overset{(\ref{exp-T-neg})}{=}  \left( 1 \, \pm \,  \frac{12 \, \xi \, \lambda_n^{(3)}(\Omega)}{\pi^{3}} \left( 1 \, \mp \frac{4 \xi}{\pi^{3}} \right)^{-1} \right) \, {\bf I}. 
\end{equation}
Similar to $\mathring{\mu_r}$, since the tensor $A_{\mathring{\mu_r}, n}$ is proportional to the identity, we obtain 
  \begin{equation}\label{P3-hm}
	\left\lVert \overset{3}{\mathbb{P}}\left(\left({\bf I}\pm \xi \nabla{\bf M} {\bf T}^{\mathring{\mu_r}}\right) \cdot H \right)\right\rVert_{\mathbb L^2(\Omega)}^2  = \sum_{n} \left\vert A_{\mathring{\mu_r}, n} \right\vert^{2} \, \left| \left\langle   H, e_n^{(3)}\right\rangle_{\mathbb{L}^{2}(\Omega)} \right|^2 \, \geq  \, \left\vert A_{\mathring{\mu_r}} \right\vert^{2} \sum_{n}  \, \left| \left\langle   H, e_n^{(3)}\right\rangle_{\mathbb{L}^{2}(\Omega)} \right|^2, 
\end{equation} 
where 
\begin{equation*}
\left\vert A_{\mathring{\mu_r}} \right\vert := \underset{n}{Inf} \left\vert A_{\mathring{\mu_r}, n}\right\vert.
\end{equation*}
Then, by gathering $(\ref{P1-h})$, $(\ref{P2-hm})$ and $(\ref{P3-hm})$, we deduce that
\begin{equation}\notag
	\left\lVert \left({\bf I}\pm \xi \nabla {\bf M}{\bf T}^{\mathring{\mu_r}}\right)(H)\right\rVert^{2}_{\mathbb L^2(\Omega)}=\sum_{i=1}^3\left\lVert \overset{i}{\mathbb{P}}\left(\left({\bf I}\pm \xi \nabla{\bf M}{\bf T}^{\mathring{\mu_r}}\right)(H)\right)\right\rVert^{2}_{\mathbb L^2(\Omega)} \geq \min\left\{1, \lvert \mathring{\mu_r} \rvert^{2}, \lvert A_{\mathring{\mu_r}} \rvert^{2} \right\}\lVert H \rVert^{2}_{\mathbb L^2(\Omega)},
\end{equation}
which further indicates that the operator norm fulfills
\begin{equation}\label{low-bd-finalm}
	\left\lVert {\bf I}\pm \xi \nabla {\bf M}{\bf T}^{\mathring{\mu_r}}\right\rVert_{\mathcal{L}\left( \mathbb{L}^{2}(\Omega) ; \mathbb{L}^{2}(\Omega) \right)} \geq \min\left\{1, \lvert\mathring{\mu_r}\rvert, \lvert A_{\mathring{\mu_r}}\rvert \right\}.
\end{equation}
Now, we evaluate the operator norm of ${\bf K}$. To achieve this, recall from $(\ref{TD09})$ that 
\begin{equation*}
	{\bf K}(F) \, = \, \pm  \, \frac{12 \, \xi }{\left( \pi^3 \, \mp \, 4 \, \xi \right)} \, \left[ - \nabla{\bf M^k} \, + \, \nabla{\bf M} \, + \, k^{2} \, {\bf N^k} \right] \left( F \right),
\end{equation*}
and, thanks to \cite[formulas (2.8) \& (2.9)]{CGS}, we rewrite the R.H.S as
\begin{eqnarray}\label{mid-diff}
\nonumber
{\bf K}(F)(x) &=& \pm  \, \frac{12 \, \xi }{\left( \pi^3 \, \mp \, 4 \, \xi \right)} \Bigg[ \frac{k^2}{2} \, N(F) + \frac{k^2}{2} \, N^{\prime}(F) + \frac{i k^3}{6\pi} \, \int_{\Omega} F(y) \, dy  \\ &+& \frac{k^2}{4\pi}\sum_{n\geq1}\frac{(ik)^{n+1}}{(n+1)!}\int_{\Omega}\lVert x-z \rVert^n \, F(z) \,dz + \frac{1}{4\pi}\sum_{n\geq3}\frac{(i k)^{n+1}}{(n+1)!}\int_{\Omega} \nabla \nabla \left(\lVert x-z\rVert^n\right) \cdot F(z) \, dz \, \Bigg],
\end{eqnarray}
where $N^{\prime}(\cdot)$ is the operator defined by $(\ref{DefNprimeTHM})$. Now, by taking the $\mathbb{L}^{2}(\Omega)-$norm on the both sides of \eqref{mid-diff} and using the fact that the second term on the R.H.S. can be evaluated in a same way as the first one, we obtain that
\begin{eqnarray}\label{5.46*m}
\left\Vert {\bf K} \right\Vert_{\mathcal{L}\left( \mathbb{L}^{2}(\Omega); \mathbb{L}^{2}(\Omega)\right)} & \leq &  \frac{12 \,  \xi }{\left\vert \pi^3 \, \mp \, 4 \, \xi \right\vert} \, \Bigg[ k^2 \, \left\Vert N \right\Vert_{\mathcal{L}\left(\mathbb{L}^{2}(\Omega);\mathbb{L}^{2}(\Omega) \right)}  + \frac{k^3}{6 \, \pi}  \, \left\vert \Omega \right\vert  \notag\\  
	&+& \frac{k^2 \, \left\vert \Omega \right\vert}{4 \, \pi \, \diam(\Omega)}  \, \sum_{n \geq 2 }\frac{ \left( k \; \diam(\Omega) \right)^{n}}{ n !} \,     + \frac{k^3 \, \left\vert \Omega \right\vert}{16 \, \pi}  \,
 \sum_{n \geq 1}\frac{\left( k \, \diam(\Omega) \right)^{n}}{n \, !} \,    \, \Bigg].
\end{eqnarray}
Moreover, in \cite[Section 5.1]{CGS}, we have justified the convergence of the last two series appearing on the R.H.S of \eqref{5.46*m}. By referring to \cite[Theorem 3.3]{RS}, we have 
\begin{equation}\notag
	\lVert N \rVert_{\mathcal{L}(\mathbb{L}^{2}(\Omega);\mathbb{L}^{2}(\Omega))} \leq \lVert N \rVert_{\mathcal{L}(\mathbb{L}^{2}(\textbf{B});\mathbb{L}^{2}(\textbf{B}))} \leq \left( \frac{3}{4 \, \pi} \, \left\vert \textbf{B} \right\vert \right)^{\frac{2}{3}} \lVert N \rVert_{\mathcal{L}(\mathbb{L}^{2}(B(0,1));\mathbb{L}^{2}(B(0,1)))},
\end{equation}
where $\textbf{B}$ is a ball such that $\left\vert \Omega \right\vert = \left\vert \textbf{B} \right\vert$, and the last inequality is derived from the Newtonian operator's scale. In addition, due to \cite[Corollary 4.2]{A1992S}, we have
\begin{equation*}
    \lVert N \rVert_{\mathcal{L}(\mathbb{L}^{2}(B(0,1));\mathbb{L}^{2}(B(0,1)))} = \frac{4}{\pi^{2}}. 
\end{equation*}
Then, by the use of $\left\vert \Omega \right\vert = \left\vert \textbf{B} \right\vert$ and $\left\vert \Omega \right\vert > 1$, we end up with the following estimation
\begin{equation*}
	\lVert N \rVert_{\mathcal{L}(\mathbb{L}^{2}(\Omega);\mathbb{L}^{2}(\Omega))} \lesssim \frac{\left\vert \Omega \right\vert}{\pi}. 
\end{equation*}
This implies, 
\begin{equation}\label{es-Km}
	\lVert {\bf K} \rVert_{\mathcal{L}(\mathbb{L}^{2}(\Omega);\mathbb{L}^{2}(\Omega))} \leq \frac{12 \,  \xi }{\left\vert \pi^3 \, \mp \, 4 \, \xi \right\vert} \frac{\left\vert \Omega \right\vert}{\pi} \; c(k).
\end{equation}
where $c(\cdot)$ is the frequency function given by
\begin{equation*}
    c(k) : =  k^2 \,  +  \frac{k^3}{6}  \,   + \frac{k^2}{4  \, \diam(\Omega)} \, \sum_{n \geq 2}\frac{\left( k \, \diam(\Omega) \right)^{n}}{n !} + \frac{k^3}{16} \, \sum_{n \geq 1}\frac{\left( k \, \diam(\Omega) \right)^{n}}{n !}.  
\end{equation*}
By combining $(\ref{low-bd-finalm})$ with $(\ref{es-Km})$, we derive
\begin{equation}\label{compa-bdm}
	\frac{12 \; \xi \; c(k)}{\pi \, \left\vert \pi^{3} \mp 4 \, \xi \right\vert } \, \left\vert \Omega \right\vert \, < \, \min\left\{ 1, \lvert \mathring{\mu_{r}}\rvert, \lvert A_{\mathring{\mu_r}} \rvert \right\}.
\end{equation}
This gives us a sufficient condition ensuring $(\ref{Scdt})$, and ends the first part of the proof. \newline \bigskip \newline
Now, even if the tensors $\mathring{\mu_{r}}$ and $A_{\mathring{\mu_r}}$  are both characterized by a chosen sign, which allows us to distinguish two cases, we will focus on the sequel only on the lower sign. 
Taking the lower sign, using the formulas $(\ref{Tmur})$ and $(\ref{A-mu})$, the inequality $(\ref{compa-bdm})$ becomes
    \begin{equation*}
	\frac{12 \, \xi \, c(k)}{\pi \, \left( \pi^{3} + 4 \, \xi \right)} \, \left\vert \Omega \right\vert < \min\left\{
	 1;\;\; \frac{8 \, \xi \, - \, \pi^{3}}{4 \, \xi \, + \, \pi^{3}}; \;\; \underset{n}{Inf} \;\; \frac{\left\vert \pi^3 + 4 \, \xi \, \left(1 \, - \, 3 \, \lambda_n^{(3)}(\Omega) \right) \right\vert}{4 \, \xi \, + \, \pi^{3}} \right\},
\end{equation*}
which under the condition $\xi >  \, \dfrac{\pi^{3}}{2}$, which is satisfied for $\xi > 2 \, \delta \, \pi^{3}$, with $\delta \geq 1$, can be further reformulated as, 
    \begin{equation}\label{eq-main-ineq}
	\frac{12 \, \xi \, c(k)}{\pi \, \left( \pi^{3} + 4 \, \xi \right)} \, \left\vert \Omega \right\vert < \min\left\{
	 1; \;\; \underset{n}{Inf} \;\; \frac{\left\vert \pi^3 + 4 \, \xi \, \left(1 \, - \, 3 \, \lambda_n^{(3)}\left( \Omega \right) \right) \right\vert}{4 \, \xi \, + \, \pi^{3}} \right\},
\end{equation}
where we shall provide more details on the real parameter $\delta$ later. To investigate the more interesting case that 
\begin{equation*}
    \min\left\{
	 1; \;\; \underset{n}{Inf} \;\; \frac{\left\vert \pi^3 + 4 \, \xi \, \left(1 \, - \, 3 \, \lambda_n^{(3)}\left( \Omega \right) \right) \right\vert}{4 \, \xi \, + \, \pi^{3}} \right\}=\underset{n}{Inf} \;\; \frac{\left\vert \pi^3 + 4 \, \xi \, \left(1 \, - \, 3 \, \lambda_n^{(3)}\left( \Omega \right) \right) \right\vert}{4 \, \xi \, + \, \pi^{3}} \neq 0,
\end{equation*}
we should exclude the following sequence of frequencies $k$ such that,
\begin{equation}\label{ExcSeqxn}
    \xi_{n}  = \frac{\pi^{3}}{4 \, \left( 3 \, \lambda_{n}^{(3)}\left( \Omega \right) \, - \, 1 \right)}.
\end{equation}
By a straightforward computation, we know that
\begin{equation}\label{Expressionximo0}
    \xi_{m_0}:= \frac{\pi^3}{4\left( 3\lambda_{m_0}^{(3)}(\Omega)-1\right)}
\end{equation}
is the solution to the (dispersion) equation
\begin{equation*}
    \pi^3+4\xi\left(1-3\lambda_{m_0}^{(3)}(\Omega)\right)=0.
\end{equation*}
In addition, the sequence 
\begin{equation*}
 \{ \xi_{n} \}_{n \in \mathbb{N}} \; \in \; ]2 \, \delta \, \pi^{3}, \infty[,   
\end{equation*}
where we recall that the lower bound, in the previous interval, is the one fixed for $\xi$.
To avoid $(\ref{eq-main-ineq})$ being meaningless (its R.H.S is zero), we introduce a small parameter $\beta$, i.e. $\left\vert \beta \right\vert \ll 1$ and $\left\vert \beta \right\vert \ll \xi_{m_{0}}$, such that\footnote{The parameter $\beta$ should be chosen such that $\xi$, given by $(\ref{xn0=...+betan0})$, satisfies $\xi \neq \xi_{n}$, $\forall \, n \in \mathbb{N}$, where $\{ \xi_{n} \}_{n \in \mathbb{N}}$ is the sequence given by $(\ref{ExcSeqxn})$. In addition, it's multiplication by the constant $\dfrac{\pi^{3}}{4}$ is only for technical reasons.} in \eqref{eq-main-ineq}, 
\begin{equation}\label{xn0=...+betan0}
    \xi = \xi_{m_{0}} + \frac{\pi^{3}}{4} \, \beta = \frac{\pi^{3}}{4} \; \left[  \frac{1}{\left(3 \, \lambda_{m_{0}}^{(3)}\left( \Omega \right) \, - \, 1 \right)} + \beta \right].
\end{equation}
Next, by using $\left(\lambda_{m_{0}}^{(3)}\left( \Omega \right); \, \xi_{m_{0}} \right)$, we get
\begin{eqnarray}\label{Inff}
    \nonumber
    \underset{n}{Inf} \;\; \frac{\left\vert \pi^3 + 4 \, \xi \, \left(1 \, - \, 3 \, \lambda_n^{(3)}\left( \Omega \right) \right) \right\vert}{4 \, \xi \, + \, \pi^{3}} \;\; & \leq & \;\; \frac{\left\vert \pi^3 + 4 \, \xi \, \left(1 \, - \, 3 \, \lambda_{m_{0}}^{(3)}\left( \Omega \right) \right) \right\vert}{4 \, \xi \, + \, \pi^{3}} \\  & \overset{(\ref{xn0=...+betan0})}{=} & \;  \frac{\left\vert \beta \right\vert \; \left\vert 3 \, \lambda_{m_{0}}^{(3)}\left( \Omega \right) - 1 \right\vert^{2}}{\left\vert 3 \, \lambda_{m_{0}}^{(3)}\left( \Omega \right) + \beta \, \left( 3 \, \lambda_{m_{0}}^{(3)}\left( \Omega \right) - 1 \right) \right\vert} = \mathcal{O}\left( \left\vert \beta \right\vert \right).
\end{eqnarray}
In the sequel, we denote by $f\left( \lambda_{m_{0}}^{(3)}\left( \Omega \right), \beta \right)$ the dominant part of the R.H.S of the previous inequality, i.e. 
\begin{equation}\label{WBMX}
    f\left( \lambda_{m_{0}}^{(3)}\left( \Omega \right), \beta \right) \; := \; \frac{\left\vert \beta \right\vert \; \left\vert 3 \, \lambda_{m_{0}}^{(3)}\left( \Omega \right) - 1 \right\vert^{2}}{3 \, \lambda_{m_{0}}^{(3)}\left( \Omega \right)}
\end{equation}
Moreover, we have  
\begin{equation}\label{small-f}
	 f\left( \lambda_{m_0}^{(3)}\left( \Omega \right), \beta \right) = \mathcal{O}\left( \left\vert \beta \right\vert \right) \, \ll \, 1.
\end{equation}
 Then, by combining with $(\ref{eq-main-ineq})$ and $(\ref{Inff})$, near the resonances, we derive that
\begin{equation*}\label{ineq-com}
	\frac{12 \, \xi \, c(k) \, \left\vert \Omega \right\vert}{\pi \, \left(\pi^3 \, + \, 4 \, \xi \right)}  \,< \, f\left( \lambda_{m_0}^{(3)}\left( \Omega \right), \beta \right),
\end{equation*}
which yields
\begin{equation}\label{cond-ck2}
c(k) \; < \; \frac{\pi \, \left( \pi^{3} \, + \, 4 \, \xi \right)}{12 \, \xi \, \left\vert \Omega \right\vert} \; f\left( \lambda_{m_0}^{(3)}\left( \Omega \right), \beta \right)  \overset{(\ref{xn0=...+betan0})}{=}  \; \left[ \frac{\pi \, \lambda_{m_0}^{(3)}\left( \Omega \right)}{\left\vert \Omega \right\vert} \, + \, g\left( \beta \right) \right] \; f\left( \lambda_{m_{0}}^{(3)}\left( \Omega \right), \beta \right),
\end{equation}
where 
\begin{equation}\label{Deffctg}
 g\left( \beta \right) :=   \frac{\pi \, \lambda_{m_0}^{(3)}(\Omega)}{\left\vert \Omega \right\vert} \; \beta \; \frac{\left( 2 \, - \, 3 \, \lambda_{m_0}^{(3)}(\Omega) \, - \, \dfrac{1}{3 \, \lambda_{m_0}^{(3)}(\Omega)}  \right)}{1 \, + \, \beta \, \left(3 \, \lambda_{m_0}^{(3)}(\Omega) \, - \, 1 \right)}, \quad \text{with} \quad g\left( \beta \right) \, = \, \mathcal{O}\left( \left\vert \beta \right\vert \right).
\end{equation}
Now, by recalling the definition of $\xi$, given by $(\ref{def-xi})$, and using the expression of $\xi$ given by $(\ref{xn0=...+betan0})$, we obtain
\begin{equation}\label{exp-k-p}
	k^{2} \, = \, \frac{c_{r}^{3} \, c_{0}}{\eta_{0}} \; \frac{\pi^{3}}{4} \; \left[ \frac{1}{\left( 3 \,  \lambda_{m_0}^{(3)}\left( \Omega \right) \, - \, 1 \right)} \, + \, \beta \right].
\end{equation}
Moreover, since $k$ fulfills the condition given by $(\ref{condition-on-k})$, which implies that
\begin{equation}\label{exp-k-d}
	k^{2} \, = \, \frac{1 \, \mp \, c_{0} \, a^{h}}{\eta \, a^{2} \,  \lambda_{n_0}^{(1)}\left( B \right)} \, \overset{(\ref{contrast-epsilon})}{=} \, \frac{1}{\eta_0 \, \lambda_{n_0}^{(1)}(B)} \, \mp \, \frac{c_0 \, a^h}{\eta_0 \, \lambda_{n_0}^{(1)}(B)},
\end{equation}
and, by gathering $(\ref{exp-k-p})$ with $(\ref{exp-k-d})$, we end up with the following relation
\begin{equation*}
	\frac{c_{0} \, c_{r}^{3}}{\eta_{0}} \, \frac{\pi^3}{4} \, \left[ \frac{1}{\left( 3 \, \lambda_{m_0}^{(3)}\left( \Omega \right) \, - \, 1 \right)} \, + \, \beta \, \right] \, = \, \frac{1}{\eta_{0} \, \lambda_{n_0}^{(1)}\left( B \right) } \mp \frac{c_{0} \, a^{h}}{\eta_{0} \, \lambda_{n_0}^{(1)}\left( B \right)}.
\end{equation*}
This implies, 
\begin{equation*}
c_{0} \, c_{r}^{3} \, = \, \frac{4}{\pi^{3}} \, \left[ \frac{1}{\left( 3 \, \lambda_{m_0}^{(3)}\left( \Omega \right) \, - \, 1 \right)} \, + \, \beta \right]^{-1} \; \left( \frac{1}{\lambda_{n_0}^{(1)}(B)} \, \mp \, \frac{c_{0} \, a^{h}}{\lambda_{n_0}^{(1)}(B)} \right),
\end{equation*}
which can be reduced, using the smallness of $\beta$ and the parameter $a$, to 
\begin{equation}\label{eq-cocr}
c_{0} \, c_{r}^{3} \, = \, \frac{4}{\pi^{3}} \; \frac{\left( 3 \, \lambda_{m_0}^{(3)}\left( \Omega \right) \, - \, 1 \right)}{\lambda_{n_0}^{(1)}(B)} \, + \mathcal{O}\left( a^{h} \right) \, + \, \mathcal{O}\left( \beta \right).
\end{equation}
By keeping the dominant term
of $\, c_{0} \, c_{r}^{3} \,$, given in $(\ref{eq-cocr})$, and substituting it into $(\ref{exp-k-p})$,  we obtain 
\begin{equation}\label{kfreqplasmon}
    k^{2} \, = \, \frac{1}{\eta_{0} \, \lambda_{n_0}^{(1)}(B)} \, + \, \beta \; \frac{\left(3 \, \lambda_{m_0}^{(3)}(\Omega) \, - \, 1 \right)}{\eta_{0} \, \lambda_{n_0}^{(1)}(B)}.  
\end{equation}
Remember that $k$ is the argument of the frequency function $c(\cdot)$ defined by $(\ref{def-ck})$ and satisfying $(\ref{cond-ck2})$. Then, we can get from \eqref{cond-ck2} that
\begin{equation}\label{c(.)Series}
 k^2 \,  +  \frac{k^3}{6}  \,   + \frac{k^2}{4  \, \diam(\Omega)} \, \sum_{n \geq 2}\frac{\left( k \, \diam(\Omega) \right)^{n}}{n !} + \frac{k^3}{16} \, \sum_{n \geq 1}\frac{\left( k \, \diam(\Omega) \right)^{n}}{n !} \; < \;  \left[ \frac{\pi \, \lambda_{m_0}^{(3)}(\Omega)}{\left\vert \Omega \right\vert} \, + \, g\left( \beta \right) \right] \; f\left( \lambda_{m_{0}}^{(3)}(\Omega), \beta \right), 
\end{equation}
and, as all the terms on the L.H.S of \eqref{c(.)Series} are positive, we deduce that 
\begin{equation}\label{kscalebeta}
 k^2  \; < \;  \left[ \frac{\pi \, \lambda_{m_0}^{(3)}\left( \Omega \right)}{\left\vert \Omega \right\vert} \, + \, g\left( \beta \right) \right] \; f\left( \lambda_{m_{0}}^{(3)}\left( \Omega \right), \beta \right) \, \underset{(\ref{small-f})}{\overset{(\ref{Deffctg})}{\lesssim}} \, \left\vert \beta \right\vert \, \ll \,1.
\end{equation}
This implies the smallness of the frequency $k$. Indeed, if $\eta_0$ is large enough such that the inequality $(\ref{c(.)Series})$ holds, then the frequency $k$ generated by $(\ref{kfreqplasmon})$ can induce a plasmonic resonances. This concludes the second part of the proof. \newline \bigskip \newline
Next, to estimate the field generated incident frequencies $k$ satisfying (\ref{kfreqplasmon}), we need to estimate the Fourier coefficients related to the magnetic field $H^{\mathring{\mu_r}}$, i.e. $\langle H^{\mathring{\mu_r}}, e_{n}^{(j)} \rangle_{\mathbb{L}^{2}(\Omega)}$, with $n \in \mathbb{N}$ and $j=1,2,3$. To do this, we start by rewriting the L.S.E given by $(\ref{LS-vec})$ as
\begin{equation}\label{coro-LS}
\left({\bf I}\pm \xi\nabla{\bf M}{\bf T}^{\mathring{\mu_r}}\right) \, H^{\mathring{\mu_r}}={\bf K}H^{\mathring{\mu_r}}+ i k H^{Inc}(x, \theta).
\end{equation}
By repeating the calculations as before, it is obvious that the term on the L.H.S of \eqref{coro-LS} can be written as follows
\begin{equation*}
\left({\bf I}\pm \xi\nabla{\bf M^0}{\bf T}^{\mathring{\mu_r}}\right)H^{\mathring{\mu_r}} = \sum_n\left\langle H^{\mathring{\mu_r}}, e_n^{(1)}\right\rangle_{\mathbb{L}^{2}(\Omega)} e_n^{(1)}+\sum_n\left\langle \mathring{\mu_r} \cdot H^{\mathring{\mu_r}},e_n^{(2)} \right\rangle_{\mathbb{L}^{2}(\Omega)} e_n^{(2)}+\sum_n \left\langle A_{\mathring{\mu_r},n} \cdot H^{\mathring{\mu_r}}, e_n^{(3)} \right\rangle_{\mathbb{L}^{2}(\Omega)} e_n^{(3)},
\end{equation*}
where $\mathring{\mu_r}$ is the tensor given by $(\ref{Tmur})$ and $A_{\mathring{\mu_r}}$ is the tensor defined by $(\ref{A-mu})$. In addition, since $\mathring{\mu_r}$ and  $(\ref{Tmur})$ and $A_{\mathring{\mu_r}}$ are proportional to the identity matrix, we get from the previous formula that
\begin{eqnarray}\label{decomp-left}
\nonumber
\left({\bf I}\pm \xi\nabla{\bf M^0}{\bf T}^{\mathring{\mu_r}}\right)H^{\mathring{\mu_r}} \, & = &  \, \sum_n \, \left\langle H^{\mathring{\mu_r}}, e_n^{(1)}\right\rangle_{\mathbb{L}^{2}(\Omega)} e_n^{(1)} \, + \, \sum_n \, \mathring{\mu_r} \, \left\langle  H^{\mathring{\mu_r}},e_n^{(2)} \right\rangle_{\mathbb{L}^{2}(\Omega)} e_n^{(2)} \\ &+& \sum_n \, A_{\mathring{\mu_r},n} \, \left\langle  H^{\mathring{\mu_r}}, e_n^{(3)} \right\rangle_{\mathbb{L}^{2}(\Omega)} e_n^{(3)}.
\end{eqnarray}
Similarly, by projecting the R.H.S of \eqref{coro-LS} onto the three sub-spaces, we can obtain that
\begin{eqnarray*}\label{decomp-right}
{\bf K}H^{\mathring{\mu_r}}+i k H^{Inc} \, &=&  \sum_{j=1}^{3} \, \sum_{n} \, \left\langle {\bf K}H^{\mathring{\mu_r}}+i k H^{Inc}, e_n^{(j)} \right\rangle_{\mathbb{L}^{2}(\Omega)} \, e_n^{(j)} \\
& \overset{(\ref{TD09})}{=} & \sum_{j=1}^{3} \, \sum_{n} \, \pm  \, \frac{12 \, \xi }{\left( \pi^3 \, \mp \, 4 \, \xi \right)} \left\langle \, \left[ - \nabla{\bf M^k} \, + \, \nabla{\bf M} \, + \, k^{2} \, {\bf N^k} \right] \left( H^{\mathring{\mu_r}}\right), e_n^{(j)} \right\rangle_{\mathbb{L}^{2}(\Omega)} \, e_n^{(j)} \\ &+& \sum_{j=1}^{3} \, \sum_{n} \,i k \, \left\langle  H^{Inc}, e_n^{(j)} \right\rangle_{\mathbb{L}^{2}(\Omega)} \, e_n^{(j)}. 
\end{eqnarray*}
Knowing that $\nabla{\bf M^k}$, and also $\nabla{\bf M}$, when restricted to $\mathbb{H}_{0}(\div=0)$ is a vanishing operator, we get from the previous formula 
\begin{eqnarray*}\label{decomp-right}
{\bf K}H^{\mathring{\mu_r}}+i k H^{Inc} \, &=&   \sum_{n} \, \pm  \, \frac{12 \, \xi }{\left( \pi^3 \, \mp \, 4 \, \xi \right)} \, k^{2} \, \left\langle {\bf N^k}  \left( H^{\mathring{\mu_r}}\right), e_n^{(1)} \right\rangle_{\mathbb{L}^{2}(\Omega)} \, e_n^{(1)} \\ &+& \sum_{j=2}^{3} \, \sum_{n} \, \pm  \, \frac{12 \, \xi }{\left( \pi^3 \, \mp \, 4 \, \xi \right)} \left\langle \, \left[ - \nabla{\bf M^k} \, + \, \nabla{\bf M} \, + \, k^{2} \, {\bf N^k} \right] \left( H^{\mathring{\mu_r}}\right), e_n^{(j)} \right\rangle_{\mathbb{L}^{2}(\Omega)} \, e_n^{(j)} \\ &+& \sum_{j=1}^{3} \, \sum_{n} \,i k \, \left\langle  H^{Inc}, e_n^{(j)} \right\rangle_{\mathbb{L}^{2}(\Omega)} \, e_n^{(j)}. 
\end{eqnarray*}
And, regarding the subspace $\mathbb{H}_{0}\left(Curl = 0 \right)$, we have 
\begin{equation*}
 \nabla{\bf M^{-k}} =   \, k^{2} \, {\bf N^{-k}} + \textbf{I}  \quad \text{and} \quad     \nabla{\bf M} =   \textbf{I}.
\end{equation*}
This implies, 
\begin{eqnarray}\label{decomp-right}
\nonumber
{\bf K}H^{\mathring{\mu_r}}+i k H^{Inc} \, &=&   \sum_{n} \, \pm  \, \frac{12 \, \xi }{\left( \pi^3 \, \mp \, 4 \, \xi \right)} \, k^{2} \, \left\langle {\bf N^k}  \left( H^{\mathring{\mu_r}}\right), e_n^{(1)} \right\rangle_{\mathbb{L}^{2}(\Omega)} \, e_n^{(1)} \\ \nonumber &+&  \sum_{n} \, \pm  \, \frac{12 \, \xi }{\left( \pi^3 \, \mp \, 4 \, \xi \right)} \left\langle \, \left[ - \nabla{\bf M^k} \, + \, \nabla{\bf M} \, + \, k^{2} \, {\bf N^k} \right] \left( H^{\mathring{\mu_r}}\right), e_n^{(3)} \right\rangle_{\mathbb{L}^{2}(\Omega)} \, e_n^{(3)} \\ &+& \sum_{j=1}^{3} \, \sum_{n} \,i k \, \left\langle  H^{Inc}, e_n^{(j)} \right\rangle_{\mathbb{L}^{2}(\Omega)} \, e_n^{(j)}. 
\end{eqnarray}
By combining with \eqref{decomp-left} and \eqref{decomp-right}, together with the expressions of $\mathring{\mu_r}$ and $A_{\mathring{\mu_r}}$, we get
\begin{equation}\label{Equa426}
		\left\langle H^{\mathring{\mu_r}}, e_n^{(1)}\right\rangle_{\mathbb{L}^{2}(\Omega)}  =  i \, k \, \left\langle H^{Inc}, e_n^{(1)} \right\rangle_{\mathbb{L}^{2}(\Omega)} \; \pm \; \frac{12 \, \xi \, k^{2}}{\left(\pi^{3} \, \mp \, 4 \, \xi \right)}  \, \left\langle {\bf N}^{k}\left(   H^{\mathring{\mu_r}}\right), e_n^{(1)}\right\rangle_{\mathbb{L}^{2}(\Omega)}.
   \end{equation}
By projecting onto the second subspace we obtain
\begin{equation*}
            \left\langle H^{\mathring{\mu_r}}, e_n^{(2)}\right\rangle_{\mathbb{L}^{2}(\Omega)}  =  \frac{i \, k}{|\mathring{\mu_r}|} \; \left\langle H^{Inc}, e_n^{(2)}\right\rangle_{\mathbb{L}^{2}(\Omega)}.
\end{equation*}
Since $\div\left( H^{Inc} \right) \, = \, 0$, we get $\left\langle H^{Inc}, e_n^{(2)}\right\rangle_{\mathbb{L}^{2}(\Omega)} = 0$. Hence, 
\begin{equation}\label{Proj2=0}
            \left\langle H^{\mathring{\mu_r}}, e_n^{(2)}\right\rangle_{\mathbb{L}^{2}(\Omega)}  =  0.
\end{equation}
Finally, by projecting onto the third subspace, 
\begin{eqnarray}\label{comp-coro}
\nonumber
\left\langle H^{\mathring{\mu_r}}, e_n^{(3)}\right\rangle_{\mathbb{L}^{2}(\Omega)} &=& \frac{i \, k \, \left\langle  H^{Inc}, e_n^{(3)} \right\rangle_{\mathbb{L}^{2}(\Omega)} \, + \, \dfrac{\pm 12 \, \xi}{\left(\pi^{3} \, \mp \, 4 \, \xi \right)} \, \langle \left[ - \nabla{\bf M^k} \, + \, \nabla{\bf M} \, + \, k^{2} \, {\bf N^k} \right]\left( H^{\mathring{\mu_r}}\right), e_n^{(3)} \rangle_{\mathbb{L}^{2}(\Omega)}}{|A_{\mathring{\mu_r},n}|} \\ \nonumber &\overset{(\ref{A-mu})}{=}& \frac{i \, k \, \left\langle  H^{Inc}, e_n^{(3)} \right\rangle_{\mathbb{L}^{2}(\Omega)} \, }{\left( 1 \pm \dfrac{12}{\pi^3} \, \lambda_n^{(3)}(\Omega) \, \xi \, \left( 1 \, \mp \, \dfrac{4}{\pi^3}\xi\right)^{-1}\right)} \\
&+& \frac{\dfrac{\pm 12 \, \xi}{\left(\pi^{3} \, \mp \, 4 \, \xi \right)} \, \langle \left[ - \nabla{\bf M^k} \, + \, \nabla{\bf M} \, + \, k^{2} \, {\bf N^k} \right]\left( H^{\mathring{\mu_r}}\right), e_n^{(3)} \rangle_{\mathbb{L}^{2}(\Omega)}}{\left( 1 \pm \dfrac{12}{\pi^3} \, \lambda_n^{(3)}(\Omega) \, \xi \, \left( 1 \, \mp \, \dfrac{4}{\pi^3}\xi\right)^{-1}\right)}.
\end{eqnarray}
Now, let $\xi$ be close to $\xi_{m_{0}}$, as in $(\ref{xn0=...+betan0})$, then  we have
\begin{equation}\label{IBA}
\left\vert 1 \, + \, \frac{12}{\pi^3} \, \lambda_{n}^{(3)}\left( \Omega \right) \, \xi \, \left(1 \, - \, \frac{4}{\pi^3} \, \xi \right)^{-1} \right\vert \, = \mathcal{O}(1), \quad \forall \; n \in \mathbb{N}, 
\end{equation}
and
\begin{equation}\label{Equa0758}
\left\vert 1 \, - \, \frac{12}{\pi^3} \, \lambda_{n}^{(3)}(\Omega) \, \xi \left(1 \, + \, \frac{4}{\pi^3} \, \xi \right)^{-1} \right\vert \, = \, \begin{cases}
			\mathcal{O}\left( 1 \right), & \text{if $n \neq m_0$}\\
   & \\
         \mathcal{O}\left( f\left(\lambda_{m_0}^{(3)}(\Omega), \beta\right) \right), \; & \text{if $n = m_0$}
		 \end{cases},
\end{equation}
where $f\left( \lambda_{m_0}^{(3)}\left( \Omega \right), \beta \right)$ fulfills \eqref{small-f}. This explains why we picked the lower sign instead of the upper sign. Hence, from the estimation of the Fourier coefficients, we deduce that
\begin{eqnarray*}
\left\lVert H^{\mathring{\mu_r}}\right\rVert^{2}_{\mathbb{L}^2(\Omega)} \, & \lesssim & \, k^{2} \, \sum_{n} \left\vert \left\langle H^{Inc}, e_n^{(1)} \right\rangle_{\mathbb{L}^{2}(\Omega)} \right\vert^{2} \; + \; k^{4} \, \sum_{n} \left\vert \left\langle {\bf N}^{k}\left(   H^{\mathring{\mu_r}}\right), e_n^{(1)}\right\rangle_{\mathbb{L}^{2}(\Omega)} \right\vert^{2} \\ \nonumber &+& \left| 1-\frac{12}{\pi^3} \, \lambda_{m_0}^{(3)}(\Omega) \, \xi \, \left(1+\frac{4}{\pi^3}\xi \right)^{-1}\right|^{-2} \; k^{2} \, \sum_{n} \left\vert \left\langle  H^{Inc}, e_n^{(3)} \right\rangle_{\mathbb{L}^{2}(\Omega)} \right\vert^{2} \,    \\
&+& \left| 1-\frac{12}{\pi^3} \, \lambda_{m_0}^{(3)}(\Omega) \, \xi \, \left(1+\frac{4}{\pi^3}\xi \right)^{-1}\right|^{-2} \;  \, \sum_{n} \left\vert \langle \left[ - \nabla{\bf M^k} \, + \, \nabla{\bf M} \, + \, k^{2} \, {\bf N^k} \right]\left( H^{\mathring{\mu_r}}\right), e_n^{(3)} \rangle_{\mathbb{L}^{2}(\Omega)}\right\vert^{2}.   \end{eqnarray*}
For the second term on the R.H.S, we use the smallness of $k$ and the boundedness of $\left\Vert {\bf N}^{k} \right\Vert_{\mathcal{L}\left(\mathbb{L}^{2}(\Omega);\mathbb{L}^{2}(\Omega)\right)}$ to move it into the L.H.S. In addition, the third term dominate the first one. With all these concerns, we obtain 
\begin{eqnarray*}
\left\lVert H^{\mathring{\mu_r}}\right\rVert^{2}_{\mathbb{L}^2(\Omega)} & \lesssim & \frac{\left(k^{2} \, \lVert H^{Inc}\rVert^{2}_{\mathbb L^2(\Omega)} \, + \,  \lVert \left[ - \nabla{\bf M^k} \, + \, \nabla{\bf M} \, + \, k^{2} \, {\bf N^k} \right] \rVert^{2}_{\mathcal{L}\left(\mathbb{L}^{2}(\Omega);\mathbb{L}^{2}(\Omega)\right)}  \, 
 \lVert H^{\mathring{\mu_r}}\rVert^{2}_{\mathbb L^2(\Omega)}\right)}{\left| 1-\dfrac{12}{\pi^3} \, \lambda_{m_0}^{(3)}(\Omega) \, \xi \, \left(1+\dfrac{4}{\pi^3}\xi \right)^{-1}\right|^{2}}  \\ 
 & \overset{(\ref{TD09})}{\lesssim}  & \frac{\left(k^{2} \, \lVert H^{Inc}\rVert^{2}_{\mathbb L^2(\Omega)} \, + \,  \lVert {\bf K}  \rVert^{2}_{\mathcal{L}\left(\mathbb{L}^{2}(\Omega);\mathbb{L}^{2}(\Omega)\right)}  \, 
 \lVert H^{\mathring{\mu_r}}\rVert^{2}_{\mathbb L^2(\Omega)}\right)}{\left| 1-\dfrac{12}{\pi^3} \, \lambda_{m_0}^{(3)}(\Omega) \, \xi \, \left(1+\dfrac{4}{\pi^3}\xi \right)^{-1}\right|^{2}}. 
 \end{eqnarray*}
 By using $(\ref{compa-bdm})$, we derive 
 \begin{equation*}
     \frac{\lVert {\bf K}  \rVert^{2}_{\mathcal{L}\left(\mathbb{L}^{2}(\Omega);\mathbb{L}^{2}(\Omega)\right)}}{\left| 1-\dfrac{12}{\pi^3} \, \lambda_{m_0}^{(3)}(\Omega) \, \xi \, \left(1+\dfrac{4}{\pi^3}\xi \right)^{-1}\right|^{2}} \; \lesssim \; 1,
 \end{equation*}
 which can be used to obtain
 \begin{equation}\label{es-coro1}
 \left\lVert H^{\mathring{\mu_r}}\right\rVert_{\mathbb{L}^2(\Omega)} \, \lesssim  \,  \left| 1-\frac{12}{\pi^3} \, \lambda_{m_0}^{(3)}(\Omega) \, \xi \, \left(1+\frac{4}{\pi^3}\xi \right)^{-1}\right|^{-1} \; k \; \lVert H^{Inc}\rVert_{\mathbb L^2(\Omega)}. 
\end{equation}
This ends the proof of the third part. Additionally, since in \textbf{Theorem \ref{thm-main-posi}}, second point, the error term defined by $(\ref{es-error-neg})$, depends on both $\left\lVert H^{\mathring{\mu_r}} \right\rVert_{\mathbb{L}^{\infty}(\Omega)}$ and $[H^{\mathring{\mu_r}}]_{C^{0, \alpha}(\overline{\Omega})}$, studying their reliance on the model parameters $k$ and $\beta$ is necessary. The objective of the upcoming lemma is to achieve this.
\begin{proposition}\label{AddedLemma} We assume here that $\Omega$ is a $C^{4,\alpha}$-regularity bounded domain. 
We have the following estimations,
\begin{equation}\label{ToproveinAppendix}
    \left[ H^{\mathring{\mu}_r} \right]_{\mathcal{C}^{0, \alpha}(\overline{\Omega})} \; \lesssim \;  \frac{k^{3}}{\left\vert \beta \right\vert^{2}}  \, \left\Vert H^{Inc} \right\Vert_{\mathbb{L}^{2}(\Omega)} \, + \,   \frac{k}{\left\vert \beta \right\vert} \, \left\Vert \nu \times H^{Inc} \right\Vert_{\mathbb{H}^{\frac{3}{2}}(\partial \Omega)} \; = \; \mathcal{O}\left( \frac{k}{\left\vert \beta \right\vert} \right),
\end{equation}
and 
\begin{equation*}
\lVert H^{\mathring{\mu_r}} \rVert_{\mathbb L^{\infty}(\Omega)}  \; \lesssim \; [H^{\mathring{\mu_r}}]_{C^{0, \alpha}(\overline{\Omega})}, 
\end{equation*}
where $0 \, < \, \alpha \, < \, 1$ and $k^{2} \lesssim \beta \ll 1$.
\end{proposition}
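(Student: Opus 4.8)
The plan is to upgrade the $\mathbb{L}^{2}$-bound already established in $(\ref{es-coro1})$, which near the resonance $\xi \approx \xi_{m_{0}}$ reads $\lVert H^{\mathring{\mu_r}} \rVert_{\mathbb{L}^{2}(\Omega)} \lesssim \frac{k}{\lvert \beta \rvert} \lVert H^{Inc} \rVert_{\mathbb{L}^{2}(\Omega)}$, to the Hölder bound $(\ref{ToproveinAppendix})$ by a bootstrap argument driven by the elliptic structure of the effective system together with the spectral decomposition of the dominant operator. First I would rewrite the L.S.E $(\ref{LS-vec})$ in resolvent form
\[
H^{\mathring{\mu_r}} = \left( {\bf I} \pm \xi \nabla {\bf M}{\bf T}^{\mathring{\mu_r}} \right)^{-1}\left( {\bf K}H^{\mathring{\mu_r}} + i \, k \, H^{Inc}\right),
\]
and read off, from the eigen-relations $(\ref{M-prop})$, that the inverse acts diagonally on the three Helmholtz subspaces: it is the identity on $\mathbb{H}_{0}(\div = 0)$, multiplication by $\mathring{\mu_r}^{-1}$ on $\mathbb{H}_{0}(Curl = 0)$, and diagonal in the basis $(e_{n}^{(3)})$ on $\nabla \mathcal{H}armonic$ with factors $A_{\mathring{\mu_r}, n}^{-1}$. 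Only the resonant mode $n = m_{0}$ is dangerous, since by $(\ref{Equa0758})$ and $(\ref{small-f})$ one has $\lvert A_{\mathring{\mu_r}, m_{0}} \rvert = \mathcal{O}\!\left( f(\lambda_{m_0}^{(3)}(\Omega), \beta) \right) = \mathcal{O}(\lvert \beta \rvert)$, whereas all other factors are $\mathcal{O}(1)$. This isolates exactly where the amplification $\lvert \beta \rvert^{-1}$ is generated.

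Next, since $\mathring{\epsilon}_r = {\bf I}$ and $\mathring{\mu_r}$ is a (negative) scalar, eliminating $E$ in $(\ref{model-equi})$ shows that $H^{\mathring{\mu_r}}$ satisfies $\mathrm{Curl}\,\mathrm{Curl}\,H^{\mathring{\mu_r}} = k^{2}\mathring{\mu_r}H^{\mathring{\mu_r}}$ with $\div H^{\mathring{\mu_r}} = 0$ in $\Omega$, so each Cartesian component obeys the scalar Helmholtz-type equation $\Delta H^{\mathring{\mu_r}} + k^{2}\mathring{\mu_r}H^{\mathring{\mu_r}} = 0$ with a coefficient of size $\mathcal{O}(k^{2})$. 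Interior Schauder estimates then control $[H^{\mathring{\mu_r}}]_{C^{0,\alpha}}$ on compact subsets of $\Omega$ by $\lVert H^{\mathring{\mu_r}} \rVert_{\mathbb{L}^{2}(\Omega)}$; the real work is to reach the boundary. Here the $C^{4,\alpha}$-regularity of $\partial\Omega$ and the data $\nu \times H^{Inc} \in \mathbb{H}^{3/2}(\partial\Omega)$ enter: I would use the integral representation underlying $(\ref{LS-1})$ together with the Schauder (Calderón--Zygmund in Hölder spaces) mapping properties of the Newtonian and Magnetization potentials, matched against the transmission condition $\nu \times H^{\mathring{\mu_r}}\vert_{+} = \nu \times H^{\mathring{\mu_r}}\vert_{-}$, to bound the boundary contribution to $[H^{\mathring{\mu_r}}]_{C^{0,\alpha}(\overline{\Omega})}$. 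Carrying the resolvent amplification $\lvert \beta \rvert^{-1}$ through this representation, the incident source $i\,k\,H^{Inc}$ contributes the term $\frac{k}{\lvert \beta \rvert}\lVert \nu \times H^{Inc}\rVert_{\mathbb{H}^{3/2}(\partial\Omega)}$, while the feedback term ${\bf K}H^{\mathring{\mu_r}}$, whose $\mathbb{L}^{2}$-norm is $\lesssim \lVert {\bf K}\rVert \, \lVert H^{\mathring{\mu_r}}\rVert_{\mathbb{L}^2} = \mathcal{O}(k^{2})\cdot\frac{k}{\lvert\beta\rvert}\lVert H^{Inc}\rVert_{\mathbb{L}^{2}}$ by $(\ref{es-Km})$ and $(\ref{es-coro1})$, picks up a further factor $\lvert\beta\rvert^{-1}$ and yields $\frac{k^{3}}{\lvert\beta\rvert^{2}}\lVert H^{Inc}\rVert_{\mathbb{L}^{2}}$. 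Summing the two contributions gives $(\ref{ToproveinAppendix})$, and since $k^{2}\lesssim\beta$ both terms are $\mathcal{O}(k/\lvert\beta\rvert)$.

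Finally, the $\mathbb{L}^{\infty}$-bound follows from the elementary inequality $\lVert H^{\mathring{\mu_r}}\rVert_{\mathbb{L}^{\infty}(\Omega)} \lesssim \lVert H^{\mathring{\mu_r}}\rVert_{\mathbb{L}^{2}(\Omega)} + [H^{\mathring{\mu_r}}]_{C^{0,\alpha}(\overline{\Omega})}$, valid on the bounded domain $\Omega$ (estimate $H^{\mathring{\mu_r}}(x)$ by its average over a fixed ball plus the Hölder oscillation), combined with $(\ref{es-coro1})$: both right-hand terms scale as $k/\lvert\beta\rvert$, so $\lVert H^{\mathring{\mu_r}}\rVert_{\mathbb{L}^{\infty}(\Omega)} \lesssim [H^{\mathring{\mu_r}}]_{C^{0,\alpha}(\overline{\Omega})}$.

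The hard part will be Step three: deriving the boundary Hölder estimate for a \emph{negative-contrast} (sign-changing) transmission problem, where, in contrast to the positive case handled by the $Id+A$ Neumann series of \textbf{Proposition \ref{prop-regularity}}, the operator must be inverted through the Magnetization spectrum and the resonant factor $\lvert A_{\mathring{\mu_r},m_0}\rvert^{-1}\sim\lvert\beta\rvert^{-1}$ must be propagated to the boundary without degrading the Hölder exponent. The delicate bookkeeping is to verify that the feedback term ${\bf K}H^{\mathring{\mu_r}}$ is genuinely of lower order—absorbing it via the smallness of $k$ and $\lVert {\bf K}\rVert = \mathcal{O}(k^{2})$ from $(\ref{es-Km})$—so that the exact powers $k^{3}/\lvert\beta\rvert^{2}$ and $k/\lvert\beta\rvert$ emerge, which is precisely where the extra $C^{4,\alpha}$-smoothness of $\partial\Omega$ is exploited.
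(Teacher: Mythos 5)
Your outline reproduces the correct numerology ($k/\vert\beta\vert$ from the source, $k^{3}/\vert\beta\vert^{2}$ from the feedback term), but the step you yourself flag as ``the hard part'' is precisely where the argument breaks, and the mechanism you propose for it would not work. You want to upgrade the volume $\mathbb{L}^{2}$-bound \eqref{es-coro1} to a Hölder bound up to $\partial\Omega$ by feeding the L.S.E \eqref{LS-1} through ``Schauder mapping properties of the Newtonian and Magnetization potentials.'' Two obstructions: first, $\nabla{\bf M}^{k}=\nabla\nabla{\bf N}^{k}$ is a Calderón--Zygmund operator that is bounded on $C^{0,\alpha}$ and on $\mathbb{L}^{2}$ but is \emph{not} smoothing — it exactly cancels the two-derivative gain of ${\bf N}^{k}$ — so the representation $H^{\mathring{\mu}_r}= i k H^{Inc}+(\text{operator})(H^{\mathring{\mu}_r})$ cannot bootstrap $\mathbb{L}^{2}$ data into $C^{0,\alpha}$; second, and more seriously, in the plasmonic regime the operator ${\bf I}\pm\xi\,\nabla{\bf M}\,{\bf T}^{\mathring{\mu}_r}$ has no Neumann-series inverse (that is the whole point of being near a resonance), and its quantitative invertibility with constant $\mathcal{O}(1/\vert\beta\vert)$ is known only through the spectral theorem for the self-adjoint operator $\nabla{\bf M}$ on $\mathbb{L}^{2}$; that spectral control does not transfer to Hölder or Schauder norms, so ``carrying the resolvent amplification through the integral representation'' is exactly the statement that needs proof, not a tool.

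The paper's route is structurally different and is designed to evade both obstructions. Writing $\nabla{\bf M}^{k}=Curl^{2}{\bf N}^{k}-{\bf I}-k^{2}{\bf N}^{k}$ and integrating by parts, the singular part is pushed onto the boundary as $Curl\,SL^{k}(\nu\times H^{\mathring{\mu}_r})$, and the jump relation \eqref{CurlSl} turns \eqref{LS-1} into the boundary integral equation \eqref{EqAS} for the tangential trace $\nu\times H^{\mathring{\mu}_r}$. The key lemma — absent from your proposal — is the spectral identity \eqref{ShiftSpectrum}, $\sigma(\nu\times\widetilde{\boldsymbol{A}}_{0})=\sigma(\nabla{\bf M})-\tfrac12$, which converts the distance of the spectral parameter $(\pi^{3}-2\xi)/(12\xi)$ to $\sigma(\nu\times\widetilde{\boldsymbol{A}}_{0})$ into a distance to the Magnetization spectrum and yields $\mathrm{dist}=\mathcal{O}(\vert\beta\vert)$ in \eqref{Estdist}; this is how the $1/\vert\beta\vert$ amplification legitimately reaches the boundary, and the geometric condition \eqref{non-flat} (never invoked in your proposal) is then needed to absorb the $k^{2}\vert\partial\Omega\vert$-term in \eqref{bk2}. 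From there the paper does \emph{not} use Schauder theory at all: it bootstraps in Sobolev scales, using Kirsch's mapping theorems for $\nu\times\widetilde{\boldsymbol{A}}_{k}$ (this is where the $C^{4,\alpha}$ boundary regularity enters) and the Amrouche--Seloula estimate to reach $H^{\mathring{\mu}_r}\in\mathbb{H}^{5/2}(\Omega)$, and concludes by the embedding $\mathbb{H}^{5/2}(\Omega)\hookrightarrow C^{0,\alpha}(\overline{\Omega})$. Your interior Schauder step and your final $\mathbb{L}^{\infty}$ remark are fine as far as they go, but without the boundary reduction and the spectral identity for $\nu\times\widetilde{\boldsymbol{A}}_{0}$ the estimate \eqref{ToproveinAppendix} is not established.
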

\begin{proof}
The second point is a straightforward one. For the first one we refer the readers to \textbf{Subsection \ref{AddedSubSecLemma}}. 
\end{proof}
The estimation $(\ref{ToproveinAppendix})$ proves that $[H^{\mathring{\mu_r}}]_{C^{0, \alpha}(\overline{\Omega})}$ is of order one with respect to the parameter $a$. Regarding the error term in $(\ref{es-error-neg})$, thanks to \textbf{Proposition \ref{AddedLemma}}, we are able to deduce that
\begin{equation}\label{add-coro-4}
E^{\infty}_{\mathrm{eff}, -}(\hat{x})-E^\infty(\hat{x}) = \Oh\left( c(k, \eta_0, c_r, c_0)  \; g_{c_r}(a)  \right) \, + \, \mathcal{O}\left( a^{\frac{h}{3}} \right),
\end{equation}
where
\begin{eqnarray}\label{add-constant}
\nonumber
c(k ,\eta_0, c_r, c_0) &=& \frac{k^3 \, \eta_0^2\, c_r^{-3}}{c_0\left( c_0 \, c_r^3 \, \pi^{3} \, - \, 12 \, k^2 \, \eta_{0} \right)}\left|\left(1 \, \mp \, \frac{4 \, \eta_{0} \, k^2}{\pi^3 \, c_{0}} \, c_{r}^{-3} \right)^{-1}\right| \, \frac{k}{\left\vert \beta \right\vert} \\
&& \\ \nonumber
g_{c_r}(a) &=& \left( c_{r}^{2 \, \alpha} \, a^{2 \, \alpha \, \left(1-\frac{h}{3}\right)} \, \left\vert \log\left( a \right) \right\vert^{2} \, + \, c_{r}^{\frac{12}{7}} \, a^{\frac{12}{7}\left(1-\frac{h}{3}\right)} \right)^{\frac{1}{2}}, \quad \mbox{with} \quad  \alpha \, \in (0,1).
\end{eqnarray}
The formula $(\ref{add-coro-4})$ suggests that the far field $E^\infty(\hat{x})$ is a small perturbation of the far field $E^{\infty}_{\mathrm{eff}, -}(\hat{x})$. 
Moreover, we recall that we have 
\begin{eqnarray}\label{Eq455}
\nonumber
E^\infty_{\mathrm{eff}, -}\left( \hat{x} \right) \, &=& \, - \, \frac{i \, k^3 \, \eta_0}{\pm \, c_0} \, c_r^{-3} \, \int_{\Omega} \, e^{- \, i \, k \, \hat{x}\cdot z} \, \hat{x} \, \times \, {\bf T}^{\mathring{\mu_r}} \cdot H^{\mathring{\mu_{r}}}(z)\,dz \\ 
& \overset{(\ref{exp-T-neg})}{=}& \, - \, \frac{i \, k^3 \, \eta_0}{\pm \, c_0} \, c_r^{-3} \, \left(1 \, \mp \, \frac{4}{\pi^3} \, \xi \right)^{-1} \, \frac{12}{\pi^3} \, \int_{\Omega} \, e^{- \, i \, k \, \hat{x}\cdot z} \, \hat{x} \, \times \,  H^{\mathring{\mu_{r}}}(z)\,dz. 
\end{eqnarray}
Suggested by $(\ref{IBA})$ and $(\ref{Equa0758})$, we choose the lower sign for ${\bf T}^{\mathring{\mu_r}}$, and we get 
\begin{eqnarray}\label{add-coro-E}
E^\infty_{\mathrm{eff}, -}\left( \hat{x} \right) \, & \overset{(\ref{Proj2=0})}{=} & - \, \frac{i \, k^3 \, \eta_0}{\pm \, c_0} \, c_r^{-3} \, \left(1 \, + \, \frac{4}{\pi^3} \, \xi \right)^{-1} \, \frac{12}{\pi^3} \, \int_{\Omega} \, e^{- \, i \, k \, \hat{x}\cdot z} \, \hat{x} \, \times \overset{3}{\mathbb{P}} \, H^{\mathring{\mu_{r}}}(z)\,dz \notag\\
& - &  \, \underbrace{\frac{i \, k^3 \, \eta_0}{\pm \, c_0} \, c_r^{-3} \, \left(1 \, + \, \frac{4}{\pi^3} \, \xi \right)^{-1} \, \frac{12}{\pi^3} \, \int_{\Omega} \, e^{- \, i \, k \, \hat{x}\cdot z} \, \hat{x} \, \times \overset{1}{\mathbb{P}} \, H^{\mathring{\mu_{r}}}(z)\,dz}_{I_{1,2}}.
\end{eqnarray}
To estimate $I_{1,2}$, we use the fact that $k$ is a small parameter and by Taylor expansion of the function $e^{- \, i \, k \, \hat{x} \cdot z}$, we get 
\begin{equation}\label{Texpk}
    e^{- \, i \, k \, \hat{x} \cdot z} \, = \, 1 \, + \, \sum_{n \geq 1} \frac{(- \, i \, k \, \hat{x} \cdot z)^n}{n!},
\end{equation}
and knowing that 
\begin{equation*}
    \int_{\Omega} e_n^{(1)}(x) \, dx \, = \, 0, \quad \forall n \in \mathbb{N},
\end{equation*}
we deduce that the dominant part of $I_{1,2}$ behaves as 
\begin{eqnarray*}
    I_{1,2} & \simeq & \frac{- \, k^3 \, \eta_0}{\pm \, c_0} \, c_r^{-3} \, \left(1 \, + \, \frac{4}{\pi^3} \, \xi \right)^{-1} \, \frac{12}{\pi^3} \, \int_{\Omega} \,  k \, \hat{x}\cdot z \, \hat{x} \, \times \overset{1}{\mathbb{P}} \, H^{\mathring{\mu_{r}}}(z)\,dz \\
   \left\vert I_{1,2} \right\vert & \lesssim  & \frac{k^4 \, \eta_0}{c_r^{3} \, c_0} \; \left\Vert \overset{1}{\mathbb{P}} \,H^{\mathring{\mu_{r}}} \right\Vert_{\mathbb{L}^{2}(\Omega)} \overset{(\ref{Equa426})}{\lesssim} \frac{k^4 \, \eta_0}{c_r^{3} \, c_0} \; k \, \left\Vert \overset{1}{\mathbb{P}} \,H^{Inc} \right\Vert_{\mathbb{L}^{2}(\Omega)} = \mathcal{O}\left( \frac{k^6 \, \eta_0}{c_r^{3} \, c_0} \right).
\end{eqnarray*}
This implies, 
\begin{eqnarray*}
E^\infty_{\mathrm{eff}, -}\left( \hat{x} \right) \, 
 & = & - \, \frac{i \, k^3 \, \eta_0}{\pm \, c_0} \, c_r^{-3} \, \left(1 \, + \, \frac{4}{\pi^3} \, \xi \right)^{-1} \, \frac{12}{\pi^3} \, \int_{\Omega} \, e^{- \, i \, k \, \hat{x}\cdot z} \, \hat{x} \, \times \overset{3}{\mathbb{P}} \,H^{\mathring{\mu_{r}}}(z)\,dz + \mathcal{O}\left( \frac{k^6 \, \eta_0}{c_r^{3} \, c_0} \right)\\  
 &=&  - \, \frac{i \, k^3 \, \eta_0}{\pm \, c_0} \, c_r^{-3} \, \, \left(1 \, + \, \frac{4}{\pi^3} \, \xi \right)^{-1} \, \frac{12}{\pi^3} \,  \int_{\Omega} \, e^{- \, i \, k \, \hat{x}\cdot z} \, \hat{x} \, \times \, e_{m_{0}}^{(3)}(z) \; dz \;\; \langle H^{\mathring{\mu_{r}}}, e_{m_{0}}^{(3)} \rangle_{\mathbb{L}^{2}(\Omega)} \\ &-&  \, \underbrace{\frac{i \, k^3 \, \eta_0}{\pm \, c_0} \, c_r^{-3} \, \, \left(1 \, + \, \frac{4}{\pi^3} \, \xi \right)^{-1} \, \frac{12}{\pi^3} \, \sum_{n \neq m_{0}} \, \int_{\Omega} \, e^{- \, i \, k \, \hat{x}\cdot z} \, \hat{x} \, \times \, e_{n}^{(3)}(z) \; dz \;\; \langle H^{\mathring{\mu_{r}}}, e_{n}^{(3)} \rangle_{\mathbb{L}^{2}(\Omega)} }_{I_{n\neq m_0}}   +  \mathcal{O}\left( \frac{k^{6} \, \eta_0}{c_r^{3} \, c_0} \right).
\end{eqnarray*}
We estimate the series $I_{n\neq m_0}$, which gives us
\begin{equation*}
    \left\vert  I_{n\neq m_0}\right\vert  \lesssim  \frac{k^3 \, \eta_0}{c_r^{3} \, c_0} \,  \left( \sum_{n \neq m_{0}} \, \left\vert \int_{\Omega} \, e^{- \, i \, k \hat{x}\cdot z} \, \hat{x} \, \times \, e_{n}^{(3)}(z) \; dz \right\vert^{2} \right)^{\frac{1}{2}} \;\; \left( \sum_{n \neq m_{0}} \left\vert \langle  \,H^{\mathring{\mu_{r}}}, e_{n}^{(3)} \rangle_{\mathbb{L}^{2}(\Omega)} \right\vert^{2} \right)^{\frac{1}{2}}
\end{equation*}
It is obvious to see from \eqref{comp-coro} that the $\mathbb{L}^{2}(\Omega)$-norm of $\overset{3}{\mathbb{P}} \,H^{\mathring{\mu_{r}}}$ is bounded by the $\mathbb{L}^{2}(\Omega)$-norm of $k \, H^{Inc}$, when $k$ is away from the resonance frequency, which is of order $\mathcal{O}\left( k \right)$. Then we can derive the coming estimation
\begin{equation*}
    \left\vert I_{n\neq m_0} \right\vert \; \lesssim \;  \frac{k^4 \, \eta_0}{c_r^{3} \, c_0} \,  \left( \sum_{n \neq m_{0}} \, \left\vert \int_{\Omega} \, e^{- \, i \, k \hat{x}\cdot z} \, \hat{x} \, \times \,  e_{n}^{(3)}(z) \; dz \right\vert^{2} \right)^{\frac{1}{2}} \,  = \, \mathcal{O}\left( \frac{k^4 \, \eta_0}{c_r^{3} \, c_0} \right). 
\end{equation*}
Then, by utilizing the fact that $k$ is small, we end up with:
\begin{eqnarray*}
E^{\infty}_{\mathrm{eff}, -}(\hat{x})  &=&  - \, \frac{i \, k^3 \, \eta_0}{\pm \, c_0} \, c_r^{-3} \, \left(1 \, + \, \frac{4}{\pi^3} \, \xi \right)^{-1} \, \frac{12}{\pi^3} \, \int_{\Omega} \, e^{- i k \hat{x}\cdot z} \, \hat{x} \, \times \, e_{m_{0}}^{(3)}(z) \; dz \;\; \langle H^{\mathring{\mu_{r}}}, e_{m_{0}}^{(3)} \rangle_{\mathbb{L}^{2}(\Omega)}  +  \mathcal{O}\left( \frac{k^4 \, \eta_0}{c_r^{3} \, c_0} \right) \\  
E^{\infty}_{\mathrm{eff}, -}(\hat{x}) &\overset{(\ref{Texpk})}{=}&  - \, \frac{i \, k^3 \, \eta_0}{\pm \, c_0} \, c_r^{-3} \, \left(1 \, + \, \frac{4}{\pi^3} \, \xi \right)^{-1} \, \frac{12}{\pi^3}  \, \hat{x} \, \times \int_{\Omega}  e_{m_{0}}^{(3)}(z) \; dz \;\; \langle H^{\mathring{\mu_{r}}}, e_{m_{0}}^{(3)} \rangle_{\mathbb{L}^{2}(\Omega)} \\ &+& - \, \frac{i \, k^3 \, \eta_0}{\pm \, c_0} \, c_r^{-3} \, \left(1 \, + \, \frac{4}{\pi^3} \, \xi \right)^{-1} \, \frac{12}{\pi^3} \, \int_{\Omega} \sum_{j \geq 1} \frac{\left( - i k \hat{x}\cdot z \right)^{j}}{j!}  \, \hat{x} \, \times \, e_{m_{0}}^{(3)}(z) \; dz \;\; \langle H^{\mathring{\mu_{r}}}, e_{m_{0}}^{(3)} \rangle_{\mathbb{L}^{2}(\Omega)} \\ &+& \mathcal{O}\left( \frac{k^4 \, \eta_0}{c_r^{3} \, c_0} \right).
\end{eqnarray*}
Next, we estimate the second term on the R.H.S, as 
\begin{eqnarray*}
    \left\vert \cdots \right\vert & \lesssim & \left\vert - \, \frac{i \, k^3 \, \eta_0}{\pm \, c_0} \, c_r^{-3} \, \left(1 \, + \, \frac{4}{\pi^3} \, \xi \right)^{-1} \, \frac{12}{\pi^3}  \, \hat{x} \, \times \int_{\Omega} \sum_{j \geq 1} \frac{\left( - i k \hat{x}\cdot z \right)^{j}}{j!}  \, e_{m_{0}}^{(3)}(z) \; dz \;\; \langle H^{\mathring{\mu_{r}}}, e_{m_{0}}^{(3)} \rangle_{\mathbb{L}^{2}(\Omega)} \right\vert \\ 
    & \lesssim &  \frac{k^4 \, \eta_0}{c_0 \, c_r^{3}}  \; \left\vert \langle H^{\mathring{\mu_{r}}}, e_{m_{0}}^{(3)} \rangle_{\mathbb{L}^{2}(\Omega)} \right\vert \overset{(\ref{comp-coro})}{=} \mathcal{O}\left( \frac{k^5 \, \eta_0}{c_0 \, c_r^{3} \, \left\vert \beta \right\vert} \right).  
\end{eqnarray*}
This implies, 
\begin{eqnarray}\label{E-new}
\nonumber
E^{\infty}_{\mathrm{eff}, -}(\hat{x}) &=&  - \, \frac{i \, k^3 \, \eta_0}{\pm \, c_0} \, c_r^{-3} \, \left(1 \, + \, \frac{4}{\pi^3} \, \xi \right)^{-1} \, \frac{12}{\pi^3}  \, \hat{x} \, \times \int_{\Omega}  e_{m_{0}}^{(3)}(z) \; dz \;\; \langle H^{\mathring{\mu_{r}}}, e_{m_{0}}^{(3)} \rangle_{\mathbb{L}^{2}(\Omega)} \\ &+& \mathcal{O}\left( \frac{k^5 \, \eta_0}{c_0 \, c_r^{3} \, \left\vert \beta \right\vert} \right) + \mathcal{O}\left( \frac{k^4 \, \eta_0}{c_r^{3} \, c_0} \right).
\end{eqnarray}
The estimation of $E^{\infty}_{\mathrm{eff}, -}(\hat{x})$ is delayed until we obtain the dominant term associated to the Fourier coefficient $\langle H^{\mathring{\mu_{r}}}, e_{m_{0}}^{(3)} \rangle_{\mathbb{L}^{2}(\Omega)}$. To achieve this, we observe from \eqref{comp-coro} that the projection of $H^{\mathring{\mu_r}}$ onto the third subspace dominates and this gives us
\begin{eqnarray}\label{EqWillUS}
\nonumber
\left\langle H^{\mathring{\mu_r}}, e_{m_0}^{(3)} \right\rangle_{\mathbb{L}^{2}(\Omega)} \, &=& \, \frac{i \, k}{f(\lambda_{m_0}^{(3)}(\Omega), \beta)}\left\langle \left({\bf I}-\frac{ \overset{3}{\mathbb{P}} \, {\bf K}}{f(\lambda_{m_0}^{(3)}(\Omega), \beta)}\right)^{-1} H^{Inc}, e_{m_0}^{(3)} \right\rangle_{\mathbb{L}^{2}(\Omega)} \\ \nonumber
&+& \underbrace{\frac{i \, k}{f(\lambda_{m_0}^{(3)}(\Omega), \beta)}\left\langle \left({\bf I}-\frac{ \overset{3}{\mathbb{P}} \, {\bf K}}{f(\lambda_{m_0}^{(3)}(\Omega), \beta)}\right)^{-1} \overset{1}{\mathbb{P}}H^{Inc}, e_{m_0}^{(3)} \right\rangle_{\mathbb{L}^{2}(\Omega)}}_{I_{\beta,1}} \\
&+& \, \underbrace{\frac{1}{f(\lambda_{m_0}^{(3)}(\Omega), \beta)}\left\langle \left({\bf I}-\frac{\overset{3}{\mathbb{P}}{\bf K}}{f(\lambda_{m_0}^{(3)}(\Omega), \beta)}\right)^{-1}\overset{3}{\mathbb{P}}{\bf K}\left(\overset{1}{\mathbb{P}} H^{\mathring{\mu_r}} \right), e_{m_0}^{(3)} \right\rangle_{\mathbb{L}^{2}(\Omega)}}_{I_{\beta,2}}.
\end{eqnarray}
Now, we estimate $I_{\beta,1}$ and $I_{\beta,2}$  appearing on the R.H.S of the previous formula. 
\begin{enumerate}
    \item[] 
    \item Estimation of $I_{\beta,1}$.
    \begin{equation*}
       \left\vert I_{\beta,1} \right\vert   \lesssim  \frac{k}{f(\lambda_{m_0}^{(3)}(\Omega), \beta)} \, \left\vert \sum_{n} \left\langle \overset{1}{\mathbb{P}}H^{Inc}, e_{n}^{(1)} \right\rangle_{\mathbb{L}^{2}(\Omega)} \, \left\langle \left({\bf I}-\frac{ \overset{3}{\mathbb{P}} \, {\bf K}}{f(\lambda_{m_0}^{(3)}(\Omega), \beta)}\right)^{-1} e_{n}^{(1)}, e_{m_0}^{(3)} \right\rangle_{\mathbb{L}^{2}(\Omega)} \right\vert.
    \end{equation*}
    Regarding the dominant term of $\left({\bf I}-\dfrac{ \overset{3}{\mathbb{P}} \, {\bf K}}{f(\lambda_{m_0}^{(3)}(\Omega), \beta)}\right)^{-1} e_{n}^{(1)}$, we have the following approximation, 
    \begin{equation*}
        \left({\bf I}-\frac{ \overset{3}{\mathbb{P}} \, {\bf K}}{f(\lambda_{m_0}^{(3)}(\Omega), \beta)}\right)^{-1} e_{n}^{(1)} \, = \, e_{n}^{(1)} \, + \, \frac{1}{f(\lambda_{m_0}^{(3)}(\Omega), \beta)} \, \frac{12}{\pi^{3} \, + \, 4 \, \xi} \, k^{4} \, \left({\bf I}-\frac{ \overset{3}{\mathbb{P}} \, {\bf K}}{f(\lambda_{m_0}^{(3)}(\Omega), \beta)}\right)^{-1}\overset{3}{\mathbb{P}}\int_{\Omega} \left\vert \cdot - y \right\vert e_{n}^{(1)}(y) \, dy,  
    \end{equation*}
    which implies, by taking the orthogonality fact between $e_{n}^{(1)}$ and $e_{m_{0}}^{(3)}$, 
    \begin{equation*}
       \left\vert I_{\beta,1} \right\vert   \lesssim  \frac{k^{5}}{f^{2}(\lambda_{m_0}^{(3)}(\Omega), \beta)} \, \left\vert \sum_{n} \left\langle \overset{1}{\mathbb{P}}H^{Inc}, e_{n}^{(1)} \right\rangle_{\mathbb{L}^{2}(\Omega)} \, \left\langle \left({\bf I}-\frac{ \overset{3}{\mathbb{P}} \, {\bf K}}{f(\lambda_{m_0}^{(3)}(\Omega), \beta)}\right)^{-1}\overset{3}{\mathbb{P}}\int_{\Omega} \left\vert \cdot - y \right\vert e_{n}^{(1)}(y) \, dy, e_{m_0}^{(3)} \right\rangle_{\mathbb{L}^{2}(\Omega)} \right\vert.
    \end{equation*}
    Successively, taking the adjoint operator of  $\left({\bf I}-\dfrac{ \overset{3}{\mathbb{P}} \, {\bf K}}{f(\lambda_{m_0}^{(3)}(\Omega), \beta)}\right)^{-1}$, using the self-adjointness of $\overset{3}{\mathbb{P}}$ and the adjoint operator related to the operator $V(\cdot) \, \rightarrow \, \int_{\Omega} \left\vert \cdot - y \right\vert \, V(y) \, dy$, we end up with the coming estimation
     \begin{equation*}
       \left\vert I_{\beta,1} \right\vert   \lesssim  \frac{k^{5}}{f^{2}(\lambda_{m_0}^{(3)}(\Omega), \beta)} \, \left\vert \sum_{n} \left\langle \overset{1}{\mathbb{P}}H^{Inc}, e_{n}^{(1)} \right\rangle_{\mathbb{L}^{2}(\Omega)} \, \left\langle  e_{n}^{(1)}, \int_{\Omega} \left\vert \cdot - y \right\vert \, \overset{3}{\mathbb{P}} \left({\bf I}-\frac{ \left( \overset{3}{\mathbb{P}} \, {\bf K} \right)^{\star}}{f(\lambda_{m_0}^{(3)}(\Omega), \beta)}\right)^{-1} e_{m_0}^{(3)}(y) \, dy \right\rangle_{\mathbb{L}^{2}(\Omega)} \right\vert.
    \end{equation*}
    As the vector field $\int_{\Omega} \left\vert \cdot - y \right\vert \, \overset{3}{\mathbb{P}} \left({\bf I}-\frac{ \left( \overset{3}{\mathbb{P}} \, {\bf K} \right)^{\star}}{f(\lambda_{m_0}^{(3)}(\Omega), \beta)}\right)^{-1} e_{m_0}^{(3)}(y) \, dy$ is moderate one, with respect to the frequency $k$, we end up with the following estimate
        \begin{eqnarray*}
       \left\vert I_{\beta,1} \right\vert 
                & \lesssim &  \frac{k^{5}}{f^{2}(\lambda_{m_0}^{(3)}(\Omega), \beta)} \, \left\Vert  \overset{1}{\mathbb{P}}H^{Inc} \right\Vert_{\mathbb{L}^{2}(\Omega)} \,  \lesssim   \frac{k^{6}}{f^{2}(\lambda_{m_0}^{(3)}(\Omega), \beta)} \, \left\Vert  E^{Inc} \right\Vert_{\mathbb{L}^{2}(\Omega)} \, \overset{(\ref{small-f})}{=} \, \mathcal{O}\left( \frac{k^{6}}{\left\vert \beta \right\vert^{2}} \, \right). 
    \end{eqnarray*}
    \item[] 
    \item Estimation of $I_{\beta,2}$.
\begin{eqnarray*}
    \left\vert I_{\beta,2} \right\vert & \leq & \left\vert \frac{1}{f(\lambda_{m_0}^{(3)}(\Omega), \beta)}\left\langle \left({\bf I}-\frac{\overset{3}{\mathbb{P}}{\bf K}}{f(\lambda_{m_0}^{(3)}(\Omega), \beta)}\right)^{-1}\overset{3}{\mathbb{P}}{\bf K}\left(\overset{1}{\mathbb{P}} H^{\mathring{\mu_r}} \right), e_{m_0}^{(3)} \right\rangle_{\mathbb{L}^{2}(\Omega)} \right\vert \\
    & \overset{(\ref{small-f})}{\lesssim} & \left\vert  \beta \right\vert^{-1} \, \left\Vert \left({\bf I}-\frac{\overset{3}{\mathbb{P}}{\bf K}}{f(\lambda_{m_0}^{(3)}(\Omega), \beta)}\right)^{-1}\overset{3}{\mathbb{P}} \right\Vert_{\mathcal{L}(\mathbb{L}^{2}(\Omega);\mathbb{L}^{2}(\Omega))} \, \left\Vert {\bf K}\left(\overset{1}{\mathbb{P}} H^{\mathring{\mu_r}} \right)  \right\Vert_{\mathbb{L}^{2}(\Omega)},
\end{eqnarray*}
and, by using the fact that the operator norm $\left\Vert \cdots \right\Vert_{\mathcal{L}(\mathbb{L}^{2}(\Omega);\mathbb{L}^{2}(\Omega))} \, \sim \, 1$, we get
\begin{equation*}
    \left\vert I_{\beta,2} \right\vert  \lesssim  \left\vert  \beta \right\vert^{-1}  \, \left\Vert {\bf K}\left(\overset{1}{\mathbb{P}} H^{\mathring{\mu_r}} \right)  \right\Vert_{\mathbb{L}^{2}(\Omega)}. 
\end{equation*}
Knowing that the operator ${\bf K}$, when reduced to $\mathbb{H}_{0}(\div = 0)$, behaves proportional to $k^{2} \, {\bf N}^{k}$, we reduce the previous estimation to 
\begin{equation*}
    \left\vert I_{\beta,2} \right\vert  \lesssim \left\vert  \beta \right\vert^{-1} \, k^2 \; \left\Vert {\bf N}^{k} \left( \overset{1}{\mathbb{P}} H^{\mathring{\mu_r}} \right) \right\Vert_{\mathbb{L}^{2}(\Omega)}  
    \lesssim  \left\vert  \beta \right\vert^{-1} \, k^2 \, \left\Vert {\bf N}^{k}   \right\Vert_{\mathcal{L}(\mathbb{L}^{2}(\Omega);\mathbb{L}^{2}(\Omega))} \;  \left\Vert \overset{1}{\mathbb{P}} H^{\mathring{\mu_r}} \right\Vert_{\mathbb{L}^{2}(\Omega)}.  
\end{equation*}
As $\left\Vert {\bf N}^{k}   \right\Vert_{\mathcal{L}(\mathbb{L}^{2}(\Omega);\mathbb{L}^{2}(\Omega))}$ is of order one, and by the use of $(\ref{Equa426})$, we get
\begin{equation*}
    \left\vert I_{\beta,2} \right\vert  \lesssim   \left\vert  \beta \right\vert^{-1} \, k^2 \, k \;  \left\Vert \overset{1}{\mathbb{P}} H^{Inc} \right\Vert_{\mathbb{L}^{2}(\Omega)} \, = \, \mathcal{O}\left( \frac{k^4}{\left\vert  \beta \right\vert} \right).  
\end{equation*}
\end{enumerate}
Hence, by recalling $(\ref{EqWillUS})$, we obtain
\begin{equation}\label{Equa0858}
\left\langle H^{\mathring{\mu_r}}, e_{m_0}^{(3)} \right\rangle_{\mathbb{L}^{2}(\Omega)} \, = \, \frac{i \; k}{f(\lambda_{m_0}^{(3)}(\Omega), \beta)}\left\langle \left({\bf I}-\frac{ \overset{3}{\mathbb{P}} \, {\bf K}}{f(\lambda_{m_0}^{(3)}(\Omega), \beta)}\right)^{-1} H^{Inc}, e_{m_0}^{(3)} \right\rangle_{\mathbb{L}^{2}(\Omega)} + \mathcal{O}\left( \frac{k^6}{\left\vert \beta \right\vert^{2}} \right) \, + \, \mathcal{O}\left( \frac{k^4}{\left\vert \beta \right\vert} \right).
\end{equation}
Now, by going back to $(\ref{E-new})$, using $(\ref{Equa0858})$ and $(\ref{def-xi})$, and recalling the fact that $\xi \sim 1$, we get 
\begin{eqnarray*}
E^{\infty}_{\mathrm{eff}, -}(\hat{x}) &=&   \frac{\pm \, 12 \, k^{2} \, \xi}{\pi^3 \, + \, 4 \, \xi} \,  \hat{x} \, \times \, \int_{\Omega} e_{m_{0}}^{(3)}(z) \; dz \;\; \frac{1}{f(\lambda_{m_0}^{(3)}(\Omega), \beta)}\left\langle \left({\bf I}-\frac{ \overset{3}{\mathbb{P}} \, {\bf K}}{f(\lambda_{m_0}^{(3)}(\Omega), \beta)}\right)^{-1} H^{Inc}, e_{m_0}^{(3)} \right\rangle_{\mathbb{L}^{2}(\Omega)} \\ &+&  \mathcal{O}\left( \frac{k^7}{\left\vert \beta \right\vert^2} \right) \, + \, \mathcal{O}\left(  k^2 \right) \, + \, \mathcal{O}\left(\frac{k^3}{\left\vert \beta \right\vert} \right).
\end{eqnarray*}
Moreover, as $k^{2} \lesssim \left\vert \beta \right\vert$, see $(\ref{kscalebeta})$, we reduce the error term of the previous term to $\mathcal{O}\left(\dfrac{k^3}{\left\vert \beta \right\vert} \right)$. Hence,
\begin{eqnarray*}
E^{\infty}_{\mathrm{eff}, -}(\hat{x}) &=&  \pm \, \frac{12 \, k^{2} \, \xi}{\pi^3 \, + \, 4 \, \xi}    \, \hat{x} \, \times \, \int_{\Omega} e_{m_{0}}^{(3)}(z) \; dz \;\; \frac{1}{f(\lambda_{m_0}^{(3)}(\Omega), \beta)}\left\langle \left({\bf I}-\frac{ \overset{3}{\mathbb{P}} \, {\bf K}}{f(\lambda_{m_0}^{(3)}(\Omega), \beta)}\right)^{-1} H^{Inc}, e_{m_0}^{(3)} \right\rangle_{\mathbb{L}^{2}(\Omega)} \\ &+& \mathcal{O}\left(\frac{k^3}{\left\vert \beta \right\vert} \right) \\
& \overset{(\ref{xn0=...+betan0})}{=} & \pm \, \frac{12 \, k^{2} \, \xi_{m_{0}}}{\pi^3 \, + \, 4 \, \xi_{m_{0}}}    \, \hat{x} \, \times \, \int_{\Omega} e_{m_{0}}^{(3)}(z) \; dz \;\; \frac{1}{f(\lambda_{m_0}^{(3)}(\Omega), \beta)}\left\langle \left({\bf I}-\frac{ \overset{3}{\mathbb{P}} \, {\bf K}}{f(\lambda_{m_0}^{(3)}(\Omega), \beta)}\right)^{-1} H^{Inc}, e_{m_0}^{(3)} \right\rangle_{\mathbb{L}^{2}(\Omega)} \\ &+&  \frac{\pm 3 k^{2} \pi^{6} \beta}{( \pi^3 + 4 \xi) ( \pi^3  +  4 \xi_{m_{0}})}  \hat{x} \times  \int_{\Omega} e_{m_{0}}^{(3)}  dz  \frac{1}{f(\lambda_{m_0}^{(3)}(\Omega), \beta)}\left\langle \left({\bf I}-\frac{ \overset{3}{\mathbb{P}} \, {\bf K}}{f(\lambda_{m_0}^{(3)}(\Omega), \beta)}\right)^{-1} H^{Inc}, e_{m_0}^{(3)} \right\rangle_{\mathbb{L}^{2}(\Omega)} \\ &+& \mathcal{O}\left(\frac{k^3}{\left\vert \beta \right\vert} \right).
\end{eqnarray*}
We estimate the second term on the R.H.S as 
\begin{equation*}
 \left\vert \frac{\pm 3  k^{2} \pi^{6}  \beta}{( \pi^3  +  4  \xi) ( \pi^3  +  4  \xi_{m_{0}})}  \hat{x}  \times  \int_{\Omega} e_{m_{0}}^{(3)} dz \frac{1}{f(\lambda_{m_0}^{(3)}(\Omega), \beta)}\left\langle \left({\bf I}-\frac{ \overset{3}{\mathbb{P}} \, {\bf K}}{f(\lambda_{m_0}^{(3)}(\Omega), \beta)}\right)^{-1} H^{Inc}, e_{m_0}^{(3)} \right\rangle_{\mathbb{L}^{2}(\Omega)} \right\vert = \mathcal{O}\left( k^{2} \right). 
\end{equation*}
Then, 
\begin{eqnarray*}
E^{\infty}_{\mathrm{eff}, -}(\hat{x}) &=&  \pm \, \frac{12 \, k^{2} \, \xi_{m_{0}}}{\pi^3 \, + \, 4 \, \xi_{m_{0}}}    \, \hat{x} \, \times \, \int_{\Omega} e_{m_{0}}^{(3)}(z) \; dz \;\; \frac{1}{f(\lambda_{m_0}^{(3)}(\Omega), \beta)}\left\langle \left({\bf I}-\frac{ \overset{3}{\mathbb{P}} \, {\bf K}}{f(\lambda_{m_0}^{(3)}(\Omega), \beta)}\right)^{-1} H^{Inc}, e_{m_0}^{(3)} \right\rangle_{\mathbb{L}^{2}(\Omega)} \\ &+& \mathcal{O}\left( k^{2} \right) + \mathcal{O}\left(\frac{k^3}{\left\vert \beta \right\vert} \right) \\
& \overset{(\ref{Expressionximo0})}{=} &  \pm \, \frac{k^{2}}{\lambda_{m_{0}}^{(3)}(\Omega)}    \, \hat{x} \, \times \, \int_{\Omega} e_{m_{0}}^{(3)}(z) \; dz \;\; \frac{1}{f(\lambda_{m_0}^{(3)}(\Omega), \beta)}\left\langle \left({\bf I}-\frac{ \overset{3}{\mathbb{P}} \, {\bf K}}{f(\lambda_{m_0}^{(3)}(\Omega), \beta)}\right)^{-1} H^{Inc}, e_{m_0}^{(3)} \right\rangle_{\mathbb{L}^{2}(\Omega)} \\ &+& \mathcal{O}\left( k^{2} \right) + \mathcal{O}\left(\frac{k^3}{\left\vert \beta \right\vert} \right) \\
& \overset{(\ref{WBMX})}{=}&    \frac{\pm \, 3 \, k^{2}}{\left\vert \beta \right\vert \; \left\vert 3 \, \lambda_{m_{0}}^{(3)}\left( \Omega \right) - 1 \right\vert^{2}}  \, \hat{x} \, \times \, \int_{\Omega} e_{m_{0}}^{(3)}(z) \; dz  \; \left\langle \left({\bf I}-\frac{ \overset{3}{\mathbb{P}} \, {\bf K}}{f(\lambda_{m_0}^{(3)}(\Omega), \beta)}\right)^{-1} H^{Inc}, e_{m_0}^{(3)} \right\rangle_{\mathbb{L}^{2}(\Omega)} \\ &+& \mathcal{O}\left( k^{2} \right) + \mathcal{O}\left(\frac{k^3}{\left\vert \beta \right\vert} \right).
\end{eqnarray*}
At this stage, by combining the previous formula with \eqref{add-coro-4}, we obtain 
\begin{eqnarray*}
\nonumber
	E^\infty(\hat{x}) &=& \frac{\pm \, 3 \, k^{2}}{\left\vert \beta \right\vert \; \left\vert 3 \, \lambda_{m_{0}}^{(3)}\left( \Omega \right) - 1 \right\vert^{2}}  \, \hat{x} \, \times \, \int_{\Omega} e_{m_{0}}^{(3)}(z) \; dz  \; \left\langle \left({\bf I}-\frac{ \overset{3}{\mathbb{P}} \, {\bf K}}{f(\lambda_{m_0}^{(3)}(\Omega), \beta)}\right)^{-1} H^{Inc}, e_{m_0}^{(3)} \right\rangle_{\mathbb{L}^{2}(\Omega)} \\
	&+& \mathcal{O}\left( k^{2} \right) + \mathcal{O}\left(\frac{k^3}{\left\vert \beta \right\vert} \right) + \Oh\left( c(k, \eta_0, c_r, c_0) \; g_{c_r}(a)  \right).
\end{eqnarray*}
Based on the fact that $[H^{\mathring{\mu_r}}]_{C^{0, \alpha}(\overline{\Omega})} $ is of order one with respect to $a$, the definition of $c(k, \eta_0, c_r, c_0)$ and the estimation of $g_{c_r}(a)$, given by $(\ref{add-constant})$, we deduce that 
\begin{equation*}
    \mathcal{O}\left( k^{2} \right) \, + \, \mathcal{O}\left(\frac{k^3}{\left\vert \beta \right\vert} \right) \, + \,  
 \Oh\left( c(k, \eta_0, c_r, c_0) \; [H^{\mathring{\mu_r}}]_{C^{0, \alpha}(\overline{\Omega})} \; g_{c_r}(a)  \right) \; = \;     \mathcal{O}\left( k^{2} \right) + \mathcal{O}\left(\frac{k^3}{\left\vert \beta \right\vert} \right).
\end{equation*}
Hence, by taking into account that we have assumed that $\lambda_{m_0}^{(3)}(\Omega) \, > \, \dfrac{1}{3}$, we end up with the coming formula
\begin{eqnarray}\label{E-new-2}
\nonumber
	E^\infty(\hat{x}) &=& \frac{\pm \, 3 \, k^{2}}{\left\vert \beta \right\vert \; \left( 3 \, \lambda_{m_{0}}^{(3)}\left( \Omega \right) - 1 \right)^{2}}  \, \hat{x} \, \times \, \int_{\Omega} e_{m_{0}}^{(3)}(z) \; dz  \; \left\langle \left({\bf I}-\frac{ \overset{3}{\mathbb{P}} \, {\bf K}}{f(\lambda_{m_0}^{(3)}(\Omega), \beta)}\right)^{-1} H^{Inc}, e_{m_0}^{(3)} \right\rangle_{\mathbb{L}^{2}(\Omega)} \\
	&+& \mathcal{O}\left( k^{2} \right) + \mathcal{O}\left(\frac{k^3}{\left\vert \beta \right\vert} \right).
\end{eqnarray}
Recalling that $H^{Inc}(x) := \left(\theta \, \times \, \mathrm{p} \right) \, e^{i \, k \, \theta \cdot x}$, using the smallness of the frequency $k$ and the boundedness of the operator $\left({\bf I}-\frac{ \overset{3}{\mathbb{P}} \, {\bf K}}{f(\lambda_{m_0}^{(3)}(\Omega), \beta)}\right)^{-1}$, to get
\begin{equation*}
    \left\langle \left({\bf I}-\frac{ \overset{3}{\mathbb{P}} \, {\bf K}}{f(\lambda_{m_0}^{(3)}(\Omega), \beta)}\right)^{-1} H^{Inc}, e_{m_0}^{(3)} \right\rangle_{\mathbb{L}^{2}(\Omega)} \, = \, \left\langle \left({\bf I}-\frac{ \overset{3}{\mathbb{P}} \, {\bf K}}{f(\lambda_{m_0}^{(3)}(\Omega), \beta)}\right)^{-1} \left(\theta \, \times \, \mathrm{p} \right), e_{m_0}^{(3)} \right\rangle_{\mathbb{L}^{2}(\Omega)} \, + \, \mathcal{O}\left( k \right).
\end{equation*}
And, this allows us to rewrite $(\ref{E-new-2})$ as
\begin{eqnarray}\label{Eq129}
\nonumber
	E^\infty(\hat{x}) &=& \frac{\pm \, 3 \, k^{2}}{\left\vert \beta \right\vert \; \left( 3 \, \lambda_{m_{0}}^{(3)}\left( \Omega \right) - 1 \right)^{2}}  \, \hat{x} \, \times \, \int_{\Omega} e_{m_{0}}^{(3)}(z) \; dz  \; \left\langle \left({\bf I}-\frac{ \overset{3}{\mathbb{P}} \, {\bf K}}{f(\lambda_{m_0}^{(3)}(\Omega), \beta)}\right)^{-1} \left(\theta \, \times \, \mathrm{p} \right), e_{m_0}^{(3)} \right\rangle_{\mathbb{L}^{2}(\Omega)} \\
	&+& \mathcal{O}\left( k^{2} \right) + \mathcal{O}\left(\frac{k^3}{\left\vert \beta \right\vert} \right).
\end{eqnarray}
By utilizing the Born series and keeping the dominant term associated with the operator ${\bf K}$, we can demonstrate that 
\begin{equation*}
    \left({\bf I} \, - \, \dfrac{ \overset{3}{\mathbb{P}} \, {\bf K}}{f(\lambda_{m_0}^{(3)}(\Omega), \beta)}\right)^{-1} \, = \, \begin{LARGE} \textbf{N} \end{LARGE}^{-1} \, + \, \mathcal{O}\left( k \right) \, + \, \mathcal{O}\left( \beta \right),
\end{equation*}
where $\begin{LARGE} \textbf{N} \end{LARGE}$ is a self-adjoint operator given by
\begin{equation*}
    \begin{LARGE} \textbf{N} \end{LARGE} := \left({\bf I} \, - \, \dfrac{\pm \, k^{2}}{2 \, \left\vert \beta \right\vert \, \left( 3 \, \lambda_{m_{0}}^{(3)}(\Omega) \, - \, 1 \right)^{2}} \left( N + N^{\prime} \right) \overset{3}{\mathbb{P}} \, \right)
\end{equation*}
with $N$ is the Newtonian operator and $N^{\prime}$ is the operator defined by $(\ref{DefNprimeTHM})$. Using this, we rewrite $(\ref{Eq129})$ as 
\begin{equation*}
	E^\infty(\hat{x}) = \frac{\pm \, 3 \, k^{2}}{\left\vert \beta \right\vert \; \left( 3 \, \lambda_{m_{0}}^{(3)}\left( \Omega \right) - 1 \right)^{2}}  \, \hat{x} \, \times \, \int_{\Omega} e_{m_{0}}^{(3)}(z) \; dz  \; \left\langle     \begin{LARGE} \textbf{N} \end{LARGE}^{-1} \left(\theta \, \times \, \mathrm{p} \right), e_{m_0}^{(3)} \right\rangle_{\mathbb{L}^{2}(\Omega)} + \mathcal{O}\left( k^{2} \right) + \mathcal{O}\left(\frac{k^3}{\left\vert \beta \right\vert} \right).
\end{equation*}
Now, by taking the adjoint operator of $    \begin{LARGE} \textbf{N} \end{LARGE}^{-1}$, which is itself, and remembering that $\langle ; \rangle_{\mathbb{L}^{2}(\Omega)}$ is a complex inner product, we get 
\begin{equation}\label{MNP}
	E^\infty(\hat{x}) = \frac{\pm \, 3 \, k^{2}}{\left\vert \beta \right\vert \; \left( 3 \, \lambda_{m_{0}}^{(3)}\left( \Omega \right) - 1 \right)^{2}}  \, \hat{x} \, \times \, \int_{\Omega} e_{m_{0}}^{(3)}(z) \; dz  \; \left( \theta \, \times \, \mathrm{p} \right) \cdot \int_{\Omega} \overline{     \begin{LARGE} \textbf{N} \end{LARGE}^{-1}\left(e_{m_0}^{(3)}\right)}(x) \, dx  + \mathcal{O}\left( k^{2} \right) + \mathcal{O}\left(\frac{k^3}{\left\vert \beta \right\vert} \right).
\end{equation}
We set $\boldsymbol{\mathcal{X}}_{m_{0}}(\lambda_{m_0}^{(3)}(\Omega), \beta)$ to be the tensor defined by
\begin{equation}\label{DefTensorX}
    \boldsymbol{\mathcal{X}}_{m_{0}}(\lambda_{m_0}^{(3)}(\Omega), \beta) := \int_{\Omega} e_{m_{0}}^{(3)}(z) \; dz   \otimes \int_{\Omega} \overline{  \begin{LARGE} \textbf{N} \end{LARGE}^{-1}\left(e_{m_0}^{(3)}\right)}(x) \, dx.  
\end{equation}
Finally, by the use of the definition $    \boldsymbol{\mathcal{X}}_{m_{0}}(\lambda_{m_0}^{(3)}(\Omega), \beta)$, the formula $(\ref{MNP})$ can be rewritten like
\begin{equation}\label{AnLoIs}
	E^\infty(\hat{x}) = \frac{\pm \, 3 \, k^{2}}{\left\vert \beta \right\vert \; \left( 3 \, \lambda_{m_{0}}^{(3)}\left( \Omega \right) - 1 \right)^{2}}  \, \hat{x} \, \times \, \left( \boldsymbol{\mathcal{X}}_{m_{0}}(\lambda_{m_0}^{(3)}(\Omega), \beta) \cdot \left( \theta \, \times \, \mathrm{p} \right) \right) + \mathcal{O}\left( k^{2} \right) + \mathcal{O}\left(\frac{k^3}{\left\vert \beta \right\vert} \right).
\end{equation}
Therefore, for $k$ fulfilling $(\ref{kfreqplasmon})$, we see that the first term in (\ref{E-new-2}) is the dominant part and it behaves like $\dfrac{k^{2}}{\beta}$
which indicates that with $\beta \, \ll \, 1$ and $\eta_0 \, \gg \, 1$, we deduce a giant amplification of the magnetic field $H^{Inc}$. In other words, the incident frequency
\begin{equation}\notag
	k_{n_0}^2 \, := \, \frac{1}{\eta_0 \,\, \lambda_{n_0}^{(1)}(B)},
\end{equation}
is a low frequency plasmonic resonance generated in $\Omega$ by the cluster of the all-dielectric nano-particles in the quasi-static regime. 
\newline 
\medskip
\newline 
The previous proof supports the following two remarks.
\begin{remark}
\phantom{}
    \begin{enumerate}
        \item[]
        \item The effective permeability expression. Recall, from $(\ref{new-coeff-neg})$, that 
\begin{equation*}
\mathring{\mu_r} \, = \, \frac{\pi^{3} \, - \, 8 \, \xi }{\pi^{3} \, + \, 4 \, \xi} \; {\bf I}  \overset{(\ref{xn0=...+betan0})}{=}  \frac{1 \, - \, 2  \, \left[ \dfrac{1}{\left( 3 \, \lambda_{m_{0}}^{(3)}(\Omega) \, - \, 1\right)} \, + \, \beta \right] }{1 \, +   \, \left[ \dfrac{1}{\left( 3 \, \lambda_{m_{0}}^{(3)}(\Omega) \, - \, 1\right)} \, + \, \beta \right]} \, {\bf I} = \left( 1 \, - \, \frac{1}{\lambda_{m_{0}}^{(3)}(\Omega)} + \mathrm{T}\left( \beta \right) \right) \, {\bf I}
\end{equation*}
where 
\begin{equation*}
    \mathrm{T}\left( \beta \right) := \frac{- \, \beta \, \left( 3 \, \lambda_{m_{0}}^{(3)}(\Omega) \, - \, 2 \, + \, \dfrac{1}{3 \, \lambda_{m_{0}}^{(3)}(\Omega)} \right)}{\lambda_{m_{0}}^{(3)}(\Omega) \, + \, \beta \, \left(\lambda_{m_{0}}^{(3)}(\Omega) \, - \, \dfrac{1}{3} \right)} \, = \, \mathcal{O}\left( \beta \right).
\end{equation*}
        \item[]
        \item[] 
        \item The scale of $\beta$ relative to $k$. We have seen, from $(\ref{kscalebeta})$, that $\beta \sim k^{2}$. But, if $\beta$ can be taken to satisfy $\beta \, \sim \, k^{\sigma}$, with $0 \, < \sigma \, < 2$, the following approximation holds
\begin{equation*}
     \left({\bf I}-\frac{ \left( \overset{3}{\mathbb{P}} \, {\bf K} \right)^{\star} }{f(\lambda_{m_0}^{(3)}(\Omega), \beta)}\right)^{-1}\left( e_{m_{0}}^{(3)} \right)  \; = \; e_{m_{0}}^{(3)} \; + \; \mathcal{O}\left( k^{2-\sigma} \right).   
\end{equation*}
And, $(\ref{DefTensorX})$ can be approximated by
\begin{equation*}
    \boldsymbol{\mathcal{X}}_{m_{0}}(\lambda_{m_0}^{(3)}(\Omega), \beta) \; = \; {\bf Q}^{(3)}_{m_{0}} \; + \; \mathcal{O}\left( k^{2-\sigma} \right).  
\end{equation*}
where
\begin{equation}\label{DefQm0Q1}
{\bf Q}^{(3)}_{m_{0}} \, := \, \int_{\Omega}e_{m_{0}}^{(3)}(z) \; dz \otimes \int_{\Omega} \overline{e_{m_0}^{(3)}}(z) \, dz.
\end{equation}
Consequently, 
\begin{equation*}
	E^\infty(\hat{x}) = \frac{\pm \, 3 \, k^{2}}{\left\vert \beta \right\vert \, \left( 3 \, \lambda_{m_0}^{(3)}(\Omega) \, - \, 1 \right)^{2}} \;  \hat{x} \, \times \left( {\bf Q}^{(3)}_{m_{0}} \cdot \left( \theta \, \times \, \mathrm{p} \right) \right) \, + \, \mathcal{O}\left(   k^{\min\left(4-2\, \sigma ; 2 \; 3 - \sigma \right)} \right).
\end{equation*}
In the case of plasmonics, the tensor ${\bf Q}^{(3)}_{m_{0}}$ plays the same role as the tensor ${\bf P}_0$, given in $(\ref{defP0})$.
    \end{enumerate}
\end{remark}
This ends the proof of the fourth part. \newline \medskip \newline 

Observe that by construction of $    \boldsymbol{\mathcal{X}}_{m_{0}}(\lambda_{m_0}^{(3)}(\Omega), \beta)$, see for instance $(\ref{DefTensorX})$, if $\int_{\Omega}e_{m_{0}}^{(3)}(z) \; dz = 0$, the far field formula given by $(\ref{AnLoIs})$ will be meaningless (i.e. the dominant term vanishes). Unfortunately, depending on the $\Omega$'s shape, it may happen that $\int_{\Omega}e_{m_{0}}^{(3)}(z) \; dz$ may be a vanishing integral. For example, based on \cite[Section 1.3]{GS}, we know that for the case $\Omega$ is the unit ball, we have $\int_{\Omega = B(0,1)}e_{n}^{(3)}(z) \; dz \, = \, 0$, for $n \geq 2$. The coming lines comes to overcome this issue and brings more details on the case $\Omega$ is the unit ball. 
\newline \smallskip \newline 
In the sequel, we assume that $\Omega$ is the unit ball and we take the parameter $\xi$ to be large enough, i.e. $\xi \gg 1$. Equations $(\ref{Eq455})$ and $(\ref{add-coro-E})$  illustrate that choosing the lower sign for ${\bf T}^{\mathring{\mu_r}}$ is crucial for creating a resonance phenomenon related to eigenvalues $\lambda_{n}^{(3)}(\Omega) \, > \, \dfrac{1}{3}$, see for instance $(\ref{ExcSeqxn})$. Our subsequent focus, guided by an essential detail that we will highlight later, is on the case where  $\lambda_{n}^{(3)}(\Omega) = \dfrac{1}{3}$ and ${\bf T}^{\mathring{\mu_r}}$ generated by the upper sign. We start by recalling, from $(\ref{Eq455})$, the far-field formula
\begin{equation*}
    E^{\infty}_{eff,-}(\hat{x})  =  - \, \pm \, i \, k \, \xi \, \frac{12}{(\pi^{3} \, - \, 4 \, \xi)} \int_{\Omega} e^{- \, i \, k \, \hat{x} \cdot z} \hat{x} \times H^{\mu}(z) \, dz,
\end{equation*}
which by Taylor expansion, for the function $z \, \rightarrow \, e^{- \, i \, k \, \hat{x} \cdot z}$, and the fact that we take $\xi \, \gg \, 1$, we obtain
\begin{equation*}
           E^{\infty}_{eff,-}(\hat{x}) =  - \, \pm \, i \, k \, \xi \, \frac{12}{(\pi^{3} \, - \, 4 \, \xi)} \int_{\Omega} \hat{x} \times H^{\mu}(z) \, dz + \mathcal{O}\left(k^2 \, \left\Vert H^{\mu} \right\Vert_{\mathbb{L}^{2}(\Omega)} \right), 
\end{equation*}
and using the fact that 
\begin{equation*}
    \int_{\Omega} e_{n}^{(1)}(x) \, dx \, = 
    \int_{\Omega} e_{n+1}^{(3)}(x) \, dx \, = \, 0, \quad \text{for} \quad n \geq 1, 
\end{equation*}
we rewrite the previous expression of the far-field like
\begin{equation}\label{FFEI}
               E^{\infty}_{eff,-}(\hat{x}) =  - \, \pm \, i \, k \, \xi \, \frac{12}{(\pi^{3} \, - \, 4 \, \xi)} \int_{\Omega} \hat{x} \times e_{1}^{(3)}(z) \, dz \; \langle H^{\mu}; e_{1}^{(3)} \rangle_{\mathbb{L}^{2}(\Omega)} + \mathcal{O}\left(k^2 \,\left\Vert H^{\mu} \right\Vert_{\mathbb{L}^{2}(\Omega)} \right).
\end{equation}
Now, we need to estimate $\langle H^{\mu}; e_{1}^{(3)} \rangle_{\mathbb{L}^{2}(\Omega)}$. From the L.S.E given by $(\ref{LS-vec})$, we have  
\begin{equation}\label{LSE13}
H^\mu \, + \, \frac{12 \, \xi}{(\pi^3 - 4 \xi)} \,  \nabla {\bf M}(H^\mu) \, = \, i \, k \, H^{Inc} \, + \,\frac{12 \, \xi}{(\pi^3 - 4 \xi)} \, K(H^{\mu}), 
\end{equation} 
where $K(H^{\mu}) :=   - \left( \nabla {\bf M^k} \, - \, \nabla {\bf M} \right)(H^\mu) \, + \, k^2 \, {\bf N^k}(H^\mu)$.  Moreover, from $(\ref{mid-diff})$, we know that
\begin{eqnarray}\label{ANC}
\nonumber
 K(H^{\mu}) &=&  \frac{k^2}{2} \, N(H^\mu) \, + \, \frac{k^2}{2}N^{\prime}(H^\mu) \, + \frac{i \, k^{3}}{6 \, \pi} \int_{\Omega} H^\mu(x) \, dx \\  &+&  \frac{1}{4 \, \pi} \sum_{n \geq 3} \, \frac{(i \, k)^{n+1}}{(n+1)!} \, \int_{\Omega} \nabla \nabla \left\vert \cdot - y \right\vert^{n} \cdot H^\mu(y) \, dy + \frac{k^{2}}{4 \, \pi} \sum_{n \geq 1} \, \frac{(i \, k)^{n+1}}{(n+1)!} \, \int_{\Omega}  \left\vert \cdot - y \right\vert^{n} \, H^\mu(y) \, dy, 
\end{eqnarray}
with $N^{\prime}(\cdot)$ is the operator defined by $(\ref{DefNprimeTHM})$. Hence, from $(\ref{LSE13})$ and the fact that $\lambda_{1}^{(3)}(\Omega) = \dfrac{1}{3}$, we get
\begin{equation*}
    \langle H^{\mu}; e_{1}^{(3)} \rangle_{\mathbb{L}^{2}(\Omega)} \; \frac{\pi^{3}}{\left(\pi^{3} \, - \, 4 \, \xi \right)} \, = \, i \, k \,  \langle H^{Inc}; e_{1}^{3} \rangle_{\mathbb{L}^{2}(\Omega)} \, + \frac{12 \, \xi}{(\pi^3 - 4 \xi)} \, \langle K(H^{\mu}); e_{1}^{(3)} \rangle_{\mathbb{L}^{2}(\Omega)}.
\end{equation*}
Based on $(\ref{Proj2=0})$, we split $H^{\mu} \, = \, \overset{1}{\mathbb{P}}(H^{\mu}) \, + \, \overset{3}{\mathbb{P}}(H^{\mu})$ and we plug it into the previous equation to obtain
\begin{eqnarray*}
        \langle H^{\mu}; e_{1}^{(3)} \rangle_{\mathbb{L}^{2}(\Omega)} \; \frac{\pi^{3}}{\left(\pi^{3} \, - \, 4 \, \xi \right)} &=& i \, k \,  \langle H^{Inc}; e_{1}^{3} \rangle_{\mathbb{L}^{2}(\Omega)} \, + \frac{12 \, \xi}{(\pi^3 - 4 \xi)} \, \langle K(\overset{1}{\mathbb{P}}(H^{\mu})); e_{1}^{(3)} \rangle_{\mathbb{L}^{2}(\Omega)} \\
        &+&  \frac{12 \, \xi}{(\pi^3 - 4 \xi)} \, \sum_{j \geq 1} \, \langle H^{\mu}; e_{j}^{(3)} \rangle_{\mathbb{L}^{2}(\Omega)} \, \langle K(e_{j}^{(3)}); e_{1}^{(3)} \rangle_{\mathbb{L}^{2}(\Omega)}. 
\end{eqnarray*}
This implies, 
\begin{eqnarray}\label{IHIP}
\nonumber
        \langle H^{\mu}; e_{1}^{(3)} \rangle_{\mathbb{L}^{2}(\Omega)} &=&  \frac{\left(\pi^{3} \, - \, 4 \, \xi \right)}{\left( \pi^{3} \, - \, 12 \, \xi \, \langle K(e_{1}^{(3)}); e_{1}^{(3)} \rangle_{\mathbb{L}^{2}(\Omega)} \right)}  i \, k \,  \langle H^{Inc}; e_{1}^{(3)} \rangle_{\mathbb{L}^{2}(\Omega)} \\ \nonumber &+& \frac{12 \, \xi}{\left( \pi^{3} \, - \, 12 \, \xi \, \langle K(e_{1}^{(3)}); e_{1}^{(3)} \rangle_{\mathbb{L}^{2}(\Omega)} \right)}  \, \langle K(\overset{1}{\mathbb{P}}(H^{\mu})); e_{1}^{(3)} \rangle_{\mathbb{L}^{2}(\Omega)} \\
        &+&  \frac{12 \, \xi}{\left( \pi^{3} \, - \, 12 \, \xi \, \langle K(e_{1}^{(3)}); e_{1}^{(3)} \rangle_{\mathbb{L}^{2}(\Omega)} \right)} \, \sum_{j \geq 2} \, \langle H^{\mu}; e_{j}^{(3)} \rangle_{\mathbb{L}^{2}(\Omega)} \, \langle K(e_{j}^{(3)}); e_{1}^{(3)} \rangle_{\mathbb{L}^{2}(\Omega)}. 
\end{eqnarray}
The dispersion equation related to this case will be given by  
\begin{equation*}
    0 \,  =  \, \left( \pi^{3} \, - \, 12 \, \xi \, \langle K(e_{1}^{(3)}); e_{1}^{(3)} \rangle_{\mathbb{L}^{2}(\Omega)} \right),
\end{equation*}    
which can be rewritten using $(\ref{ANC})$, as 
\begin{eqnarray}\label{Eq0408}
\nonumber
    0 & = & \pi^{3} \, - \, 6 \, \xi \, k^2 \, \Bigg[ \langle N(e_{1}^{(3)}); e_{1}^{(3)} \rangle_{\mathbb{L}^{2}(\Omega)} \, + \,  \langle N^{\prime}(e_{1}^{(3)}); e_{1}^{(3)} \rangle_{\mathbb{L}^{2}(\Omega)} \, + \frac{i \, k}{3 \, \pi} \left\vert \int_{\Omega} e_{1}^{(3)}(x) \, dx \right\vert^{2} \\  \nonumber
    & & \qquad \qquad \qquad - \frac{1}{2 \, \pi} \,  \, \sum_{n \geq 3} \, \frac{(i \, k)^{n-1}}{(n+1)!} \, \langle \int_{\Omega} \nabla \nabla \left\vert \cdot - y \right\vert^{n} \cdot e_{1}^{(3)}(y) \, dy; e_{1}^{(3)} \rangle_{\mathbb{L}^{2}(\Omega)} \\ && \qquad \qquad \qquad + \frac{1}{2 \, \pi} \sum_{n \geq 1} \, \frac{(i \, k)^{n+1}}{(n+1)!} \, \langle \int_{\Omega}  \left\vert \cdot - y \right\vert^{n} \, e_{1}^{(3)}(y) \, dy; e_{1}^{(3)} \rangle_{\mathbb{L}^{2}(\Omega)} \Bigg]. 
\end{eqnarray}
\newline \medskip \newline
Let us postpone the computations and mention first the following comment. In contrast to the case of eigenvalues $\lambda_{n}^{(3)}(\Omega) > \dfrac{1}{3}$, where the choice of a lower sign for the tensor ${\bf T}^{\mathring{\mu_r}}$ was important to generate resonances. For the case of $\lambda_{1}^{(3)}(\Omega) \, = \, \dfrac{1}{3}$, we made the choice to take the lower sign for the tensor ${\bf T}^{\mathring{\mu_r}}$. Since $k^2$ is proportional to $\xi$, this choice of sign produces a dispersion equation that does not have a monomial of order one in $\xi$. Thus, the dominant part of the dispersion equation will take the following form 
\begin{equation*}
    a_0 \, + \, a_1 \, \xi^2 \, + \, a_2 \, \xi^{\frac{5}{2}}, \quad \text{with} \quad a_{0}, a_{1}, a_{2} \, \in \mathbb{C}. 
\end{equation*}
Then, by cancelling the quadratic part and taking into account the fact that $\xi \gg 1$, we will have an amplification of the incident field. More details will be given later. \newline \medskip \newline
Now, by going back to $(\ref{Eq0408})$, assuming that 
\begin{equation*}
    \langle \left( N \, + \, N^{\prime}\right)(e_{1}^{(3)}); e_{1}^{(3)} \rangle_{\mathbb{L}^{2}(\Omega)} \, \neq \, 0,
\end{equation*}
the solution, that we denote by $\xi_{1}$, to the following reduced dispersion equation
\begin{equation*}
  0 \, = \,  \pi^{3} \, - \, 6 \, \xi \, k^2 \,  \langle \left( N \, + \,  N^{\prime} \right)(e_{1}^{(3)}); e_{1}^{(3)} \rangle_{\mathbb{L}^{2}(\Omega)}  \, \overset{(\ref{def-xi})}{=}  \, 
        \pi^{3} \, - \, 6 \, \xi^{2} \, \frac{c_{0} \, c_{r}^{3}}{\eta_{0}} \, \langle \left( N \, + \,  N^{\prime} \right)(e_{1}^{(3)}); e_{1}^{(3)} \rangle_{\mathbb{L}^{2}(\Omega)},
\end{equation*}
is given by
\begin{equation}\label{SolRDEq}
    \xi_{1}^{2} \, = \, \frac{ \pi^{3} \, \eta_{0}}{6 \, c_{0} \, c_{r}^{3} \,  \langle \left( N \, + \, N^{\prime} \right)(e_{1}^{(3)}); e_{1}^{(3)}  \rangle_{\mathbb{L}^{2}(\Omega)} }.
\end{equation}
For $\xi \gg 1$, the coefficient of the third term in $(\ref{IHIP})$ is of order one with respect to the frequency $k$. To improve the estimation of $    \langle H^{\mu}; e_{1}^{(3)} \rangle_{\mathbb{L}^{2}(\Omega)}$, in $(\ref{IHIP})$, we need to compute $\langle H^{\mu}; e_{j}^{(3)} \rangle_{\mathbb{L}^{2}(\Omega)}$, for $j \geq 2$. Again, by using the L.S.E given by $(\ref{LSE13})$, we get 
\begin{equation}\label{West}
    \langle H^\mu; e_{j}^{(3)} \rangle_{\mathbb{L}^{2}(\Omega)}   \, = \, \frac{(\pi^3 - 4 \xi)\,   i \, k}{(\pi^3 - 4 \xi) \, + \, 12 \, \xi \, \lambda_{j}^{(3)}(\Omega)}  \, \langle H^{Inc}; e_{j}^{(3)} \rangle_{\mathbb{L}^{2}(\Omega)} \, + \,\, \frac{12 \, \xi}{(\pi^3 - 4 \xi) \, + \, 12 \, \xi \, \lambda_{j}^{(3)}(\Omega)}\, \langle K(H^{\mu}); e_{j}^{(3)} \rangle_{\mathbb{L}^{2}(\Omega)}. 
\end{equation}
Then, by plugging \eqref{West} into $(\ref{IHIP})$, we obtain
\begin{eqnarray*}
        \langle H^{\mu}; e_{1}^{(3)} \rangle_{\mathbb{L}^{2}(\Omega)} &=&  \frac{1}{\left( \pi^{3} \, - \, 12 \, \xi \, \langle K(e_{1}^{(3)}); e_{1}^{(3)} \rangle_{\mathbb{L}^{2}(\Omega)} \right)} \Bigg[\left(\pi^{3} \, - \, 4 \, \xi \right) i k  \langle H^{Inc}; e_{1}^{3} \rangle_{\mathbb{L}^{2}(\Omega)} +  12 \xi  \langle K(\overset{1}{\mathbb{P}}(H^{\mu})); e_{1}^{(3)} \rangle_{\mathbb{L}^{2}(\Omega)} \\ \nonumber
        &+&  12 \, \xi \, \sum_{j \geq 2} \, \frac{(\pi^3 - 4 \xi)\,   i \, k}{(\pi^3 - 4 \xi) \, + \, 12 \, \xi \, \lambda_{j}^{(3)}(\Omega)}  \, \langle H^{Inc}; e_{j}^{(3)} \rangle_{\mathbb{L}^{2}(\Omega)} \,  \, \langle K(e_{j}^{(3)}); e_{1}^{(3)} \rangle_{\mathbb{L}^{2}(\Omega)} \\
        &+& 12 \, \xi  \, \sum_{j \geq 2} \,  \frac{ 12 \, \xi }{(\pi^3 - 4 \xi) \, + \, 12 \, \xi \, \lambda_{j}^{(3)}(\Omega)} \, \langle K(H^{\mu}); e_{j}^{(3)} \rangle_{\mathbb{L}^{2}(\Omega)}   \, \langle K(e_{j}^{(3)}); e_{1}^{(3)} \rangle_{\mathbb{L}^{2}(\Omega)}\Bigg]. 
\end{eqnarray*}
By using the previous formula and going back to $(\ref{FFEI})$, the far-field takes the following form
\begin{equation}\label{EqSN}
    E^{\infty}_{eff,-}(\hat{x})  =   \pm  \, \frac{12 \,  \xi \, k^{2}}{\left( \pi^{3} - 12 \, \xi  \, \langle K\left(e_{1}^{3}\right); e_{1}^{3} \rangle \right)} \int_{\Omega} \hat{x} \times e_{1}^{(3)}(z) \, dz \;  \langle H^{Inc}; e_{1}^{3} \rangle_{\mathbb{L}^{2}(\Omega)} + Err_0 + \mathcal{O}\left(k^2 \, \left\Vert H^{\mu} \right\Vert_{\mathbb{L}^{2}(\Omega)} \right),
\end{equation}
where 
\begin{eqnarray*}
   Err_{0} &:=& \mp \, \frac{i \, k \, 144 \, \xi^{2}}{(\pi^{3}-4 \, \xi) \, \left( \pi^{3} \, - \, 12 \, \xi \, \langle K(e_{1}^{(3)}); e_{1}^{(3)} \rangle_{\mathbb{L}^{2}(\Omega)} \right)} \, \int_{\Omega} \hat{x} \times e_{1}^{(3)}(z) \, dz \, \Bigg[  \langle K(\overset{1}{\mathbb{P}}(H^{\mu})); e_{1}^{(3)} \rangle_{\mathbb{L}^{2}(\Omega)} \\ \nonumber
        &+&   \sum_{j \geq 2} \, \frac{(\pi^3 - 4 \xi)\,   i \, k}{(\pi^3 - 4 \xi) \, + \, 12 \, \xi \, \lambda_{j}^{(3)}(\Omega)}  \, \langle H^{Inc}; e_{j}^{(3)} \rangle_{\mathbb{L}^{2}(\Omega)} \,  \langle K(e_{j}^{(3)}); e_{1}^{(3)} \rangle_{\mathbb{L}^{2}(\Omega)} \\
        &+&   \, \sum_{j \geq 2} \,  \frac{ 12 \, \xi }{(\pi^3 - 4 \xi) \, + \, 12 \, \xi \, \lambda_{j}^{(3)}(\Omega)} \, \langle K(H^{\mu}); e_{j}^{(3)} \rangle_{\mathbb{L}^{2}(\Omega)}   \, \langle K(e_{j}^{(3)}); e_{1}^{(3)} \rangle_{\mathbb{L}^{2}(\Omega)} \Bigg].
\end{eqnarray*}
Now, by taking the modulus on the both sides of the previous formula, we get
\begin{eqnarray*}
  \left\vert Err_{0} \right\vert & \lesssim &  \frac{ k \, \, \xi}{\left\vert \pi^{3} \, - \, 12 \, \xi \, \langle K(e_{1}^{(3)}); e_{1}^{(3)} \rangle_{\mathbb{L}^{2}(\Omega)} \right\vert} \,  \Bigg[  \left\vert \langle K(\overset{1}{\mathbb{P}}(H^{\mu})); e_{1}^{(3)} \rangle_{\mathbb{L}^{2}(\Omega)} \right\vert \\ \nonumber
        &+&   \sum_{j \geq 2} k \, \left\vert \langle H^{Inc}; e_{j}^{(3)} \rangle_{\mathbb{L}^{2}(\Omega)} \right\vert  \, \left\vert \langle K(e_{j}^{(3)}); e_{1}^{(3)} \rangle_{\mathbb{L}^{2}(\Omega)} \right\vert +   \, \sum_{j \geq 2}  \left\vert \langle K(H^{\mu}); e_{j}^{(3)} \rangle_{\mathbb{L}^{2}(\Omega)} \right\vert  \, \left\vert \langle K(e_{j}^{(3)}); e_{1}^{(3)} \rangle_{\mathbb{L}^{2}(\Omega)} \right\vert \Bigg] \\
        & \lesssim &  \frac{ k \, \, \xi}{\left\vert \pi^{3} \, - \, 12 \, \xi \, \langle K(e_{1}^{(3)}); e_{1}^{(3)} \rangle_{\mathbb{L}^{2}(\Omega)} \right\vert} \,  \Bigg[  \left\Vert K(\overset{1}{\mathbb{P}}(H^{\mu})) \right\Vert_{\mathbb{L}^{2}(\Omega)}  + \left( k \,  +   \,   \left\Vert  K(H^{\mu}) \right\Vert_{\mathbb{L}^{2}(\Omega)} \right) \, \left\Vert  K(e_{1}^{(3)})  \right\Vert_{\mathbb{L}^{2}(\Omega)} \Bigg].
\end{eqnarray*}
And, by keeping the dominant part of the operator $K(\cdot)$, which is $\dfrac{k^2}{2} \, \left( N \, + \, N^{\prime} \right)$, see for instance $(\ref{ANC})$, and using the fact that $N^{\prime}(\cdot)$ behaves like $N(\cdot)$ from singularity analysis point of view we get from the previous formula 
\begin{equation*}
    \left\vert Err_{0} \right\vert  \lesssim   \frac{ k \, \, \xi}{\left\vert \pi^{3} \, - \, 12 \, \xi \, \langle K(e_{1}^{(3)}); e_{1}^{(3)} \rangle_{\mathbb{L}^{2}(\Omega)} \right\vert} \,  \left[ k^{2} \, \left\Vert N(\overset{1}{\mathbb{P}}(H^{\mu})) \right\Vert_{\mathbb{L}^{2}(\Omega)}  + \, \left( k^{3} \,  +  k^{4} \,   \left\Vert  N(H^{\mu}) \right\Vert_{\mathbb{L}^{2}(\Omega)} \right) \, \left\Vert  N(e_{1}^{(3)})  \right\Vert_{\mathbb{L}^{2}(\Omega)} \right].
\end{equation*}
Moreover, using the continuity of the Newtonian operator, we obtain 
\begin{equation*}
    \left\vert Err_{0} \right\vert  \lesssim   \frac{ k^{3} \, \, \xi}{\left\vert \pi^{3} \, - \, 12 \, \xi \, \langle K(e_{1}^{(3)}); e_{1}^{(3)} \rangle_{\mathbb{L}^{2}(\Omega)} \right\vert} \,  \left[ \left\Vert \overset{1}{\mathbb{P}}(H^{\mu}) \right\Vert_{\mathbb{L}^{2}(\Omega)}  +  k +  k^{2} \, \left\Vert  H^{\mu} \right\Vert_{\mathbb{L}^{2}(\Omega)} \right].
\end{equation*}
As $\left\Vert H^{\mu} \right\Vert_{\mathbb{L}^{2}(\Omega)} = \left\Vert \overset{1}{\mathbb{P}}(H^{\mu}) \right\Vert_{\mathbb{L}^{2}(\Omega)} + \left\Vert \overset{3}{\mathbb{P}}(H^{\mu}) \right\Vert_{\mathbb{L}^{2}(\Omega)}$ and using the smallness of the frequency $k$, we obtain from the previous equation
\begin{equation}\label{USUK}
    \left\vert Err_{0} \right\vert  \lesssim   \frac{ k^{3} \, \, \xi}{\left\vert \pi^{3} \, - \, 12 \, \xi \, \langle K(e_{1}^{(3)}); e_{1}^{(3)} \rangle_{\mathbb{L}^{2}(\Omega)} \right\vert} \,  \left[ \left\Vert \overset{1}{\mathbb{P}}(H^{\mu}) \right\Vert_{\mathbb{L}^{2}(\Omega)}  +  k +  k^{2} \, \left\Vert \overset{3}{\mathbb{P}}(H^{\mu}) \right\Vert_{\mathbb{L}^{2}(\Omega)} \right].
\end{equation}
To estimate the R.H.S of the above equation, we need to estimate $\left\Vert \overset{1}{\mathbb{P}}(H^{\mu}) \right\Vert_{\mathbb{L}^{2}(\Omega)}$ and $\left\Vert \overset{3}{\mathbb{P}}(H^{\mu}) \right\Vert_{\mathbb{L}^{2}(\Omega)}$. 
\begin{enumerate}
    \item[] 
    \item Estimation of $\left\Vert \overset{1}{\mathbb{P}}(H^{\mu}) \right\Vert_{\mathbb{L}^{2}(\Omega)}$. To achieve this, we go back to the L.S.E, given by $(\ref{LSE13})$, to get 
    \begin{equation*}
    \langle H^\mu; e_{n}^{(1)}\rangle_{\mathbb{L}^{2}(\Omega)} = \, i \, k \, \langle H^{Inc}; e_{n}^{(1)}\rangle_{\mathbb{L}^{2}(\Omega)} \, + \,\frac{12 \, \xi}{(\pi^3 - 4 \xi)} \, \langle K(H^{\mu}); e_{n}^{(1)}\rangle_{\mathbb{L}^{2}(\Omega)}, 
\end{equation*}
which, after taking the modulus, the dominant part of the operator $K(\cdot)$ and the fact that $\xi \gg 1$, we obtain
   \begin{eqnarray*}
   \left\vert \langle H^\mu; e_{n}^{(1)}\rangle_{\mathbb{L}^{2}(\Omega)} \right\vert \, & \lesssim & \, k \, \left\vert \langle H^{Inc}; e_{n}^{(1)}\rangle_{\mathbb{L}^{2}(\Omega)} \right\vert \, + k^{2}  \, \left\vert \langle N(H^{\mu}); e_{n}^{(1)}\rangle_{\mathbb{L}^{2}(\Omega)} \right\vert \\
   \left\Vert \overset{1}{\mathbb{P}}(H^{\mu}) \right\Vert_{\mathbb{L}^{2}(\Omega)} \, & \lesssim & \, k \,     \left\Vert \overset{1}{\mathbb{P}}(H^{Inc}) \right\Vert_{\mathbb{L}^{2}(\Omega)} \, + \, k^{2} \,    \left\Vert H^{\mu} \right\Vert_{\mathbb{L}^{2}(\Omega)} \\
    \left\Vert \overset{1}{\mathbb{P}}(H^{\mu}) \right\Vert_{\mathbb{L}^{2}(\Omega)} \, & \lesssim & \, k \,     \left\Vert \overset{1}{\mathbb{P}}(H^{Inc}) \right\Vert_{\mathbb{L}^{2}(\Omega)} \, + \, k^{2} \,    \left\Vert \overset{3}{\mathbb{P}}(H^{\mu}) \right\Vert_{\mathbb{L}^{2}(\Omega)}
\end{eqnarray*}
    \item[]
    \item Estimation of $\left\Vert \overset{3}{\mathbb{P}}(H^{\mu}) \right\Vert_{\mathbb{L}^{2}(\Omega)}$. To achieve this, we go back to $(\ref{West})$, to get 
    \begin{eqnarray*}
    \left\vert \langle H^\mu; e_{j}^{(3)} \rangle_{\mathbb{L}^{2}(\Omega)}   \right\vert \, & \lesssim & \, k  \, \left\vert \langle H^{Inc}; e_{j}^{(3)} \rangle_{\mathbb{L}^{2}(\Omega)} \right\vert \, + k^{2} \, \left\vert \langle N(H^{\mu}); e_{j}^{(3)} \rangle_{\mathbb{L}^{2}(\Omega)} \right\vert \\
    \left\Vert \overset{3}{\mathbb{P}}(H^{\mu}) \right\Vert_{\mathbb{L}^{2}(\Omega)} \, & \lesssim & \, k \,     \left\Vert \overset{3}{\mathbb{P}}(H^{Inc}) \right\Vert_{\mathbb{L}^{2}(\Omega)} \, + \, k^{2} \, \left\Vert H^{\mu} \right\Vert_{\mathbb{L}^{2}(\Omega)} \\
    \left\Vert \overset{3}{\mathbb{P}}(H^{\mu}) \right\Vert_{\mathbb{L}^{2}(\Omega)} \, & \lesssim & \, k \,     \left\Vert \overset{3}{\mathbb{P}}(H^{Inc}) \right\Vert_{\mathbb{L}^{2}(\Omega)} \, + \, k^{2} \, \left\Vert \overset{1}{\mathbb{P}}(H^{\mu}) \right\Vert_{\mathbb{L}^{2}(\Omega)}. 
\end{eqnarray*}
\end{enumerate}
This gives us 
\begin{equation*}
    \left\Vert \overset{1}{\mathbb{P}}(H^{\mu}) \right\Vert_{\mathbb{L}^{2}(\Omega)} = \mathcal{O}\left( k \right) \quad \text{and} \quad     \left\Vert \overset{3}{\mathbb{P}}(H^{\mu}) \right\Vert_{\mathbb{L}^{2}(\Omega)} = \mathcal{O}\left( k \right).
\end{equation*}
And, by going back to $(\ref{USUK})$, we obtain 
\begin{equation*}
    \left\vert Err_{0} \right\vert  \lesssim   \frac{ k^{4} \, \, \xi}{\left\vert \pi^{3} \, - \, 12 \, \xi \, \langle K(e_{1}^{(3)}); e_{1}^{(3)} \rangle_{\mathbb{L}^{2}(\Omega)} \right\vert},
\end{equation*}
which implies, by recalling $(\ref{EqSN})$,
\begin{eqnarray*}
    E^{\infty}_{eff,-}(\hat{x}) & = &  \pm  \, \frac{12 \,  \xi \, k^{2}}{\left( \pi^{3} - 12 \, \xi  \, \langle K\left(e_{1}^{3}\right); e_{1}^{3} \rangle_{\mathbb{L}^{2}(\Omega)} \right)}  \hat{x} \times \int_{\Omega} e_{1}^{(3)}(z) \, dz \;  \langle H^{Inc}; e_{1}^{3} \rangle_{\mathbb{L}^{2}(\Omega)} \\ &+&  \mathcal{O}\left( \frac{ k^{4} \, \, \xi}{\left\vert \pi^{3} \, - \, 12 \, \xi \, \langle K(e_{1}^{(3)}); e_{1}^{(3)} \rangle_{\mathbb{L}^{2}(\Omega)} \right\vert} \right) \, + \, \mathcal{O}\left(k^{3} \right) \\
    & = &  \pm  \, \frac{12 \,  \xi \, k^{2}}{\left( \pi^{3} - 12 \, \xi  \, \langle K\left(e_{1}^{3}\right); e_{1}^{3} \rangle_{\mathbb{L}^{2}(\Omega)} \right)}  \hat{x} \times \int_{\Omega} e_{1}^{(3)}(z) \, dz \; \left(\theta \times \mathrm{p}\right) \cdot \int_{\Omega} e_{1}^{(3)}(z) \, dz \,  \\ &+& 
    \pm  \, \frac{12 \,  \xi \, k^{2}}{\left( \pi^{3} - 12 \, \xi  \, \langle K\left(e_{1}^{3}\right); e_{1}^{3} \rangle_{\mathbb{L}^{2}(\Omega)} \right)}  \hat{x} \times \int_{\Omega} e_{1}^{(3)}(z) \, dz \;  \int_{\Omega} \left(\theta \times \mathrm{p} \right) \cdot \sum_{n \geq 1} \frac{(i \, k \, \theta \cdot z)^{n}}{n!} \cdot e_{1}^{3}(z) \, dz  \\
    &+& \mathcal{O}\left( \frac{ k^{4} \, \, \xi}{\left\vert \pi^{3} \, - \, 12 \, \xi \, \langle K(e_{1}^{(3)}); e_{1}^{(3)} \rangle_{\mathbb{L}^{2}(\Omega)} \right\vert} \right) \, + \, \mathcal{O}\left(k^{3} \right).
\end{eqnarray*}
The second term on the R.H.S can be estimated as 
\begin{equation*}
    \left\vert \cdots \right\vert \, \lesssim \,   \frac{\xi \, k^{3}}{\left\vert \pi^{3} - 12 \, \xi  \, \langle K\left(e_{1}^{3}\right); e_{1}^{3} \rangle_{\mathbb{L}^{2}(\Omega)} \right\vert}  
\end{equation*}
Hence, 
\begin{eqnarray*}
    E^{\infty}_{eff,-}(\hat{x}) & = &  \pm  \, \frac{12 \,  \xi \, k^{2}}{\left( \pi^{3} - 12 \, \xi  \, \langle K\left(e_{1}^{3}\right); e_{1}^{3} \rangle_{\mathbb{L}^{2}(\Omega)} \right)}  \hat{x} \times \int_{\Omega} e_{1}^{(3)}(z) \, dz \; \left(\theta \times \mathrm{p} \right) \cdot \int_{\Omega} e_{1}^{(3)}(z) \, dz \,  \\ 
    &+& \mathcal{O}\left( \frac{ k^{3} \, \, \xi}{\left\vert \pi^{3} \, - \, 12 \, \xi \, \langle K(e_{1}^{(3)}); e_{1}^{(3)} \rangle_{\mathbb{L}^{2}(\Omega)} \right\vert} \right) \, + \, \mathcal{O}\left(k^{3} \right),
\end{eqnarray*}
which can be rewritten like
\begin{eqnarray}\label{Eq1614}
\nonumber
    E^{\infty}_{eff,-}(\hat{x})    & = &  \pm  \, \frac{12 \,  \xi \, k^{2}}{\left( \pi^{3} - 12 \, \xi  \, \langle K\left(e_{1}^{3}\right); e_{1}^{3} \rangle_{\mathbb{L}^{2}(\Omega)} \right)}  \hat{x} \times \left( \textbf{Q}_{1} \cdot 
 \left(\theta \times \mathrm{p} \right) \right) \\ &+& \, \mathcal{O}\left( \frac{ k^{3} \, \, \xi}{\left\vert \pi^{3} \, - \, 12 \, \xi \, \langle K(e_{1}^{(3)}); e_{1}^{(3)} \rangle_{\mathbb{L}^{2}(\Omega)} \right\vert} \right) \, + \, \mathcal{O}\left(k^{3} \right),
\end{eqnarray}
where the tensor $\textbf{Q}_{1}$ is given by $(\ref{DefQm0Q1})$.
In particular, when $\xi = \xi_1$, with $\xi_1$ is given by $(\ref{SolRDEq})$, the dominant term in $(\ref{Eq1614})$ behaves like $\dfrac{1}{k}$, we recall that the frequency $k$ is taken to be small, and the error term will be of order one. More precisely, we get
\begin{equation*}
    E^{\infty}_{eff,-}(\hat{x})  =  \frac{\pm \, i \, 6 \, \pi}{k \, \left\vert \int_{\Omega} e_{1}^{(3)}(x) \, dx \right\vert^2} \, \hat{x} \times \left( \textbf{Q}_{1} \cdot 
 \left(\theta \times \mathrm{p}\right) \right) \, + \,  \mathcal{O}\left( 1 \right).
\end{equation*}
In addition, as we are dealing with a ball and thanks to \cite[Section 4.5.3]{AhceneThesis}, we know that\footnote{In this particular case of the unit ball, we have 
\begin{equation*}
    \int_{\Omega = B(0,1)} e_{1}^{(3)}(x) \, dx \; = \; \frac{\pi^2}{9} \, \int_{B(0,1)} \phi_{1}(x) \, dx,
\end{equation*}
where the R.H.S is given in $(\ref{PCB})$.}
\begin{equation*}
    \int_{\Omega} e_{1}^{(3)}(x) \, dx \; = \; \begin{pmatrix} 
    0 \\
    \\
- \, i \, \dfrac{2 \, \sqrt{6 \, \pi}}{9} \\ 
\\
\dfrac{2}{3} \, \sqrt{\dfrac{\pi}{3}}  
\end{pmatrix}.
\end{equation*}
This implies 
\begin{equation*}
    \left\vert \int_{\Omega} e_{1}^{(3)}(x) \, dx \right\vert \; = \; \frac{2 \, \sqrt{\pi}}{3} \quad \text{and} \quad  \textbf{Q}_{1} \; = \; \frac{4 \, \pi}{27} \, \textbf{I}.
\end{equation*}
Hence, 
\begin{equation*}
    E^{\infty}_{eff,-}(\hat{x}) \, = \, \pm \, i \, 2 \, \pi \, k^{-1} \, \hat{x} \times \left(\theta \times \mathrm{p} \right)  \, + \,  \mathcal{O}\left( 1 \right),
\end{equation*}
which, combined with $(\ref{add-coro-4})$, gives us the coming formula
\begin{equation*}
    E^{\infty}(\hat{x}) \, = \, \pm \, i \, 2 \, \pi \, k^{-1} \, \hat{x} \times \left(\theta \times \mathrm{p} \right)  \, + \,  \mathcal{O}\left( 1 \right).
\end{equation*}
\bigskip

This concludes the proof of the fifth part and ends the proof of \textbf{Theorem \ref{coro-plas-resonance}}.

\section{A-priori estimates for the point-interaction approximations}\label{sec5:proof}

In this section, we prove the a-priori estimates presented in \textbf{Proposition \ref{prop-es-LS}}. 
We recall the L.S.E introduced in \eqref{LS-1}. By taking the average value with respect to $x$ over $S_m$, there holds
	\begin{eqnarray*}
        \nonumber
		\frac{1}{|S_m|}\int_{S_m}H^{\mathring{\mu_{r}}}(x)\,dx \, &-& \, \frac{\eta_0\, k^2}{\pm \, c_0 \, c_r^{3}}\frac{1}{|S_m|} \, \int_{S_m} \, \left[ - \, \underset{x}\nabla {\bf M^{k}}_{\Omega}\left( {\bf T}^{\mathring{\mu_{r}}} \cdot H^{\mathring{\mu_{r}}} \right) \, + \, k^{2} \, {\bf N^{k}}_{\Omega}\left({\bf T}^{\mathring{\mu_{r}}} \cdot H^{\mathring{\mu_{r}}} \right) \right](x) \, dx \\
		& = & \, i \, k \frac{1}{|S_m|}\int_{S_m} H^{Inc}(x)\,dx.
	\end{eqnarray*}
	 where $\nabla {\bf M^{k}}_{\Omega}$ is the Magnetization operator defined in $\Omega$, ${\bf N^{k}}_{\Omega}$ is the Newtonian operator defined in $\Omega$ and $S_m$ is the largest ball contained in $\Omega_m$. As $\Omega \, = \, \left( \underset{m=1}{\overset{\aleph}{\cup}} \Omega_{m} \right) \cup \left( \Omega \setminus \left( \underset{m=1}{\overset{\aleph}{\cup}} \Omega_{m} \right) \right),$ we rewrite the previous equation as 
     \begin{eqnarray}\label{EqLK1}
        \nonumber
		\frac{1}{|S_m|}\int_{S_m}H^{\mathring{\mu_{r}}}(x)\,dx \, &-& \, \frac{\eta_0\, k^2}{\pm \, c_0 \, c_r^{3}}\frac{1}{|S_m|} \, \int_{S_m} \, \sum_{j=1}^{\aleph} \left[ - \, \underset{x}{\nabla} {\bf M^{k}}_{\Omega_{j}}\left( {\bf T}^{\mathring{\mu_{r}}} \cdot H^{\mathring{\mu_{r}}} \right) \, + \, k^{2} \, {\bf N^{k}}_{\Omega_{j}}\left({\bf T}^{\mathring{\mu_{r}}} \cdot H^{\mathring{\mu_{r}}} \right) \right](x) \, dx \\
        & = & \, i \, k \frac{1}{|S_m|}\int_{S_m} H^{Inc}(x)\,dx \, + \, C_{m},  
	\end{eqnarray}
  where 
  \begin{eqnarray*}
      C_{m} &:=& \, \frac{\eta_0\, k^2}{\pm \, c_0 \, c_r^{3}}\frac{1}{|S_m|} \, \int_{S_m} \, \left[ - \, \underset{x}{\nabla} {\bf M^{k}}_{\Omega \setminus \left( \underset{m=1}{\overset{\aleph}{\cup}} \Omega_{m} \right)} \left( {\bf T}^{\mathring{\mu_{r}}} \cdot H^{\mathring{\mu_{r}}} \right) \, + \, k^{2} \, {\bf N^{k}}_{\Omega \setminus \left( \underset{m=1}{\overset{\aleph}{\cup}} \Omega_{m} \right)}\left({\bf T}^{\mathring{\mu_{r}}} \cdot H^{\mathring{\mu_{r}}} \right) \right](x) \, dx \\
      &=& \, \frac{\eta_0\, k^2}{\pm \, c_0 \, c_{r}^{3}} \, \frac{1}{|S_m|}\int_{\Omega\backslash\underset{m=1}{\overset{\aleph}{\cup}}\Omega_m} \, \left[  - \, \nabla {\bf M^{k}}_{S_{m}}\left( {\bf T}^{\mathring{\mu_{r}}} \cdot H^{\mathring{\mu_{r}}}\right) \, + \, k^{2} \, {\bf N^{k}}_{S_{m}}\left( {\bf T}^{\mathring{\mu_{r}}} \cdot H^{\mathring{\mu_{r}}}\right) \right](x) \, dx.
  \end{eqnarray*}
  Using Taylor expansion for $H^{Inc}(\cdot)$ and splitting $\underset{j=1}{\overset{\aleph}{\cup}} \Omega_{j} = \underset{j=1 \atop j \neq m}{\overset{\aleph}{\cup}} \Omega_{j} \, \cup \, \Omega_{m}$, we rewrite $(\ref{EqLK1})$ as 
  \begin{eqnarray}\label{EqLK2}
        \nonumber
		\frac{1}{|S_m|}\int_{S_m}H^{\mathring{\mu_{r}}}(x)\,dx \, &-& \, \frac{\eta_0\, k^2}{\pm \, c_0 \, c_r^{3}}\frac{1}{|S_m|} \, \int_{S_m} \, \sum_{j=1 \atop j \neq m}^{\aleph} \left[ - \, \underset{x}{\nabla} {\bf M^{k}}_{\Omega_{j}}\left( {\bf T}^{\mathring{\mu_{r}}} \cdot H^{\mathring{\mu_{r}}} \right) \, + \, k^{2} \, {\bf N^{k}}_{\Omega_{j}}\left({\bf T}^{\mathring{\mu_{r}}} \cdot H^{\mathring{\mu_{r}}} \right) \right](x) \, dx \\
        & = & \, i \, k \, H^{Inc}(z_{m}) \, + \, T_{m} \, + \, B_{m} \, + \, C_{m},  
	\end{eqnarray}
 where 
 \begin{eqnarray*}
     T_m \, &:=& \, \frac{ik}{\left\vert S_{m} \right\vert} \int_{S_{m}} \int_{0}^{1} \nabla H^{Inc}(z_{m}+t(x-z_{m})) \cdot (x - z_{m}) \, dt \, dx \\
     B_m \, &:=& \, \frac{\eta_0\, k^2}{\pm \, c_0 \, c_{r}^{3}} \, \frac{1}{|S_m|} \, \int_{S_m} \, \left[ - \, \underset{x}{\nabla} {\bf M^{k}}_{\Omega_{m}} \left( {\bf T}^{\mathring{\mu_{r}}} \cdot H^{\mathring{\mu_{r}}} \right) \, + \, k^{2} \, {\bf N^{k}}_{\Omega_{m}}\left({\bf T}^{\mathring{\mu_{r}}} \cdot H^{\mathring{\mu_{r}}} \right) \right](x) \, dx.
 \end{eqnarray*}
 We add and subtract its discrete version to obtain the second term on the L.H.S of $(\ref{EqLK2})$. More precisely, we get 
	 \begin{eqnarray}\label{eq-ls-aver0}
       \nonumber
	\frac{1}{|S_m|}\int_{S_m}H^{\mathring{\mu_{r}}}(x)\,dx &-& \frac{\eta_0\, k^2}{\pm \, c_0 \, c_{r}^{3}} \, \frac{1}{\left\vert S_{m} \right\vert} \, \sum_{j=1 \atop j \neq m}^\aleph \Upsilon_k(z_m, z_j) \cdot {\bf T}^{\mathring{\mu_{r}}} \cdot \frac{1}{|S_j|}\int_{S_j}H^{\mathring{\mu_{r}}}(y)\,dy \\	&=& \, i \, k \, H^{Inc}(z_m) + A_{m} + T_{m} + B_{m} + C_{m}, 
   \end{eqnarray}
   where
   	 \begin{eqnarray*}
	 	A_m \, &:=& \, \frac{\eta_0\, k^2}{\pm \, c_0 \, c_r^{3}} \frac{1}{|S_m|} \, \int_{S_m} \,  \sum_{j=1 \atop j \neq m}^{\aleph} \left[ - \, \underset{x}{\nabla} {\bf M^{k}}_{\Omega_{j}}\left( {\bf T}^{\mathring{\mu_{r}}} \cdot H^{\mathring{\mu_{r}}} \right) \, + \, k^{2} \, {\bf N^{k}}_{\Omega_{j}}\left({\bf T}^{\mathring{\mu_{r}}} \cdot H^{\mathring{\mu_{r}}} \right) \right](x) \, dx  \\
	 	&-& \frac{\eta_0\, k^2}{\pm \, c_0 \, c_{r}^{3}} \, \frac{1}{\left\vert S_{m} \right\vert} \, \sum_{j=1 \atop j \neq m}^\aleph \Upsilon_k(z_m, z_j) \cdot {\bf T}^{\mathring{\mu_{r}}} \cdot \frac{1}{|S_j|}\int_{S_j}H^{\mathring{\mu_{r}}}(y)\,dy.
	 \end{eqnarray*}
  For shortness, we set the following notation
   \begin{equation}\label{ErrTABC}
       Error_{m}^{\star} := T_{m} + A_{m} + B_{m} + C_{m},
   \end{equation}
   and we rewrite $(\ref{eq-ls-aver0})$ as 
   	 \begin{equation}\label{eq-ls-aver}
	\frac{1}{|S_m|}\int_{S_m}H^{\mathring{\mu_{r}}}(x)\,dx \, - \, \frac{\eta_0\, k^2}{\pm \, c_0 \, c_{r}^{3}} \, \frac{1}{\left\vert S_{m} \right\vert} \, \sum_{j=1 \atop j \neq m}^\aleph \Upsilon_k(z_m, z_j) \cdot {\bf T}^{\mathring{\mu_{r}}} \cdot \frac{1}{|S_j|}\int_{S_j}H^{\mathring{\mu_{r}}}(y)\,dy \, = \, i \, k \, H^{Inc}(z_m) \, + \, Error_{m}^{\star}. 
   \end{equation}
   Our aim is to evaluate $Error_{m}^{\star}$ by estimating $T_m$, $A_m$, $B_m$ and $C_m$.
 \begin{enumerate}
     \item[] 
     \item Estimation of $T_m$. We have, 
     \begin{eqnarray*}
         T_m &:=& \frac{ik}{\left\vert S_{m} \right\vert} \int_{S_{m}} \int_{0}^{1} \nabla H^{Inc}(z_{m}+t(x-z_{m})) \cdot (x - z_{m}) \, dt \, dx \\
        \left\vert T_m \right\vert & \leq & \frac{k}{\left\vert S_{m} \right\vert} \left\Vert  \int_{0}^{1} \nabla H^{Inc}(z_{m}+t(\cdot -z_{m})) \, dt \right\Vert_{\mathbb{L}^{2}(S_{m})} \, \left\Vert  (\cdot - z_{m})  \right\Vert_{\mathbb{L}^{2}(S_{m})} = \mathcal{O}\left( k \, d \right).
     \end{eqnarray*}
     Hence, using the fact $\aleph = \mathcal{O}\left( d^{-3} \right)$, 
     \begin{equation}\label{EstTm}
         \underset{m=1}{\overset{\aleph}{\sum}} \left\vert T_{m} \right\vert^{2} \, \lesssim \, \aleph \, d^{2} = \mathcal{O}\left( d^{-1} \right). 
     \end{equation}
     \item[] 
     \item Estimation of $A_m$. We start by rewriting $A_{m}$ as 
     \begin{equation}\label{Am=Am1+Am2+Am3+Am4}
         A_{m} = \frac{\eta_0\, k^2}{\pm \, c_0 \, c_{r}^{3}} \, \frac{1}{|S_m|} \; \left[A_{m,1} + A_{m,2} + A_{m,3} + A_{m,4} \right],
     \end{equation}
     and then we set and we estimate each term appearing on the R.H.S of \eqref{Am=Am1+Am2+Am3+Am4}, separately. 
     \begin{enumerate}
         \item[] 
         \item Estimation of $A_{m,1}$.
         \begin{equation*}
             A_{m,1} := - \, \sum_{j=1 \atop j \neq m}^{\aleph} \, \int_{\Omega_j} \, \nabla {\bf M}_{S_{m}} \left( {\bf T}^{\mathring\mu_{r}} \cdot \left(H^{\mathring{\mu_{r}}}(\cdot) \, - \, \frac{1}{|S_j|}\int_{S_j}H^{\mathring{\mu_r}}(y)\,dy\right)\, \right)(z) \,dz, 
         \end{equation*}
          Then, by taking the modulus on the both sides of the previous equation, based on \textbf{Corollary \ref{CoroHolderH}}, we get 
        \begin{equation*}
             \left\vert A_{m,1} \right\vert \lesssim \, \left\vert {\bf T}^{\mathring\mu_{r}} \right\vert \; [H^{\mathring{\mu_{r}}}]_{C^{0,\alpha}(\overline{\Omega})} \, d^{\alpha}  \; \sum_{j=1 \atop j \neq m}^{\aleph} \int_{\Omega_j} \int_{S_m} \frac{1}{\left\vert x - z \right\vert^{3}} \,dx\,dz. 
         \end{equation*}
         Indeed, with the Taylor expansion near the centers $(z_m, z_j)$ for the function over the double integral, by using the counting lemma given by \textbf{Lemma \ref{conting1}}, since $|\Omega_j|\sim d^3$, we obtain that 
        \begin{equation*}
             \left\vert A_{m,1} \right\vert \lesssim \, \left\vert {\bf T}^{\mathring\mu_{r}} \right\vert \; [H^{\mathring{\mu_{r}}}]_{C^{0,\alpha}(\overline{\Omega})} \, d^{\alpha}  \; \left\vert S_{m} \right\vert \, \left\vert \log(d) \right\vert. 
         \end{equation*}
         Hence, using the fact that $\aleph = \mathcal{O}\left( d^{-3} \right)$, 
         \begin{equation}\label{EstimationAm1}
             \sum_{m=1}^{\aleph} \frac{1}{\left\vert S_{m} \right\vert^{2}} \; \left\vert A_{m,1} \right\vert^{2} \lesssim  \, \left\vert {\bf T}^{\mathring\mu_{r}} \right\vert^{2} \; [H^{\mathring{\mu_{r}}}]^{2}_{C^{0,\alpha}(\overline{\Omega})} \, d^{2 \, \alpha - 3}  \;  \left\vert \log(d) \right\vert^{2}.
         \end{equation}
         \item[] 
         \item Estimation of $A_{m,2}$. 
         \begin{equation*}
             A_{m,2} \, := \, \sum_{j=1 \atop j \neq m}^{\aleph} \, \int_{\Omega_j} \, \left[ - \, \nabla {\bf M^{k}}_{S_{m}} + k^{2} \, {\bf N^{k}}_{S_{m}} + \nabla {\bf M}_{S_{m}} \right]\left({\bf T}^{\mathring\mu_{r}}\cdot\left(H^{\mathring{\mu_{r}}}(\cdot) \, - \, \frac{1}{|S_j|}\int_{S_j}H^{\mathring{\mu_r}}(y)\,dy\right)\right)\,dx. 
         \end{equation*}
          Regarding the singularity analysis of the kernel of the operator $\left[ - \, \nabla {\bf M^{k}}_{S_{m}} + k^{2} \, {\bf N^{k}}_{S_{m}} + \nabla {\bf M}_{S_{m}} \right]$,  for any $x\in S_m \subset \Omega_m$ and $z\in \Omega_j$, $j\neq m$, we have
	 \begin{equation}\label{Upsk-Ups0}
	 	\left\vert \left(\Upsilon_k-\Upsilon_0\right)(x,z) \right\vert \lesssim \frac{k^2}{d_{mj}},
	 \end{equation}
see \cite[Inequality (2.4.22)]{BouzekriThesis}. Then, by using $(\ref{distribute})$, $(\ref{Upsk-Ups0})$, \textbf{Corollary \ref{CoroHolderH}} and \textbf{Lemma \ref{conting1}}, we obtain
\begin{equation*}
\left\vert A_{m,2} \right\vert \lesssim k^{2} \,  \lvert {\bf T}^{\mathring{\mu_r}} \rvert \; [H^{\mathring{\mu_{r}}}]_{C^{0, \alpha}(\overline{\Omega})}  d^\alpha \, \left\vert S_{m} \right\vert. 
\end{equation*}
Hence, 
    \begin{equation}\label{EstimationAm2}
    \sum_{m=1}^{\aleph} \frac{1}{\left\vert S_{m} \right\vert^{2}} \, \left\vert A_{m,2} \right\vert^{2} \, \lesssim \, k^{4} \,  \lvert {\bf T}^{\mathring{\mu_r}} \rvert^{2} \; [H^{\mathring{\mu_{r}}}]^{2}_{C^{0, \alpha}(\overline{\Omega})} \; d^{2 \, \alpha - 3}. 
    \end{equation}
         \item[]  
         \item Estimation of $A_{m,3}$.
         \begin{equation*}
             A_{m,3} :=  \sum_{j=1 \atop j \neq m}^{\aleph}\left(\int_{\Omega_j}\int_{S_m}\Upsilon_0(x,z)\,dx\,dz \, - d^{3} \, \left\vert S_{m} \right\vert \, \Upsilon_0(z_m, z_j)\right)\cdot{\bf T}^{\mathring\mu_{r}}\cdot\frac{1}{|S_j|}\int_{S_j}H^{\mathring{\mu_r}}(y)\,dy.
         \end{equation*}
         Since $\Upsilon_{0}(x,z)$, for $x$ and $z$ in two disjoint domains, is an harmonic function, we get by the Mean Value Theorem,
         \begin{equation*}
             \int_{S_{m}} \Upsilon_{0}(x,z) \, dx = \left\vert S_{m} \right\vert \, \Upsilon_{0}(z_{m}, z), \quad z \in \Omega_{j}.  
         \end{equation*} 
         This implies, 
                  \begin{equation*}
             A_{m,3} =  \left\vert S_{m} \right\vert \, \sum_{j=1 \atop j \neq m}^{\aleph}\left(\int_{\Omega_j} \Upsilon_0(z_{m}, z) \, dz \, - \, d^{3}  \, \Upsilon_0(z_m, z_j)\right)	\cdot {\bf T}^{\mathring\mu_{r}} \cdot \frac{1}{|S_j|}\int_{S_j}H^{\mathring{\mu_r}}(y)\,dy.
         \end{equation*}
        Now, for any  fixed $m$, with $m \in \left\{  1, \cdots, \aleph \right\}$, knowing that  
              $\left\vert \Omega_{j} \right\vert \sim d^{3}$ and the fact that $|S_m|=|S_j|$, for any $m, j\in \left\{1, \cdots, \aleph\right\}$,  we can further derive that
        \begin{equation*}
             A_{m,3}
              =    \, \sum_{j=1 \atop j \neq m}^{\aleph}\int_{\Omega_j} \left( \Upsilon_0(z_{m},z) -  \Upsilon_0(z_m, z_j)\right)	\, dz \, \cdot {\bf T}^{\mathring\mu_{r}} \cdot \int_{S_j}H^{\mathring{\mu_r}}(y)\,dy,
         \end{equation*}
         Now, for any  fixed $m$, with $m = 1, \cdots, \aleph$, we split $\Omega$ into two parts as 
	  \begin{equation}\notag
	  	\Omega=\Omega_0^m\cup \left(\Omega\backslash \Omega_0^m\right),
	  \end{equation}
	 where $\Omega_0^m$ is a small neighborhood of $\Omega_m$ such that there exists $\beta\in (0,1)$ fulfilling
	 \begin{equation}\label{Omega-0m}
	 	|\Omega_0^m|=\Oh\left(d^{3\beta}\right),\quad
	 	\mathrm{diam}(\Omega_0^m)=\Oh(d^\beta ),
	 \end{equation}
	 see Figure \ref{fig:distribution2} for a schematic illustration.
	 
	\begin{figure}[htbp]
		\centering
		\includegraphics[width=0.48\linewidth]{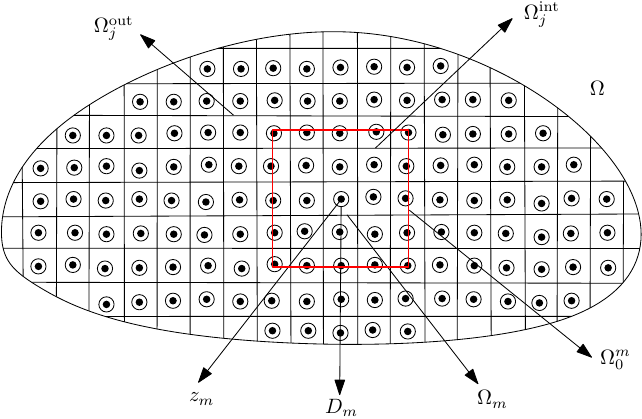}
		\caption{A schematic illustration for the way of splitting $\Omega$ in terms of $\Omega_m$.}
		\label{fig:distribution2}
	\end{figure}

	 Denote the number of those $\Omega_j's$ with $\Omega_j\cap \Omega_0^m \neq \{ \emptyset \}$ by $N_{(1)}$ and the corresponding subdomains as $\Omega_j^{\mathrm{int}}$, $j=1, \cdots, N_{(1)}$. We set the number of the rest of $\Omega_j's$ as ${N}_{(2)}$ and we denote those subdomains as $\Omega_j^{\mathrm{out}}$, then we have
	 \begin{equation}\label{OmegamOmegaint}
	 	\Omega_0^m=\underset{j=1}{\overset{N_{(1)}}{\cup}}\left( \Omega_j^{\mathrm{int}}\cap\Omega_0^m\right)\quad\mbox{and}\quad
	 	\Omega\backslash\Omega_0^m=\left(\underset{j=1}{\overset{{N}_{(2)}}{\cup}}\Omega_j^{\mathrm{out}}\right)\cup \left(\underset{j=1}{\overset{N_{(1)}}{\cup}}\Omega_j^{\mathrm{int}}\backslash \Omega_0^m\right).
	 \end{equation}
  Thus, there holds
  \begin{equation}\label{Am3=Am31+Am32+Am33}
      A_{m,3} = A_{m,3,1} + A_{m,3,2} + A_{m,3,3},  
  \end{equation}
  where 
  \begin{eqnarray*}
      A_{m,3,1} &:=&   \, \sum_{j=1}^{N_{(2)}}\int_{\Omega_j^{out}} \left( \Upsilon_0(z_{m},z) -  \Upsilon_0(z_m, z_j)\right)	\, dz \, \cdot {\bf T}^{\mathring\mu_{r}} \cdot \int_{S_j}H^{\mathring{\mu_r}}(y)\,dy \\
      A_{m,3,2} &:=&  \,  \sum_{j=1 \atop j \neq m}^{N_{(1)}}\int_{\Omega_j^{int} \setminus \Omega_{0}^{m}} \left( \Upsilon_0(z_{m},z) -  \Upsilon_0(z_m, z_j)\right)	\, dz \, \cdot {\bf T}^{\mathring\mu_{r}} \cdot \int_{S_j}H^{\mathring{\mu_r}}(y)\,dy \\
      A_{m,3,3} &:=&  \,  \sum_{j=1 \atop j \neq m}^{N_{(1)}}\int_{\Omega_j^{int} \cap \Omega_{0}^{m}} \left( \Upsilon_0(z_{m},z) -  \Upsilon_0(z_m, z_j)\right)	\, dz \, \cdot {\bf T}^{\mathring\mu_{r}} \cdot \int_{S_j}H^{\mathring{\mu_r}}(y)\,dy.
  \end{eqnarray*}
  Since
\begin{equation}\notag
	\underset{j=1}{\overset{N_{(1)}}{\cup}}\Omega_j^{\mathrm{int}}\backslash\Omega_0^m\subset \Omega\backslash\Omega_0^m\quad\mbox{and}\quad
	N_{(1)}\ll{N}_{(2)}\mbox{ with }{N}_{(2)}\sim d^{-3},
\end{equation}
the estimation of $A_{m,3,2}$ is dominated by the estimation of $A_{m,3,1}$. Then we have 
\begin{equation*}
    \left\vert A_{m,3,1} + A_{m,3,2} \right\vert \; \lesssim \; \left\vert A_{m,3,1} \right\vert.
\end{equation*}
To estimate $A_{m,3,1}$, we use Taylor expansion near $z_{j}$ to obtain  
\begin{equation*}
          A_{m,3,1} =  \,  \sum_{j=1}^{N_{(2)}}\int_{\Omega_j^{out}} 
 \int_{0}^{1} \nabla \Upsilon_0(z_{m},z_{j}+t(z-z_{j})) \cdot (z-z_{j}) \, dt \, dz  \cdot  {\bf T}^{\mathring\mu_{r}} \cdot \int_{S_j}H^{\mathring{\mu_r}}(y)\,dy,
\end{equation*}
which can be estimated as 
\begin{equation*}
  \left\vert  A_{m,3,1} \right\vert \lesssim   \, \sum_{j=1}^{N_{(2)}} \frac{1}{d_{mj}^{4}} 
 \, \int_{\Omega_j^{out}} 
  \left\vert z - z_{j} \right\vert  \, dz \; \left\vert {\bf T}^{\mathring\mu_{r}} \right\vert \; \left\vert S_{j} \right\vert \; \left\Vert H^{\mathring{\mu_r}} \right\Vert_{\mathbb{L}^{\infty}(S_{j})}.
\end{equation*}
Since $\left\vert \Omega_{j}^{out} \right\vert \sim d^{3}$, for $1 \leq j \leq N_{(2)}$, we can further obtain that 
\begin{equation*}
  \left\vert  A_{m,3,1} \right\vert \lesssim  \left\vert S_{m} \right\vert \, d^{4} \; \left\vert {\bf T}^{\mathring\mu_{r}} \right\vert \; \left\Vert H^{\mathring{\mu_r}} \right\Vert_{\mathbb{L}^{\infty}(\Omega)} \, \sum_{j=1}^{N_{(2)}} \frac{1}{d_{mj}^{4}}.
\end{equation*}
From \textbf{Lemma \ref{conting1}}, we know that
\begin{equation*}
 \sum_{j=1}^{N_{(2)}} \frac{1}{d_{mj}^{4}} \lesssim \frac{1}{\left( \underset{m \neq j}{\min} \;\; d_{mj} \right)^{4}}, 
\end{equation*}
where $d_{mj} \geq \underset{m \neq j}{\min} \;\; d_{mj} \gtrsim d^{\beta}$ for $z_m \in \Omega_{m}$ and $z_{j} \in \Omega_{j}^{out}$, $1\leq j\leq N_{(2)}$, which implies, 
\begin{equation*}
 \sum_{j=1}^{N_{(2)}} \frac{1}{d_{mj}^{4}} \lesssim \, d^{-4 \, \beta}.   
\end{equation*}
Then, 
\begin{equation*}
  \left\vert  A_{m,3,1} \right\vert \lesssim  \left\vert S_{m} \right\vert  \, d^{4 - 4 \, \beta} \; \left\vert {\bf T}^{\mathring\mu_{r}} \right\vert \; \left\Vert H^{\mathring{\mu_r}} \right\Vert_{\mathbb{L}^{\infty}(\Omega)}.
\end{equation*}
and thus 
\begin{equation}\label{EstAm31}
 \sum_{m=1}^{\aleph} \frac{1}{\left\vert S_{m} \right\vert^{2}} \; \left\vert  A_{m,3,1} \right\vert^{2} \, \lesssim  \, d^{5 - 8 \, \beta} \; \left\vert {\bf T}^{\mathring\mu_{r}} \right\vert^{2} \; \left\Vert H^{\mathring{\mu_r}} \right\Vert^{2}_{\mathbb{L}^{\infty}(\Omega)},
\end{equation}
due to the relation that $\aleph \sim d^{-3} $.
\medskip
\newline 
For $A_{m,3,3}$, by taking the Taylor expansion of the function $\left( \Upsilon_{0}(z_{m}, z) - \Upsilon_{0}(z_{m},z_{j}) \right)$ at $z=z_j$ and using the fact that $\left\vert \Omega_j^{\mathrm{int}}\cap \Omega_0^m \right\vert \sim d^{3}$, for $1 \leq j \leq N_{(1)}$,we can know that
\begin{equation*}
	\left\vert A_{m,3,3} \right\vert \, \lesssim \, \left\vert S_{m} \right\vert \; \lvert {\bf T}^{\mathring{\mu_{r}}}\rvert \, \lVert H^{\mathring{\mu_{r}}}\rVert_{\mathbb L^\infty(\Omega)} \; d^4 \; \sum_{j=1\atop j\neq m}^{N_{(1)}}\frac{1}{d_{mj}^{4}}, 
\end{equation*}
and\footnote{We have 
\begin{equation*}
    \sum_{m=1}^{\aleph} \, \sum_{j=1\atop j\neq m}^{N_{(1)}} \, \frac{1}{d_{mj}^{8}} \, = \,   \sum_{j=1}^{N_{(1)}} \, \sum_{m=1 \atop m \neq j}^{\aleph} \frac{1}{d_{mj}^{8}}.
\end{equation*}
}
\begin{eqnarray*}
\sum_{m=1}^{\aleph} \frac{\left\vert A_{m,3,3} \right\vert^{2}}{\left\vert S_{m} \right\vert^{2}}  \, & \lesssim & \;  \lvert {\bf T}^{\mathring{\mu_{r}}}\rvert^{2} \, \lVert H^{\mathring{\mu_{r}}}\rVert^{2}_{\mathbb L^\infty(\Omega)} \; d^8 \; N_{(1)} \; \sum_{m=1}^{\aleph}  \, \sum_{j=1\atop j\neq m}^{N_{(1)}}\frac{1}{d_{mj}^{8}} \\
& \lesssim & \,   \lvert {\bf T}^{\mathring{\mu_{r}}}\rvert^{2} \, \lVert H^{\mathring{\mu_{r}}}\rVert^{2}_{\mathbb L^\infty(\Omega)} \; d^{8} \; N_{(1)} \; \,   \sum_{j=1}^{N_{(1)}} \, \sum_{m=1 \atop m \neq j}^{\aleph} \frac{1}{d_{mj}^{8}} \, \lesssim  \,  \lvert {\bf T}^{\mathring{\mu_{r}}}\rvert^{2} \, \lVert H^{\mathring{\mu_{r}}}\rVert^{2}_{\mathbb L^\infty(\Omega)}  \; N_{(1)}^{2},
\end{eqnarray*}
by utilizing \textbf{Lemma} \ref{conting1}.
For $N_{(1)}$, since $\Omega_m\subset\Omega_0^m$ and for any $\Omega_j^{\mathrm{int}}$, with  $1 \leq j \leq 
 N_{(1)}$, we have $\Omega_j^{\mathrm{int}} \cap \Omega_0^m \neq \{ \emptyset \}$, then there exists a positive constant $\delta_0$ such that
\begin{equation}\notag
	\left| \Omega_j^{\mathrm{int}}\cap\Omega_0^m\right|\geq \delta_0\left|\Omega_j^{\mathrm{int}}\right|>0,
\end{equation}
which gives us
\begin{equation}\notag
	d^{3\beta} \overset{(\ref{Omega-0m})}{=} |\Omega_0^m| \overset{(\ref{OmegamOmegaint})}{=} \sum_{j=1}^{N_{(1)}}\left| \Omega_j^{\mathrm{int}}\cap\Omega_0^m\right|\geq N_{(1)} \delta_0 d^3,
\end{equation}
indicating that
\begin{equation}\notag
	N_{(1)}\lesssim \delta_0^{-1}d^{3\beta-3}.
\end{equation}
Then, 
\begin{equation}\label{EstAm33}
\sum_{m=1}^{\aleph} \frac{1}{\left\vert S_{m} \right\vert^{2}} \; \left\vert A_{m,3,3} \right\vert^{2} \, \lesssim \;  \lvert {\bf T}^{\mathring{\mu_{r}}} \rvert^{2} \, \lVert H^{\mathring{\mu_{r}}}\rVert^{2}_{\mathbb L^\infty(\Omega)} \, \delta_{0}^{-2} \; d^{6 \, \beta \, - \, 6}.
\end{equation}
Now, by recalling the expression $A_{m,3}$, given by $(\ref{Am3=Am31+Am32+Am33})$, using the fact that $A_{m,3,2}$ is dominated by $A_{m,3,1}$, together with $(\ref{EstAm31})$ and $(\ref{EstAm33})$, and by taking the parameter\footnote{The parameter $\beta$ is chosen to equal the exponents of the parameter $d$ in the estimations $(\ref{EstAm31})$ and $(\ref{EstAm33})$.} $\beta = \frac{11}{14}$, we obtain
\begin{equation}\label{EstAm3}
\sum_{m=1}^{\aleph} \frac{1}{\left\vert S_{m} \right\vert^{2}} \; \left\vert A_{m,3} \right\vert^{2} \, \lesssim  \;  \lvert {\bf T}^{\mathring{\mu_{r}}} \rvert^{2} \, \lVert H^{\mathring{\mu_{r}}}\rVert^{2}_{\mathbb L^\infty(\Omega)}  \; d^{-\frac{9}{7}}.
\end{equation}

         \item[] 
         \item Estimation of $A_{m,4}$.
         \begin{equation*}
         A_{m,4} := \sum_{j=1 \atop j \neq m}^{\aleph}\left(\int_{\Omega_j}\int_{S_m}\left(\Upsilon_k-\Upsilon_0\right)(x,z)\,dx\,dz \, - \, d^{3} \, \left\vert S_{m} \right\vert \, \left(\Upsilon_k-\Upsilon_0\right)(z_m, z_j)\right) 
          \cdot  \, {\bf T}^{\mathring\mu_{r}} \cdot \frac{1}{|S_j|}\int_{S_j}H^{\mathring{\mu_r}}(y)\,dy. 
     \end{equation*}
     Knowing that $\left\vert \Omega_{j} \right\vert \sim d^{3},$ 
     we rewrite $A_{m, 4}$ as
            \begin{equation}\label{Am4Formula}
         A_{m,4} = c_r^{-3} \, \sum_{j=1 \atop j \neq m}^{\aleph}\left(\int_{\Omega_j}\int_{S_m}\left(\Upsilon_k-\Upsilon_0\right)(x,z)\,dx\,dz - \left\vert \Omega_{j} \right\vert \, \left\vert S_{m} \right\vert \, \left(\Upsilon_k-\Upsilon_0\right)(z_m, z_j)\right) \cdot {\bf T}^{\mathring\mu_{r}} \cdot \frac{1}{|S_j|}\int_{S_j}H^{\mathring{\mu_r}}(y)\,dy. 
     \end{equation}
    By Taylor expansion of the function $\left( \Upsilon_k-\Upsilon_0 \right)(x, z)$ at $(z_m ,z_j)$, we have
	 \begin{equation}\label{CMPTaylor}
	 	(\Upsilon_k-\Upsilon_0)(x,z)=(\Upsilon_k-\Upsilon_0)(z_m,z_j)+R_{m,j}(x,z),
	 \end{equation}
	 where the matrix $R_{m,j}(x, z)$ is given by
     \begin{eqnarray*}\label{|Rmj|}
     \nonumber
     	R_{m, j}(x,z) &:=& \int_{0}^{1}\underset{x}{\nabla}(\Upsilon_k-\Upsilon_0)(z_m+t(x-z_m),z_j) \cdot (x-z_m)\,dt \\ \nonumber &+& \int_{0}^{1}\underset{z}{\nabla}(\Upsilon_k-\Upsilon_0)(z_m,z_j+t(z-z_j)) \cdot (z-z_j)\,dt, 
     \end{eqnarray*}
     with
     \begin{equation}\label{|Rmj|}
         \left\vert 	R_{m, j}(x,z) \right\vert \overset{(\ref{Upsk-Ups0})}{\lesssim} \frac{1}{d_{mj}^{2}} \, \left[ \left\vert x - z_m \right\vert + \left\vert z - z_j \right\vert \right].
     \end{equation}
     Then, by plugging $(\ref{CMPTaylor})$ into $(\ref{Am4Formula})$, we can obtain
     \begin{eqnarray*}
         A_{m,4} &=& \, \sum_{j=1 \atop j \neq m}^{\aleph} \int_{\Omega_j} \int_{S_m} R_{m,j}(x,z)\,dx\,dz  \cdot  \, {\bf T}^{\mathring\mu_{r}} \cdot \frac{1}{|S_j|}\int_{S_j}H^{\mathring{\mu_r}}(y)\,dy \\
        \left\vert A_{m,4} \right\vert &\overset{(\ref{|Rmj|})}{\lesssim}&  \, \left\vert {\bf T}^{\mathring\mu_{r}} \right\vert \, \left\Vert H^{\mathring{\mu_{r}}} \right\Vert_{\mathbb L^{\infty}\left( \Omega \right)} \, \sum_{j=1 \atop j \neq m}^{\aleph} \frac{1}{d_{mj}^{2}} \, \left(\left\vert S_{m} \right\vert \, \int_{\Omega_j} \left\vert z - z_{j} \right\vert \, dz + \left\vert \Omega_{j} \right\vert \int_{S_m} \left\vert x - z_{m} \right\vert \, dx \right)\\
        & \lesssim & \, d \, \left\vert S_{m} \right\vert \, \left\lvert {\bf T}^{\mathring{\mu_{r}}}\right\rvert \; \lVert H^{\mathring{\mu_{r}}} \rVert_{\mathbb L^{\infty}\left( \Omega \right)}, 
 \end{eqnarray*}
by \textbf{Lemma} \ref{conting1}.
 Hence, 
\begin{equation}\label{EstimationAm4}
    \sum_{m=1}^{\aleph} \frac{1}{\left\vert S_{m} \right\vert^{2}} \, \left\vert A_{m,4} \right\vert^{2} \, \lesssim  \, d^{-1} \,  \left\lvert {\bf T}^{\mathring{\mu_{r}}}\right\rvert^{2} \; \lVert H^{\mathring{\mu_{r}}} \rVert^{2}_{\mathbb L^{\infty}\left( \Omega \right)}. 
 \end{equation}
     \end{enumerate}
Therefore, for $A_{m}$, combining with the estimations $(\ref{EstimationAm1}), (\ref{EstimationAm2}), (\ref{EstAm3})$ and $(\ref{EstimationAm4})$, we deduce that 
     \begin{eqnarray}\label{Am}
     \nonumber
        \sum_{m=1}^{\aleph} \left\vert A_{m} \right\vert^{2} \; & \lesssim & \; \sum_{m=1}^{\aleph} \; \sum_{j=1}^{4} \; \frac{1}{\left\vert S_{m} \right\vert^{2}} \; \left\vert A_{m,j} \right\vert^{2} \\
        & \lesssim &   \frac{\eta_0^2 \, k^4}{c_0^2 \, c_r^{6}} \lvert {\bf T}^{\mathring{\mu_{r}}} \rvert^{2}  \; \left( [H^{\mathring{\mu_r}}]^2_{C^{0, \alpha}(\overline{\Omega})} \, d^{2\alpha-3} \, \left\vert \log d \right\vert^2 \, + \, \lVert H^{\mathring{\mu_r}}\rVert_{\mathbb L^{\infty}(\Omega)}^{2} \, d^{-\frac{9}{7}} \right).
\end{eqnarray}
\item[]
\item Estimation of $B_m$. We start by rewriting $B_{m}$ as $B_{m} = B_{m,1} + B_{m,2}$, where  
\begin{eqnarray*}
    B_{m,1} &:=& \frac{\eta_0\, k^2}{\pm c_0}\frac{c_r^{-3}}{|S_m|}  \int_{S_m}\left[- \nabla {\bf M^{k}}_{\Omega_{m}} \, + \, k^{2} \, {\bf N^{k}}_{\Omega_{m}} \right]\left( {\bf T}^{\mathring{\mu_r}} \cdot \left(H^{\mathring{\mu_r}}(\cdot)-\frac{1}{|S_m|}\int_{S_m}H^{\mathring{\mu_r}}(y)\,dy\right) \right)(x) \, dx \\
    B_{m,2} &:=& \frac{\eta_0\, k^2}{\pm c_0}\frac{c_r^{-3}}{|S_m|} \int_{S_m}\left[- \nabla {\bf M^{k}}_{\Omega_{m}} \, + \, k^{2} \, {\bf N^{k}}_{\Omega_{m}} \right]\left( {\bf T}^{\mathring{\mu_r}} \cdot \frac{1}{|S_m|}\int_{S_m}H^{\mathring{\mu_r}}(y)\,dy \right)(x) \,dx.
\end{eqnarray*}
Next, we estimate $B_{m,1}$ and $B_{m,2}$, separately. 
\begin{enumerate}
    \item[] 
    \item Estimation of $B_{m,1}$. Since $B_{m, 1}$ can be written as\footnote{We have used the fact that, for an arbitrary vector field $F$, 
  \begin{equation*}
      \int_{\Omega_{m}} \int_{S_{m}} \Upsilon_0(x, z) \cdot F(z) \, dx \, dz = \int_{S_{m}} \int_{\Omega_{m}} \Upsilon_0(x, z) \cdot F(z) \, dz \, dx = \int_{S_{m}} \nabla \nabla {\bf N}_{\Omega_{m}}\left(F\right)(x) \, dx. 
  \end{equation*} 
  }
	 \begin{eqnarray}\label{ImBBm1}
  \nonumber
	 	B_{m,1} &=& \frac{\eta_0 k^2}{\pm c_0}\frac{c_r^{-3}}{|S_m|} \Bigg[  \int_{S_m} \nabla \nabla {\bf N}_{\Omega_{m}}\left( {\bf T}^{\mathring{\mu_r}} \cdot \left(H^{\mathring{\mu_r}}(\cdot)-\frac{1}{|S_m|}\int_{S_m}H^{\mathring{\mu_r}}(y)\,dy\right) \right)(x) \,dx \\
	 	&+& \int_{S_m} \left[- \nabla {\bf M^{k}}_{\Omega_{m}}  + \nabla {\bf M}_{\Omega_{m}} +  k^{2} \, {\bf N^{k}}_{\Omega_{m}} \right]\left( {\bf T}^{\mathring{\mu_r}} \cdot \left(H^{\mathring{\mu_r}}(\cdot) - \frac{1}{|S_m|}\int_{S_m}H^{\mathring{\mu_r}}(y) dy\right)\right)(x) dx \Bigg].
	 \end{eqnarray}
And, by taking the modulus on the both sides of \eqref{ImBBm1}, we get
  \begin{eqnarray}\label{Imk-B}
  \nonumber
	 \left\vert B_{m,1} \right\vert & \lesssim & \frac{\eta_0 \, k^2}{c_0} \frac{c_r^{-3}}{|S_m|} \, |S_m|^{\frac{1}{q}} \, \left\Vert \nabla \nabla {\bf N} \left( {\bf T}^{\mathring{\mu_r}}\left(H^{\mathring{\mu_r}}(\cdot)-\frac{1}{|S_m|}\int_{S_m}H^{\mathring{\mu_r}}(y)\,dy\right) \chi_{\Omega_{m}}(\cdot)\right) \right\Vert_{\mathbb{L}^{p}(S_{m})}  \\ \nonumber
  & + & \frac{\eta_0 \, k^2}{c_0}c_r^{-3}\lvert {\bf T}^{\mathring{\mu_r}}\rvert[H^{\mathring{\mu_r}}]_{C^{0, \alpha}(\overline{\Omega})} \, d^{\alpha} \, \frac{1}{|S_m|}  \int_{\Omega_m}\int_{S_m} \left\vert \Upsilon_k - \Upsilon_0\right\vert(x, z) \,dx\,dz,
  \end{eqnarray}
  where $p, q>1$ such that $\dfrac{1}{p} \, + \, \dfrac{1}{q} \, = \, 1$. By utilizing the Calderon-Zygmund inequality, see \cite[Theorem 9.9]{GT}, we can further get  
    \begin{eqnarray}\label{Imk-B}
  \nonumber
	 \left\vert B_{m,1} \right\vert & \lesssim & \frac{\eta_0 \, k^2}{c_0} \frac{c_r^{-3}}{|S_m|} \, |S_m|^{\frac{1}{q}} \, \left\Vert  {\bf T}^{\mathring{\mu_r}}\left(H^{\mathring{\mu_r}}(\cdot)-\frac{1}{|S_m|}\int_{S_m}H^{\mathring{\mu_r}}(y)\,dy\right) \chi_{\Omega_{m}} (\cdot)\right\Vert_{\mathbb{L}^{p}(S_{m})}  \\ \nonumber
  & + & \frac{\eta_0 \, k^2}{c_0}c_r^{-3}\lvert {\bf T}^{\mathring{\mu_r}}\rvert[H^{\mathring{\mu_r}}]_{C^{0, \alpha}(\overline{\Omega})} \, d^{\alpha} \, \frac{1}{|S_m|}  \int_{\Omega_m}\int_{S_m} \left\vert \Upsilon_k - \Upsilon_0\right\vert(x, z) \,dx\,dz,
  \end{eqnarray}
  which can be reduced, with \textbf{Corollary $\ref{CoroHolderH}$}, to 
  \begin{equation*}
 	 \left\vert B_{m,1} \right\vert  \lesssim  \frac{\eta_0 \, k^2}{c_0 \, c_r^{3}}   \, \left\lvert  {\bf T}^{\mathring{\mu_r}} \right\rvert [ H^{\mathring{\mu_r}} ]_{C^{0,\alpha}(\overline{\Omega})} \, d^{\alpha} \, \left[ 1 + \frac{1}{|S_m|} \, \int_{\Omega_m}\int_{S_m} \left\vert \Upsilon_k - \Upsilon_0\right\vert(x, z) \,dx\,dz \right].
  \end{equation*}
  Now, similar to $(\ref{Upsk-Ups0})$, for any $x \in S_m \subset \Omega_m$ and $z \in \Omega_m$, we have 
      \begin{equation}\label{add-singu1}
	 	|(\Upsilon_k-\Upsilon_0)(x,z)| \lesssim \frac{k^2}{\left\vert x-z \right\vert},
	 \end{equation}
  hence,  
  \begin{equation*}
      \frac{1}{|S_m|}\int_{\Omega_m}\int_{S_m} \left\vert \Upsilon_k - \Upsilon_0\right\vert(x, z) \,dx\,dz \lesssim \frac{k^2}{|S_m|} \, \int_{\Omega_m}\int_{\Omega_m} \frac{1}{ \left\vert x - z \right\vert} \,dx\,dz \lesssim k^{2} \, d^{2}.
  \end{equation*}
Then, 
 \begin{equation}\label{EstimationBm1}
 	\left\vert B_{m,1} \right\vert  \lesssim  \frac{\eta_0 \, k^2}{c_0} c_r^{-3}  \, \left\lvert  {\bf T}^{\mathring{\mu_r}} \right\rvert [ H^{\mathring{\mu_r}} ]_{C^{0,\alpha}(\overline{\Omega})} \, d^{\alpha} \, \left[ 1 + k^{2} \, d^{2}  \right]  =  \mathcal{O}\left( \frac{\eta_0 \, k^2}{c_0} c_r^{-3}  \, \lvert  {\bf T}^{\mathring{\mu_r}} \rvert [ H^{\mathring{\mu_r}} ]_{C^{0,\alpha}(\overline{\Omega})} \, d^{\alpha}  \right).
  \end{equation}
 \item[]
    \item Estimation of $B_{m,2}$. We have,  
\begin{eqnarray*}
	B_{m,2} &=& \underbrace{- \, \frac{\eta_0 k^2}{\pm \, c_0 \, c_r^{3}} \frac{1}{|S_m|} \, \int_{S_m} \nabla {\bf M}_{\Omega_m} \left( {\bf T}^{\mathring{\mu_r}} \cdot \frac{1}{|S_m|} \int_{S_m}H^{\mathring{\mu_r}}(y)\,dy \, \right)(x) \, dx }_{B_{m, 2, 1}} \\
	&+& \underbrace{\frac{\eta_0 k^2}{\pm \, c_0 \, c_r^{3}} \frac{1}{|S_m|} \, \int_{S_m} \left[- \, \nabla {\bf M^{k}}_{\Omega_m} \, + \, \nabla {\bf M}_{\Omega_m} \, + \, k^{2} \, {\bf N^{k}}_{\Omega_m} \right]\left( {\bf T}^{\mathring{\mu_r}} \cdot \frac{1}{|S_m|}\int_{S_m}H^{\mathring{\mu_r}}(y)\,dy \right)(x) \,dx}_{B_{m, 2, 2}}.
\end{eqnarray*}
For $B_{m,2,2}$, similar to $(\ref{ImBBm1})$, there holds
\begin{eqnarray}\label{Bm22}
\nonumber
\left\vert B_{m,2,2} \right\vert & \lesssim &  \frac{\eta_0 k^4}{c_0}c_r^{-3}\lvert{\bf T}^{\mathring{\mu_r}}\rvert\lVert H^{\mathring{\mu_r}}\rVert_{\mathbb L^{\infty}}\frac{1}{|S_m|}\int_{\Omega_m}\int_{S_m}\frac{1}{|x-z|}\,dx\,dz \\ &\lesssim& \frac{\eta_0 k^4}{c_0}\lvert{\bf T}^{\mathring{\mu_r}}\rvert\lVert H^{\mathring{\mu_r}}\rVert_{\mathbb L^{\infty}} c_r^{-3}\, d^2. 
\end{eqnarray}
For $B_{m,2,1}$, as the function 
\begin{equation*}
    x \longrightarrow  \nabla {\bf M}_{\Omega_m} \left( {\bf T}^{\mathring{\mu_r}} \cdot \frac{1}{|S_m|} \int_{S_m}H^{\mathring{\mu_r}}(y)\,dy \, \right)(x),  
\end{equation*}
is harmonic one, by using the Mean Value Theorem, we get
\begin{equation*}
    \frac{1}{|S_m|} \, \int_{S_m} \nabla {\bf M}_{\Omega_m} \left( {\bf T}^{\mathring{\mu_r}} \cdot \frac{1}{|S_m|} \int_{S_m}H^{\mathring{\mu_r}}(y)\,dy \, \right)(x) \, dx \, = \, \nabla {\bf M}_{\Omega_m} \left( {\bf T}^{\mathring{\mu_r}} \cdot \frac{1}{|S_m|} \int_{S_m}H^{\mathring{\mu_r}}(y)\,dy \, \right)(z_{m}),
\end{equation*}
and we end up with
\begin{eqnarray*}
 B_{m,2,1} \, &=& \, - \, \frac{\eta_0 \, k^2}{\pm \, c_0 \, c_r^{3}} \, \nabla {\bf M}_{\Omega_{m}}\left({\bf T}^{\mathring{\mu_r}} \cdot \frac{1}{|S_m|}\int_{S_m}H^{\mathring{\mu_r}}(y)\,dy \right)(z_{m}) \\
 &=& \, - \, \frac{\eta_0 \, k^2}{\pm \, c_0 \, c_r^{3}} \, \nabla {\bf M}_{\Omega_{m}}\left( \boldsymbol{I} \right)(z_{m}) \cdot \left( {\bf T}^{\mathring{\mu_r}} \cdot \frac{1}{|S_m|}\int_{S_m}H^{\mathring{\mu_r}}(y)\,dy \right),
\end{eqnarray*}
where due to the constant vector nature of ${\bf T}^{\mathring{\mu_r}} \cdot \frac{1}{|S_m|}\int_{S_m}H^{\mathring{\mu_r}}(y)\,dy$, the second equality is result of it. By referring to \cite[Table 1]{Yaghjian}, we know that 
\begin{equation*}
    \nabla {\bf M}_{\Omega_{m}}\left( \boldsymbol{I} \right)(x) \, = \,  \frac{1}{3} \, \boldsymbol{I}, \quad \text{for} \quad x \in \Omega_{m}. 
\end{equation*}
Hence,   
\begin{equation}\label{Jm0-B}
        B_{m,2,1} = -\frac{\eta_0 k^2}{\pm 3c_0}c_r^{-3}{\bf T}^{\mathring{\mu_r}} \cdot \frac{1}{|S_m|}\int_{S_m}H^{\mathring{\mu_r}}(y)\,dy. 
\end{equation}
Thus, for $B_m$, by using the estimates \eqref{EstimationBm1}, \eqref{Bm22} and the formula \eqref{Jm0-B}, we have
\begin{align*}
    B_{m} &= -\frac{\eta_0 k^2}{\pm 3c_0}c_r^{-3}{\bf T}^{\mathring{\mu_r}} \cdot \frac{1}{|S_m|}\int_{S_m}H^{\mathring{\mu_r}}(y)\,dy + B_{m,1} + B_{m,2,2}\notag\\
     &= -\frac{\eta_0 k^2}{\pm 3c_0}c_r^{-3}{\bf T}^{\mathring{\mu_r}} \cdot \frac{1}{|S_m|}\int_{S_m}H^{\mathring{\mu_r}}(y)\,dy + \mathcal{O}\left( \frac{\eta_0 \, k^2}{c_0} c_r^{-3} \, \lvert{\bf T}^{\mathring{\mu_r}}\rvert \, [ H^{\mathring{\mu_r}}]_{C^{0, \alpha}(\overline\Omega)} \, d^{\alpha} \right),
\end{align*} 
where the remainder term admits the following estimation
\begin{eqnarray}\label{ErrBm}
    \sum_{m=1}^{\aleph} \left\vert \mathcal{O}\left( \cdots \right) \right\vert^{2} \,  \lesssim  \, \mathcal{O}\left( \aleph \,  \frac{\eta_0^{2} \, k^4}{c_0^{2}} \, \lvert{\bf T}^{\mathring{\mu_r}}\rvert^{2} \, [H^{\mathring{\mu_r}}]^{2}_{C^{0, \alpha}(\overline\Omega)} \, c_r^{-6} \, d^{2\alpha} \right) =  \mathcal{O}\left( \frac{\eta_0^{2} \, k^4}{c_0^{2}} \, \lvert{\bf T}^{\mathring{\mu_r}}\rvert^{2} \, [ H^{\mathring{\mu_r}}]^{2}_{C^{0, \alpha}(\Omega)} \, c_r^{-6} \, d^{2\alpha-3} \right).
\end{eqnarray} 
\end{enumerate}
\item Estimation of $C_m$. We write $C_m$ as
\begin{eqnarray*} 
	C_m &=& - \, \frac{\eta_0\, k^2}{\pm \, c_0 \, c_r^{3}}\frac{1}{|S_m|}\int_{\Omega\backslash\underset{m=1}{\overset{\aleph}{\cup}}\Omega_m} \nabla {\bf M}_{S_m}\left( {\bf T}^{\mathring{\mu_r}} \cdot H^{\mathring{\mu_r}}\right)(x)\,dx \\
	&+& \frac{\eta_0\, k^2}{\pm \, c_0 \, c_r^{3}} \frac{1}{|S_m|} \int_{\Omega\backslash\underset{m=1}{\overset{\aleph}{\cup}}\Omega_m} \left[- \nabla {\bf M^{k}}_{S_{m}} \, + \, \nabla {\bf M}_{S_{m}} \, + \, k^{2} \, {\bf N^{k}}_{S_{m}}  \right]\left({\bf T}^{\mathring{\mu_r}} \cdot H^{\mathring{\mu_r}} \right)(x)\,dx,
\end{eqnarray*}
which, by utilizing the Mean Value Theorem, can be further formulated as  
\begin{eqnarray}\label{DefCm} 
\nonumber
	C_m &=& - \, \frac{\eta_0\, k^2}{\pm \, c_0 \, c_r^{3}} \, \nabla {\bf M}_{\Omega\backslash\underset{m=1}{\overset{\aleph}{\cup}}\Omega_m}\left({\bf T}^{\mathring{\mu_r}} \cdot H^{\mathring{\mu_r}}\right)(z_{m}) \\
	&+& \frac{\eta_0\, k^2}{\pm \, c_0 \, c_r^{3}} \frac{1}{|S_m|} \int_{\Omega\backslash\underset{m=1}{\overset{\aleph}{\cup}}\Omega_m} \left[- \nabla {\bf M^{k}}_{S_{m}} \, + \, \nabla {\bf M}_{S_{m}} \, + \, k^{2} \, {\bf N^{k}}_{S_{m}}  \right]\left({\bf T}^{\mathring{\mu_r}} \cdot H^{\mathring{\mu_r}} \right)(x)\,dx.
\end{eqnarray}
Since for any $x\in S_m\subset \Omega_m$, $z\in \Omega\backslash\underset{m=1}{\overset{\aleph}{\cup}}\Omega_m$, similar to \eqref{add-singu1}, there holds
\begin{equation*}
	\left\vert (\Upsilon_k-\Upsilon_0)(x,z) \right\vert \lesssim \frac{k^2}{\left\vert x - z \right\vert},
\end{equation*}
then, by taking the modulus on the both sides of $(\ref{DefCm})$, we obtain
\begin{eqnarray}\label{Cm=Cm1+Cm2}
\nonumber
\left\vert C_m \right\vert & \lesssim & \frac{\eta_0\, k^2}{c_0 \, c_r^{3}} \,  \, \int_{\Omega\backslash\underset{m=1}{\overset{\aleph}{\cup}}\Omega_m} \frac{1}{\left\vert z_{m} - z \right\vert^{3}} \; dz \; \lvert {\bf T}^{\mathring{\mu_r}} \rvert \, \left\Vert H^{\mathring{\mu_r}} \right\Vert_{\mathbb{L}^{\infty}(\Omega)}  \\
	&+& \underbrace{\frac{\eta_0\, k^2}{c_0 \, c_r^{3}}\frac{1}{|S_m|}\int_{\Omega\backslash\underset{m=1}{\overset{\aleph}{\cup}}\Omega_m}\int_{S_m} \frac{k^{2}}{\left\vert x - z \right\vert}  \, \lvert {\bf T}^{\mathring{\mu_r}} \rvert \; \left\Vert H^{\mathring{\mu_r}} \right\Vert_{\mathbb{L}^{\infty}(\Omega)} \,dx\,dz}_{C_{m, 2}}.
\end{eqnarray}
To estimate $C_{m, 2}$, we consider the following two cases in terms of the split of  $\Omega\backslash\underset{m=1}{\overset{\aleph}{\cup}}\Omega_m$, for any fixed $m$.
\begin{enumerate}
    \item We denote the set $V_1\subset\Omega\backslash\underset{m=1}{\overset{\aleph}{\cup}}\Omega_m$, such that any $z \in V_1$ locates away from $x\in S_m$. In this case, the function $\dfrac{1}{|x-z|}$ is bounded. Moreover, from \textbf{Remark \ref{rem-vol-d}}, we know that $|V_1|=\Oh(d)$ .
    \item[] 
    \item We denote the set $V_2\subset\Omega\backslash\underset{m=1}{\overset{\aleph}{\cup}}\Omega_m$ such that $x\in S_m$ locates near one of the $\Omega_m's$ touching the boundary $\partial\Omega$ of $V_2$. In this case, it is obvious that there exist a positive constant $c_+$ such that $V_2\subset B(x, c_+d)$. 
    \item[] 
\end{enumerate}
The figure below gives us a schematic illustration of the decomposition 
\begin{equation}\label{V1V2}
 \Omega\backslash\underset{m=1}{\overset{\aleph}{\cup}}\Omega_m = V_{1} \cup V_{2}.   
\end{equation}

\begin{figure}[htbp]
	\centering
	\includegraphics[width=0.7\linewidth]{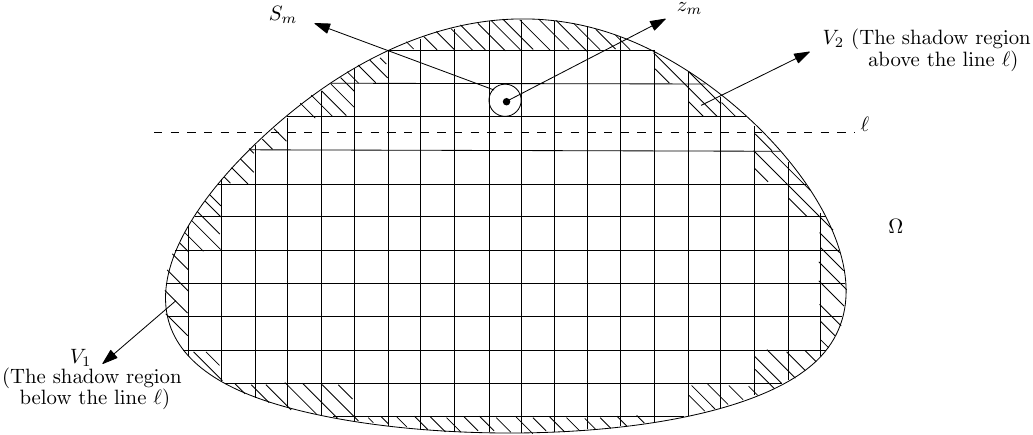}
	\caption{A schematic illustration for the split of the region $\Omega\backslash\underset{m=1}{\overset{\aleph}{\cup}}\Omega_m$.}
	\label{fig:es-cmk-bound}
\end{figure}

Using the decomposition $(\ref{V1V2})$, we get
\begin{eqnarray*}
    \left\vert C_{m,2} \right\vert & \lesssim &  \frac{\eta_0\, k^4}{c_0 \,  c_r^{3}}\frac{1}{|S_m|} \int_{V_1 \cup V_2} \int_{S_m} \frac{1}{|x-z|}\,dx\,dz  \, \lvert {\bf T}^{\mathring{\mu_r}}\rvert\lVert H^{\mathring{\mu_r}}\rVert_{\mathbb L^{\infty}(\Omega)} \\
	& \lesssim & \frac{\eta_0\, k^4}{c_0 \, c_r^{3}}\frac{1}{|S_m|}\left( |V_1| \, |S_m|+ \int_{B(x, c_+d)} \, \int_{S_{m}} \, \frac{1}{|x-z|} \, dx \,dz \, \right) \, \lvert {\bf T}^{\mathring{\mu_r}}\rvert\lVert H^{\mathring{\mu_r}}\rVert_{\mathbb L^{\infty}(\Omega)},
\end{eqnarray*}
where
\begin{equation*}
    \int_{B(x, c_+d)} \, \int_{S_{m}} \, \frac{1}{|x-z|} \, dx \,dz = \mathcal{O}\left( \left\vert S_{m} \right\vert \; d^{2} \right), 
\end{equation*}
together with the fact that $\left\vert V_{1} \right\vert = \mathcal{O}\left( d \right)$, which gives us
\begin{equation*}
 \left\vert C_{m,2} \right\vert	\lesssim \frac{\eta_0\, k^4}{c_0 \, c_r^{3}} \; \lvert {\bf T}^{\mathring{\mu_r}} \rvert \; \lVert H^{\mathring{\mu_r}}\rVert_{\mathbb{L}^{\infty}(\Omega)}  \, d.
\end{equation*}
Now, the estimation for $C_m$ in \eqref{Cm=Cm1+Cm2} becomes
\begin{equation}\label{5.30*}
\left\vert C_m \right\vert  \lesssim  \frac{\eta_0\, k^2}{c_0 \, c_r^{3}}  \, \left[  \int_{\Omega\backslash\underset{m=1}{\overset{\aleph}{\cup}}\Omega_m} \frac{1}{\left\vert z_{m} - z \right\vert^{3}} \; dz + k^{2} \, d \right] \; \left\vert {\bf T}^{\mathring{\mu_r}} \right\vert \, \left\Vert H^{\mathring{\mu_r}} \right\Vert_{\mathbb{L}^{\infty}(\Omega)}.
\end{equation}
It is obvious that the first term on the R.H.S. of \eqref{5.30*} is more dominant than the second one. Then, we can derive from \eqref{5.30*} that
\begin{equation*}
\sum_{m=1}^{\aleph} \left\vert C_m \right\vert^{2}  \lesssim  \; \frac{\eta_0^2\, k^4}{c_0^2 \, c_{r}^{6}} \, \left\vert {\bf T}^{\mathring{\mu_r}} \right\vert^{2} \, \left\Vert H^{\mathring{\mu_r}} \right\Vert^{2}_{\mathbb{L}^{\infty}(\Omega)} \, \sum_{m=1}^{\aleph} \left\vert   \int_{\Omega\backslash\underset{m=1}{\overset{\aleph}{\cup}}\Omega_m} \frac{1}{\left\vert z_{m} - z \right\vert^{3}} \; dz \right\vert^{2}.
\end{equation*}
By utilizing \textbf{Lemma \ref{lem-count-boundary}}, formula $(\ref{count-bound})$, which provides a counting criterion near $\partial\Omega$, we obtain

\begin{equation}\label{Cm0}
	\sum_{m=1}^{\aleph} \left\vert C_{m} \right\vert^{2} \; \lesssim \; \frac{\eta_0^2 \, k^4}{c_0^2 \, c_{r}^{6}}  \, \lvert{\bf T}^{\mathring{\mu_r}} \rvert^{2} \; \lVert H^{\mathring{\mu_r}}\rVert^2_{\mathbb{L}^{\infty}(\Omega)} \; d^{-1}.
\end{equation}
\end{enumerate}

Now, recall $Error_{m}^{\star}$ given by \eqref{ErrTABC}, which can be written as
\begin{equation*}
    Error_{m}^{\star} = -\frac{\eta_0 k^2}{\pm 3c_0}c_r^{-3}{\bf T}^{\mathring{\mu_r}} \cdot \frac{1}{|S_m|}\int_{S_m}H^{\mathring{\mu_r}}(y)\,dy + Error_{m},
\end{equation*}
with 
\begin{equation}\label{DefErrorm}
    Error_{m} := T_{m} + A_{m} + \mathcal{O}\left( \frac{\eta_0 \, k^4}{c_0} \, \lvert{\bf T}^{\mathring{\mu_r}}\rvert \, \lVert H^{\mathring{\mu_r}}\rVert_{\mathbb L^{\infty}(\Omega)} \, c_r^{-3} \, d^{2} \right) + C_{m}. 
\end{equation}
Using the expression of $Error_{m}$, given by \eqref{DefErrorm}, the L.S.E introduced in $(\ref{eq-ls-aver})$ takes the following form
	 \begin{equation}\label{FF}
\digamma_{m} \, - \, \frac{\eta_0\, k^2}{\pm c_0}a^{3-h}\sum_{j=1 \atop j \neq m}^\aleph \Upsilon_k(z_m, z_j) \cdot {\bf P}_{0} \cdot \digamma_{j}  = \, i \, k \, H^{Inc}(z_m)  + Error_{m},  
   \end{equation}
where
\begin{equation*}
 \digamma_{m} :=    \left( \boldsymbol{I} + \frac{\eta_0 k^2}{\pm 3c_0}c_r^{-3}{\bf T}^{\mathring{\mu_r}} \right) \cdot \frac{1}{|S_m|}\int_{S_m}H^{\mathring{\mu_{r}}}(x)\,dx.
\end{equation*}
Taking into account the estimations $(\ref{EstTm})$, $(\ref{Am})$, $(\ref{ErrBm})$ and $(\ref{Cm0})$, it is straightforward to derive the estimate associated with $Error_m$ presented in \eqref{DefErrorm} as
\begin{eqnarray}\label{5.31*}
    \sum_{m=1}^{\aleph} \left\vert Error_{m} \right\vert^{2} \, & \lesssim & \,     \sum_{m=1}^{\aleph} \, \left[  \left\vert T_{m} \right\vert^{2} \, +   \left\vert A_{m} \right\vert^{2} \, +      \left\vert \mathcal{O}(\cdots) \right\vert^{2} \, + \left\vert C_{m} \right\vert^{2} \right] \notag\\ & \lesssim & \frac{\eta_0^2 \, k^4}{c_0^2}\,\lvert  {\bf T}^{\mathring{\mu_{r}}} \rvert^{2} \; c_r^{-6} \; \left( [H^{\mathring{\mu_r}}]^2_{C^{0, \alpha}(\overline{\Omega})} \, d^{2\alpha-3} \, \left\vert \log\left(d\right) \right\vert^2 \, + \, \lVert H^{\mathring{\mu_r}}\rVert_{\mathbb L^{\infty}\left( \Omega \right)}^2 \, d^{-\frac{9}{7}} \right). 
\end{eqnarray}
In addition, by subtracting \eqref{alg-dis-00} from \eqref{FF}, we get 
	\begin{equation}\label{diff-la}
	\left( \digamma_{m} \, - \, U_{m} \, \right) \, - \,  \frac{\eta_0\, k^2}{\pm c_0}a^{3-h}\sum_{j=1 \atop j \neq m}^{\aleph} \Upsilon_k(z_m, z_j) \cdot {\bf P_0} \cdot \left( \digamma_{j} \, - \, U_{j} \right) = {Error}_m, 
	\end{equation}
where ${Error}_m$ is given by $(\ref{DefErrorm})$. We know, from $(\ref{contrast-epsilon})$, that $\eta_{0} = \eta \, a^{2}$ and by plugging this relation into $(\ref{diff-la})$, we derive the same algebraic system as the one given by $(\ref{linear-discrete})$, up to a switching operation\footnote{From a singularity analysis point of view, we have
\begin{equation*}
    \Upsilon_{k}(\cdot,\cdot) \cdot {\bf P_0} \; \sim \; {\bf P_0} \cdot \Upsilon_{k}(\cdot,\cdot). 
\end{equation*}
} between the matrix $\Upsilon_{k}(\cdot,\cdot)$ and the tensor ${\bf P_0}$. To avoid redundancy steps, based on the proof of \cite[Theorem 1.3]{CGS}, we deduce that  
\begin{eqnarray*}
    \left( \sum_{m=1}^{\aleph} \, \left\vert \digamma_{m} \, - \, U_{m} \right\vert^{2} \, \right)^{\frac{1}{2}} \; & \lesssim & \; \frac{c_0\, c_r^3}{c_{r}^{3} \, c_{0} \, - k^{2} \, \eta_{0} \, \left\vert {\bf P_0} \right\vert } \;\; \left( \sum_{m=1}^{\aleph} \, \left\vert Error_{m} \right\vert^{2} \, \right)^{\frac{1}{2}} \\
 & \overset{(\ref{5.31*})}{\lesssim} & \frac{\eta_0\, k^2 \, \left\vert {\bf T}^{\mathring{\mu_r}} \right\vert}{\left( c_0 \, c_r^3 \, - \, k^2 \, \eta_0 \, \lvert{\bf P_0} \rvert \right)} \; \left( [H^{\mathring\mu_{r}}]^{2}_{C^{0, \alpha}(\overline{\Omega})}d^{2 \, \alpha -3} \, \left\vert \log (d) \right\vert^{2}
	 \, + \, \lVert H^{\mathring\mu_{r}} \rVert^{2}_{\mathbb{L}^{\infty}(\Omega)} \,  d^{-\frac{9}{7}} \right)^{\frac{1}{2}}.
	\end{eqnarray*}
This proves \eqref{es-l2} and ends the proof of \textbf{Proposition \ref{prop-es-LS}}.

\section{Quantitative estimates of the $C^{0, \alpha}$-regularity}
Here, we show the detailed proof of the $C^{0, \alpha}$- H\"{o}lder regularity of the solution to the electromagnetic scattering problem for the effective medium \eqref{model-equi} with $\mathring{\mu_{r}}$ being negative definite, which is stated by \textbf{Proposition \ref{prop-regu-neg}}. We end this section by proving  \textbf{Proposition \ref{AddedLemma}}.

\subsection{Proof of Proposition \ref{prop-regu-neg}.}\label{subsec-proof 4.1}
In this subsection, we prove \textbf{Proposition \ref{prop-regu-neg}} for the H\"{o}lder regularity, up to the boundary, of the solution to the electromagnetic scattering problem \eqref{model-equi} with negative definite effective permeability $\mathring{\mu_{r}}$. The proof shall be divided into the following two steps.

\begin{enumerate}
    \item[] 
    \item We show that $E^{\mathring{\epsilon_{r}}}, H^{\mathring{\mu_r}}\in \mathbb L^2(\Omega)$, $\Omega\subset\mathbb{R}^3$, excluding the plasmonic eigenvalues. \newline \\
Remember from $(\ref{compa-bdm})$ that we have
\begin{equation}\label{compa-bdmp}
	\frac{12 \; \xi \; c(k)}{\pi \, \left\vert \pi^{3} \mp 4 \, \xi \right\vert } \, \left\vert \Omega \right\vert \, < \, \min\left\{ 1, \lvert \mathring{\mu_{r}}\rvert, \lvert A_{\mathring{\mu_r}} \rvert \right\}.
\end{equation}
The tensors $\mathring{\mu_{r}}$ and $A_{\mathring{\mu_r}}$  are both characterized by a chosen sign, which allows us to distinguish two cases. In the sequel, we assume that $\xi > 2 \, \delta \, \pi^{3}$, where $\delta$ is a real positive parameter which will be defined later for simplification. It is direct to see from \eqref{ls} and \eqref{us} that for $\xi>2\delta \pi^3$, $\mathring{\mu_r}$ is negative definite.
\begin{enumerate}
    \item[]
    \item By taking the lower sign. The inequality $(\ref{compa-bdmp})$ becomes:
    \begin{equation*}\label{compa1}
	\frac{12 \, \xi \, c(k)}{\pi \, \left( \pi^{3} + 4 \, \xi \right)} \, \left\vert \Omega \right\vert < \min\left\{
	 1;\;\; \frac{8 \, \xi \, - \, \pi^{3}}{4 \, \xi \, + \, \pi^{3}}; \;\; \underset{n}{Inf} \;\; \frac{\left\vert \pi^3 + 4 \, \xi \, \left(1 \, - \, 3 \, \lambda_n^{(3)}(\Omega) \right) \right\vert}{4 \, \xi \, + \, \pi^{3}} \right\}.
\end{equation*}
We denote by $\lambda_{n_{1}}^{(3)}(\Omega)$ the closest eigenvalue from the right to $\dfrac{1}{3}$. Set\footnote{The accumulation point for the sequence $\{ \lambda_{n}^{(3)}(\Omega) \}_{n \in \mathbb{N}}$ is $\dfrac{1}{2}$, so we can infer that $0 < \delta_{\star} < \dfrac{1}{6}$.} $\delta_{\star} := \lambda_{n_{1}}^{(3)}(\Omega) - \dfrac{1}{3}$ and $\delta := \left[ \dfrac{1}{12 \, \delta_{\star}} \right] + 1$, where $[\cdot]$ is the integer part function. The distribution of the sequence of eigenvalues allows us to distinguish two cases. 
\begin{enumerate}
    \item[] 
    \item If $ 0 \, < \, \lambda_n^{(3)}(\Omega) \, \leq \, \dfrac{1}{3}$, we obtain: 
    \begin{equation*}
 \underset{n}{Inf} \; \left\vert \pi^3 + 4 \, \xi \, \left(1 \, - \, 3 \, \lambda_n^{(3)}(\Omega) \right) \right\vert =  \pi^3.   
\end{equation*}
Hence, $(\ref{compa1})$ takes the following form
\begin{equation*}
	\frac{12 \, \xi \, c(k)}{\pi \, \left( \pi^{3} + 4 \, \xi \right)} \, \left\vert \Omega \right\vert \, < \, \min\left\{
	 1;\;\; \frac{8 \, \xi \, - \, \pi^{3}}{4 \, \xi \, + \, \pi^{3}}; \;\; \frac{\pi^3}{4 \, \xi \, + \, \pi^{3}} \right\} = \frac{\pi^3}{4 \, \xi \, + \, \pi^{3}}.
\end{equation*}
Hence, 
\begin{equation}\label{Ineq1}
	 c(k) < \frac{\pi^{4}}{12 \, \xi \, \left\vert \Omega \right\vert}. 
\end{equation}
\item[] 
\item If $\dfrac{1}{3} + \delta_{\star} < \lambda_n^{(3)}(\Omega) \leq 1$, we obtain\footnote{We have used the fact that 
\begin{equation*}
    x < 1 + [x], \quad x \in \mathbb{R}^{+}.
\end{equation*}
}:   
\begin{equation*}
    \pi^{3} - 8 \, \xi \leq \pi^{3} + 4 \, \xi \, \left(1  - 3 \, \lambda_n^{(3)}(\Omega) \right) < \pi^{3} \, \left( 1 - 24 \, \delta_{\star} \, - \, 24 \, \delta_{\star} \, \left[ \frac{1}{12 \, \delta_{\star}} \right] \right) < 0.
\end{equation*}
Consequently,
\begin{equation*}
 \underset{n}{Inf} \; \left\vert \pi^3 + 4 \, \xi \, \left(1 \, - \, 3 \, \lambda_n^{(3)}(\Omega) \right) \right\vert \, = \,\underset{n}{Inf} \;\left(-\, \pi^3\, -\, 4\, \xi\, \left(1\, -\, 3\, \lambda_n^{(3)}(\Omega)\right)\right)
 =\, - \, \pi^3 + 12 \, \xi \, \delta_{\star}.   
\end{equation*} 
Hence, from $(\ref{compa1})$, a sufficient condition can be derived as
    \begin{equation*}
	\frac{12 \, \xi \, c(k)}{\pi \, \left( \pi^{3} + 4 \, \xi \right)} \, \left\vert \Omega \right\vert < \min\left\{
	 1;\;\; \frac{8 \, \xi \, - \, \pi^{3}}{4 \, \xi \, + \, \pi^{3}};  \;\; \frac{- \, \pi^3 + 12 \, \xi \, \delta_{\star}}{4 \, \xi \, + \, \pi^{3}} \right\} = \frac{- \, \pi^3 + 12 \, \xi \, \delta_{\star}}{4 \, \xi \, + \, \pi^{3}}.
\end{equation*}
Then,
\begin{equation}\label{Ineq2}
    c(k) < \frac{\left( - \,\pi^{3} + 12 \, \xi \, \delta_{\star} \right)\, \pi}{12 \, \xi \, \left\vert \Omega \right\vert}.
\end{equation}
\end{enumerate}
By combining with the inequalities $(\ref{Ineq1})$ and $(\ref{Ineq2})$, we deduce that
   \begin{equation}\label{Ineq5}
	 c(k) <  \min\left\{ \frac{\pi^{4}}{12 \, \xi \, \left\vert \Omega \right\vert} ; \frac{\pi \, \left( - \pi^3 + 12 \, \xi \, \delta_{\star} \right)}{12 \, \xi \, \left\vert \Omega \right\vert} \right\} = \frac{\pi^{4}}{12 \, \xi \, \left\vert \Omega \right\vert}.
\end{equation}
    \item[] 
    \item By taking the upper sign. The inequality $(\ref{compa-bdmp})$ becomes:
    \begin{equation*}
	\frac{12 \, \xi \, c(k)}{\pi \, \left(   4 \, \xi \, - \, \pi^{3} \right)} \, \left\vert \Omega \right\vert < \min\left\{
	 1;\;\; \frac{8 \, \xi \, + \, \pi^{3}}{4 \, \xi \, - \, \pi^{3}}; \;\; \underset{n}{Inf} \;\; \frac{\left\vert \pi^3 + 4 \, \xi \, \left(3 \, \lambda_n^{(3)}(\Omega) \, - \, 1 \, \right) \right\vert}{4 \, \xi \, - \, \pi^{3}} \right\},
\end{equation*}
which can be reduced to     \begin{equation}\label{FAF1155}
	\frac{12 \, \xi \, c(k)}{\pi \, \left(   4 \, \xi \, - \, \pi^{3} \right)} \, \left\vert \Omega \right\vert < \min\left\{
	 1; \;\; \underset{n}{Inf} \;\; \frac{\left\vert \pi^3 + 4 \, \xi \, \left(3 \, \lambda_n^{(3)}(\Omega) \, - \, 1 \, \right) \right\vert}{4 \, \xi \, - \, \pi^{3}} \right\},
\end{equation}
since 
\begin{equation*}
    \frac{8\, \xi\, +\, \pi^3}{4\, \xi\, -\, \pi^3}>1.
\end{equation*} 
We denote by $\lambda_{n_{2}}^{(3)}(\Omega)$ the closest eigenvalue from the left to $\dfrac{1}{3}$. We set $\delta_{\star} := \dfrac{1}{3} - \lambda_{n_{2}}^{(3)}(\Omega)$ and we set $\delta := 1 + \left[ \dfrac{1}{12 \, \delta_{\star}} \right]$. Two cases shall be studied as follows. 
\begin{enumerate}
    \item[] 
    \item If $ 0 < \lambda_n^{(3)}(\Omega) \leq \dfrac{1}{3} - \delta_{\star}$, we obtain: 
    \begin{equation*}
        \pi^{3} - 4 \, \xi < \pi^{3} + 4 \, \xi \, \left( 3 \, \lambda_n^{(3)}(\Omega) - \, 1 \right) < \pi^{3} \, \left(1 - 12 \, \delta \, \delta_{\star} \right) < 0.
    \end{equation*}
    Then, 
    \begin{equation*}
 \underset{n}{Inf} \; \left\vert \pi^3 + 4 \, \xi \, \left(3 \, \lambda_n^{(3)}(\Omega) \, - \, 1 \right) \right\vert = \underset{n}{Inf} \; \left(-\, \pi^3 - 4\, \xi\, \left(3\, \lambda_n^{(3)}(\Omega)\, -\, 1\right)\right)
 = - \, \pi^3 + 12 \, \xi \, \delta_{\star}.   
\end{equation*}  
Hence, $(\ref{FAF1155})$ takes the following form
\begin{equation*}\label{ITBF1}
	\frac{12 \, \xi \, c(k)}{\pi \, \left(4 \, \xi \, - \pi^{3} \right)} \, \left\vert \Omega \right\vert < \min\left\{
	 1;\;\; \frac{- \, \pi^3 + 12 \, \xi \, \delta_{\star}}{4 \, \xi \, - \, \pi^{3}} \right\} = \frac{- \, \pi^3 + 12 \, \xi \, \delta_{\star}}{4 \, \xi \, - \, \pi^{3}},
\end{equation*}
where, for the last estimation, we have used the fact that $\delta_{\star} \leq \frac{1}{3}$. Then, 
\begin{equation}\label{Ineq3}
    c(k) < \frac{\pi \, \left( - \, \pi^3 + 12 \, \xi \, \delta_{\star} \right)}{12 \, \xi \, \left\vert \Omega \right\vert}.
\end{equation}
\item[] 
\item If $\dfrac{1}{3} < \lambda_n^{(3)}(\Omega) \leq 1$, we have  
\begin{equation*}
     \underset{n}{Inf} \; \left\vert \pi^3 + 4 \, \xi \, \left(3 \, \lambda_n^{(3)}(\Omega) \, - \, 1 \right) \right\vert =  \, \pi^3.
\end{equation*}
Then, 
\begin{equation*}
	\frac{12 \, \xi \, c(k)}{\pi \, \left(   4 \, \xi \, - \, \pi^{3} \right)} \, \left\vert \Omega \right\vert < \min\left\{
	 1; \; \frac{\pi^3}{4 \, \xi \, - \, \pi^{3}} \right\} = \frac{\pi^3}{4 \, \xi \, - \, \pi^{3}}.
\end{equation*}
Hence, 
\begin{equation}\label{Ineq4}
    	 c(k) \, < \, \frac{\pi^4}{12 \, \xi \, \left\vert \Omega \right\vert}.
\end{equation}
\end{enumerate}
By combining with the inequalities $(\ref{Ineq3})$ and $(\ref{Ineq4})$, as done in $(\ref{Ineq5})$, we deduce 
\begin{equation}\label{Ineq6}
    	 c(k) \, < \, \min\left\{\frac{\pi\left(-\, \pi^3\, +\, 12\, \xi\, \delta_{\star}\right)}{12\, \xi\, |\Omega|},\, \frac{\pi^4}{12\, \xi\, |\Omega|}\right\}\,<\, \frac{\pi^4}{12 \, \xi \, \left\vert \Omega \right\vert}.
\end{equation} 
\item[]
\item For the critical case, where $\lambda_{n}^{(3)} = \frac{1}{3}$ is an eigenvalue of the Magnetization operator, we can prove that $(\ref{Ineq6})$ still holds.
\end{enumerate}
Finally, for both of the lower sign and the upper sign cases, we deduce that under the condition
\begin{equation}\label{cdtxi}
    2 \, \delta \, \pi^{3} < \xi < \frac{\pi^{4}}{12 \, \left\vert \Omega \right\vert \, c(k)},
\end{equation}
 the operator ${\bf I}\pm \xi\nabla{\bf M}{\bf T}^{\mathring{\mu_{r}}}$, in $(\ref{LS-vec})$, dominates the operator ${\bf K}$ and the corresponding coercivity can be attained, which shows that $H^{\mathring{\mu_r}}\in \mathbb{L}^2(\Omega)$. In a similar way, we can prove that $E^{\mathring{\epsilon_{r}}}\in \mathbb{L}^2(\Omega)$.

\item[] 
\item We prove that $E^{\mathring{\epsilon_{r}}}, H^{\mathring{\mu_r}} \in C^{0, \alpha}(\overline{\Omega})$, for $0<\alpha<1$.

We recall, from the electromagnetic scattering problem for the effective medium, that
\begin{equation}\label{effect-medium}
	\begin{cases}
	 \mathrm{Curl} E^{\mathring{\epsilon_{r}}}-i k \mathring{\mu_r} H^{\mathring{\mu_r}}=0 &\mbox{in }\mathbb{R}^3 \\
	 \mathrm{Curl} H^{\mathring{\mu_r}}+i k \mathring{\epsilon_{r}} E^{\mathring{\epsilon_{r}}}=0 &\mbox{in }\mathbb{R}^3 \\
	 \nu\times E^{\mathring{\epsilon_{r}}}_+|_{\partial\Omega}=\nu\times E^{\mathring{\epsilon_{r}}}_-|_{\partial\Omega} \,\, \text{and} \,\, \nu\times H^{\mathring{\mu_{r}}}_+|_{\partial\Omega}=\nu\times H^{\mathring{\mu_{r}}}_-|_{\partial\Omega}& \mbox{on }\partial\Omega,
	\end{cases}
\end{equation}
where 
\begin{equation}\notag
	\mathring{\epsilon_{r}}={\bf I}, \mbox{ in }\mathbb{R}^3, \quad \text{and} \quad
	\mathring{\mu_{r}}=
	\begin{cases}
	{\bf I}+\dfrac{\eta_0 k^2}{\pm c_0}c_r^{-3}{\bf T}^{\mathring{\mu_{r}}}&\mbox{ in }\Omega,\\
                     & \\
	{\bf I}&\mbox{ in }\mathbb{R}^3\backslash \Omega,
	\end{cases}
\end{equation}
with $H_+^{\mathring{\mu_{r}}}$ (resp. $E_+^{\mathring{\epsilon_{r}}}$) and  $H^{\mathring{\mu_{r}}}_-$ (resp. $E^{\mathring{\epsilon_{r}}}_-$) denoting the magnetic (resp. electric) fields outside $\Omega$ and inside $\Omega$, respectively. It is direct  to see that $E_+^{\mathring{\epsilon_{r}}}$ and $H_+^{\mathring{\mu_{r}}}$ are smooth enough for $\mathring{\epsilon_{r}}=\mathring{\mu_{r}}={\bf I}$ in $\mathbb{R}^3\backslash \Omega$. Now, let $R>0$, such that $\Omega\subset B(0, R)\subset B(0, 2R)$, where $B(0, R)$ is the ball centered at the origin with radius $R$. Suppose $\psi_R\in C^\infty(\mathbb{R}^3\backslash \Omega)$ fulfilling that
\begin{equation}\notag
\psi_R=
	\begin{cases}
	1 &\mbox{in}\quad B(0, R)\backslash \Omega,\\
          & \\
	0 &\mbox{in}\quad \mathbb{R}^3\backslash B(0, 2R).
	\end{cases}
\end{equation}
Thus it is easy to get that
\begin{equation}\label{embed-out1}
	\psi_R H^{\mathring{\mu_{r}}_+}\in C^\infty(\mathbb{R}^3\backslash\Omega)\hookrightarrow \mathbb{W}^{k, p}(\mathbb{R}^3\backslash\Omega)\quad\mbox{for any}\quad k, \, p>0,\quad \psi_R H^{\mathring{\mu_{r}}}_+|_{\partial\Omega}=H^{\mathring{\mu_{r}}}_+|_{\partial\Omega},
\end{equation}
where $\mathbb{W}^{k, p}$ is the normal Sobolev space. It suffices to take $k=1$ in \eqref{embed-out1}. Then from the definition of $\psi_R$, by using the Trace Theorem for the trace operator $Tr$ in a bounded domain with Lipschitz boundary, we have
\begin{equation}\notag
\lVert Tr(\psi_RH^{\mathring{\mu_{r}}}_+)\rVert_{\mathbb{W}^{1-\frac{1}{p}, p}(\partial\Omega)}=\lVert H^{\mathring{\mu_{r}}}_+|_{\partial\Omega}\rVert_{\mathbb{W}^{1-\frac{1}{p}, p}(\partial\Omega)}\leq c\lVert\psi_RH^{\mathring{\mu_{r}}}_+\rVert_{\mathbb{W}^{1, p}(\mathbb{R}^3\backslash \Omega)},
\end{equation}
which directly indicates that
\begin{equation}\notag
	H^{\mathring{\mu_{r}}}_+|_{\partial\Omega}\in \mathbb{W}^{1-\frac{1}{p}, p}(\partial\Omega).
\end{equation}
Since $\partial\Omega$ is $C^2$-regular, it is easy to see that in \eqref{effect-medium}
\begin{equation}\notag
	\nu\in C^{1}(\partial\Omega)\subset \mathbb{W}^{1, \infty}(\partial\Omega)\subset \mathbb{W}^{1-\frac{1}{p}, p}(\partial\Omega),\quad\mbox{for any } p.
\end{equation}
Thus, we can obtain that
\begin{equation}\notag
	\nu\times H^{\mathring{\mu_r}}_+|_{\partial\Omega}\in \mathbb{W}^{1-\frac{1}{p}, p}(\partial\Omega).
\end{equation}
By the boundary transmission condition in \eqref{effect-medium}, it yields that
\begin{equation}\label{bd-trans}
	\nu\times H^{\mathring{\mu_{r}}}_-|_{\partial\Omega}=\nu\times H^{\mathring{\mu_{r}}}_+|_{\partial\Omega}\in \mathbb{W}^{1-\frac{1}{p},  p}(\partial\Omega)\quad\mbox{for any }p.
\end{equation}
Then there holds inside $\Omega$ that
\begin{equation}\label{new-effec}
\begin{cases}
	\mathrm{Curl} E^{\mathring{\epsilon_{r}}}_--ik \mathring{\mu_{r}} H^{\mathring{\mu_{r}}}_-=0 &\mbox{in }\Omega,\\
	\mathrm{Curl} H^{\mathring{\mu_{r}}}_-+i k \mathring{\epsilon_{r}} E^{\mathring{\epsilon_{r}}}_-=0&\mbox{in }\Omega,\\
	\nu\times H^{\mathring{\mu_{r}}}_-|_{\partial\Omega} \,\, \text{and} \,\, \nu\times E^{\mathring{\epsilon_{r}}}_-|_{\partial\Omega}\in \mathbb{W}^{1-\frac{1}{p}, p}(\partial\Omega).
\end{cases}
\end{equation}
Recall that under the condition $(\ref{cdtxi})$, we have proved that $H^{\mathring{\mu_{r}}} \in \mathbb L^2(\Omega)$ and $E^{\mathring{\epsilon_{r}}} \in \mathbb L^2(\Omega)$, which indicates by \eqref{new-effec} that
\begin{equation*}
	\mathrm{Curl} H^{\mathring{\mu_{r}}}_- \in \mathbb L^2(\Omega) \,\,\, \text{and} \,\,\, \mathrm{Curl} E^{\mathring{\epsilon_{r}}}_-\in \mathbb L^2(\Omega).
\end{equation*} 
Moreover, by taking advantage of the fact that $\mathring{\mu_{r}}$ is constant, we can further derive from \eqref{new-effec} that $\mathrm{div} H^{\mathring{\mu_{r}}}_{-} =0$.
Therefore, based on the facts that
\begin{equation}\notag
	H^{\mathring{\mu_{r}}}_-, \; \mathrm{Curl} H^{\mathring{\mu_{r}}}_-, \; \mathrm{div} H^{\mathring{\mu_{r}}}_-\in \mathbb L^2(\Omega)\quad\mbox{and}\quad \nu\times H^{\mathring{\mu_{r}}}_-|_{\partial\Omega}\in \mathbb{W}^{1-\frac{1}{p}, p}(\partial\Omega), \mbox{ for any }p,
\end{equation} 
and by utilizing \cite[Corollary 5.3]{AS}, with $p=2$, we deduce that
\begin{equation}\notag
	H^{\mathring{\mu_{r}}}_-\in \mathbb{W}^{1, 2}(\Omega).
\end{equation}
By the Sobolev Embedding Theorem, there holds
\begin{equation}\notag
	H^{\mathring{\mu_{r}}}_-\in \mathbb{W}^{1, 2}(\Omega)\hookrightarrow \mathbb L^6 (\Omega).
\end{equation}
Similarly, from \eqref{new-effec}, we have
\begin{equation}\notag
	H^{\mathring{\mu_{r}}}_-, \; \mathrm{Curl} H^{\mathring{\mu_{r}}}_-, \; \mathrm{div}H^{\mathring{\mu_{r}}}_-\in \mathbb L^6(\Omega).
\end{equation}
Together with \eqref{bd-trans}, we know that $H^{\mathring{\mu_{r}}}_-\in  \mathbb{W}^{1, 6}(\Omega)$ by using \cite[Corollary 5.3]{AS} again. Thus, it is easy to get
\begin{equation}\notag
	H^{\mathring{\mu_{r}}}_- \in \mathbb{W}^{1, 6}(\Omega)\hookrightarrow C^{0, \frac{1}{2}}(\Omega)\subset \mathbb L^p(\Omega), \quad\mbox{for any }p,
\end{equation}
for the bounded domain $\Omega$, by the Sobolev Embedding Theorem, which further indicates by \eqref{new-effec} that
\begin{equation}\notag
	H^{\mathring{\mu_{r}}}_-, \; \mathrm{Curl} H^{\mathring{\mu_{r}}}_-, \; \mathrm{div}H^{\mathring{\mu_{r}}}_-\in \mathbb L^p(\Omega), \quad\mbox{for any }p.
\end{equation}
Combining with \eqref{bd-trans} and the Sobolev Embedding Theorem, we can deduce that
\begin{equation}\notag
	H^{\mathring{\mu_{r}}}_-\in \mathbb{W}^{1, p}(\Omega)\hookrightarrow C^{0, \alpha}(\Omega)\quad\mbox{for any }p>3\mbox{ and } 0<\alpha<1,
\end{equation}
up to the boundary. With a similar argument, we can derive the result that $H^{\mathring{\mu_{r}}}_-, E^{\mathring{\epsilon_{r}}}_-\in C^{0, \alpha}(\overline\Omega)$ for any $0<\alpha<1$. This ends the proof of \textbf{Proposition \ref{prop-regu-neg}}.
\end{enumerate}

\subsection{Proof of \textbf{Proposition \ref{AddedLemma}}}\label{AddedSubSecLemma}
In \textbf{Proposition \ref{prop-regu-neg}}, we have shown the qualitative $C^{0, \alpha}$-regularity property. Here, we provide quantitative estimates of this $C^{0, \alpha}$-regularity in terms of the parameters $k$ and $\beta$ (and $a$ therefore).  To justify $(\ref{ToproveinAppendix})$ we start by recalling the L.S.E given by $(\ref{LS-1})$, 
\begin{eqnarray*}
     H^{\mathring{\mu}_r}(x) \, - \, \frac{\eta_0 \, k^2}{\pm \, c_0 \, c_r^{3}}  \, \left[ - \, \nabla {\bf M^{k}} \left( {\bf T}^{\mathring\mu_{r}} \cdot H^{\mathring{\mu}_r}\right)(x) \, + \, k^{2} \, {\bf N^{k}}\left( {\bf T}^{\mathring\mu_{r}} \cdot H^{\mathring{\mu}_r}\right)(x)\right] \, &=& \, i\, k \, H^{Inc}(x, \theta) \\ \nonumber
 	H^{\mathring{\mu}_r}(x) \, - \, \pm \, \xi  \, {\bf T}^{\mathring\mu_{r}} \, \left[ - \, \nabla {\bf M^{k}} \left( H^{\mathring{\mu}_r}\right)(x) \, + \, k^{2} \,  {\bf N^{k}}\left( H^{\mathring{\mu}_r}\right)(x)\right] \, &=& \, i\, k \, H^{Inc}(x, \theta). 
\end{eqnarray*}
Based on $(\ref{IBA})$ and $(\ref{Equa0758})$, we choose the lower sign to obtain 
\begin{eqnarray}\label{Eq0259}
\label{EqB6}
 	H^{\mathring{\mu}_r} \, + \, \xi  \, {\bf T}^{\mathring\mu_{r}} \, \left[ - \, \nabla {\bf M^{k}}\left( H^{\mathring{\mu}_r}\right) \, + \, k^{2} \, {\bf N^{k}}\left( H^{\mathring{\mu}_r}\right) \right] \, &=& \, i\, k \, H^{Inc}  \\  \nonumber
   	H^{\mathring{\mu}_r} \, + \, \frac{\xi  \, {\bf T}^{\mathring\mu_{r}}}{\left( 1 \, - \, \xi  \, {\bf T}^{\mathring\mu_{r}} \right)}  \, Curl^{2}  {\bf N^{k}}\left( H^{\mathring{\mu}_r} \right) \, &=& \, \frac{i\, k}{\left( 1 \, - \, \xi  \, {\bf T}^{\mathring\mu_{r}} \right)} \, H^{Inc} \\ \nonumber
    H^{\mathring{\mu}_r} \, - \, \frac{\xi  \, {\bf T}^{\mathring\mu_{r}}}{\left( 1 \, - \, \xi  \, {\bf T}^{\mathring\mu_{r}} \right)} \, Curl \, SL^{k}\left( \nu \times H^{\mathring{\mu}_r} \right) \, &=& \, \frac{i\, k}{\left( 1 \, - \, \xi  \, {\bf T}^{\mathring\mu_{r}} \right)} \, H^{Inc} \\ &-& \frac{\xi  \, {\bf T}^{\mathring\mu_{r}}}{\left( 1 \, - \, \xi  \, {\bf T}^{\mathring\mu_{r}} \right)}  \, Curl  {\bf N^{k}}\left( Curl \left( H^{\mathring{\mu}_r} \right) \right), \quad \text{in} \quad \Omega, 
\end{eqnarray}
where $SL^{k}(\cdot)$ is the vectorial Single Layer operator defined by
\begin{equation*}
SL^{k}(E)(x) \, := \, \int_{\partial \Omega} \Phi_{k}(x,y) \, E(y) \, d\sigma(y), \quad x \in \Omega. 
\end{equation*}
And, by going to the boundary and using the fact that 
\begin{equation}\label{CurlSl}
    Curl  \, SL^{k}\left( \nu \times H^{\mathring{\mu}_r} \right) \, = \, \widetilde{\boldsymbol{A}}_{k}\left( \nu \times H^{\mathring{\mu}_r} \right) \, + \, \frac{1}{2} \, \nu \, \times \, \left( \nu \times H^{\mathring{\mu}_r} \right), \quad \text{on} \; \partial \Omega,
\end{equation}
where 
\begin{equation}\label{DefAk}
    \widetilde{\boldsymbol{A}}_{k}\left( \nu \times H^{\mathring{\mu}_r} \right)(x) \, := \, p.v. \, \int_{\partial \Omega} \underset{x}{\nabla} \Phi_{k}(x,y) \, \times \, \left( \nu \times H^{\mathring{\mu}_r} \right)(y) \, d\sigma(y), \quad x \in \partial \Omega,  
\end{equation}
we obtain
\begin{equation}\label{EqAS}
    \frac{\left( 2 \, - \, \xi  \, {\bf T}^{\mathring\mu_{r}} \right)}{2 \, \xi  \, {\bf T}^{\mathring\mu_{r}}} \, \nu \times H^{\mathring{\mu}_r} \, - \, \nu \times  \widetilde{\boldsymbol{A}}_{k}\left( \nu \times H^{\mathring{\mu}_r} \right) \, = \, \frac{i \, k}{\xi  \, {\bf T}^{\mathring\mu_{r}}} \, \nu \times H^{Inc} \, - \, \nu \times Curl {\bf N^{k}} \left( Curl \left(  H^{\mathring{\mu}_r} \right) \right).
\end{equation}
Then, 
\begin{eqnarray*}
  \nu \times H^{\mathring{\mu}_r}  \, &=& \,  \left[     \frac{\left( 2 \, - \, \xi  \, {\bf T}^{\mathring\mu_{r}} \right)}{2 \, \xi  \, {\bf T}^{\mathring\mu_{r}}}  \, - \, \nu \times  \widetilde{\boldsymbol{A}}_{0} \right]^{-1} \, \Bigg[ \nu \times \left(\widetilde{\boldsymbol{A}}_{k} - \widetilde{\boldsymbol{A}}_{0} \right)\left( \nu \times H^{\mathring{\mu}_r} \right) + \frac{i \, k}{\xi  \, {\bf T}^{\mathring\mu_{r}}} \, \nu \times H^{Inc} \\ && \qquad \qquad \qquad \qquad \qquad \qquad \qquad \qquad \qquad - \, \nu \times Curl {\bf N^{k}} \left( Curl \left(  H^{\mathring{\mu}_r} \right) \right) \Bigg].
\end{eqnarray*}
Then, by taking the $\left\Vert \cdot \right\Vert_{\mathbb{L}^{2}(\partial \Omega)}$-norm in both sides, we obtain
\begin{eqnarray}\label{Eqbdist}
\nonumber
  \left\Vert \nu \times H^{\mathring{\mu}_r} \right\Vert_{\mathbb{L}^{2}(\partial \Omega)} \, & \leq & \frac{1}{\text{dist}} \; \Bigg[ \left\Vert \nu \times \left(\widetilde{\boldsymbol{A}}_{k} - \widetilde{\boldsymbol{A}}_{0} \right)\left( \nu \times H^{\mathring{\mu}_r} \right) \right\Vert_{\mathbb{L}^{2}(\partial \Omega)} \, + \, \frac{k}{\xi  \, {\bf T}^{\mathring\mu_{r}}} \, \left\Vert  \nu \times H^{Inc} \right\Vert_{\mathbb{L}^{2}(\partial \Omega)} \\ && \qquad  \qquad \qquad \qquad \qquad \qquad \qquad + \left\Vert \, \nu \times Curl {\bf N^{k}} \left( Curl \left(  H^{\mathring{\mu}_r} \right) \right) \right\Vert_{\mathbb{L}^{2}(\partial \Omega)} \Bigg],
\end{eqnarray}
where 
\begin{equation}\label{distbLemma}
    \text{dist} := \text{dist} \left[ \frac{\left( 2 \, - \, \xi  \, {\bf T}^{\mathring\mu_{r}} \right)}{2 \, \xi  \, {\bf T}^{\mathring\mu_{r}}} \,  ; \, \sigma\left( \nu \times  \widetilde{\boldsymbol{A}}_{0} \right) \right] 
     \overset{(\ref{exp-T-neg})}{=} 
    \text{dist} \left[ \frac{\left( \pi^{3} \, - \, 2 \, \xi \right)}{12 \, \xi} \,  \, ; \, \sigma\left( \nu \times  \widetilde{\boldsymbol{A}}_{0} \right) \right].
\end{equation}
To continue with the estimation of $\text{dist}$, we need to investigate the spectrum of the operator $\nu \times  \widetilde{\boldsymbol{A}}_{0}$ denoted by $\sigma\left( \nu \times \widetilde{\boldsymbol{A}}_{0} \right)$. The coming lemma gives more clarification on this.  
\begin{lemma}
    The following relation holds, 
    \begin{equation}\label{ShiftSpectrum}
        \sigma\left( \nu \times \widetilde{\boldsymbol{A}}_{0} \right) \; = \; \sigma\left( \nabla {\bf M} \right) \; - \; \frac{1}{2},  
    \end{equation}
    where $\nabla {\bf M}$ is the Magnetization operator and $\sigma\left(  \nabla {\bf M} \right)$ is its spectrum. 
\end{lemma}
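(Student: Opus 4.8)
The plan is to establish the identity $\sigma\left( \nu \times \widetilde{\boldsymbol{A}}_{0} \right) = \sigma\left( \nabla {\bf M} \right) - \frac{1}{2}$ by exhibiting a concrete algebraic relationship between the boundary integral operator $\nu \times \widetilde{\boldsymbol{A}}_{0}$ and the (volumetric) Magnetization operator $\nabla {\bf M}$, both of which are governed by the same static kernel $\nabla_x \Phi_0(x,y)$. First I would recall the classical Neumann--Poincar\'e (double-layer-type) operator
\begin{equation*}
    \boldsymbol{K}^{\ast}(\varphi)(x) \, := \, p.v. \int_{\partial \Omega} \underset{x}{\nabla}\Phi_{0}(x,y) \cdot \nu(x) \, \varphi(y) \, d\sigma(y), \quad x \in \partial \Omega,
\end{equation*}
whose spectrum is well known to be related to that of $\nabla {\bf M}$ via $\sigma(\nabla {\bf M}) = \frac{1}{2} + \sigma(\boldsymbol{K}^{\ast})$ on the appropriate energy subspace $\nabla \mathcal{H}armonic$. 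This is exactly the content needed: since $\nabla{\bf M}_D:\nabla \mathcal{H}armonic \rightarrow \nabla \mathcal{H}armonic$ is self-adjoint with spectrum in $]0,1]$ and accumulation point $\frac{1}{2}$ (as recalled in the excerpt just before the definition $(\ref{def-mag-operator})$), the shift by $-\frac{1}{2}$ in $(\ref{ShiftSpectrum})$ is precisely the shift that carries $\sigma(\nabla {\bf M})$ onto the spectrum of the surface operator, which is symmetric about $0$ and contained in $]-\frac{1}{2},\frac{1}{2}]$.

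The key steps, in order, are as follows. First I would decompose the tangential vector field $\nu \times \widetilde{\boldsymbol{A}}_{0}$ using the surface Helmholtz (Hodge) decomposition of tangential fields on $\partial \Omega$ into surface-gradient and surface-curl parts. The operator $\widetilde{\boldsymbol{A}}_{0}$, being $Curl\, SL^{0}$ minus its jump term (cf.\ $(\ref{CurlSl})$), acts diagonally with respect to this decomposition up to compact/lower-order pieces, and on the relevant component it reduces to a scalar operator whose symbol is governed by the adjoint double-layer potential $\boldsymbol{K}^{\ast}$. Second, I would use the identity connecting the single-layer potential and the Magnetization operator: for a harmonic-gradient field $F = \nabla u$ in $\Omega$, the trace of $\nabla{\bf M}(F)$ and the boundary action of $\nu\times \widetilde{\boldsymbol{A}}_{0}$ are intertwined by the jump relations for $SL^{0}$ and $Curl\,SL^{0}$. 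Concretely, I would show that if $(\lambda, e)$ is an eigenpair of $\nabla {\bf M}$ restricted to $\nabla \mathcal{H}armonic$, then the associated boundary data $\nu \times e|_{\partial\Omega}$ is an eigenvector of $\nu\times\widetilde{\boldsymbol{A}}_{0}$ with eigenvalue $\lambda - \frac{1}{2}$, and conversely that every boundary eigenfunction extends harmonically to a volume eigenfunction of $\nabla {\bf M}$. Third, I would verify the bijectivity of this correspondence (using that $SL^{0}$ is an isomorphism onto the appropriate trace space, so no spurious spectrum is created or destroyed), which upgrades the eigenvalue correspondence to the full spectral identity $(\ref{ShiftSpectrum})$.

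The main obstacle I anticipate is controlling the Hodge-decomposition reduction rigorously: $\widetilde{\boldsymbol{A}}_{0}$ is a genuinely vectorial, strongly singular (principal value) boundary operator, and showing that its action is \emph{exactly} — not merely up to smoothing — conjugate to the scalar Neumann--Poincar\'e operator requires carefully tracking the tangential surface differential operators $\nabla_{\partial\Omega}$ and $\text{curl}_{\partial\Omega}$ through the kernel $\nabla_x\Phi_0(x,y)$, and handling the divergence-free and curl-free channels separately. The subspace on which the spectral correspondence lives must be pinned down precisely: the surface-curl (solenoidal) tangential component pairs with $\nabla\mathcal{H}armonic$ in the volume, while the surface-gradient component either lies in the kernel or contributes only the point $-\frac{1}{2}$ (the accumulation point), matching the $\frac{1}{2}$ accumulation of $\sigma(\nabla {\bf M})$. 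Getting this bookkeeping exactly right — so that the shift is uniformly $-\frac{1}{2}$ across the whole spectrum and no eigenvalue is miscounted — is the delicate part; the regularity of $\partial\Omega$ (here $C^{4,\alpha}$, more than enough) guarantees the needed mapping properties of $SL^{0}$ and the compactness of the error terms, which I would invoke rather than re-derive.
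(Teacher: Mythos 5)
Your second step is, by itself, the paper's entire proof; the Neumann--Poincar\'e/Hodge scaffolding you build around it is not used in the paper and is not needed. The paper argues as follows: given an eigenpair $(\lambda_n, V_n)$ of $\nabla{\bf M}$ on $\nabla\mathcal{H}armonic$, write $\nabla{\bf M} = -\nabla\div{\bf N}$, use $\Delta{\bf N}(V_n) = -V_n$ and $\nabla\div = \Delta + Curl\,Curl$ to obtain $Curl^{2}{\bf N}(V_n) = (1-\lambda_n)\,V_n$ in $\Omega$; then, since $Curl\,V_n = 0$, the identity $Curl\,{\bf N}(F) = {\bf N}(Curl\,F) - SL(\nu\times F)$ gives $Curl\,SL(\nu\times V_n) = (\lambda_n - 1)\,V_n$ in $\Omega$; finally the jump relation \eqref{CurlSl} and the algebra $\nu\times(\nu\times w) = -w$ for tangential $w$ yield $\nu\times\widetilde{\boldsymbol{A}}_0(\nu\times V_n) = \left(\lambda_n - \frac{1}{2}\right)(\nu\times V_n)$ on $\partial\Omega$. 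This is exactly the ``intertwining by jump relations'' you describe, carried out as a short chain of potential-theoretic identities. In particular, the obstacle you single out as the delicate part --- making the conjugation of $\nu\times\widetilde{\boldsymbol{A}}_{0}$ to the scalar operator $\boldsymbol{K}^{\ast}$ exact rather than up to compact errors --- never arises, and your own diagnosis of why it matters is correct: an up-to-compact identification preserves only the essential spectrum, not eigenvalues, so that route as described could not close. The cure is simply to delete your first step and promote the jump-relation computation to the main argument.

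One point where your plan is more careful than the paper: the chain above proves only the inclusion $\sigma\left(\nabla{\bf M}\right) - \frac{1}{2} \subseteq \sigma\left(\nu\times\widetilde{\boldsymbol{A}}_{0}\right)$, together with the observation (implicit in the paper) that $\nu\times V_n \neq 0$, since otherwise $Curl\,SL(\nu\times V_n) = (\lambda_n - 1)\,V_n$ would force $V_n = 0$ whenever $\lambda_n \neq 1$. The lemma asserts equality of spectra, so the converse --- your third step, extending a boundary eigenfunction of $\nu\times\widetilde{\boldsymbol{A}}_{0}$ through $Curl\,SL$ to a volume eigenfunction of $\nabla{\bf M}$, using the mapping properties of the single layer so that no spectrum is created or destroyed --- is genuinely required and is not written out in the paper. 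If you execute your proposal, that converse is the only piece you must supply beyond what the paper records; the forward direction should be done by the direct computation above, not through $\boldsymbol{K}^{\ast}$.
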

\begin{proof}
Let $\lambda_n \, \in \, \sigma\left( \nabla {\bf M} \right)$, i.e. $\exists \, V_n \, \in \, \nabla$-Harmonic such that 
\begin{eqnarray*}
    \nabla {\bf M} \left( V_{n} \right) \; &=& \; \lambda_n \, V_{n}, \quad \text{in} \; \Omega. \\
    - \, \nabla \, \div {\bf N} \left( V_{n} \right) \; &=& \; \lambda_n \, V_{n}, \quad \text{in} \; \Omega. \\
    Curl^{2}  \, {\bf N} \left( V_{n} \right) \, + \,  \Delta  \, {\bf N} \left( V_{n} \right) \; &=& \, - \; \lambda_n \, V_{n}, \quad \text{in} \; \Omega. \\
    Curl^{2}  \, {\bf N} \left( V_{n} \right) \, - \,   V_{n}  \; &=& \, - \; \lambda_n \, V_{n}, \quad \text{in} \; \Omega. \\
    Curl^{2}  \, {\bf N} \left( V_{n} \right)  \; &=& \, \left( 1 \, - \, \lambda_n \right) \, V_{n}, \quad \text{in} \; \Omega. \\
    Curl  \, \left[ {\bf N} \left( Curl V_{n} \right) \, - \, SL\left( \nu \times V_{n} \right) \right]   \; &=& \, \left( 1 \, - \, \lambda_n \right) \, V_{n}, \quad \text{in} \; \Omega. \\
    Curl \,SL\left( \nu \times V_{n} \right) \; &=& \, \left( - \, 1 \, + \, \lambda_n \right) \, V_{n}, \quad \text{in} \; \Omega. \\
    \widetilde{\boldsymbol{A}}_{0}\left( \nu \times V_{n} \right) \, + \, \frac{1}{2} \, \nu \, \times \, \left( \nu \times V_{n} \right) \; &\overset{(\ref{CurlSl})}{=}& \, \left( - \, 1 \, + \, \lambda_n \right) \, V_{n}, \quad \text{on} \; \partial \Omega. \\
    \nu \times \widetilde{\boldsymbol{A}}_{0}\left( \nu \times V_{n} \right) \, + \, \frac{1}{2} \, \nu \times \, \left[ \nu \, \times \, \left( \nu \times V_{n} \right) \right] \; &=& \, \left( - \, 1 \, + \, \lambda_n \right) \, \left( \nu \times V_{n} \right), \quad \text{on} \; \partial \Omega. \\
    \nu \times \widetilde{\boldsymbol{A}}_{0}\left( \nu \times V_{n} \right) \, - \, \frac{1}{2} \,  \left( \nu \times V_{n} \right)  \; &=& \, \left( - \, 1 \, + \, \lambda_n \right) \, \left( \nu \times V_{n} \right), \quad \text{on} \; \partial \Omega. \\
    \nu \times \widetilde{\boldsymbol{A}}_{0}\left( \nu \times V_{n} \right)   \; &=& \, \left( - \, \frac{1}{2} \, + \, \lambda_n \right) \, \left( \nu \times V_{n} \right), \quad \text{on} \; \partial \Omega.
\end{eqnarray*}
This concludes the proof of the announced Lemma. 
\end{proof}
Using $(\ref{ShiftSpectrum})$, we rewrite $(\ref{distbLemma})$ as
\begin{eqnarray*}
    \text{dist} \, = \, \text{dist} \left[ \frac{\left( \pi^{3} \, - \, 2 \, \xi \right)}{12 \, \xi} \,  \, ; \, \sigma\left( \nabla {\bf M} \right) \, - \, \frac{1}{2} \right] \, &=& \, \text{dist} \left[ \frac{1}{3} \, + \,  \frac{\pi^{3}}{12 \, \xi} \,  \, ; \, \sigma\left( \nabla {\bf M} \right) \right] \\
& \overset{(\ref{xn0=...+betan0})}{=} & \, \text{dist} \left[ \dfrac{\lambda_{m_{0}}^{(3)}\left( \Omega \right) \, + \, \beta \, \left( \lambda_{m_{0}}^{(3)}\left( \Omega \right) \, - \, \dfrac{1}{3} \right)}{1 \, + \, \beta \, \left(3 \, \lambda_{m_{0}}^{(3)}\left( \Omega \right) \, - \, 1 \right)} \, ; \, \sigma\left( \nabla {\bf M} \right) \right]. 
\end{eqnarray*}
As $\beta$ is taken to be small, the closest eigenvalue to 
\begin{equation*}
\dfrac{\lambda_{m_{0}}^{(3)}\left( \Omega \right) \, + \, \beta \, \left( \lambda_{m_{0}}^{(3)}\left( \Omega \right) \, - \, \dfrac{1}{3} \right)}{1 \, + \, \beta \, \left(3 \, \lambda_{m_{0}}^{(3)}\left( \Omega \right) \, - \, 1 \right)}, 
\end{equation*}
is given by $\lambda_{m_{0}}^{(3)}\left( \Omega \right)$. Hence, 
\begin{equation}\label{Estdist}
\text{dist} \; = \; \text{dist} \left[  \dfrac{\lambda_{m_{0}}^{(3)}\left( \Omega \right) \, + \, \beta \, \left( \lambda_{m_{0}}^{(3)}\left( \Omega \right) \, - \, \dfrac{1}{3} \right)}{1 \, + \, \beta \, \left(3 \, \lambda_{m_{0}}^{(3)}\left( \Omega \right) \, - \, 1 \right)} \, ; \, \lambda_{m_{0}}^{(3)}\left( \Omega \right) \right] = \, \frac{\left\vert \beta \right\vert \, \left( 3 \, \lambda_{m_{0}}^{(3)}\left( \Omega \right) \, - \, 1 \right)^{2}}{3 \, \left[ 1 \, + \, \beta \, \left(3 \, \lambda_{m_{0}}^{(3)}\left( \Omega \right) \, - \, 1 \right) \right]} \, = \, \mathcal{O}\left( \left\vert \beta \right\vert \right). 
\end{equation}
Then, by going back to $(\ref{Eqbdist})$, and using $(\ref{Estdist})$, we derive that 
\begin{eqnarray}\label{EqB13}
\nonumber
  \left\Vert \nu \times H^{\mathring{\mu}_r} \right\Vert_{\mathbb{L}^{2}(\partial \Omega)} \, & \leq & \frac{3 \, \left[ 1 \, + \, \beta \, \left(3 \, \lambda_{m_{0}}^{(3)}\left( \Omega \right) \, - \, 1 \right) \right]}{\left\vert \beta \right\vert \, \left( 3 \, \lambda_{m_{0}}^{(3)}\left( \Omega \right) \, - \, 1 \right)^{2}} \; \Bigg[ \left\Vert \nu \times \left(\widetilde{\boldsymbol{A}}_{k} - \widetilde{\boldsymbol{A}}_{0} \right)\left( \nu \times H^{\mathring{\mu}_r} \right) \right\Vert_{\mathbb{L}^{2}(\partial \Omega)} \\ &+& \, \frac{k}{\xi  \, {\bf T}^{\mathring\mu_{r}}} \, \left\Vert  \nu \times H^{Inc} \right\Vert_{\mathbb{L}^{2}(\partial \Omega)} + \left\Vert \, \nu \times Curl {\bf N^{k}} \left( Curl \left(  H^{\mathring{\mu}_r} \right) \right) \right\Vert_{\mathbb{L}^{2}(\partial \Omega)} \Bigg].
\end{eqnarray}
We need to estimate the first term on the R.H.S. To do this, from $(\ref{DefAk})$, we have   
\begin{eqnarray*}
    \nu \times \left( \widetilde{\boldsymbol{A}}_{k} \, - \, \widetilde{\boldsymbol{A}}_{0} \right)\left( \nu \times H^{\mathring{\mu}_r} \right)(x) \, &=& \, \nu \times \int_{\partial \Omega} \underset{x}{\nabla} \left( \Phi_{k} \, - \, \Phi_{0}\right)(x,y) \, \times \, \left( \nu \times H^{\mathring{\mu}_r} \right)(y) \, d\sigma(y) \\
     &=& \, - \frac{k^{2}}{8 \, \pi} \, \nu \times \int_{\partial \Omega}   \underset{x}{\nabla} \left\vert x - y \right\vert  \, \times \, \left( \nu \times H^{\mathring{\mu}_r} \right)(y) \, d\sigma(y) \\
     &+&  \frac{1}{4 \, \pi} \, \nu \times \int_{\partial \Omega} \left(\sum_{j \geq 3} \frac{(i \, k)^{j}}{j!} \underset{x}{\nabla} \left\vert x - y \right\vert^{j-1} \right) \times \, \left( \nu \times H^{\mathring{\mu}_r} \right)(y) \, d\sigma(y). 
\end{eqnarray*}
Next, we set $\beth$ to be the operator defined by 
\begin{eqnarray*}
    \beth \; : \; \mathbb{L}^{2}\left(\partial \Omega \right) & \longrightarrow & \mathbb{L}^{2}\left(\partial \Omega \right) \\ 
    E & \longrightarrow & \beth \left( E \right)(\cdot) :=  \, \nu \times \int_{\partial \Omega}   \nabla \left\vert \cdot - y \right\vert  \, \times \, E(y) \, d\sigma(y).
\end{eqnarray*}
Thus, 
\begin{eqnarray*}
   \left\Vert \nu \times \left( \widetilde{\boldsymbol{A}}_{k} \, - \, \widetilde{\boldsymbol{A}}_{0} \right)\left( \nu \times H^{\mathring{\mu}_r} \right) \right\Vert_{\mathbb{L}^{2}(\partial \Omega)} 
     & \leq &  \frac{k^{2}}{8 \, \pi} \, \left\Vert \beth \right\Vert_{\mathcal{L}\left(\mathbb{L}^{2}(\partial \Omega); \mathbb{L}^{2}(\partial \Omega)\right)} \, \left\Vert  \nu \times H^{\mathring{\mu}_r}  \right\Vert_{\mathbb{L}^{2}(\partial \Omega)} \\
     &+& k^{3} \, C_{1}(\Omega, k) \, \left\Vert \nu \times H^{\mathring{\mu}_r} \right\Vert_{\mathbb{L}^{2}(\partial \Omega)},
\end{eqnarray*}
where 
\begin{equation*}
    C_{1}(\Omega, k) \, = \, \frac{1}{4 \ \pi} \, \left[ \int_{\partial \Omega} \, \int_{\partial \Omega} \left\vert \sum_{j \geq 3} \frac{i^{j} \, k^{j-3}}{j !} \nabla \left\vert x - y \right\vert^{j-1} \right\vert^{2} \, d\sigma(y) \, d\sigma(x) \right],
\end{equation*}
which is of order one with respect to the parameter $k$. In addition, we have the following estimation
\begin{eqnarray*}
\left\Vert \beth \left( E \right) \right\Vert^{2}_{\mathbb{L}^{2}(\partial \Omega)} \, &=&  \, \int_{\partial \Omega} \left\vert \nu(x) \times \int_{\partial \Omega}   \nabla \left\vert x - y \right\vert  \, \times \, E(y) \, d\sigma(y) \, \right\vert^{2} \, d \sigma(x) \\
& \leq &  \, \int_{\partial \Omega} \left\vert  \int_{\partial \Omega}   \nabla \left\vert x - y \right\vert  \, \times \, E(y) \, d\sigma(y) \, \right\vert^{2} \, d \sigma(x) \\
& \leq &  \, \int_{\partial \Omega}   \int_{\partial \Omega}  \left\vert \nabla \left\vert x - y \right\vert \, \right\vert^{2} \, d\sigma(y)  \, d \sigma(x) \; \left\Vert E \right\Vert^{2}_{\mathbb{L}^{2}(\partial \Omega)}  = \, \left\vert \partial \Omega \right\vert^{2}  \; \left\Vert E \right\Vert^{2}_{\mathbb{L}^{2}(\partial \Omega)}. 
\end{eqnarray*}
Then, 
\begin{equation*}
    \left\Vert \beth \right\Vert_{\mathcal{L}\left(\mathbb{L}^{2}(\partial \Omega); \mathbb{L}^{2}(\partial \Omega)\right)} \, \leq \, \left\vert \partial \Omega \right\vert,
\end{equation*}
and this implies, 
\begin{equation}\label{Eq1237}
   \left\Vert \nu \times \left( \widetilde{\boldsymbol{A}}_{k} \, - \, \widetilde{\boldsymbol{A}}_{0} \right)\left( \nu \times H^{\mathring{\mu}_r} \right) \right\Vert_{\mathbb{L}^{2}(\partial \Omega)} \leq  \frac{k^{2}}{8 \, \pi} \, \left\vert \partial \Omega \right\vert \, \left\Vert  \nu \times H^{\mathring{\mu}_r}  \right\Vert_{\mathbb{L}^{2}(\partial \Omega)} \, + \, k^{3} \, C_{1}(\Omega, k) \, \left\Vert \nu \times H^{\mathring{\mu}_r} \right\Vert_{\mathbb{L}^{2}(\partial \Omega)}.
\end{equation}
Next, by using $(\ref{Eq1237})$, the inequality $(\ref{EqB13})$ becomes 
\begin{eqnarray}\label{bk2}
\nonumber
  \left\Vert \nu \times H^{\mathring{\mu}_r} \right\Vert_{\mathbb{L}^{2}(\partial \Omega)} \, & \leq & \frac{3 \, \left[ 1 \, + \, \beta \, \left(3 \, \lambda_{m_{0}}^{(3)}\left( \Omega \right) \, - \, 1 \right) \right]}{\left\vert \beta \right\vert \, \left( 3 \, \lambda_{m_{0}}^{(3)}\left( \Omega \right) \, - \, 1 \right)^{2}} \; \Bigg[ \frac{k^{2}}{8 \, \pi} \,\left\vert \partial \Omega \right\vert \, \left\Vert  \nu \times H^{\mathring{\mu}_r}  \right\Vert_{\mathbb{L}^{2}(\partial \Omega)} \, + \, k^{3} \, C_{1}(\Omega, k) \, \left\Vert \nu \times H^{\mathring{\mu}_r} \right\Vert_{\mathbb{L}^{2}(\partial \Omega)} \\ &+& \, \frac{k}{\xi  \, {\bf T}^{\mathring\mu_{r}}} \, \left\Vert  \nu \times H^{Inc} \right\Vert_{\mathbb{L}^{2}(\partial \Omega)} + \left\Vert \, \nu \times Curl {\bf N^{k}} \left( Curl \left(  H^{\mathring{\mu}_r} \right) \right) \right\Vert_{\mathbb{L}^{2}(\partial \Omega)} \Bigg].
\end{eqnarray}
Using the fact that 
\begin{equation*}
    k^{2} \, \leq \, \frac{\pi \, \left\vert \beta \right\vert \, \left(3 \, \lambda_{m_{0}}^{(3)}\left( \Omega \right) \, - \, 1 \right)^{2}}{3 \, \left\vert \Omega \right\vert}, 
\end{equation*}
see $(\ref{WBMX})$ and $(\ref{kscalebeta})$, we rewrite the previous inequality $(\ref{bk2})$ as   
\begin{eqnarray*}
  \left\Vert \nu \times H^{\mathring{\mu}_r} \right\Vert_{\mathbb{L}^{2}(\partial \Omega)} \, & \leq &  \, \frac{\left\vert \partial \Omega \right\vert}{8 \, \left\vert \Omega \right\vert}  \, \left\Vert \nu \times H^{\mathring{\mu}_r} \right\Vert_{\mathbb{L}^{2}(\partial \Omega)} +  \, \pi \, k \, C_{1}(\Omega, k)   \, \left\Vert \nu \times H^{\mathring{\mu}_r} \right\Vert_{\mathbb{L}^{2}(\partial \Omega)} \\
  & + & \, \frac{\beta \, \left(3 \, \lambda_{m_{0}}^{(3)}\left( \Omega \right) \, - \, 1 \right)}{\left\vert \Omega \right\vert} \, \Bigg[ \, \frac{1}{8} \,\left\vert \partial \Omega \right\vert  \, + \, \pi \, k \, C_{1}(\Omega, k) \Bigg]  \, \left\Vert \nu \times H^{\mathring{\mu}_r} \right\Vert_{\mathbb{L}^{2}(\partial \Omega)} \\ &+& \, \frac{3 \, \left[ 1 \, + \, \beta \, \left(3 \, \lambda_{m_{0}}^{(3)}\left( \Omega \right) \, - \, 1 \right) \right]}{\left\vert \beta \right\vert \, \left( 3 \, \lambda_{m_{0}}^{(3)}\left( \Omega \right) \, - \, 1 \right)^{2}} \;  \frac{k}{\xi  \, {\bf T}^{\mathring\mu_{r}}} \, \left\Vert  \nu \times H^{Inc} \right\Vert_{\mathbb{L}^{2}(\partial \Omega)}
  \\ &+& \, \frac{3 \, \left[ 1 \, + \, \beta \, \left(3 \, \lambda_{m_{0}}^{(3)}\left( \Omega \right) \, - \, 1 \right) \right]}{\left\vert \beta \right\vert \, \left( 3 \, \lambda_{m_{0}}^{(3)}\left( \Omega \right) \, - \, 1 \right)^{2}} \;  \left\Vert \, \nu \times Curl {\bf N^{k}} \left( Curl \left(  H^{\mathring{\mu}_r} \right) \right) \right\Vert_{\mathbb{L}^{2}(\partial \Omega)},
\end{eqnarray*}
which, under the condition 
\begin{equation}\label{CdtOmega8}
    \frac{\left\vert \partial \Omega \right\vert}{8 \, \left\vert \Omega \right\vert} \, < \, 1,
\end{equation}
the smallness of $k$ and $\beta$ and the fact that $\xi  \, {\bf T}^{\mathring\mu_{r}} \, \sim \, 1$, gives us 
\begin{eqnarray*}
  \left\Vert \nu \times H^{\mathring{\mu}_r} \right\Vert_{\mathbb{L}^{2}(\partial \Omega)} \, & \lesssim &   \, \frac{k}{\left\vert \beta \right\vert} \;   \left\Vert  \nu \times H^{Inc} \right\Vert_{\mathbb{L}^{2}(\partial \Omega)}
  \\ &+& \, \frac{1}{\left\vert \beta \right\vert} \; \left\Vert \, \nu \times Curl {\bf N^{k}}  \right\Vert_{\mathcal{L}\left(\mathbb{L}^{2}( \Omega);\mathbb{L}^{2}(\partial \Omega)\right)} \; \left\Vert Curl \left(  H^{\mathring{\mu}_r} \right)  \right\Vert_{\mathbb{L}^{2}(\Omega)}. 
\end{eqnarray*}
Now, using the fact that 
\begin{equation*}
    \left\Vert \, \nu \times Curl {\bf N^{k}}  \right\Vert_{\mathcal{L}\left(\mathbb{L}^{2}( \Omega);\mathbb{L}^{2}(\partial \Omega)\right)} \; \sim \; 1,
\end{equation*}
we reduce the previous estimation to
\begin{equation}\label{DH}
  \left\Vert \nu \times H^{\mathring{\mu}_r} \right\Vert_{\mathbb{L}^{2}(\partial \Omega)} \,  \lesssim    \, \frac{k}{\left\vert \beta \right\vert} \;   \left\Vert  \nu \times H^{Inc} \right\Vert_{\mathbb{L}^{2}(\partial \Omega)}
+ \, \frac{1}{\left\vert \beta \right\vert}  \; \left\Vert Curl \left(  H^{\mathring{\mu}_r} \right)  \right\Vert_{\mathbb{L}^{2}(\Omega)}. 
\end{equation}
We need to estimate $\left\Vert Curl \left(  H^{\mathring{\mu}_r} \right)  \right\Vert_{\mathbb{L}^{2}(\Omega)}$. To do this, by recalling $(\ref{EqB6})$ and taking the Curl operator in its both sides, we obtain
\begin{eqnarray}\label{Eq0957109}
\nonumber
     Curl\left(H^{\mathring{\mu}_r} \right)   \, &=& \, i\, k \, Curl\left(H^{Inc}\right) \, - \, \xi  \, {\bf T}^{\mathring\mu_{r}}  \, k^{2} \,  Curl\left({\bf N^{k}}\left( H^{\mathring{\mu}_r}\right)\right) \\
    Curl\left(H^{\mathring{\mu}_r} \right) \, & \overset{(\ref{IncWave})}{=} & \, k^{2} \, E^{Inc} \, - \, \xi  \, {\bf T}^{\mathring\mu_{r}}  \, k^{2} \,  Curl\left({\bf N^{k}}\left( H^{\mathring{\mu}_r}\right)\right) \\ \nonumber
    \left\Vert Curl\left(H^{\mathring{\mu}_r} \right) \right\Vert_{\mathbb{L}^{2}(\Omega)} \, & \lesssim &  \, k^{2} \, \left\Vert E^{Inc} \right\Vert_{\mathbb{L}^{2}(\Omega)} \, +  \, k^{2} \, \left\Vert  Curl {\bf N^{k}} \right\Vert_{\mathcal{L}\left( \mathbb{L}^{2}(\Omega); \mathbb{L}^{2}(\Omega) \right)} \, \left\Vert H^{\mathring{\mu}_r} \right\Vert_{\mathbb{L}^{2}(\Omega)} \\ \nonumber
        \left\Vert Curl\left(H^{\mathring{\mu}_r} \right) \right\Vert_{\mathbb{L}^{2}(\Omega)} \, & \lesssim &  \, k^{2} \, \left\Vert E^{Inc} \right\Vert_{\mathbb{L}^{2}(\Omega)} \, +  \, k^{2}  \, \left\Vert H^{\mathring{\mu}_r} \right\Vert_{\mathbb{L}^{2}(\Omega)} \\ \nonumber
             \left\Vert Curl\left(H^{\mathring{\mu}_r} \right) \right\Vert_{\mathbb{L}^{2}(\Omega)} \,   & \overset{(\ref{es-coro1})}{\lesssim} & \, k^{2} \, \left\Vert E^{Inc} \right\Vert_{\mathbb{L}^{2}(\Omega)} \, +  \, \frac{k^{3}}{\left\vert \beta \right\vert}  \, \left\Vert H^{Inc} \right\Vert_{\mathbb{L}^{2}(\Omega)}.
\end{eqnarray}
As $\beta \sim k^{2}$, with $k$ taken to be small, and both $\left\Vert H^{Inc} \right\Vert_{\mathbb{L}^{2}(\Omega)}$ and $\left\Vert E^{Inc} \right\Vert_{\mathbb{L}^{2}(\Omega)}$ are of order one with respect to the parameters $k$ and $\beta$, we reduce the previous estimation into the following one
\begin{equation*}
    \left\Vert Curl\left(H^{\mathring{\mu}_r} \right) \right\Vert_{\mathbb{L}^{2}(\Omega)} \, \lesssim \,  \, \frac{k^{3}}{\left\vert \beta \right\vert}  \, \left\Vert H^{Inc} \right\Vert_{\mathbb{L}^{2}(\Omega)}.
\end{equation*}
Plugging the previous estimation into $(\ref{DH})$, we deduce that  
\begin{equation}\label{EqB22}
  \left\Vert \nu \times H^{\mathring{\mu}_r} \right\Vert_{\mathbb{L}^{2}(\partial \Omega)} \,  \lesssim    \, \frac{k}{\left\vert \beta \right\vert} \;   \left\Vert  \nu \times H^{Inc} \right\Vert_{\mathbb{L}^{2}(\partial \Omega)}
+   \; \frac{k^{3}}{\left\vert \beta \right\vert^{2}}  \, \left\Vert H^{Inc} \right\Vert_{\mathbb{L}^{2}(\Omega)}. 
\end{equation}
From $(\ref{EqAS})$, we recall that  
\begin{eqnarray*}
\nu \times H^{\mathring{\mu}_r} \, &=& \, \frac{2 \, \xi  \, {\bf T}^{\mathring\mu_{r}}}{\left( 2 \, - \, \xi  \, {\bf T}^{\mathring\mu_{r}} \right)} \, \left[ \nu \times  \widetilde{\boldsymbol{A}}_{k}\left( \nu \times H^{\mathring{\mu}_r} \right) \, + \, \frac{i \, k}{\xi  \, {\bf T}^{\mathring\mu_{r}}} \, \nu \times H^{Inc} \, - \, \nu \times Curl {\bf N^{k}} \left( Curl \left(  H^{\mathring{\mu}_r} \right) \right) \right] \\
\left\Vert \nu \times H^{\mathring{\mu}_r} \right\Vert_{\mathbb{H}^{1}\left( \partial \Omega \right)} \, & \lesssim & \, \left\Vert \nu \times  \widetilde{\boldsymbol{A}}_{k}\left( \nu \times H^{\mathring{\mu}_r} \right) \right\Vert_{\mathbb{H}^{1}\left( \partial \Omega \right)} \, + \, k \, \left\Vert \nu \times H^{Inc} \right\Vert_{\mathbb{H}^{1}\left( \partial \Omega \right)} \\ &+& \, \left\Vert \nu \times Curl {\bf N^{k}} \left( Curl \left(  H^{\mathring{\mu}_r} \right) \right) \right\Vert_{\mathbb{H}^{1}\left( \partial \Omega \right)}.
\end{eqnarray*}
In addition, by using the continuity of the operator $\nu \times  \widetilde{\boldsymbol{A}}_{k}$ from
$\mathbb{L}^{2}_{t}(\partial \Omega)$ to $\mathbb{H}^{1}_{t}(\partial \Omega)$, see \cite[Theorem 4.2]{Kirsch1989}, where we recall that 
\begin{equation*}
\mathbb{H}_{t}^{m}(\partial \Omega) \, := \, \left\{ E : \partial \Omega \longrightarrow \mathbb{C}^{3}; \; E \in \mathbb{H}^{m}(\partial \Omega); \, \nu \cdot E \, = \, 0 \; \text{on} \; \partial \Omega \right\}, \quad m \in \mathbb{N},   
\end{equation*}
we obtain  
\begin{equation*}
\left\Vert \nu \times H^{\mathring{\mu}_r} \right\Vert_{\mathbb{H}^{1}\left( \partial \Omega \right)} \,  \lesssim  \, \left\Vert  \nu \times H^{\mathring{\mu}_r} \right\Vert_{\mathbb{L}^{2}\left( \partial \Omega \right)} \, + \, k \, \left\Vert \nu \times H^{Inc} \right\Vert_{\mathbb{H}^{1}\left( \partial \Omega \right)} + \, \left\Vert \nu \times Curl {\bf N^{k}} \left( Curl \left(  H^{\mathring{\mu}_r} \right) \right) \right\Vert_{\mathbb{H}^{1}\left( \partial \Omega \right)}. 
\end{equation*}
Additionally, the operator $\nu \times Curl {\bf N^{k}}$ is continuous from $\mathbb{H}^{\frac{1}{2}}(\Omega)$ to $\mathbb{H}^{1}(\partial \Omega)$. Then,
\begin{equation}\label{EqbEstCurlH}
\left\Vert \nu \times H^{\mathring{\mu}_r} \right\Vert_{\mathbb{H}^{1}\left( \partial \Omega \right)} \,  \lesssim  \, \left\Vert  \nu \times H^{\mathring{\mu}_r} \right\Vert_{\mathbb{L}^{2}\left( \partial \Omega \right)} \, + \, k \, \left\Vert \nu \times H^{Inc} \right\Vert_{\mathbb{H}^{1}\left( \partial \Omega \right)} + \, \left\Vert  Curl \left(  H^{\mathring{\mu}_r} \right) \right\Vert_{\mathbb{H}^{\frac{1}{2}}\left( \Omega \right)}. 
\end{equation}
The estimation of $\left\Vert  Curl \left(  H^{\mathring{\mu}_r} \right) \right\Vert_{\mathbb{H}^{\frac{1}{2}}\left( \Omega \right)}$ is needed. To achieve this, we recall from $(\ref{Eq0957109})$ that 
\begin{eqnarray}\label{EqB2024}
    Curl\left(H^{\mathring{\mu}_r} \right) \, & = & \, k^{2} \, E^{Inc} \, - \, \xi  \, {\bf T}^{\mathring\mu_{r}}  \, k^{2} \,  Curl\left({\bf N^{k}}\left( H^{\mathring{\mu}_r}\right)\right) \\ \nonumber
   \left\Vert Curl\left(H^{\mathring{\mu}_r} \right) \right\Vert_{\mathbb{H}^{\frac{1}{2}}\left( \Omega \right)} \, &  \lesssim & \, k^{2} \, \left\Vert E^{Inc} \right\Vert_{\mathbb{H}^{\frac{1}{2}}\left( \Omega \right)} \, + \, k^{2} \, \left\Vert Curl\left({\bf N^{k}}\left( H^{\mathring{\mu}_r}\right)\right) \right\Vert_{\mathbb{H}^{\frac{1}{2}}\left( \Omega \right)}. 
\end{eqnarray}
Furthermore, as the operator $Curl {\bf N^{k}}$ is continuous from $\mathbb{L}^{2}(\Omega)$ to $\mathbb{H}^{1}\left( \Omega \right) \, \subset \, \mathbb{H}^{\frac{1}{2}}\left( \Omega \right)$, we get
\begin{eqnarray*}
\nonumber
  \left\Vert Curl\left(H^{\mathring{\mu}_r} \right) \right\Vert_{\mathbb{H}^{\frac{1}{2}}\left( \Omega \right)} \, &  \lesssim & \, k^{2} \, \left\Vert E^{Inc} \right\Vert_{\mathbb{H}^{\frac{1}{2}}\left( \Omega \right)} \, + \, k^{2} \, \left\Vert  H^{\mathring{\mu}_r}  \right\Vert_{\mathbb{L}^{2}\left( \Omega \right)} \\
  & \overset{(\ref{es-coro1})}{\lesssim} & \, k^{2} \, \left\Vert E^{Inc} \right\Vert_{\mathbb{H}^{\frac{1}{2}}\left( \Omega \right)} \, +  \, \frac{k^{3}}{\left\vert \beta \right\vert}  \, \left\Vert H^{Inc} \right\Vert_{\mathbb{L}^{2}(\Omega)}.
\end{eqnarray*}
By plugging the previous equation into $(\ref{EqbEstCurlH})$, we obtain
\begin{eqnarray*}
\left\Vert \nu \times H^{\mathring{\mu}_r} \right\Vert_{\mathbb{H}^{1}\left( \partial \Omega \right)} \, & \lesssim & \, \left\Vert  \nu \times H^{\mathring{\mu}_r} \right\Vert_{\mathbb{L}^{2}\left( \partial \Omega \right)} \, + \, k \, \left\Vert \nu \times H^{Inc} \right\Vert_{\mathbb{H}^{1}\left( \partial \Omega \right)} + \, k^{2} \, \left\Vert E^{Inc} \right\Vert_{\mathbb{H}^{\frac{1}{2}}\left( \Omega \right)} \, +  \, \frac{k^{3}}{\left\vert \beta \right\vert}  \, \left\Vert H^{Inc} \right\Vert_{\mathbb{L}^{2}(\Omega)} \\
 & \overset{(\ref{EqB22})}{\lesssim}  &   \, \frac{k}{\left\vert \beta \right\vert} \;   \left\Vert  \nu \times H^{Inc} \right\Vert_{\mathbb{L}^{2}(\partial \Omega)}
+   \; \frac{k^{3}}{\left\vert \beta \right\vert^{2}}  \, \left\Vert H^{Inc} \right\Vert_{\mathbb{L}^{2}(\Omega)} \, + \, k \, \left\Vert \nu \times H^{Inc} \right\Vert_{\mathbb{H}^{1}\left( \partial \Omega \right)} + \, k^{2} \, \left\Vert E^{Inc} \right\Vert_{\mathbb{H}^{\frac{1}{2}}\left( \Omega \right)}. 
\end{eqnarray*}
Knowing that $\left\Vert  \nu \times H^{Inc} \right\Vert_{\mathbb{L}^{2}(\partial \Omega)}, \;
\left\Vert H^{Inc} \right\Vert_{\mathbb{L}^{2}(\Omega)}, \;  \left\Vert \nu \times H^{Inc} \right\Vert_{\mathbb{H}^{1}\left( \partial \Omega \right)}$ and $\left\Vert E^{Inc} \right\Vert_{\mathbb{H}^{\frac{1}{2}}\left( \Omega \right)}$ are of order one with respect to the parameters $k$ and $\beta$, and by taking into account the fact that $\beta \sim k^{2}$, we reduce the previous inequality into 
\begin{equation*}
\left\Vert \nu \times H^{\mathring{\mu}_r} \right\Vert_{\mathbb{H}^{1}\left( \partial \Omega \right)} \,  \lesssim   \, \frac{k}{\left\vert \beta \right\vert} \;   \left\Vert  \nu \times H^{Inc} \right\Vert_{\mathbb{L}^{2}(\partial \Omega)}
+   \; \frac{k^{3}}{\left\vert \beta \right\vert^{2}}  \, \left\Vert H^{Inc} \right\Vert_{\mathbb{L}^{2}(\Omega)}. 
\end{equation*}
Furthermore, by Sobolev embeddings, see \cite[Theorem 9.16]{Brezis}, we deduce that $\nu \times H^{\mathring{\mu}_r} \in \mathbb{L}^{r}(\Omega)$, with $r \geq 2$. Hence, 
\begin{equation}\label{ASEq1012}
\left\Vert \nu \times H^{\mathring{\mu}_r} \right\Vert_{\mathbb{L}^{r}\left( \partial \Omega \right)} \,  \lesssim   \, \frac{k}{\left\vert \beta \right\vert} \;   \left\Vert  \nu \times H^{Inc} \right\Vert_{\mathbb{L}^{2}(\partial \Omega)}
+   \; \frac{k^{3}}{\left\vert \beta \right\vert^{2}}  \, \left\Vert H^{Inc} \right\Vert_{\mathbb{L}^{2}(\Omega)}, \quad r \geq 2. 
\end{equation}
Going back to $(\ref{Eq0259})$ and taking the $\left\Vert \cdot \right\Vert_{\mathbb{L}^{r}(\Omega)}$-norm in its both sides, we get  
\begin{equation*}
   \left\Vert H^{\mathring{\mu}_r} \right\Vert_{\mathbb{L}^{r}(\Omega)} \, \lesssim \,  \left\Vert Curl \, SL^{k}\left( \nu \times H^{\mathring{\mu}_r} \right) \right\Vert_{\mathbb{L}^{r}(\Omega)} \, + \, k \, \left\Vert H^{Inc} \right\Vert_{\mathbb{L}^{r}(\Omega)} \, + \, \left\Vert Curl  {\bf N^{k}}\left( Curl \left( H^{\mathring{\mu}_r} \right) \right) \right\Vert_{\mathbb{L}^{r}(\Omega)}, 
\end{equation*}
which, by using the continuity of the operator $Curl SL^{k}$ from $\mathbb{L}^{r}(\partial \Omega)$ to $\mathbb{L}^{r}(\partial \Omega)$ and the continuity of the operator $Curl {\bf N^{k}}$ from $\mathbb{L}^{r}(\Omega)$ to $\mathbb{L}^{r}(\Omega)$, the last inequality can be reduced to  
\begin{eqnarray*}
   \left\Vert H^{\mathring{\mu}_r} \right\Vert_{\mathbb{L}^{r}(\Omega)} \, & \lesssim & \,  \left\Vert  \nu \times H^{\mathring{\mu}_r}  \right\Vert_{\mathbb{L}^{r}(\partial \Omega)} \, + \, k \, \left\Vert H^{Inc} \right\Vert_{\mathbb{L}^{r}(\Omega)} \, + \, \left\Vert Curl \left( H^{\mathring{\mu}_r} \right)  \right\Vert_{\mathbb{L}^{r}(\Omega)}. 
\end{eqnarray*}
Using $(\ref{EqB2024})$ and the continuity of the operator $Curl {\bf N^{k}}$ from $\mathbb{L}^{r}(\Omega)$ to $\mathbb{L}^{r}(\Omega)$, we obtain
\begin{equation}\label{Eq07591012}
    \left\Vert Curl \left( H^{\mathring{\mu}_r} \right)  \right\Vert_{\mathbb{L}^{r}(\Omega)} \, \lesssim \, k^{2} \, \left\Vert E^{Inc} \right\Vert_{\mathbb{L}^{r}(\Omega)} \, +  \, k^{2} \,  \left\Vert  H^{\mathring{\mu}_r} \right\Vert_{\mathbb{L}^{r}(\Omega)}
\end{equation}
and hence
\begin{eqnarray*}
      \left\Vert H^{\mathring{\mu}_r} \right\Vert_{\mathbb{L}^{r}(\Omega)} \, & \lesssim & \left\Vert  \nu \times H^{\mathring{\mu}_r}  \right\Vert_{\mathbb{L}^{r}(\partial \Omega)} \, + \, k \, \left\Vert H^{Inc} \right\Vert_{\mathbb{L}^{r}(\Omega)} \, + \,  \, k^{2} \, \left\Vert E^{Inc} \right\Vert_{\mathbb{L}^{r}(\Omega)} \, +  \, k^{2} \,  \left\Vert H^{\mathring{\mu}_r} \right\Vert_{\mathbb{L}^{r}(\Omega)}.
\end{eqnarray*}
As the parameter $k$ is taken to be small and both $\left\Vert H^{Inc} \right\Vert_{\mathbb{L}^{r}(\Omega)}$ and $\left\Vert E^{Inc} \right\Vert_{\mathbb{L}^{r}(\Omega)}$ are of order one we obtain
\begin{eqnarray}\label{Eq19141012}
\nonumber
   \left\Vert H^{\mathring{\mu}_r} \right\Vert_{\mathbb{L}^{r}(\Omega)} \, & \lesssim  & \,   \left\Vert  \nu \times H^{\mathring{\mu}_r}  \right\Vert_{\mathbb{L}^{r}(\partial \Omega)} \, + \, k \, \left\Vert H^{Inc} \right\Vert_{\mathbb{L}^{r}(\Omega)} \\
   & \overset{(\ref{ASEq1012})}{\lesssim} & \, \frac{k}{\left\vert \beta \right\vert} \;   \left\Vert  \nu \times H^{Inc} \right\Vert_{\mathbb{L}^{2}(\partial \Omega)}
+   \; \frac{k^{3}}{\left\vert \beta \right\vert^{2}}  \, \left\Vert H^{Inc} \right\Vert_{\mathbb{L}^{2}(\Omega)}, \quad r \geq 2.
\end{eqnarray}
In addition, by plugging the previous estimation into $(\ref{Eq07591012})$, we derive 
\begin{eqnarray*}
\nonumber
    \left\Vert Curl \left( H^{\mathring{\mu}_r} \right)  \right\Vert_{\mathbb{L}^{r}(\Omega)} \, & \lesssim & \, k^{2} \, \left\Vert E^{Inc} \right\Vert_{\mathbb{L}^{r}(\Omega)} \, +  \, k^{2} \, \left[ \, \frac{k}{\left\vert \beta \right\vert} \;   \left\Vert  \nu \times H^{Inc} \right\Vert_{\mathbb{L}^{2}(\partial \Omega)}
+   \; \frac{k^{3}}{\left\vert \beta \right\vert^{2}}  \, \left\Vert H^{Inc} \right\Vert_{\mathbb{L}^{2}(\Omega)} \right] \\
 & \lesssim & \frac{k^{3}}{\left\vert \beta \right\vert} \;   \left\Vert  \nu \times H^{Inc} \right\Vert_{\mathbb{L}^{2}(\partial \Omega)}
+   \; \frac{k^{5}}{\left\vert \beta \right\vert^{2}}  \, \left\Vert H^{Inc} \right\Vert_{\mathbb{L}^{2}(\Omega)}, \quad r \geq 2,
\end{eqnarray*}
which gives us an estimation of $Curl \left( H^{\mathring{\mu}_r} \right)$ in $\mathbb{L}^{r}(\Omega)$, for $r \geq 2$. At this stage we use  \cite[Corollary 5.3]{AS} to derive the following inequality 
\begin{equation*}
    \left\Vert  H^{\mathring{\mu}_r}  \right\Vert_{\mathbb{W}^{m,p}(\Omega)} \,  \lesssim  \, \left\Vert  H^{\mathring{\mu}_r}   \right\Vert_{\mathbb{L}^{p}(\Omega)} \, + \, \left\Vert Curl \left( H^{\mathring{\mu}_r} \right)  \right\Vert_{\mathbb{W}^{m-1,p}(\Omega)} \, + \, \left\Vert  \nu \times H^{\mathring{\mu}_r}  \right\Vert_{\mathbb{W}^{m-\frac{1}{p},p}(\partial \Omega)}.
\end{equation*}
And, in particular, for $m \,= \, \dfrac{5}{2}$ and $p=2$, we get
\begin{equation}\label{HZ18461012}
    \left\Vert  H^{\mathring{\mu}_r}  \right\Vert_{\mathbb{H}^{\frac{5}{2}}(\Omega)} \,  \lesssim  \, \left\Vert  H^{\mathring{\mu}_r}   \right\Vert_{\mathbb{L}^{2}(\Omega)} \, + \, \left\Vert Curl \left( H^{\mathring{\mu}_r} \right)  \right\Vert_{\mathbb{H}^{\frac{3}{2}}(\Omega)} \, + \, \left\Vert  \nu \times H^{\mathring{\mu}_r}  \right\Vert_{\mathbb{H}^{2}(\partial \Omega)}.
\end{equation}
Then, in order to get an estimation for $\left\Vert  H^{\mathring{\mu}_r}  \right\Vert_{\mathbb{H}^{\frac{5}{2}}(\Omega)}$, we need to estimate $\left\Vert Curl \left( H^{\mathring{\mu}_r} \right)  \right\Vert_{\mathbb{H}^{\frac{3}{2}}(\Omega)}$ and $\left\Vert  \nu \times H^{\mathring{\mu}_r}  \right\Vert_{\mathbb{H}^{2}(\partial \Omega)}$.
\begin{enumerate}
    \item[] 
    \item Estimation of $\left\Vert Curl \left( H^{\mathring{\mu}_r} \right)  \right\Vert_{\mathbb{H}^{\frac{3}{2}}(\Omega)}$. \\
    To do this, by taking the $\left\Vert \cdot \right\Vert_{\mathbb{H}^{\frac{3}{2}}(\Omega)}$-norm in both sides of $(\ref{EqB2024})$, we derive that 
\begin{equation*}
   \left\Vert Curl\left(H^{\mathring{\mu}_r} \right) \right\Vert_{\mathbb{H}^{\frac{3}{2}}(\Omega)} \,  \lesssim  \, k^{2} \, \left\Vert E^{Inc} \right\Vert_{\mathbb{H}^{\frac{3}{2}}(\Omega)} \, + \,  k^{2} \, \left\Vert Curl\left({\bf N^{k}} \left( H^{\mathring{\mu}_r}\right)\right) \right\Vert_{\mathbb{H}^{\frac{3}{2}}(\Omega)},
\end{equation*}
which, by using the continuity of the operator $Curl \, {\bf N^{k}}$ from $\mathbb{H}^{\frac{1}{2}}(\Omega)$ to $\mathbb{H}^{\frac{3}{2}}(\Omega)$, can be reduced to  
\begin{equation}\label{EBEq}
   \left\Vert Curl\left(H^{\mathring{\mu}_r} \right) \right\Vert_{\mathbb{H}^{\frac{3}{2}}(\Omega)} \,  \lesssim  \, k^{2} \, \left\Vert E^{Inc} \right\Vert_{\mathbb{H}^{\frac{3}{2}}(\Omega)} \, + \,  k^{2} \, \left\Vert  H^{\mathring{\mu}_r} \right\Vert_{\mathbb{H}^{\frac{1}{2}}(\Omega)}.
\end{equation}
    \item[]
    \item Estimation of $\left\Vert  \nu \times H^{\mathring{\mu}_r}  \right\Vert_{\mathbb{H}^{2}(\partial \Omega)}$.\\
To achieve this, we go back to $(\ref{EqAS})$ to obtain
\begin{eqnarray*}
 \left\Vert \nu \times H^{\mathring{\mu}_r} \right\Vert_{\mathbb{H}^{2}(\partial \Omega)}  \, & \lesssim & \,  \left\Vert \nu \times  \widetilde{\boldsymbol{A}}_{k}\left( \nu \times H^{\mathring{\mu}_r} \right) \right\Vert_{\mathbb{H}^{2}(\partial \Omega)} \, + \,  k \, \left\Vert \nu \times H^{Inc} \right\Vert_{\mathbb{H}^{2}(\partial \Omega)}  \\ &+& \left\Vert \nu \times Curl {\bf N^{k}} \left( Curl \left(  H^{\mathring{\mu}_r} \right) \right) \right\Vert_{\mathbb{H}^{2}(\partial \Omega)}.
\end{eqnarray*}
And, using the continuity of the operator $\nu \times Curl {\bf N^{k}}$ from $\mathbb{H}^{\frac{3}{2}}(\Omega)$ to $\mathbb{H}^{2}(\partial \Omega)$, we obtain
\begin{equation*}
 \left\Vert \nu \times H^{\mathring{\mu}_r} \right\Vert_{\mathbb{H}^{2}(\partial \Omega)}  \,  \lesssim  \,  \left\Vert \nu \times  \widetilde{\boldsymbol{A}}_{k}\left( \nu \times H^{\mathring{\mu}_r} \right) \right\Vert_{\mathbb{H}^{2}(\partial \Omega)} \, + \,  k \, \left\Vert \nu \times H^{Inc} \right\Vert_{\mathbb{H}^{2}(\partial \Omega)} + \left\Vert  Curl \left(  H^{\mathring{\mu}_r} \right)  \right\Vert_{\mathbb{H}^{\frac{3}{2}}(\Omega)}.
\end{equation*}
Thanks to \cite[Theorem 4.3]{Kirsch1989} we know that under the $C^{4,\alpha}$-regularity\footnote{This regularity condition on $\Omega$ might be reduced.} of $\partial \Omega$, with $0 < \alpha < 1$, the operator $\nu \times  \widetilde{\boldsymbol{A}}_{k}$ is continuous from $\mathbb{H}^{1}(\partial \Omega)$ to $\mathbb{H}^{2}(\partial \Omega)$. Hence, 
\begin{eqnarray*}
 \left\Vert \nu \times H^{\mathring{\mu}_r} \right\Vert_{\mathbb{H}^{2}(\partial \Omega)}  \,  & \lesssim &  \,  \left\Vert  \nu \times H^{\mathring{\mu}_r}  \right\Vert_{\mathbb{H}^{1}(\partial \Omega)} \, + \,  k \, \left\Vert \nu \times H^{Inc} \right\Vert_{\mathbb{H}^{2}(\partial \Omega)} + \left\Vert  Curl \left(  H^{\mathring{\mu}_r} \right)  \right\Vert_{\mathbb{H}^{\frac{3}{2}}(\Omega)} \\
 & \overset{(\ref{EBEq})}{\lesssim} &  \,  \left\Vert  \nu \times H^{\mathring{\mu}_r}  \right\Vert_{\mathbb{H}^{1}(\partial \Omega)} \, + \,  k \, \left\Vert \nu \times H^{Inc} \right\Vert_{\mathbb{H}^{2}(\partial \Omega)} \, + \, k^{2} \, \left\Vert E^{Inc} \right\Vert_{\mathbb{H}^{\frac{3}{2}}(\Omega)} \, + \,  k^{2} \, \left\Vert  H^{\mathring{\mu}_r} \right\Vert_{\mathbb{H}^{\frac{1}{2}}(\Omega)}.
\end{eqnarray*}
As $\left\Vert \nu \times H^{Inc} \right\Vert_{\mathbb{H}^{2}(\partial \Omega)}$ and $\left\Vert E^{Inc} \right\Vert_{\mathbb{H}^{\frac{3}{2}}(\Omega)}$ are both of order one, and the parameter $k$ is taken to be small, we reduce the previous estimation into
\begin{equation}\label{EqFB}
 \left\Vert \nu \times H^{\mathring{\mu}_r} \right\Vert_{\mathbb{H}^{2}(\partial \Omega)}  \,   \lesssim \,  \left\Vert  \nu \times H^{\mathring{\mu}_r}  \right\Vert_{\mathbb{H}^{1}(\partial \Omega)} \, + \,  k \, \left\Vert \nu \times H^{Inc} \right\Vert_{\mathbb{H}^{2}(\partial \Omega)} \, + \,  k^{2} \, \left\Vert  H^{\mathring{\mu}_r} \right\Vert_{\mathbb{H}^{\frac{1}{2}}(\Omega)}.
\end{equation}
  \item[]
\end{enumerate}
Consequently, based on $(\ref{EBEq})$ and $(\ref{EqFB})$, the estimation $(\ref{HZ18461012})$ becomes, 
\begin{equation*}
    \left\Vert  H^{\mathring{\mu}_r}  \right\Vert_{\mathbb{H}^{\frac{5}{2}}(\Omega)} \,  \lesssim  \,  \left\Vert  H^{\mathring{\mu}_r}  \right\Vert_{\mathbb{L}^{2}(\Omega)} \, + \, k^{2} \,     \left\Vert  E^{Inc}  \right\Vert_{\mathbb{H}^{\frac{3}{2}}(\Omega)} \, + \,   k^{2} \,  \left\Vert  H^{\mathring{\mu}_r}  \right\Vert_{\mathbb{H}^{\frac{1}{2}}(\Omega)} \, + \, \left\Vert \nu \times  H^{\mathring{\mu}_r}  \right\Vert_{\mathbb{H}^{1}(\partial \Omega)} \, + \, k \, \left\Vert \nu \times  H^{Inc}  \right\Vert_{\mathbb{H}^{2}(\partial \Omega)}.
\end{equation*}
Knowing that $\mathbb{H}^{\frac{5}{2}}(\Omega) \, \subset \, \mathbb{H}^{\frac{1}{2}}(\Omega)$, we deduce that $\left\Vert  H^{\mathring{\mu}_r}  \right\Vert_{\mathbb{H}^{\frac{1}{2}}(\Omega)} \, \lesssim \, \left\Vert  H^{\mathring{\mu}_r}  \right\Vert_{\mathbb{H}^{\frac{5}{2}}(\Omega)}$, and by the fact that $k$ is a small parameter, we obtain
\begin{equation*}
    \left\Vert  H^{\mathring{\mu}_r}  \right\Vert_{\mathbb{H}^{\frac{5}{2}}(\Omega)} \,  \lesssim  \,  \left\Vert  H^{\mathring{\mu}_r}  \right\Vert_{\mathbb{L}^{2}(\Omega)} \, + \, k^{2} \,     \left\Vert  E^{Inc}  \right\Vert_{\mathbb{H}^{\frac{3}{2}}(\Omega)} \, + \,  \left\Vert \nu \times  H^{\mathring{\mu}_r}  \right\Vert_{\mathbb{H}^{1}(\partial \Omega)} \, + \, k \, \left\Vert \nu \times  H^{Inc}  \right\Vert_{\mathbb{H}^{2}(\partial \Omega)}.
\end{equation*}
In addition, as $\left\Vert  E^{Inc}  \right\Vert_{\mathbb{H}^{\frac{3}{2}}(\Omega)}$ and $ \left\Vert \nu \times  H^{Inc}  \right\Vert_{\mathbb{H}^{2}(\partial \Omega)}$ are both of order one, we reduce the previous estimation into the following one
\begin{equation}\label{SSI}
    \left\Vert  H^{\mathring{\mu}_r}  \right\Vert_{\mathbb{H}^{\frac{5}{2}}(\Omega)} \,  \lesssim  \,  \left\Vert  H^{\mathring{\mu}_r}  \right\Vert_{\mathbb{L}^{2}(\Omega)} \,  + \,  \left\Vert \nu \times  H^{\mathring{\mu}_r}  \right\Vert_{\mathbb{H}^{1}(\partial \Omega)} \, + \, k \, \left\Vert \nu \times  H^{Inc}  \right\Vert_{\mathbb{H}^{2}(\partial \Omega)}.
\end{equation}
Regarding the previous estimation, as $(\ref{HZ18461012})$, to finish with the estimation of $\left\Vert  H^{\mathring{\mu}_r}  \right\Vert_{\mathbb{H}^{\frac{5}{2}}(\Omega)}$, we need to estimate $\left\Vert \nu \times  H^{\mathring{\mu}_r}  \right\Vert_{\mathbb{H}^{1}(\partial \Omega)}$. To achieve this, we go back to $(\ref{EqAS})$ to obtain
\begin{eqnarray*}
 \left\Vert \nu \times H^{\mathring{\mu}_r} \right\Vert_{\mathbb{H}^{1}(\partial \Omega)}  \, & \lesssim & \,  \left\Vert \nu \times  \widetilde{\boldsymbol{A}}_{k}\left( \nu \times H^{\mathring{\mu}_r} \right) \right\Vert_{\mathbb{H}^{1}(\partial \Omega)} \, + \,  k \, \left\Vert \nu \times H^{Inc} \right\Vert_{\mathbb{H}^{1}(\partial \Omega)}  \\ &+& \left\Vert \nu \times Curl {\bf N^{k}} \left( Curl \left(  H^{\mathring{\mu}_r} \right) \right) \right\Vert_{\mathbb{H}^{1}(\partial \Omega)}.
\end{eqnarray*}
And, using the continuity of the operator $\nu \times Curl {\bf N^{k}}$ from $\mathbb{H}^{\frac{1}{2}}(\Omega)$ to $\mathbb{H}^{1}(\partial \Omega)$, we obtain
\begin{eqnarray*}
 \left\Vert \nu \times H^{\mathring{\mu}_r} \right\Vert_{\mathbb{H}^{1}(\partial \Omega)}  \,  & \lesssim & \,  \left\Vert \nu \times  \widetilde{\boldsymbol{A}}_{k}\left( \nu \times H^{\mathring{\mu}_r} \right) \right\Vert_{\mathbb{H}^{1}(\partial \Omega)} \, + \,  k \, \left\Vert \nu \times H^{Inc} \right\Vert_{\mathbb{H}^{1}(\partial \Omega)} + \left\Vert  Curl \left(  H^{\mathring{\mu}_r} \right)  \right\Vert_{\mathbb{H}^{\frac{1}{2}}(\Omega)} \\
 & \lesssim & \,  \left\Vert \nu \times  \widetilde{\boldsymbol{A}}_{k}\left( \nu \times H^{\mathring{\mu}_r} \right) \right\Vert_{\mathbb{H}^{1}(\partial \Omega)} \, + \,  k \, \left\Vert \nu \times H^{Inc} \right\Vert_{\mathbb{H}^{1}(\partial \Omega)} + \left\Vert  Curl \left(  H^{\mathring{\mu}_r} \right)  \right\Vert_{\mathbb{H}^{\frac{3}{2}}(\Omega)} \\
  & \overset{(\ref{EBEq})}{\lesssim}& \left\Vert \nu \times  \widetilde{\boldsymbol{A}}_{k}\left( \nu \times H^{\mathring{\mu}_r} \right) \right\Vert_{\mathbb{H}^{1}(\partial \Omega)} \, + \,  k \, \left\Vert \nu \times H^{Inc} \right\Vert_{\mathbb{H}^{1}(\partial \Omega)} \, + \,  k^{2} \, \left\Vert E^{Inc} \right\Vert_{\mathbb{H}^{\frac{3}{2}}(\Omega)} \, + \,  k^{2} \, \left\Vert  H^{\mathring{\mu}_r} \right\Vert_{\mathbb{H}^{\frac{1}{2}}(\Omega)}\\
  & \lesssim & \left\Vert \nu \times  \widetilde{\boldsymbol{A}}_{k}\left( \nu \times H^{\mathring{\mu}_r} \right) \right\Vert_{\mathbb{H}^{1}(\partial \Omega)} \, + \,  k \, \left\Vert \nu \times H^{Inc} \right\Vert_{\mathbb{H}^{1}(\partial \Omega)} \, + \ \,  k^{2} \, \left\Vert  H^{\mathring{\mu}_r} \right\Vert_{\mathbb{H}^{\frac{1}{2}}(\Omega)}\\
   & \lesssim & \left\Vert \nu \times  \widetilde{\boldsymbol{A}}_{k}\left( \nu \times H^{\mathring{\mu}_r} \right) \right\Vert_{\mathbb{H}^{1}(\partial \Omega)} \, + \,  k \, \left\Vert \nu \times H^{Inc} \right\Vert_{\mathbb{H}^{1}(\partial \Omega)} \, + \ \,  k^{2} \, \left\Vert  H^{\mathring{\mu}_r} \right\Vert_{\mathbb{H}^{\frac{5}{2}}(\Omega)}. 
\end{eqnarray*}
Thanks to \cite[Theorem 4.3]{Kirsch1989}, we know that the operator $\nu \times  \widetilde{\boldsymbol{A}}_{k}$ is continuous from $\mathbb{L}^{2}(\partial \Omega)$ to $\mathbb{H}^{1}(\partial \Omega)$. Hence, 
\begin{eqnarray*}
     \left\Vert \nu \times H^{\mathring{\mu}_r} \right\Vert_{\mathbb{H}^{1}(\partial \Omega)}  \,   & \lesssim &  \,  \left\Vert  \nu \times H^{\mathring{\mu}_r}  \right\Vert_{\mathbb{L}^{2}(\partial \Omega)} \, + \,  k \, \left\Vert \nu \times H^{Inc} \right\Vert_{\mathbb{H}^{1}(\partial \Omega)} + k^{2} \, \left\Vert H^{\mathring{\mu}_r}   \right\Vert_{\mathbb{H}^{\frac{5}{2}}(\Omega)} \\
     & \overset{(\ref{EqB22})}{\lesssim} & \, \frac{k}{\left\vert \beta \right\vert} \;   \left\Vert  \nu \times H^{Inc} \right\Vert_{\mathbb{L}^{2}(\partial \Omega)} +   \; \frac{k^{3}}{\left\vert \beta \right\vert^{2}}  \, \left\Vert H^{Inc} \right\Vert_{\mathbb{L}^{2}(\Omega)} \, + \, k^{2} \, \left\Vert H^{\mathring{\mu}_r}   \right\Vert_{\mathbb{H}^{\frac{5}{2}}(\Omega)}.
\end{eqnarray*}
Going back to $(\ref{SSI})$ and using the previous estimation we end up with the following estimation
\begin{equation*}
    \left\Vert  H^{\mathring{\mu}_r}  \right\Vert_{\mathbb{H}^{\frac{5}{2}}(\Omega)} \,  \lesssim  \,  \left\Vert  H^{\mathring{\mu}_r}  \right\Vert_{\mathbb{L}^{2}(\Omega)} \,  + \,  \, \frac{k}{\left\vert \beta \right\vert} \;   \left\Vert  \nu \times H^{Inc} \right\Vert_{\mathbb{L}^{2}(\partial \Omega)} +   \; \frac{k^{3}}{\left\vert \beta \right\vert^{2}}  \, \left\Vert H^{Inc} \right\Vert_{\mathbb{L}^{2}(\Omega)}.
\end{equation*}
Moreover, thanks to $(\ref{Eq19141012})$,  we obtain   
\begin{equation*}
    \left\Vert  H^{\mathring{\mu}_r}  \right\Vert_{\mathbb{H}^{2}(\Omega)} \,  \lesssim  \,   \frac{k^{3}}{\left\vert \beta \right\vert^{2}}  \, \left\Vert H^{Inc} \right\Vert_{\mathbb{L}^{2}(\Omega)} \, + \,   \frac{k}{\left\vert \beta \right\vert} \, \left\Vert \nu \times H^{Inc} \right\Vert_{\mathbb{H}^{\frac{3}{2}}(\partial \Omega)}.
\end{equation*}
In addition, the following embedding result holds
\begin{equation*}
    \mathbb{H}^{\frac{5}{2}}(\Omega) \; \hookrightarrow \; C^{0, \alpha}(\overline{\Omega}), \quad \text{with} \quad 0 \, < \, \alpha \, < \, 1, 
\end{equation*}
see \cite[Section 4.12]{Adams}, and this implies 
\begin{equation*}
    \left[ H^{\mathring{\mu}_r} \right]_{\mathcal{C}^{0, \alpha}(\overline{\Omega})} \; \lesssim \;  \frac{k^{3}}{\left\vert \beta \right\vert^{2}}  \, \left\Vert H^{Inc} \right\Vert_{\mathbb{L}^{2}(\Omega)} \, + \,   \frac{k}{\left\vert \beta \right\vert} \, \left\Vert \nu \times H^{Inc} \right\Vert_{\mathbb{H}^{\frac{3}{2}}(\partial \Omega)}, \quad \text{with} \quad 0 \, < \, \alpha \, < \, 1.
\end{equation*}
This ends the proof of \textbf{Proposition \ref{AddedLemma}}.

\section{Appendix: Counting Lemmas}

Two lemmas are used in the proof of our main theorems to calculate the sums of inverse distances between nano-particles inside $\Omega$ and near $\partial \Omega$.
\smallskip
\begin{lemma}\label{conting1}
	Suppose ${z}_m$, $m=1, \cdots, \aleph$, are defined in \eqref{scaling motion}. For $k>0$, we have the following formula:
 \begin{equation*}
     \sum_{j=1\atop j\neq m}^\aleph| z_j- z_m|^{-k} = \begin{cases}
		  \Oh\left(d^{-3}\right), & \text{\quad if $k<3$,}\\
                          & \\
            \Oh\left(d^{-3}\left(1 \, + \, \left\vert \log(d) \right\vert \right)\right), & \text{\quad if $k=3$,} \\
            & \\
            \Oh\left(d^{-k}\right), & \text{\quad if $k>3$.}
    \end{cases}
 \end{equation*}	
 \medskip
\end{lemma}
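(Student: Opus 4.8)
The plan is to exploit the (approximately) periodic, cubic-grid structure of the centers $z_m$ fixed in Assumption \uppercase\expandafter{\romannumeral4}: each $z_m$ is the center of a cube $\Omega_m$ of side length comparable to $d$ and volume $\left\vert \Omega_m \right\vert = \mathcal{O}(d^3)$, distinct cubes are pairwise disjoint, and the minimal separation $d = \min_{m \neq j}\mathrm{dist}(D_m, D_j)$ bounds $|z_j - z_m|$ from below. This simultaneously guarantees that the summand is never singular (a lower bound on $|z_j-z_m|$) and that no region can be overcrowded with centers (an upper bound on density via the fixed cell volume). The whole estimate is then a packing/counting argument, decoupling the $d$-dependence from a purely combinatorial sum.

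First I would fix $m$ and organize the remaining centers into spherical shells about $z_m$. For an integer $\ell \geq 1$, set
\[
A_\ell := \{\, j \neq m : \ell d \leq |z_j - z_m| < (\ell+1)d \,\}.
\]
Since $\diam(\Omega) = \mathcal{O}(1)$, only shells with $1 \leq \ell \leq L$ for some $L = \mathcal{O}(d^{-1})$ are nonempty. The key geometric estimate is $\# A_\ell \lesssim \ell^2$: the cubes $\{\Omega_j : j \in A_\ell\}$ are pairwise disjoint, each of volume $\sim d^3$, and are all contained in a slightly enlarged annulus $\{x : (\ell - c)d \leq |x - z_m| \leq (\ell+1+c)d\}$ of volume $\mathcal{O}(\ell^2 d^3)$; dividing the annulus volume by the cell volume yields the count.

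Next I would bound each shell's contribution. For $j \in A_\ell$ one has $|z_j - z_m|^{-k} \leq (\ell d)^{-k}$, hence $\sum_{j \in A_\ell} |z_j - z_m|^{-k} \lesssim \ell^2 (\ell d)^{-k} = d^{-k}\,\ell^{2-k}$, and therefore
\[
\sum_{j=1 \atop j\neq m}^{\aleph} |z_j - z_m|^{-k} \;\lesssim\; d^{-k} \sum_{\ell=1}^{L} \ell^{2-k}.
\]
The three regimes then follow from elementary bounds on $\sum_{\ell=1}^{L} \ell^{2-k}$ with $L \sim d^{-1}$: for $k < 3$ the sum is $\mathcal{O}(L^{3-k}) = \mathcal{O}(d^{k-3})$, giving $\mathcal{O}(d^{-3})$; for $k = 3$ it is $\mathcal{O}(1 + \log L) = \mathcal{O}(1 + |\log d|)$, giving $\mathcal{O}(d^{-3}(1 + |\log d|))$; and for $k > 3$ the series $\sum_\ell \ell^{2-k}$ converges, so the partial sum is $\mathcal{O}(1)$ and the total is $\mathcal{O}(d^{-k})$.

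There is no genuine obstacle here; the lemma is a routine counting estimate. The only points requiring care are making the annulus-counting constant uniform in $m$ (which follows from the uniform cell volume and separation in Assumption \uppercase\expandafter{\romannumeral4}, and holds even when $\Omega_m$ touches $\partial\Omega$, since enlarging the annulus only strengthens the bound), and keeping the exponent $k$ in the lemma notationally distinct from the frequency $k$ used elsewhere. I would also remark that the stated bound is the worst case of a center deep inside $\Omega$: centers near the boundary have strictly fewer neighbors and so satisfy the same estimate \emph{a fortiori}.
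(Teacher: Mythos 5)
Your proof is correct and is essentially the argument the paper relies on: the paper defers the proof of this lemma to \cite[Section 3.3]{ACP}, where the estimate is obtained by exactly this concentric-layer counting (at most $\mathcal{O}(\ell^{2})$ centers in the $\ell$-th shell of width $d$, distance at least $\ell d$, then summing $\ell^{2-k}$ up to $L\sim d^{-1}$), the same scheme the paper itself sketches before Lemma \ref{lem-count-boundary}. No gap; your shell decomposition, packing bound, and case analysis in $k$ match the intended proof.
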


\textbf{Lemma \ref{conting1}} provides us with a straightforward way to count the number of the particles and evaluate the distance between any two points $z_j\in D_j$ and $z_m\in D_m$,
 we refer to \cite[Sectoin 3.3]{ACP} for more details.
\medskip
\newline 
The following lemma is concerned with the counting of the particles which are very close to the boundary $\partial \Omega$. Let $z_m$ be an arbitrary fixed position (of an arbitrary particle). From $z_m$, we split the space into $d$-equidistant cubes $\Sigma_\ell$, with $F_\ell$ being their corresponding surfaces in a fixed direction such that $F_\ell$ support some of the points $(z_\ell)_{\ell=1, \ell\neq m}^\aleph$. We know, from \cite[Lemma A.1]{AM}, that the number of such cubes $\Sigma_\ell$ is at most of order $\Oh(\aleph^{\frac{1}{3}})$. Then, for the particles counting in the area $\Omega \, \setminus \, \underset{m=1}{\overset{\aleph}{\cup}} \Omega_{m}$, we can first assume that any small part of $\partial\Omega$ is flat under the condition that $\partial\Omega$ is $C^{1,\alpha}$-regular and $a$ is sufficiently small. Next, we divide the region in $\Omega \, \setminus \, \underset{m=1}{\overset{\aleph}{\cup}} \Omega_{m}$ near the considered flat part into concentric layers such that there are at most $(2n+1)^2$ small cubes $\Omega_n$, with $n=1, \cdots, \left[ \aleph^{\frac{1}{3}} \right]$, up to the $n$-th layer intersecting with the surface. It is obvious that  the number of the particles located in the $n$-th layer is at most of order $\Oh((2n+1)^2-(2n-1)^2)$ and the distance from any small cube $\Omega_n$ containing the particles to $z_m$ is at least $(n+\ell)d$. Thus we have the following counting lemma presenting an upper bound with respect to the particles distances. 

\begin{lemma}\label{lem-count-boundary}
	For any fixed $z_m$, $m=1, \cdots, \aleph$,  the following formula holds,
	\begin{equation}\label{count-bound}
		\sum_{m=1}^\aleph \left( \int_{\Omega \setminus  \underset{m=1}{\overset{\aleph}{\cup}} \Omega_{m}} \; \frac{1}{|z_m-z|^3}\,dz\right)^2=\Oh\left( d^{-1}\right).
	\end{equation}
\end{lemma}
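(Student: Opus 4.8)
The plan is to reduce the estimate to a purely geometric counting problem localized near $\partial\Omega$, exactly as suggested by the discussion preceding the statement. Since the cubes $\Omega_m$ form an exact cubic tiling in the interior of $\Omega$, the leftover set $\Omega\setminus\bigcup_{m}\Omega_m$ is contained in a tubular neighbourhood of $\partial\Omega$ of width $\Oh(d)$, and by \textbf{Remark \ref{rem-vol-d}} its volume is $\Oh(d)$. The first step is therefore to fix $z_m$ and to bound the single integral $\int_{\Omega\setminus\bigcup_m\Omega_m}|z_m-z|^{-3}\,dz$ in terms of the depth $\mathrm{dist}(z_m,\partial\Omega)=:\ell\,d$. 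Using the $C^{1,\alpha}$-regularity of $\partial\Omega$ together with the smallness of $d$, I would flatten each boundary patch so that the leftover region looks locally like a slab of thickness $\Oh(d)$, and then organize its $\Oh(d^3)$-sized pieces into concentric square annuli around the foot of the perpendicular dropped from $z_m$, following the construction recalled just before the lemma.

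Second, on the $n$-th such annulus there are at most $\Oh\big((2n+1)^2-(2n-1)^2\big)=\Oh(n)$ pieces, each of volume $\Oh(d^3)$ and each at distance at least $(n+\ell)d$ from $z_m$. Testing the kernel $|z_m-z|^{-3}$ against these contributions gives
\begin{equation*}
\int_{\Omega\setminus\bigcup_m\Omega_m}\frac{dz}{|z_m-z|^{3}}\;\lesssim\;\sum_{n\ge 1}\frac{n\,d^3}{\big((n+\ell)d\big)^{3}}\;=\;\sum_{n\ge 1}\frac{n}{(n+\ell)^{3}}\;\lesssim\;\frac{1}{\ell}\;=\;\frac{d}{\mathrm{dist}(z_m,\partial\Omega)}.
\end{equation*}
Comparing the discrete sum with $\int_0^\infty x(x+\ell)^{-3}\,dx=\Oh(\ell^{-1})$ is the routine part; it quantifies the expected $r^{-1}$ behaviour of a $|\cdot|^{-3}$-kernel integrated against the essentially two-dimensional boundary layer.

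Third, I would square this pointwise bound and sum over $m$ by grouping the centres $z_m$ according to their depth $\ell$ from $\partial\Omega$. The number of centres at each prescribed depth is controlled through the area of $\partial\Omega$ by the same boundary-counting mechanism used in \textbf{Lemma \ref{conting1}} and in \cite[Lemma A.1]{AM}. This reduces the claim to estimating $\sum_m \ell_m^{-2}$ with the appropriate multiplicities and extracting the power $d^{-1}$ asserted in $(\ref{count-bound})$.

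The main obstacle I expect is precisely this last assembly: one must count, uniformly in $m$ and uniformly along the curved boundary, how many centres sit at each prescribed depth, and then carry out the summation so that the convergent series $\sum_{n} n\,(n+\ell)^{-3}$ combines with the depth multiplicities to give exactly the stated order $\Oh(d^{-1})$ rather than a larger power of $d^{-1}$. Controlling the flattening uniformly near $\partial\Omega$, so that the annular counting on each patch is valid simultaneously, is the supporting technical point; once the counting is in place, the remaining manipulations are elementary.
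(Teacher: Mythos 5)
Your first two steps coincide with the paper's: the localization to the $\Oh(d)$-thick leftover layer and, for a centre at depth $\ell d$, the annulus decomposition giving
\begin{equation*}
\int_{\Omega\setminus\underset{j=1}{\overset{\aleph}{\cup}}\Omega_j}\frac{dz}{|z_m-z|^{3}}\;\lesssim\;\sum_{n\geq 1}\frac{n}{(n+\ell)^{3}}\;\lesssim\;\frac{1}{\ell}
\end{equation*}
are exactly the ingredients used there. The problem is the third step, which you yourself flag as the "main obstacle" but do not carry out — and the counting you propose for it cannot yield \eqref{count-bound}. If the multiplicity of centres at prescribed depth $\ell d$ is "controlled through the area of $\partial\Omega$," then it is of order $|\partial\Omega|\,d^{-2}$ for \emph{every} $\ell\lesssim d^{-1}$, because the shell $\{x\in\Omega:\ \ell d\leq \mathrm{dist}(x,\partial\Omega)<(\ell+1)d\}$ has volume $\approx|\partial\Omega|\,d$ and each centre occupies a cube of volume $d^{3}$. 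Your assembly then gives
\begin{equation*}
\sum_{m=1}^{\aleph}\left(\int_{\Omega\setminus\underset{j=1}{\overset{\aleph}{\cup}}\Omega_j}\frac{dz}{|z_m-z|^{3}}\right)^{2}\;\lesssim\;\sum_{\ell=1}^{[d^{-1}]}d^{-2}\,\ell^{-2}\;=\;\Oh\left(d^{-2}\right),
\end{equation*}
which misses the claimed order by a full factor $d^{-1}$. Nor is this slack in your estimates: the $\Theta(d^{-2})$ centres sitting in cubes adjacent to the leftover region (depth index $\ell=\Oh(1)$) each have integral $\Theta(1)$, so any accounting that keeps all of them at multiplicity $d^{-2}$ per layer is stuck at $d^{-2}$.

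The paper closes this step with a different multiplicity. In its proof the sum over $m$ is reorganized as $\sum_{\ell}\sum_{z_\ell\in F_\ell}$, and the number of centres attributed to the $\ell$-th layer $F_\ell$ is taken to be $\Oh(\ell^{2})$ — the shell-type count of \cite[Lemma A.1]{AM} for points lying on a concentric cubical surface of radius $\ell d$ about a fixed point, not the depth-layer count along the whole boundary. That factor $\ell^{2}$ is absorbed into the square as $\sum_{n}n\,\ell\,(n+\ell)^{-3}=\Oh(1)$, whence $\sum_{\ell=1}^{\Oh(d^{-1})}\Oh(1)=\Oh(d^{-1})$, and the remaining computation is the one quoted from \cite[Section 4, formula (4.35)]{Cao-Sini}. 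So the single genuinely nontrivial point of the lemma is precisely the justification that the relevant multiplicity per layer is $\Oh(\ell^{2})$ rather than $\Oh(d^{-2})$; your proposal replaces it with a count that provably produces only $\Oh(d^{-2})$, and this is a genuine gap rather than a routine assembly left to the reader.
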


\begin{proof}
	It is clear from the distribution described above \textbf{Lemma \ref{lem-count-boundary}} that
	\begin{eqnarray*}
		\boldsymbol{\varkappa} \, &:=& \,  \sum_{m=1}^\aleph \left( \int_{\Omega\backslash\cup_{m=1}^\aleph \Omega_m}\frac{1}{|z_m-z|^3}\,dz\right)^{2} \\
		& \leq & \sum_{\ell=1}^{\aleph^{\frac{1}{3}}}\sum_{z_\ell\in F_\ell}\left( \sum_{n=1}^{\aleph^{\frac{1}{3}}} \frac{ \left\vert \Omega_n \right\vert \, \left[ (2n+1)^{2} \, - \, (2n-1)^{2} \right]}{(n+\ell)^3d^3} \right)^2 
		 \lesssim  \sum_{\ell=1}^{\aleph^{\frac{1}{3}}}\left( \sum_{n=1}^{\aleph^{\frac{1}{3}}} \frac{n\ell}{(n+\ell)^3}\right)^{2}.
   \end{eqnarray*}
   By referring to the precise computations derived in \cite[Section 4, formula (4.35)]{Cao-Sini}, we get 
   \begin{eqnarray*}
 \boldsymbol{\varkappa} \,  & \lesssim &
 \left(3 \, \bar{c} \, + \, \frac{9}{2}\right) \, d^{-1} \, + \, \frac{2 \, (3 \, d^{-2} \, - \, d^{-1} \, - \, 2)}{1 \, + \, d^{-1}} \, - \, 6 \, d^{-1} \, \log\left( \frac{2 \, d^{-1}}{1 \, + \, d^{-1}} \right) + \frac{3\, d \, (1-d)}{(1 \, + \, d^3)(1 \, + \,d^2)},
	\end{eqnarray*}
 where $\bar{c}$ is a positive constant and $d$ is given by $(\ref{basic-ad})$. Then $(\ref{count-bound})$ can be obtained as a direct consequence of the fact that $d$ is a small parameter.
\end{proof}

\end{document}